\newenvironment{items}
{\begin{enumerate}[topsep=3pt, itemsep=3pt, parsep=0pt, label=(\roman*)]}
{\end{enumerate}}
\renewcommand{\tilde}{\widetilde}
\newcommand{\dual}[1]{#1^{\vee}}
\newcommand{\rel}{{\mbox{\tiny rel}}}
\newcommand{\lin}{{\mbox{\tiny lin}}}
\newcommand{\con}{{\mbox{\tiny con}}}
\newcommand{\argument}{{{\,\cdot\,}}}
\newcommand{\XX}{{\mathfrak X}}
\renewcommand{\AA}{{\mathfrak A}}
\newcommand{\A}{{\mathscr A}}
\newcommand{\B}{{\mathscr B}}
\newcommand{\D}{{\mathscr D}}
\newcommand{\C}{{\mathscr C}}
\newcommand{\M}{{\mathscr M}}
\newcommand{\Y}{{\mathscr Y}}
\newcommand{\N}{{\mathscr N}}
\renewcommand{\P}{{\mathscr P}}
\newcommand{\LL}{{\mathfrak L}}
\newcommand{\zz}{{\mathbb Z}}
\newcommand{\rr}{{\mathbb R}}
\renewcommand{\O}{{\mathscr O}}
\newcommand{\st}{\mathrel{\mid}}
\newcommand{\ev}{\mathop{\rm ev}\nolimits}
\newcommand{\Mor}{\mathop{\rm Mor}\nolimits}
\newcommand{\twoheadlongrightarrow}{\relbar\joinrel\twoheadrightarrow}
\newcommand{\Der}{\mathop{\rm Der}\nolimits}
\newcommand{\Homu}{\mathop{\underline{\rm Hom}}\nolimits}
\newcommand{\Mapu}{\mathop{\underline{\rm Map}}\nolimits}
\newcommand{\Symu}{\mathop{\underline{\rm Sym}}\nolimits}
\newcommand{\Map}{\mathop{{\rm Map}}\nolimits}
\newcommand{\longiso}{\stackrel{\textstyle\sim}{\longrightarrow}}
\newcommand{\iso}{\stackrel{\sim}{\rightarrow}}
\newcommand{\doublearrowstack}[2]%
                      {{{{\scriptstyle#1}\atop{\textstyle\longrightarrow}}\atop{{\textstyle\longrightarrow}\atop{\scriptstyle#2}}}}
\newcommand{\rightleftarrowstack}[2]%
                      {{{{\scriptstyle#1}\atop{\textstyle\longrightarrow}}\atop{{\textstyle\longleftarrow}\atop{\scriptstyle#2}}}}
\newcommand{\leftrightarrowstack}[2]%
                      {{{{\scriptstyle#1}\atop{\textstyle\longleftarrow}}\atop{{\textstyle\longrightarrow}\atop{\scriptstyle#2}}}}
\newtheorem{thm}{Theorem}[section]
\newtheorem{cor}[thm]{Corollary}
\newtheorem{lem}[thm]{Lemma}
\newtheorem{prop}[thm]{Proposition}
\newtheorem{defn}[thm]{Definition}
\newtheorem{rmk}[thm]{Remark}
\newtheorem{ex}[thm]{Example}
\newenvironment{pf}{\begin{trivlist}\item[]{\sc Proof.}}%
            {\nolinebreak $\Box$ \end{trivlist}}
\newenvironment{pfIFT}{\begin{trivlist}\item[]{\sc Proof of Theorem~\ref{ift}.}}%
            {\nolinebreak $\Box$ \end{trivlist}}
\newenvironment{proof}{\begin{trivlist}\item[]{\sc Proof.}}%
            {\nolinebreak $\Box$ \end{trivlist}}
\newcommand{\del}{\partial}
\newcommand{\longto}{\longrightarrow}
\DeclareMathOperator\id{id}
\newcommand{\im}{\mathop{\rm im}\nolimits}
\newcommand{\rk}{\mathop{\rm rk}\nolimits}
\newcommand{\pr}{\mathop{\rm pr}\nolimits}
\newcommand{\Sym}{\mathop{\rm Sym}\nolimits}
\newcommand{\Hom}{\mathop{\rm Hom}\nolimits}
\newcommand{\contract}{{\,\lrcorner\,}}
\newcommand{\noprint}[1]{}
\newcommand{\xxto}[1]{\xrightarrow{#1}}
\newcommand{\mapnm}{{\Map(\N, \M)}}
\newcommand{\Z}{{\mathscr Z}}
\newcommand{\pathast}{\Gamma(I,a^\ast(T_M\oplus L)\,dt\oplus a^\ast L)} 
\newcommand{\eetale}{a local diffeomorphism}
\newcommand{\fiber}{\text{fiber}}
\newcommand{\Ho}{\mathrm{Ho}}
\newcommand{\ddim}{\dim^{\text vir}}
\title{Derived Differentiable Manifolds}
\author{Kai Behrend, Hsuan-Yi Liao and Ping Xu}
\date{}
\newcommand{\Addresses}{{
  \bigskip
  \footnotesize

  Kai Behrend, \textsc{Department of Mathematics,  University of British Columbia}\par\nopagebreak
  \textit{E-mail address}: \texttt{behrend@math.ubc.ca}

  \medskip

  Hsuan-Yi Liao, \textsc{Department of Mathematics, 
National Tsing Hua University}\par\nopagebreak
  \textit{E-mail address}: \texttt{hyliao@math.nthu.edu.tw}

  \medskip

  Ping Xu, \textsc{Department of Mathematics, Pennsylvania State University}\par\nopagebreak
  \textit{E-mail address}: \texttt{ping@math.psu.edu}

}}
\begin{document}
\sloppy

\maketitle

\begin{abstract}
{We develop the  theory of derived differential geometry in terms of bundles
of  curved $L_\infty[1]$-algebras, i.e.  dg manifolds of positive amplitudes.
We prove the category of derived manifolds is a category of fibrant objects. 
 Therefore, we can make sense of ``homotopy fibered product'' and ``derived intersection" of submaifolds in a smooth manifold in the homotopy category of derived manifolds. We construct a factorization of the diagonal using path spaces.
  First we construct an infinite-dimensional factorization using actual
 path spaces motivated by the AKSZ construction, then we cut down to
 finite dimensions using the Fiorenza-Manetti method.
 The main ingredient is the homotopy transfer theorem for  curved $L_\infty[1]$-algebras.

We also prove the  inverse function theorem for derived manifolds, and
investigate the relationship  between weak equivalence and 
quasi-isomorphism for derived manifolds.
} 
\end{abstract}

%

\let\thefootnote\relax\footnotetext{Research partially supported by NSF grants
 DMS-1707545 and DMS-2001599,  and KIAS Individual Grant MG072801.}

\tableofcontents

\section*{Introduction}\addcontentsline{toc}{section}{Introduction}

\subsubsection*{Derived manifolds}

This work is a contribution to the theory of derived manifolds in the
context of $C^\infty$-geometry. Recently,  there has been a growing interest
in derived differential geometry. The main purpose is
 to use  ``homotopy fibered product" in a proper sense to replace
 the fibered product, which is not always defined in classical differential
geometry. There has appeared quite many works in this 
direction in the literature  mainly motivated by derived algebraic geometry
of Lurie and To\"{e}n-Vezzosi \cite{MR2717174, MR2394633}--- see also \cite{MR3033634}.
 For instance, see \cite{MR2641940,carchedi2012homological, MR3121621, MR3221297, borisov2011simplicial,  2018arXiv180407622P, macpherson2017universal, Nuiten}
for the $C^\infty$-setting and \cite{MR4036665, MR3959070} for
the analytic-setting.  Our approach is based on 
the geometry of bundles of positively graded curved $L_\infty[1]$-algebras, or equivalently,
  dg manifolds of positive amplitudes. In spirit, it is analogous 
to  the dg approach to derived algebraic geometry 
 \cite{2002math.....12225B,2002math.....12226B,MR1801413,MR1839580,MR2496057}  
and is closer to the approach of Carchedi-Roytenberg  \cite{carchedi2012homological,MR3121621}, by  avoiding the machinery of  $C^\infty$-ring. 

For us, a {\em derived manifold} is a triple $\M=(M,L,\lambda)$,
where $M$ is a $C^\infty$-manifold, $L=L^1\oplus\ldots\oplus L^n$ is a 
graded vector bundle over $M$, and $\lambda=(\lambda_k)_{0\leq k< n}$ is a
smooth family of multilinear operations $\lambda_k:L^{\otimes k}\to L$ of degree 1, making each fiber $L|_P$, where $P\in M$, a curved $L_\infty[1]$-algebra.
 (Equivalently, $\Sym_{\O_M}L^\vee$ is a sheaf of commutative differential graded algebras over the sheaf $\O_M$ of $C^\infty$-functions on $M$.)

We prove that derived manifolds form a category of fibrant objects (see \cite{MR341469} for details on categories of fibrant objects).  
Therefore, we can make sense of ``homotopy fibered product'' for derived manifolds. In particular, in the homotopy category of derived manifolds, we can talk about derived intersection of two submaifolds in a smooth manifold $M$. See Section~\ref{rmk:HptFibProd}. 
Recall that a {\em category of fibrant objects }is a category $\C$, together with two subcategories, the category of fibrations in $\C$, and the category of weak equivalences in $\C$, subject to the following axioms:
\begin{items}
\item weak equivalences satisfy two out of three, all isomorphisms are weak equivalences,
\item all isomorphisms are fibrations,
\item every base change (i.e. pull back) of a fibration exists, and is again a fibration,
\item every base change of a trivial fibration (i.e., a fibration which is a weak equivalence) is a trivial fibration,
\item $\C$ has a final object, and all morphisms whose target is the final object are fibrations,
\item every morphism can be factored as a weak equivalence followed by a fibration.
\end{items}
One is interested in the localization of $\C$ at the weak equivalences, i.e., the category obtained from $\C$ by formally inverting the weak equivalences.  This localization process
 gives rise to an $\infty$-category, whose associated 1-category is the {\em homotopy category }of $\C$.  The presence of the fibrations, satisfying the above axioms, simplifies the description of the localized category significantly.  In fact, Cisinsky \cite{MR2746284} has shown, 
that the homotopy type of the space of morphisms from $X$ to $Y$ can be represented by the category of spans from $X$ to $Y$, where a span $X\leftarrow X'\to Y$ has the property that the backwards map $X'\to X$ is a trivial fibration.

The zeroth operation $\lambda_0$ is a global  
section of the vector bundle $L^1$ over $M$.  We call the zero locus of $\lambda_0$ the {\em classical locus }of the derived manifold $\M=(M,L,\lambda)$. At a classical point $P\in M$, we can define the {\em tangent complex}
$$\xymatrix{
TM|_P\rto^{D_P\lambda_0}& L^1|_P\rto^{\lambda_1|_P}& L^2|_P\rto^{\lambda_1|_P}&\ldots}.$$
We define a morphism of derived manifolds to be a {\em weak equivalence }if
\begin{items}
\item it induces a bijection on classical loci,
\item  the linear part  induces a quasi-isomorphism on tangent complexes at all classical points.
\end{items}
We define a morphism of derived manifolds to be a {\em fibration}, if 
\begin{items}
\item the underlying morphism of manifolds is a differentiable submersion,
\item the linear part of the morphism of curved $L_\infty[1]$-algebras
 is a degreewise surjective morphism of graded vector bundles.
\end{items}
Our main theorem is the following:
\begin{trivlist}
\item {\bf Theorem A} {\it The category of derived manifolds is a category of fibrant objects.}
\end{trivlist}

Restricting, for example,  to derived manifolds where $M$ is a single point,
 which is classical \cite[Remark 7.4]{MR3832143} \cite{FioMan},
 we obtain the category of fibrant objects of
 finite-dimensional, positively graded, hence nilpotent $L_\infty[1]$-algebras,
 where weak equivalences are quasi-isomorphisms, and fibrations are 
degreewise surjections  (see also \cite{MR1843805, MR2628795}).

On the other extreme, restricting to derived manifolds with $L=0$, we obtain
 the category of smooth manifolds, with diffeomorphisms for weak equivalences and differential submersions as fibrations.

Our choice of definition of derived manifolds
 is chosen to be minimal, subject to including both of these extreme cases.

Most of the proof of Theorem~1 is straightforward.  The non-trivial part is the proof of the factorization property.  It is well-known, that it is sufficient to prove factorization for diagonals. The proof proceeds in two stages.

First, we factor the diagonal of a derived manifold $\M$ as 
$$\M\longrightarrow PT\M[-1]\longrightarrow\M\times\M\,.$$
Here, $PT\M[-1]$ is the path space $\Map(I,T\M[-1])$ 
of the shifted tangent bundle $T\M[-1]$, where $I$ is an open interval containing $[0,1]$.  
Another way to view $\Map(I,T\M[-1])$ is as $\Map(TI[1],\M)$, where $TI[1]$ is the shifted tangent bundle of the interval $I$. Note
that  $TI[1]$ is not a derived manifold in our sense (the corresponding graded vector bundle is not concentrated in positive degrees);
this is essentially the viewpoint of  AKSZ construction \cite{MR1432574}. See 
Appendix~\ref{sec:AKSZ}.

The path space $PT\M[-1]$ is infinite-dimensional, so in a second step we pass to a finite-dimensional model by using homotopical perturbation theory, in this case the curved $L_\infty[1]$-transfer theorem  \cite{ezra,MR1950958}.  

In fact, it is convenient to break up this second step into two subitems. The base manifold of $PT\M[-1]$ is $PM=\Map(I,M)$, the path space of $M$.  The first step is to restrict to $P_gM\subset PM$, the manifold of {\em short geodesic }paths with respect to an affine connection on $M$.
 This means that we restrict to paths which are sufficiently short (or, in other words, sufficiently slowly traversed) so that evaluation at $0$ and $1$ defines an open embedding $P_gM\to M\times M$.  
Note that as a smooth manifold
$P_gM$ can be identified with an open neighborhood of the zero section
of $TM$, the open embedding $P_gM\to M\times M$ follows from the
tubular  neighborhood theorem of smooth manifolds.

In a second step, we choose a connection in $L$, and apply the $L_\infty[1]$-transfer theorem fiberwise over $P_gM$ to cut down the graded vector bundle to finite dimensions. 
Our method in this step is inspired by the Fiorenza-Manetti
approach in \cite{MR3832143, FioMan}.

After this process, we are left with the following  finite-dimensional
\emph{derived  path space $\P\M$}. The base manifold is
 $P_gM\subset M\times M$. Over this we have the
 graded vector bundle
\begin{equation}\label{grvb}
P_\con(TM\oplus L)\,dt\oplus P_\lin L\,.
\end{equation}
The fiber over a (short geodesic) path $a:I\to M$ of this graded vector bundle is the direct sum of the space of covariant constant sections in $\Gamma\big(I,a^\ast (TM\oplus L)\big)$, shifted up in degree by multiplying with the formal symbol $dt$ of degree $1$,  and the covariant affine-linear sections in $\Gamma(I,a^\ast L)$. 
The operations $(\mu_k)_{0\leq k<n+1}$ on the graded vector bundle
 (\ref{grvb}) are determined by the the transfer process. They can all be made explicit as (finite) sums over rooted trees, and 
 therefore are all smooth operations. Note that as a graded vector bundle,
 $P_\con (TM\oplus L)\,dt\oplus P_\lin L$ is in fact isomorphic to 
\begin{equation}
\ev_0^* (TM\oplus  L)\,dt \oplus \ev_0^* L \oplus \ev_1^*L 
\end{equation}
where $\ev_0, \ \ev_1: P_gM \subset M\times M\to M$ denote the restriction of projection to
the first and second component, respectively.

To prove that 
\begin{items}
\item the object (\ref{grvb}) is, indeed, a vector bundle over the manifold $P_gM$, and
\item the operations $(\mu_k)$ on (\ref{grvb}) are differentiable, 
\end{items}
uses nothing but standard facts about bundles, connections, and parallel
 transport since the  formula for the $L_\infty[1]$-algebra operations
 in the transfer theorem is explicit and universal. 
  Yet, to give a conceptual approach in light of AKSZ construction
\cite{MR1432574}, we can view  path spaces as 
infinite-dimensional manifolds  using
 the theory of {\em diffeology}.  It allows us to put a diffeological structure on the infinite-dimensional intermediary $PT\M[-1]$, and use a generalization of the transfer theorem to the diffeological context to prove our theorem.  In order not to clutter the exposition with formalities involving diffeologies, we have assembled the needed facts in an appendix. 

\subsubsection*{The associated differential graded algebra}

By standard facts, we can associate to a derived manifold 
\begin{items}
\item a  differential graded manifold, with whose structure sheaf is the sheaf of commutative differential graded algebras $\A=\Sym_{\O_M}L^\vee$, with a 
derivation $Q:\A\to\A$ of degree 1, induced by $(\lambda_k)$, satisfying $Q^2= \frac{1}{2}[Q,Q]=0$,
\item the commutative differential graded algebra $\Gamma(M,\A)$ of global sections. 
\end{items}
The following theorem serves as a partial justification for our definition of weak equivalence of derived manifold.

\begin{trivlist}
\item {\bf Theorem B} {\it  Any weak equivalence  of derived manifolds induces a quasi-isomorphism of the associated differential graded algebras of global sections.}
\end{trivlist}

Indeed, we propose the following

\begin{trivlist}
\item {\bf  Conjecture} {\it
 A morphism of derived manifolds is a weak equivalence, if and only if the induced morphism of differential graded manifolds is a quasi-isomorphism. }
\end{trivlist}

\subsubsection*{Inverse function theorem}

Along the way we prove another theorem which shows the strength of our definition of weak equivalence.  It is a generalization of the inverse function theorem. 
\begin{trivlist}
\item {\bf Theorem C} {\it   Every trivial fibration of derived manifolds admits a section, at least after restricting to an open neighborhood of the classical locus of the target.}
\end{trivlist}
If we replace  derived manifolds by their germs (inside $M$) around their
 classical loci, we therefore obtain an equivalent  category of
 fibrant objects in which every trivial fibration admits a section.  Such categories of fibrant objects have a particularly simple description of their homotopy categories:  the morphisms are simply homotopy classes of morphisms
 \cite[Theorem 1]{MR341469}.  \color{black} Here two morphisms $f,g:\M\to \N$ are homotopic if there exists a morphism $h:\M\to\P\N$, which gives back $f$ and $g$ upon composing with the two evaluation maps $\P\N\to\N$. 

In a subsequent paper, we will study differentiable derived stacks
 in terms of Lie groupoids in the category of derived manifolds along a similar line as in \cite{MR2817778}. 
In particular, we will  study ``derived orbifolds".
It would be interesting to investigate the relation between
derived orbifolds and Kuranishi spaces of  Fukaya-Oh-Ohta-Ono
 \cite{fukaya2015kuranishi, MR3931096}
and  d-orbifolds of Joyce \cite{MR3221297, joyce2015new}.
We expect that derived orbifolds in our sense will provide a useful tool in
studying  symplectic reduction, and  it would be interesting  to explore
the relation  between ``symplectic derived orbifolds" 
with  Sjamaar-Lerman theory of stratified symplectic spaces and reduction \cite{MR1127479}. 
It is also worth to study the connetion of
derived manifolds with Costello's ``$L_\infty$-spaces" \cite{costello2011geometric, MR3358542, MR4079151}. 
After the posting of the first e-print version of the present paper on
 arXiv.org, we learned that  a similar version of inversion function theorem for derived
 manifolds (called  $L_\infty$ spaces by the authors)  
was also  obtained by Amorim-Tu using a different method \cite{amorim2020whitehead}.

\subsection*{Notations and conventions}

Differentiable means $C^\infty$. Manifold means differentiable manifold, which includes second countable and Hausdorff as part of the definition.  Hence manifolds admit partitions of unity, which implies that vector bundles admit connections, and fiberwise surjective homomorphisms of vector bundles admit sections.

For any graded vector bundle $E$ over a manifold $M$, we sometimes use the same symbol $E$ to denote the sheaf of its sections over $M$, by abuse of notation. 
In particular, for a graded vector bundle $L$ over $M$, by
$\Sym_{\O_M} L^\vee$, or simply $\Sym L^\vee$, we denote the sheaf, over $M$,
 of sections of the graded vector bundle $\Sym L^\vee$, i.e. the sheaf of
fiberwise polynomial functions on $L$.

The notation $| \cdot |$ denotes the degree of an element. When we use this notation, we assume the input is a homogeneous element.

\section*{Acknowledgments}\addcontentsline{toc}{section}{Acknowledgments}
We would like to thank several institutions for their hospitality,
 which allowed the completion of this project: Pennsylvania State University
 (Liao),  University of British Columbia (Liao and Xu),
National Center for Theoretical Science  (Liao) and KIAS (Xu).
We also wish to thank  Ruggero Bandiera, Damien Broka, 
David Carchedi, 
Alberto Cattaneo, 
 David Favero,
 Domenico Fiorenza, Ezra Getzler,
 Owen Gwilliam,
 Vladimir Hinich,
 Bumsig Kim, Jia-Ming Liou, Wille Liu,
 Marco Manetti, Raja Mehta, Pavel Mnev,  Byungdo Park,
Jonathan Pridham, 
 Dima Roytenberg,
Pavol Severa,  Mathieu Sti\'enon and Ted Voronov
 for fruitful discussions and useful comments.

\section{The category of derived manifolds}

\begin{defn}
A {\bf derived manifold} is a triple $\M=(M,L,\lambda)$ , where $M$ is
a manifold (called the base), $L$ is a (finite-dimensional) graded vector bundle
$$L=L^1\oplus\ldots\oplus L^{n}$$ over $M$, and 
$\lambda=(\lambda_k)_{k\geq0}$ is a sequence of 
multi-linear operations
of degree $+1$
\begin{equation}
\label{eq:SCE}
\lambda_k:\underbrace{ L\times_M\ldots \times_ML}_k\longrightarrow L
\,,\qquad k\geq0\,.
\end{equation}
The operations $\lambda_k$ are required to be differentiable maps over $M$, and to make each fiber $(L,\lambda)|_P$,
for $P\in M$, into a curved $L_\infty[1]$-algebra (see Appendix~\ref{linfty}).
\end{defn}
  We say that $(L,\lambda)$ is
a {\bf bundle of curved $L_\infty[1]$-algebras }over $M$. 
 By our assumption on the
degrees of $L$,  we have $\lambda_k=0$, for $k\geq n$. The integer $n$
is called the {\bf amplitude }of $\M$. 

A derived manifold is called {\bf quasi-smooth} if its amplitude is one.
 That is, a quasi-smooth derived manifold consists of
 a triple $(M,L,\lambda)$,
 where $L = L^1$ is a vector bundle of degree $1$ over $M$,
 and $\lambda = \lambda_0$ is a global section of $L$.
Such a  derived manifold can be considered as the ``derived intersection"
of $\lambda_0$ with the zero section of $L$.
 If $f$ is a smooth function on $M$, the quasi-smooth derived manifold
 $(M, T^\vee M[-1], df)$ can be thought  as the ``derived critical locus'' of $f$.

\color{black}

Recall that the curved $L_\infty[1]$-axioms for the $\lambda = (\lambda_k)_{k \geq 0}$ can be summarized in the single equation
$$\lambda\circ\lambda=0\,.$$
(See Appendix~\ref{linfty} for the notation.) 
The axioms are enumerated by the number of arguments they take.  The first few are 
\begin{items}
\item ($n=0$) $\lambda_1(\lambda_0)=0$,
\item ($n=1$) $\lambda_2(\lambda_0,x)+\lambda_1^2x=0$.
\item ($n=2$) $\lambda_3(\lambda_0,x,y)+\lambda_2(\lambda_1(x),y)+ (-1)^{|x||y|}\, \lambda_2(\lambda_1(y),x)+\lambda_1(\lambda_2(x,y))=0$. 
\end{items}
If all $\lambda_k$, for $k\geq3$, vanish, then $L[-1]$ is a bundle of curved differential graded Lie algebras over $M$. Guided by this correspondence, we sometimes call the section $\lambda_0$ of $L^1$ over $M$ the {\bf curvature }of $\M$, call  $\lambda^1:L\to L[1]$ the {\bf twisted differential}, and $\lambda_2$ the {\bf bracket}.  The $\lambda_k$, for $k\geq3$, are the {\bf higher brackets}.

\begin{defn}
The set of points $Z(\lambda_0)\subset M$ where the curvature vanishes is a closed subset of $M$.  It is called the {\bf classical or
  Maurer-Cartan locus }of the derived-manifold $\M$, notation
$\pi_0(\M)$. We consider $\pi_0(\M)$ as a set without any further
structure.
\end{defn}

\begin{defn}
A {\bf morphism }of derived manifolds $(f,\phi):(M,L,\lambda)\to
(M',L',\lambda')$ consists of a differential map  of manifolds $f:M\to M'$, and 
a sequence of operations  
$$\phi_k:\underbrace{L\times_M\ldots\times_M L}_k\longrightarrow  L'\,,\qquad k\geq1\,,$$
of degree zero. The operations $\phi_k$ are required to be differentiable maps covering $f:M\to M'$, such that for every $P\in M$, the induced
sequence of operations $\phi|_P$ defines a morphism of curved $L_\infty[1]$-algebras $(L,\lambda)|_P\to (L',\lambda')|_{f(P)}$.
 Our assumptions imply that $\phi_k=0$, for
$k>n'$, where $n'$ is the amplitude of $(M',L',\lambda')$. 
\end{defn}

In particular, $\phi_1:L\to f^\ast L'$ is a morphism of graded vector
bundles.

Recall that the $L_\infty[1]$-morphism axioms
 can be summarized in the single equation
(see Appendix~\ref{linfty} for the notation) 
$$\phi\circ\lambda=\lambda'\bullet\phi\,.$$
They are enumerated by the number of
arguments they take.  The first few are
\begin{items}
\item ($n=0$) $\phi_1(\lambda_0)=\lambda_0'$,
\item ($n=1$) $\phi_2(\lambda_0,x)+\phi_1(\lambda_1(x))=\lambda'_1(\phi_1(x))$.
\end{items}
The first property implies that a morphism of derived manifolds induces a
map on classical loci. 

It is possible to compose morphisms of derived manifolds, and so we have the
category of derived manifolds.

\begin{rmk}\label{finalremark}
The category of derived manifolds has a final object.  This is the
singleton manifold with the zero graded vector bundle on it, notation $\ast$.  The
classical locus of a derived manifold $\M$ is equal to the set of morphisms
from $\ast$ to $\M$.
\end{rmk}

\begin{rmk}
If $(M,L,\lambda)$ is a derived manifold, and $f:N\to M$ a differentiable
map, then $(f^\ast L,f^\ast\lambda)$, is a bundle of curved
$L_\infty [1]$-algebras over $N$, endowing $N$ with the structure of derived manifold $(N,f^\ast L,f^\ast\lambda)$, together with a morphism of
 derived manifolds $(N,f^\ast L,f^\ast\lambda)\to(M,L,\lambda)$. 
\end{rmk}

\begin{rmk}
If $(M,L,\lambda)$ is a derived manifold, then the set of global sections
$\Gamma(M,L)$ is a curved $L_\infty[1]$-algebra. For every point $P\in M$, evaluation at $P$ defines a linear morphism (see below) of $L_\infty[1]$-algebras
 $\Gamma(M,L)\to L|_P$. 
\end{rmk}

\begin{defn}\label{defnlin}
We call a morphism $(f,\phi):(M,L,\lambda)\to (M',L',\lambda')$ of derived manifolds {\bf linear}, if $\phi_k=0$, for all $k>1$. 
\end{defn}

The linear morphisms define a subcategory of the category of derived manifolds, containing all identities, although not all isomorphisms. 

\begin{prop}\label{oneiso}
Let $(f,\phi):(M,L,\lambda)\to (M',L',\lambda')$ be a morphism of derived manifolds.  If $f:M\to M'$ is a diffeomorphism of manifolds, and $\phi_1:L\to L'$ an isomorphism of vector bundles, then $(f,\phi)$ is an isomorphism of derived manifolds. 
\end{prop}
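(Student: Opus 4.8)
The plan is to construct an explicit inverse $(g,\psi):(M',L',\lambda')\to(M,L,\lambda)$ and check it is a morphism of derived manifolds which is two-sided inverse to $(f,\phi)$. On base manifolds, take $g=f^{-1}$, which exists and is smooth since $f$ is a diffeomorphism. The subtle point is the inverse $L_\infty[1]$-morphism: even though $\phi_1$ is an isomorphism, the higher components $\psi_k$ of the inverse are not simply zero, but are determined recursively. So first I would recall (or derive) the standard formula for the inverse of an $L_\infty[1]$-morphism whose linear part is invertible: working fiberwise, the morphism $\phi$ corresponds to a counit-preserving coalgebra morphism $\Phi:\Sym(L|_P)\to\Sym(L'|_{f(P)})$ (or dually a dg-algebra morphism on $\Sym L^\vee$), and since $\phi_1$ is an isomorphism, $\Phi$ is an isomorphism of coalgebras; its inverse $\Psi=\Phi^{-1}$ then decomposes into components $\psi_k$, giving an $L_\infty[1]$-morphism $(L',\lambda')|_{f(P)}\to(L,\lambda)|_P$. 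Concretely, $\psi_1=(\phi_1)^{-1}$ and the $\psi_k$ for $k\ge2$ are defined by induction on $k$ via the coalgebra-morphism equation, each $\psi_k$ being a universal polynomial expression in $(\phi_1)^{-1}$ and $\phi_1,\ldots,\phi_k$.

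The key steps, in order: (i) observe that since $\phi_k$ are smooth maps covering $f$ and $\phi_1$ is a vector bundle isomorphism, the recursively-defined operations $\psi_k:(L')^{\times_{M'}k}\to L$ (covering $g=f^{-1}$) are again smooth — this is immediate because the recursion is built from composition, the smooth inverse $(\phi_1)^{-1}$ (smooth by Cramer's rule / smoothness of matrix inversion), and the smooth operations $\phi_j$, all pulled back along the diffeomorphism $f^{-1}$; (ii) verify fiberwise that $\psi|_{Q}$ is a morphism of curved $L_\infty[1]$-algebras $(L',\lambda')|_Q\to(L,\lambda)|_{g(Q)}$ for every $Q\in M'$ — this is exactly the statement that the coalgebra isomorphism $\Psi=\Phi^{-1}$ is compatible with the codifferentials $Q_\lambda,Q_{\lambda'}$, which follows formally from $\Phi$ being a dg-coalgebra morphism; hence $(g,\psi)$ is a morphism of derived manifolds; (iii) check $(g,\psi)\circ(f,\phi)=\id$ and $(f,\phi)\circ(g,\psi)=\id$ — on bases this is $f^{-1}\circ f=\id$, and on the $L_\infty[1]$-side it is $\Psi\circ\Phi=\id$ fiberwise, which holds by construction; since the composition of morphisms of derived manifolds is computed fiberwise on the $L_\infty[1]$-parts (composed with the base maps), both composites are the identity morphism. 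Thus $(f,\phi)$ is an isomorphism.

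The main obstacle — really the only non-formal point — is step (i): one must be sure that the recursively constructed $\psi_k$ genuinely assemble into \emph{differentiable} operations over $M'$, not merely fiberwise-defined linear-algebra gadgets. This reduces to two standard facts already granted in the paper's conventions: matrix inversion is smooth on the open locus of invertible matrices (so $(\phi_1)^{-1}$ is a smooth bundle map), and finite compositions and contractions of smooth bundle maps are smooth. Everything else — the existence and uniqueness of the inverse coalgebra morphism, its compatibility with codifferentials, and the fact that the two composites are identities — is pure (co)algebra applied fiberwise, using the dictionary of Appendix~\ref{linfty} between curved $L_\infty[1]$-morphisms and counit-preserving dg-coalgebra morphisms, and needs no differential geometry beyond what has just been said.
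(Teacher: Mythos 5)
Your proof is correct and is essentially the paper's argument: both invert the morphism recursively using the invertibility of the linear part, yours phrased through the symmetric-coalgebra dictionary, the paper's phrased directly through the operation $\bullet$ (after reducing WLOG to $f=\id$, $\phi_1=\id$), with uniqueness of solutions of $\psi\bullet\phi=\chi$ plus associativity of $\bullet$ supplying the two-sided inverse. The smoothness point you isolate in step (i) is left implicit in the paper, since the recursion is built from compositions of smooth bundle maps.
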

\begin{pf}
Without loss of generality, we assume $M=M'$, $L=L'$, and $f=\id_M$, $\phi_1=\id_L$. First, note that for every sequence of operations $\chi$ on $L$, the equation $\psi\bullet \phi=\chi$ has a unique solution for $\psi$.  In fact, the equation $\psi\bullet\phi=\chi$ defines $\psi_k$ recursively.  Then define $\psi$ such that $\psi\bullet\phi=\id$, where $\id_1=\id_L$, and $\id_k=0$, for $k>1$.  Then $(\phi\bullet\psi)\bullet\phi=\id\bullet\phi$ by associativity of $\bullet$. By the uniqueness result proved earlier, this implies $\phi\bullet\psi=\id$.  
Therefore, $\psi$ is an inverse to $\phi$.  One checks 
that $\lambda'\bullet\phi=\phi\circ\lambda$ implies $\lambda\bullet\psi=\psi\circ\lambda'$, and hence $\psi$ is an inverse for $\phi$ as a morphism of 
derived manifolds. 
\end{pf}

\subsection{Etale morphisms and weak equivalences}

Let $\M=(M,L,\lambda)$ be a derived manifold, with curvature $F=\lambda_0$. 
Let $P\in Z(F)$ be a classical point of $\M$. 
Since $F(P)=0\in L_1|_P$, there is a natural decomposition
$T L_1|_{F(P)} \cong TM|_P \oplus L_1|_P$. By  $D_PF$, we denote
the composition
\begin{equation}
\label{eq:Lyon}
D_PF:  T M |_P \xxto{TF|_P}T L_1|_{F(P)} \cong T M|_P \oplus L_1|_P \xxto{\pr}
 {L_1}|_P
\end{equation}
where $TF|_P$ denotes the tangent map of $F: M\to L_1$ at $P\in M$.
Thus $D_PF:TM|_P\to L^1|_P$ is a linear map, called  the {\em  derivative of the curvature $F$}.

\begin{defn}
\label{def:tangentcomplex}
Let $\M=(M,L,\lambda)$ be a derived manifold, with curvature $F=\lambda_0$ and twisted differential $d=\lambda_1$. Let $P\in
Z(F)$ be a classical point of $\M$. 
 We add the derivative of the curvature  $D_PF$ to $d|_P$ in the
graded vector space $TM|_P\oplus L|_P$ to define the {\bf tangent
  complex }of $\M$ at $P$:
\begin{equation}\label{eq:tangent}
T\M|_P\quad :=\quad\xymatrix{
TM|_P\rto^{D_PF}& L^1|_P\rto^{d|_P}& L^2|_P\rto^{d|_P} & \ldots}
\end{equation}
(Note that $TM|_P$ is in degree $0$.)
\end{defn}

  The fact that $dF=0$, and that $P$ is a Maurer-Cartan point,   implies that we have a complex.  (Note that we do not define $T\M$ itself here.) 
We refer the reader to Remark \ref{rmk:dg} for
geometric meaning of the  tangent complex.

Let $\M=(M,L,\lambda)$ be a derived manifold. 
The \textbf{virtual dimension} $\ddim(\M)$ of $\M$  is
 defined to be the Euler characteristic of the tangent complex
 $T\M|_P$ at a classical point $P$ of $\M$. Since $L$ is finite-dimensional,
then 
$$
\ddim(\M) = \dim(M) +\sum_{i=1}^n \rk(L^i), 
$$
which is a locally constant function on the classical loci of $\M$.

Given a morphism of derived manifolds $f:\M\to\M'$, and a classical
point $P$ of $\M$, we get an induced morphism of tangent complexes
$$Tf|_P:T\M|_P\longrightarrow T\M'|_{f(P)}\,.$$
Only the linear part of $f$ is used in $Tf|_P$.

\begin{defn}
Let  $f:\M\to\M'$ be a morphism of  derived manifolds  and $P$ a classical
point of $\M$. We call $f$ {\bf \'etale at $P$}, if the induced
morphism of tangent complexes at $P$ 
$$Tf|_P:T\M|_P\longrightarrow T\M'|_{f(P)}$$
is a quasi-isomorphism (of complexes of vector spaces). 

The morphism $f$ is {\bf \'etale}, if it is \'etale  at every
classical point of $\M$. 
\end{defn}

Every isomorphism of derived manifolds is \'etale.

\begin{rmk}
For classical smooth  manifolds, an \'etale morphism is
exactly \eetale. On  the other hand, if both  the base manifolds
of $\M$  and $\M'$ are  just a point $\{*\}$, a morphism $f:\M\to\M'$
is \'etale means that $f$ is a quasi-isomorphism of
the corresponding $L_\infty[1]$-algebras if the 
curvature of $\M$ vanishes. Otherwise, it is always \'etale. 
\end{rmk}

\begin{defn}
A morphism $f:\M\to\M'$ of derived manifolds is called a {\bf weak
  equivalence} if
\begin{items}
\item $f$ induces a bijection on classical loci,
\item $f$ is \'etale.
\end{items}
\end{defn}

\begin{rmk}
Weak equivalences satisfy two out of three, and so the category of derived manifolds is a category with weak equivalences.
\end{rmk}

\begin{rmk}
If $\M \to \M'$ is a weak equivalence, then $\M$ and $\M'$ have the same virtual dimension. 
\end{rmk}

\subsection{Algebra model}

For a manifold $M$, we denote the sheaf of $\rr$-valued $C^\infty$-functions by $\O_M$.

\begin{defn}
A {\bf graded manifold} $\M$ of amplitude $[-m, n]$
is a pair  $(M,\A)$, where $M$ is a manifold,
$$\A=\bigoplus_{i}\A^i$$
is a sheaf of $\zz$-graded commutative $\O_M$-algebras over (the underlying
topological space) $M$, such that there exists a  $\zz$-graded
vector space 
$$V=V^{-m}\oplus V^{-m+1} \oplus \cdots V^{n-1}\oplus V^{n}$$ 
and a covering of $M$ by open submanifolds $U\subset M$, and
for every $U$ in the covering family, we have 
$$\A|_U\cong C^\infty (U)\otimes \Sym V^\vee\,,$$ 
as sheaf of graded $\O_M$-algebras.
Here $m$ and $n$ may not be non-negative integers, and
we only assume that $-m\leq n$.
\end{defn}

We say that the graded manifold $\M$ is finite-dimensional
if $\dim M$ and $\dim V$ are both finite. 
We refer the reader to~\cite[Chapter~2]{MR2709144} and~\cite{MR2275685,MR2819233}
for a short introduction to graded manifolds and relevant references.
By $\Gamma (M, \A)$, we denote the global sections.

\begin{defn} 
A {\bf differential graded manifold} (or {\bf dg manifold}) is a triple $(M,\A,Q)$, where $(M,\A)$ is
a graded manifold, and $Q:\A\to \A$
is a  degree 1 derivation of $\A$ 
as sheaf of $\rr$-algebras, such that $[Q,Q]=0$.
\end{defn}

Since we  are only interested  in the $C^\infty$-context,  in
the above definition, the existence of such  $Q:\A\to \A$ is 
equivalent to the existence of  degree 1 derivation on the global sections
$$Q:  \Gamma({M, \A})\to \Gamma({M, \A})$$
such that  $[Q,Q]=0$. Thus $Q$ can be considered as
a vector field on $\M$, notation $Q\in \XX (\M)$,
called a  {\bf homological vector field} on the  graded manifold $\M$
 \cite{MR1432574,MR2819233}.

\begin{defn}
A {\bf morphism }of dg manifolds $(M,\A,Q)\to (N,\B,Q')$ is a pair
$(f,\Phi)$, where $f:M\to N$ is a differentiable map of manifolds, and
$\Phi:\B\to f_\ast \A$ (or equivalently $\Phi:f^\ast \B\to \A$) is a
morphism of sheaves of graded  algebras, such that $Q\,\Phi=\Phi\, f^\ast Q'$.
\end{defn}

This defines the category of dg manifolds.

Let $\M=(M,L,\lambda)$ be a derived manifold. 
The sheaf of graded algebras $\A=\Sym_{\O_M}L^\vee$ is locally free
 (on generators in negative bounded degree), and thus
$(M, \A)$ is a graded manifold of  amplitude  $[1, n]$, called
{\bf  graded manifolds of positive amplitudes} by abuse of notation.
  The sum of the duals of the $\lambda_k$ defines a homomorphism of
 $\O_M$-modules $L^\vee\to \Sym_{\O_M}L^\vee$, which extends in a unique way
 to a derivation $Q_\lambda:\A\to \A$ of degree $+1$. 
 The condition
 $\lambda\circ\lambda=0$ is equivalent to the condition $[Q_\lambda, Q_\lambda]=0$. 

Given a morphism of derived manifolds $(f,\phi):(M,L,\lambda)\to (N,E,\mu)$, the sum of the duals of the $\phi_k$  gives rise to a homomorphism of $\O_M$-modules $f^\ast E^\vee\to \Sym_{\O_M} L^\vee$, which extends uniquely to a morphism of sheaves of $\O_M$-algebras $\Phi:f^\ast\Sym_{\O_N}E^\vee\to \Sym_{\O_M}L^\vee$.  Conversely, every morphism of $\O_M$-algebras $f^\ast\Sym_{\O_N}E^\vee\to \Sym_{\O_M}L^\vee$ arises in this way from a unique family of operations $(\phi_k)$. The condition $\phi\circ\lambda=\mu\bullet\phi$ is equivalent to
 $Q_\lambda\Phi=\Phi f^\ast Q_\mu$. 

These considerations show that we have a functor
\begin{align}\label{eq3}
(\text{derived manifolds})&\longrightarrow (\text{dg manifolds of
positive amplitudes})\\
(M,L,\lambda)&\longmapsto (M,\Sym_{\O_M} L^\vee,Q_\lambda)\,.\nonumber
\end{align}
which is fully faithful.

\begin{prop}
\label{pro:Batchelor}
The functor (\ref{eq3}) is an equivalence of categories. 
\end{prop}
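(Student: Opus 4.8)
The plan is to show that the functor \eqref{eq3} is essentially surjective; since it is already known to be fully faithful, this suffices. So let $(M,\A,Q)$ be a dg manifold of positive amplitude $[1,n]$. First I would establish a differential-geometric Batchelor-type theorem: any graded manifold $(M,\A)$ of positive amplitude is (non-canonically) isomorphic, as a sheaf of graded $\O_M$-algebras, to $(M,\Sym_{\O_M}L^\vee)$ for a genuine finite-rank graded vector bundle $L=L^1\oplus\cdots\oplus L^n$ on $M$. In the $C^\infty$-setting this is standard and follows from the existence of partitions of unity: locally $\A|_U\cong C^\infty(U)\otimes\Sym V^\vee$ by definition, the degree filtration is canonical, the associated graded is the bundle $L^\vee=\bigoplus_i L^{i\vee}$ with $L^i$ locally modelled on $V^i$, and one splices the local splittings $\Sym L^\vee\xrightarrow{\sim}\A$ together using a partition of unity (this works because the transition data lives in a sheaf of unipotent automorphisms, or more simply because one can build the isomorphism degree by degree, each step being an affine-space-valued choice hence patchable). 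This is the content being invoked implicitly just before the proposition when the authors say "$\Sym_{\O_M}L^\vee$ is locally free".

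Having fixed such an isomorphism $\A\cong\Sym_{\O_M}L^\vee$, the homological vector field $Q$ becomes a degree $+1$ derivation of $\Sym_{\O_M}L^\vee$ with $[Q,Q]=0$. Any such derivation of a free graded-commutative algebra is determined by its restriction to the generators, i.e.\ by the composite $\O_M$-module map $L^\vee\hookrightarrow\Sym_{\O_M}L^\vee\xrightarrow{Q}\Sym_{\O_M}L^\vee$, which lands in degree-$1$ elements. Decomposing the target by symmetric power, $\Sym_{\O_M}L^\vee=\bigoplus_{k\ge0}\Sym^k_{\O_M}L^\vee$, the $k$-th component of this map is an $\O_M$-linear map $L^\vee\to\Sym^k L^\vee$ of degree $+1$, which dualizes to a fibrewise $k$-multilinear graded-symmetric operation $\lambda_k:L\times_M\cdots\times_M L\to L$ of degree $+1$; smoothness of $\lambda_k$ is automatic since it is just the dual of a bundle map. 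Positivity of the grading on $L$ forces $\lambda_k=0$ for $k\ge n$, exactly as in the definition of derived manifold. Thus $Q=Q_\lambda$ for the family $\lambda=(\lambda_k)_{k\ge0}$.

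It remains to check that $[Q,Q]=0$ translates into the curved $L_\infty[1]$-axioms $\lambda\circ\lambda=0$ on each fibre. This is the formal bookkeeping already recorded in the text: $[Q_\lambda,Q_\lambda]=0\iff\lambda\circ\lambda=0$, proved by comparing the two sides on generators and matching the arity-$n$ component of $Q_\lambda^2$ with the $n$-th $L_\infty[1]$-relation. Since this equivalence is stated (with the first few instances spelled out explicitly) in the paragraph preceding the proposition, I would simply cite it. Evaluating fibrewise at $P\in M$ then shows each $(L,\lambda)|_P$ is a curved $L_\infty[1]$-algebra, so $(M,L,\lambda)$ is a bona fide derived manifold mapping to $(M,\A,Q)$ under \eqref{eq3}. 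Essential surjectivity together with full faithfulness gives the equivalence.

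\medskip

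The main obstacle is the Batchelor step: one must make sure the local trivializations $\A|_U\cong C^\infty(U)\otimes\Sym V^\vee$ can be glued to a global identification $\A\cong\Sym_{\O_M}L^\vee$. Everything else is dualization of bundle maps and the already-cited algebraic equivalence $[Q_\lambda,Q_\lambda]=0\iff\lambda\circ\lambda=0$. In the smooth category the gluing goes through cleanly because of partitions of unity (the obstruction to extending a splitting over one more degree lives in a soft sheaf, hence vanishes); this is the reason the conventions section emphasized that manifolds here carry partitions of unity. A clean way to present it is to induct on the amplitude $n$: the truncation $\A\to\A/(\A^{>n-1})$ realizes $\A$ as a square-zero-type extension of a lower-amplitude graded manifold by the $\O_M$-module $L^{n\vee}$ placed in top degree, and such extensions of a free graded-commutative algebra split after choosing an $\O_M$-linear section, which exists globally by partitions of unity.
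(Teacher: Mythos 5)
Your proposal is correct and follows essentially the same route as the paper: essential surjectivity via a Batchelor-type splitting $\A\cong\Sym_{\O_M}L^\vee$ built degree by degree, with the surjection onto the new generators split using partitions of unity, followed by reading off the operations $\lambda_k$ from the restriction of $Q$ to the generators $L^\vee$ and invoking $[Q,Q]=0\iff\lambda\circ\lambda=0$. The only step you leave implicit is the observation that $Q(\O_M)=0$ (immediate for degree reasons, since $\A$ is concentrated in degrees $\leq 0$ and $Q$ has degree $+1$), which is what makes $Q$ an $\O_M$-linear derivation, hence tangent to the fibers of $L$, and justifies your treating its restriction to generators as an $\O_M$-module map; the paper records this explicitly.
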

\begin{pf}
It remains to show essential surjectivity.
  Let $M$ be a manifold. We need to prove that if $\A$ is a
  sheaf of algebras, locally free on generators of negative degree  $\geq -n$,
 then there exist a graded vector bundle $E=E^{-n}\oplus\ldots\oplus E^{-1}$,
 and an embedding $E\hookrightarrow \A$,  such that $\Sym_{\O_M} E\iso\A$.
  Let $\B\subset\A$ be the sheaf of subalgebras generated by all local sections of degree $\geq -n+1$. Then the $\O_M$-module
 $E^{-n}=\A^{-n}/\B^{-n}$ is locally free, and the projection
 $\A^{-n}\to E^{-n}$ admits a section $E^{-n}\hookrightarrow\A^{-n}$. 
 By induction, we have $E^{-n+1}\oplus\ldots\oplus E^{-1}\hookrightarrow \A$, 
inducing an isomorphism $\Sym_{\O_M}(E^{-n+1}\oplus\ldots\oplus E^{-1})\iso \B$.  It follows that $\Sym_{\O_M}(E^{-n}\oplus\ldots\oplus E^{-1})\iso \A$.
Let  $L^{i}=(E^{-i})^\vee$, $i=1, \ldots, n$,
 and $L=L^1\oplus\ldots\oplus L^{n}$. Thus $\A\cong \Sym_{\O_M} L^\vee$.

Moreover,  as the morphism $\O_M\to\A$ has image in degree $0$, for degree
reason, we have $Q(\O_M)=0$, so that $(\O_M, 0)\to (\A,Q)$ is a morphism of 
sheaves of differential graded algebras. It thus follows that the
homological vector field $Q$ is indeed tangent to the fibers of
the graded vector bundle $L$.
Therefore, $Q$ induces a unique family of fiberwise
 operations $(\lambda_k)_{k \geq 0}$ of degree 1 as in  \eqref{eq:SCE}.
 The condition  $[Q, Q]=0$ implies that
 $\lambda\circ\lambda=0$. Hence $(L,\lambda)$ is
a  bundle of curved $L_\infty[1]$-algebras over $M$, and
$(M, L, \lambda)$ is a derived manifold.
\end{pf}

In the supermanifold (i.e.\ $\zz_2$-graded with $Q$ being zero) case,
Proposition~\ref{pro:Batchelor} is known as Batchelor's
theorem --- see~\cite{MR536951,MR2275685,QFT_2-vol_book}.

\begin{rmk}
\label{rmk:dg}
Thus, derived manifolds, according to our definition, are differential
 graded manifolds endowed with  a generating graded vector bundle of
 positive degrees. 
Some constructions are easier in the algebra model, but many use the graded vector bundle $L$. Of course, a major consideration is that $L$, being finite-rank, is more geometrical than $\A$.

For instance, 
 the tangent complex  \eqref{eq:Lyon} admits a natural
geometric interpretation. In classical differential geometry,
if $Q$ is a vector field on a  manifold $\M$ and $P$ is
a point in $\M$ at which $Q$ vanishes, the Lie derivative $\LL_Q$ induces
a well-defined linear  map
$$\LL_Q: T\M |_P \to T\M |_P$$
called the linearization of $Q$ at $P$ \cite[p.72]{MR515141}.

 For a derived manifold $\M=(M,L,\lambda)$, let $Q_\lambda$ be the
 homological vector field as in (\ref{eq3}). The  classical
points  can be thought as those points  where  $Q_\lambda$
vanishes. Note that, if  $P$ is a classical point,
$TL|_{0_P} \cong T M|_P \oplus L|_P$, where $0_P\in L|_P$
is the zero vector. By a formal calculation,
one can easily check that the linearization of $Q_\lambda$ at $P$
gives rise to  exactly the tangent complex \eqref{eq:tangent} at $P$.
\end{rmk}

\begin{rmk} 	
Dg manifolds of amplitude $[-m, -1]$ can be thought of as
 \emph{Lie $m$-algebroids} \cite{MR2521116,  MR2441255,MR4007376, MR2223155, MR3090103, MR2768006, MR1958835} 
and \cite[Letters~7 and~8]{2017arXiv170700265S}. 
They can be considered as the infinitesimal counterparts of higher groupoids.

Hence a general dg manifold of amplitude $[-m, n]$ 
can encode both stacky and derived singularities in
differential geometry.
\end{rmk}

\subsection{Fibrations}

\begin{defn}\label{defnfib}
We call the morphism $(f,\phi):(M,L,\lambda)\to (M',L',\lambda')$ of
derived manifolds a {\bf fibration}, if 
\begin{items}
\item $f:M\to M'$ is a submersion,
\item $\phi_1:L\to f^\ast L'$ is a degree-wise surjective morphism of
  graded vector bundles over $M$.
\end{items}
\end{defn}

\begin{rmk}
If we consider $\phi_1$ as a bundle map of graded vector bundles
$$
\xymatrix{
L \ar[d] \ar[r]^{\phi_1} & L' \ar[d] \\
M \ar[r]_{f} & M',
}
$$
 the combination of  conditions (i)-(ii)
in Definition \ref{defnfib}
is  simply equivalent to that $\phi_1:L \to L'$ is a submersion
as a differentiable map.
\end{rmk}

For every derived manifold $\M$, the unique morphism $\M\to\ast$ is a (linear)
fibration.

If a fibrations is a linear morphism (Definition~\ref{defnlin}), we call it a {\em linear fibration}.

\begin{prop}\label{prop:IsoToLinearMor}
Every fibration is equal to 
the composition of a linear fibration with an isomorphism. 
\end{prop}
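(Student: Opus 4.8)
The plan is to exploit the uniqueness-of-factorization argument already used in the proof of Proposition~\ref{oneiso}. Given a fibration $(f,\phi):(M,L,\lambda)\to (M',L',\lambda')$, the base map $f$ is a submersion and $\phi_1:L\to f^\ast L'$ is degreewise surjective. Since manifolds admit partitions of unity, fiberwise surjective homomorphisms of vector bundles admit sections (as noted in the conventions), so we may choose a splitting and thereby regard $\phi_1$ as identifying $f^\ast L'$ with a quotient, but more importantly we get a morphism of derived manifolds we can compare against. Concretely, I would first pull back the target along $f$: the derived manifold $(M,f^\ast L',f^\ast\lambda')$ together with the canonical morphism to $(M',L',\lambda')$ (which is linear, a fibration, and has identity base map up to $f$). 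It then suffices to factor the induced morphism $(M,L,\lambda)\to (M,f^\ast L',f^\ast\lambda')$ over $M$ as (isomorphism) followed by (linear fibration over $\id_M$), because composing with the pullback morphism and noting that the composition of two linear fibrations is a linear fibration (and precomposing a linear fibration with an isomorphism is handled by reassociating) gives the desired statement. So without loss of generality $f=\id_M$.

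Now, over $\id_M$, I have a morphism $\phi=(\phi_k)_{k\ge 1}:(L,\lambda)\to(L',\lambda')$ of bundles of curved $L_\infty[1]$-algebras with $\phi_1$ degreewise surjective. The key step: I want to find an isomorphism of derived manifolds $\psi:(M,L,\lambda)\to(M,\tilde L,\tilde\lambda)$ covering $\id_M$ with $\psi_1=\id_L$ (in the sense that the underlying graded bundle is unchanged) such that $\phi\circ\psi^{-1}$ is \emph{linear}, i.e. all its higher components vanish. Equivalently, I seek a sequence of operations $\psi=(\psi_k)$ on $L$ with $\psi_1=\id_L$ such that $\phi\bullet\psi$ has vanishing components in arities $\ge 2$; then $\phi\bullet\psi=\phi_1$ as a linear morphism, and since $\phi_1$ is a surjective bundle map, $(M,L,\phi^\ast\text{-transported structure})\to(M,L',\lambda')$ via $\phi_1$ is a linear fibration. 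The point is that $\phi_1$ being surjective does not suffice to \emph{solve} $\phi\bullet\psi=\phi_1$ directly; instead I solve it on the nose using the recursive structure of $\bullet$: the arity-$k$ component of $\phi\bullet\psi$ has the form $\phi_1\circ\psi_k + (\text{terms involving }\psi_{<k},\phi_{>1})$, and since $\phi_1$ admits a bundle-map section $\sigma$, I can define $\psi_k := -\sigma\circ(\text{lower-order terms})$ recursively, obtaining $\phi\bullet\psi=\phi_1$; then $\psi_1=\id_L$ makes $\psi$ invertible by Proposition~\ref{oneiso}'s mechanism (the equation $\chi\bullet\psi=\id$ is recursively solvable), so $\psi^{-1}$ is an isomorphism of derived manifolds onto its image structure, and $\phi=\phi_1\bullet\psi^{-1}$ exhibits $\phi$ as (linear fibration)$\,\circ\,$(isomorphism). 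Transporting $\lambda$ through $\psi$ defines the intermediate derived manifold's curved $L_\infty[1]$-structure, and the $L_\infty[1]$-morphism identity $\phi\circ\lambda=\lambda'\bullet\phi$ guarantees $\phi_1$ intertwines the transported structure with $\lambda'$, so it is genuinely a linear fibration of derived manifolds.

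The main obstacle, and the part requiring care rather than cleverness, is bookkeeping the recursion: verifying that the arity-$k$ term of $\phi\bullet\psi$ really is $\phi_1\circ\psi_k$ plus something depending only on $\psi_j$ for $j<k$ (so the recursion is well-posed), and checking smoothness over $M$ of the $\psi_k$ so defined — but this is immediate since $\sigma$ can be chosen smooth, composition and the $\bullet$-operations are smooth, and everything is a finite sum (the amplitudes are finite, so $\psi_k=0$ for $k$ large). A secondary point is assembling the two reductions (pullback along $f$, then the over-$\id_M$ argument) into a single factorization through one intermediate derived manifold; here I would just take the intermediate object to be $(M,L,\tilde\lambda)$ with $\tilde\lambda$ the $\psi$-transport of $\lambda$, equipped with the composite linear fibration $\phi_1$ followed by the canonical $(M,f^\ast L',f^\ast\lambda')\to(M',L',\lambda')$, and observe this composite is linear (composition of linear morphisms) and a fibration (composition of submersions on bases, composition of degreewise surjections on bundles), hence a linear fibration, while $\psi^{-1}:(M,L,\lambda)\to(M,L,\tilde\lambda)$ is an isomorphism.
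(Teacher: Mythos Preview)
Your proposal is correct and follows essentially the same idea as the paper: use a section of the surjection $\phi_1$ to build an isomorphism over $\id_M$ whose composite with $\phi_1$ recovers $\phi$, then transport $\lambda$ through that isomorphism to define the intermediate structure. The only difference is one of economy: rather than recursively solving $(\phi\bullet\psi)_k=0$ for the \emph{inverse} of the isomorphism, the paper writes down the isomorphism itself by a closed formula---after splitting $L=f^\ast L'\oplus F$ with projection $\pi=\phi_1$ and inclusion $\iota$, it sets $\phi'_1=\id_L$ and $\phi'_k=\iota\,\phi_k$ for $k>1$, so that $\pi\bullet\phi'=\phi$ is immediate from $\pi\iota=\id$. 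Your recursion produces exactly $(\phi')^{-1}$, so the two arguments are equivalent, but the paper's shortcut avoids the recursion and the separate reduction to $f=\id_M$.
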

\begin{pf}
Let $(f,\phi):(M,L,\lambda)\to (N,E,\mu)$ be a fibration of derived manifolds. By splitting the surjection $\phi_1:L\to f^\ast E$ we may assume that $L=f^\ast E\oplus F$, and that $\phi_1:L\to f^\ast E$ is the projection, which we shall denote by $\pi:L\to f^\ast F$.  We also denote the inclusion by $\iota:f^\ast E\to L$. 

We define operations $\phi_k':\Sym^k L\to L$ of degree zero by $\phi_1'=\id_L$, and $\phi_k'=\iota\,\phi_k$, for $k>1$. 

There exists a unique sequence of operations $\lambda':\Sym^k L\to L$ of degree 1, such that $\phi'\circ\lambda=\lambda'\bullet\phi'$. (Simply solve this equation recursively for $\lambda_k'$.) Thus
$ (M,L,\lambda')$ is a derived manifold, and
 $(\id_M,\phi'):(M,L,\lambda)\to (M,L,\lambda')$ is an isomorphism of derived manifolds (see Proposition~\ref{oneiso}).

Then we have $\pi\bullet\phi'=\phi$ and $\mu\bullet\pi=\pi\circ \lambda'$. The latter equation can be checked after applying $\bullet \phi'$ on the right, as $\phi'$ is an isomorphism. Note that we   have $\phi\circ\lambda=\mu\bullet\phi$, by assumption, and hence $(\mu\bullet \pi)\bullet\phi'=\mu\bullet(\pi\bullet\phi')=\mu\bullet\phi=\phi\circ\lambda= (\pi\bullet\phi')\circ\lambda= \pi(\phi'\circ\lambda)=\pi(\lambda'\bullet\phi')=(\pi\circ\lambda')\bullet\phi'$.

We have proved that $(f,\pi):(M,L,\lambda')\to (N,E,\mu)$ is a linear morphism of derived manifolds, and that $(f,\pi)\circ(\id_M,\phi')=(f,\phi)$. 
\end{pf}

\begin{rmk}
Suppose $\pi:L\to E$ is a linear fibration of curved $L_\infty[1]$-algebras
 over a manifold $M$.  Then $K=\ker\pi$ is a curved $L_\infty[1]$-ideal in $L$. 
This means that, for all $n$, if for some $i=1,\ldots,n$ we have $x_i\in K$,
 then $\lambda_n(x_1,\ldots,x_n)\in K$. 
(It does not imply that $\lambda_0\in K$.)
 Here $\lambda$ is the curved $L_\infty[1]$-structure on $L$. 
\end{rmk}

\begin{prop}\label{pullex} 
\begin{items}
\item Pullbacks of  fibrations exist in the category of
derived manifolds, and  are still fibrations.
\item Pullback of derived manifolds induces a pullback of classical loci.
\end{items}
\end{prop}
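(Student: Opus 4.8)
The plan is to reduce to the case of a linear fibration, construct the pullback explicitly in the dg‑algebra model of Proposition~\ref{pro:Batchelor}, and then read off the statement about classical loci formally. To pull back a fibration $h=(f,\phi)\colon\M\to\N$ along an arbitrary morphism $k=(g,\psi)\colon\M'\to\N$, I would first use Proposition~\ref{prop:IsoToLinearMor} to write $h=g_0\circ j$, with $j\colon\M\iso\M_0$ an isomorphism and $g_0\colon\M_0\to\N$ a linear fibration; concretely, after choosing a splitting $L\cong f^\ast E\oplus F$ of $\phi_1$ (here $\M=(M,L,\lambda)$, $\N=(N,E,\mu)$), the object $\M_0$ has underlying bundle $f^\ast E\oplus F$ and $g_0$ is the projection $f^\ast E\oplus F\to f^\ast E$. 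Since base change along an isomorphism is routine, any pullback of $g_0$ along $k$ also serves as a pullback of $h$ along $k$ (precompose its projection to $\M_0$ with $j^{-1}$), and it has the \emph{same} projection to $\M'$; so it suffices to treat the linear fibration $g_0$.

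Next I would construct $\M_0\times_\N\M'$. Put $M''=M\times_N M'$ in the category of manifolds: this exists because $f$ is a submersion, the projection $q\colon M''\to M'$ is again a submersion, and $p\colon M''\to M$ and $r=fp=gq\colon M''\to N$ are smooth. In the algebra model, $\M_0$ has structure sheaf $\A_0=f^\ast\!\Sym_{\O_N}E^\vee\otimes_{\O_M}\Sym_{\O_M}F^\vee$, and because $g_0$ is the linear projection it corresponds to the inclusion of the first tensor factor. I would then set $\A''=p^\ast\A_0\otimes_{r^\ast\Sym_{\O_N}E^\vee}q^\ast\Sym_{\O_{M'}}(L')^\vee$, with differential $Q''$ induced by the homological vector fields of $\M_0$ and of $\M'$; since $g_0$ and $k$ are dg morphisms, $Q''$ is well defined on the tensor product, has degree $1$, and squares to zero. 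Cancelling the first tensor factor of $\A_0$ against $r^\ast\Sym_{\O_N}E^\vee$ identifies $\A''\cong\Sym_{\O_{M''}}\bigl(p^\ast F^\vee\oplus q^\ast (L')^\vee\bigr)$, so $(M'',\A'',Q'')$ is a dg manifold of positive amplitude and, via Proposition~\ref{pro:Batchelor}, defines a derived manifold $\M''$ with underlying graded vector bundle $p^\ast F\oplus q^\ast L'$. The canonical algebra maps out of $p^\ast\A_0$ and out of $q^\ast\Sym_{\O_{M'}}(L')^\vee$ are dg and yield morphisms $\M''\to\M_0$ and $\M''\to\M'$, and the universal property of the tensor product of commutative algebras (compatible pairs of dg morphisms agreeing on $r^\ast\Sym_{\O_N}E^\vee$) shows that $\M''$ is the categorical pullback $\M_0\times_\N\M'$. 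Finally the projection $\M''\to\M'$ has underlying map $q$, a submersion, and linear part the projection $p^\ast F\oplus q^\ast L'\to q^\ast L'$, which is degreewise surjective; hence it is a fibration, which proves (i).

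For (ii), by Remark~\ref{finalremark} the classical locus is the set-valued representable functor $\pi_0(-)=\Hom(\ast,-)$ on derived manifolds. Representable functors preserve limits, so applying $\pi_0$ to the pullback square constructed in (i) gives $\pi_0(\M_0\times_\N\M')=\pi_0(\M)\times_{\pi_0(\N)}\pi_0(\M')$, which is the claim. I expect the only non-formal point in this whole argument to be the identification $\A''\cong\Sym_{\O_{M''}}\bigl(p^\ast F^\vee\oplus q^\ast (L')^\vee\bigr)$, i.e. that the tensor product of structure sheaves remains free on generators in bounded negative degree. This is exactly where the reduction to a linear fibration is needed — so that $\Sym_{\O_N}E^\vee\to\A_0$ is a co-split inclusion — together with the global splitting $L\cong f^\ast E\oplus F$, available because manifolds admit partitions of unity. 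Everything else (base change of submersions and of degreewise-surjective bundle maps, square-zero of the tensor-product differential, the universal property, and part~(ii)) is routine.
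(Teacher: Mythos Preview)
Your proposal is correct and follows essentially the same approach as the paper: reduce to a linear fibration, form the tensor product of structure sheaves over the fibered-product base, identify the result as a $\Sym$ of a graded vector bundle (the paper writes this bundle as the fibered product $E\times_L L'$, which after your splitting is exactly your $p^\ast F\oplus q^\ast L'$), and invoke the universal property of the tensor product; part~(ii) is handled identically via Remark~\ref{finalremark}. Your write-up is slightly more explicit about the reduction step and the role of the splitting, but there is no substantive difference.
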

\begin{pf}
Let $(p,\pi):(M',L',\lambda')\to (M,L,\lambda)$ be a fibration,
 and $(f,\phi):(N,E,\mu)\to (M,L,\lambda)$ an arbitrary morphism of derived manifolds. Without loss of generality, we assume that $\pi$ is linear. We denote the corresponding sheaves of algebras by 
$\A=\Sym_{\O_\M} L^\vee$, $\A'=\Sym_{\O_\M'} {L'}^\vee$, and $\B=\Sym_{\O_\N} E^\vee$. 

We define the manifold $N'$ to be the fibered product $N'=N\times_M M'$, and endow it with the sheaf of differential graded algebras 
$$\B'=\B|_{N'}\otimes_{\A|_{N'}}\A'|_{N'}\,.$$
To prove $(N',\B')$ is a derived manifold, let $E'=E\times_L L'$ be the fibered product of the smooth maps $\phi_1:E \to L$ and $\pi_1:L' \to L$, and
consider the  projection map $E'\to N'$. It is
straightforward to see that the latter is naturally  a graded vector bundle and  $\B'=\Sym_{\O_{N'}}{E'}^\vee$.  And
it is clear  that $(N', E')\to (N, E)$
is a fibration.  
It follows from the fact that tensor product is a coproduct in the category of sheaves of differential graded algebras, that $(N',\B')$ is the fibered product of $(N,\B)$ and $(M',\A')$ over $(M,\A)$ in the category of differential graded manifolds. 

For the second claim, about classical loci, one can prove it by Remark~\ref{finalremark} and the universal property of pullbacks.
\end{pf}

\begin{rmk}\label{rmk:DerDim&FibProd}
Let $\M'\to \M$ be a fibration, and $\N\to \M$ an arbitrary morphism of derived manifolds. If the underlying graded vector bundles of $\M$, $\M'$ and $\N$ are $(M,L)$, $(M',L')$ and $(N,E)$, respectively, then the underlying graded vector bundle of $\N \times_{\M} \M'$ is $(N \times_M M', E\times_L L')$. Thus, the 
virtual dimensions satisfy the   relation
$$
\ddim(\N \times_{\M} \M') = \ddim(\N) + \ddim(\M') - \ddim(\M)\,.
$$
\end{rmk}

\begin{prop}
Pullbacks of \'etale fibrations are \'etale fibrations.
\end{prop}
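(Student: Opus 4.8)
The plan is to use Proposition~\ref{pullex} for the fibration part and to reduce the \'etale part to a computation of tangent complexes. Keep the notation of the proof of Proposition~\ref{pullex}: let $(p,\pi):\M'=(M',L',\lambda')\to\M=(M,L,\lambda)$ be an \'etale fibration, $(f,\phi):\N=(N,E,\mu)\to\M$ an arbitrary morphism, and $\N'=\N\times_\M\M'$ the pullback, with underlying manifold $N'=N\times_M M'$ and underlying graded vector bundle $E'=E\times_L L'$. By Proposition~\ref{pullex} the projection $q:\N'\to\N$ is already a fibration, so all that remains is to prove that $q$ is \'etale, i.e.\ that $Tq|_{P'}$ is a quasi-isomorphism for every classical point $P'$ of $\N'$.

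First I would work out the classical points and the tangent complex of $\N'$. Since $\B'=\B|_{N'}\otimes_{\A|_{N'}}\A'|_{N'}$, the homological vector field of $\N'$ is the fibered sum over $\M$ of those of $\N$ and $\M'$ (cf.\ Remark~\ref{rmk:dg}); equivalently, the curvature $\mu'_0$ and the twisted differential $\mu'_1$ of $\N'$ are the maps induced on the fibered product $E'=E\times_L L'$ by $(\mu_0,\mu_1)$, $(\lambda'_0,\lambda'_1)$ and $(\lambda_0,\lambda_1)$. From this a point $P'\in N'$ is classical exactly when it is a pair $(P,Q')$ with $P\in\pi_0(\N)$, $Q'\in\pi_0(\M')$ and $Q:=f(P)=p(Q')\in\pi_0(\M)$. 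For such a point, $p$ being a submersion makes $N'=N\times_M M'$ transverse, so $T_{P'}N'=T_PN\times_{T_QM}T_{Q'}M'$, while in positive degrees $E'|_{P'}=E|_P\times_{L|_Q}L'|_{Q'}$ by construction; checking that the differentials (the derivative of the curvature in degree $0$, the twisted differential in higher degrees) respect these identifications gives
$$T\N'|_{P'}\;=\;T\N|_P\times_{T\M|_Q}T\M'|_{Q'}\,,$$
with structure maps $Tf|_P$ and $Tp|_{Q'}$, and with $Tq|_{P'}$ the projection to the first factor. I expect this identification to be the main obstacle: it is morally obvious but requires care in separating base-direction and fiber-direction contributions to $D_{P'}\mu'_0$.

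Granting this, the rest is a diagram chase. The chain map $g:=Tp|_{Q'}:T\M'|_{Q'}\to T\M|_Q$ is degreewise surjective --- in degree $0$ it is $T_{Q'}p$, surjective because $p$ is a submersion, and in degree $i\geq1$ it is the fiber $(\pi_1)|_{Q'}:{L'}^i|_{Q'}\to L^i|_Q$, surjective because $\pi_1:L'\to p^\ast L$ is degreewise surjective --- and, since $(p,\pi)$ is \'etale, it is a quasi-isomorphism, so its kernel $K^\bullet$ is acyclic. Now $Tq|_{P'}:T\N'|_{P'}\to T\N|_P$ is the base change of $g$ along $Tf|_P$; a base change of a degreewise surjection is a degreewise surjection with the same kernel, so $Tq|_{P'}$ is surjective with kernel $K^\bullet$. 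The long exact cohomology sequence of $0\to K^\bullet\to T\N'|_{P'}\to T\N|_P\to0$ together with the acyclicity of $K^\bullet$ then shows that $Tq|_{P'}$ is a quasi-isomorphism. As $P'$ was arbitrary, $q$ is \'etale, hence an \'etale fibration.
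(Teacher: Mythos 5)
Your proposal is correct and follows essentially the same route as the paper: the paper's proof consists precisely of the identification $T\N'|_{S'}=T\N|_S\times_{T\M|_P}T\M'|_{P'}$ followed by ``the result follows,'' and you have simply filled in both halves --- the verification of that identification and the subsequent diagram chase (acyclic kernel of the degreewise surjection, preserved under base change, long exact sequence). No gaps; your elaboration is a faithful expansion of the paper's argument.
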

\begin{pf}
Let $\M'\to \M$ be an \'etale fibration of derived manifolds,
 and $\N\to \M$ an arbitrary morphism of derived manifolds.
 Let $\N'=\N\times_{\M}\M'$ be the fibered product, and $S'$  a classical point of $\N'$. 
 Denote the images of $S'$ under the maps $N' \to N$, $N' \to M$ and $N' \to M'$ by $S$, $P$, and $P'$, respectively. One checks that
$$T\N'|_{S'}=T\N|_S\times_{T\M|_P}T\M'|_{P'}\,.$$
The result follows.
\end{pf}

A {\bf trivial fibration }is a weak equivalence which is also a fibration.

\begin{cor}
Pullbacks of trivial fibrations are trivial fibrations.
\end{cor}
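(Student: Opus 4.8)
The plan is to combine the two results just established---that pullbacks of fibrations are fibrations (Proposition~\ref{pullex}(i)), and that pullbacks of \'etale fibrations are \'etale fibrations (the preceding proposition)---together with the observation that a trivial fibration is precisely a fibration that is simultaneously \'etale and a bijection on classical loci. Thus the only genuinely new thing to verify is that the bijection-on-classical-loci property is stable under base change.

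Concretely, let $\M'\to\M$ be a trivial fibration and $\N\to\M$ an arbitrary morphism, and form $\N'=\N\times_\M\M'$. By Proposition~\ref{pullex}(i) the projection $\N'\to\N$ is a fibration, and since $\M'\to\M$ is \'etale, the preceding proposition shows $\N'\to\N$ is \'etale. It remains to check $\pi_0(\N')\to\pi_0(\N)$ is a bijection. Here I would invoke Proposition~\ref{pullex}(ii): pullback of derived manifolds induces a pullback of classical loci, so that $\pi_0(\N')=\pi_0(\N)\times_{\pi_0(\M)}\pi_0(\M')$ as sets. Since $\pi_0(\M')\to\pi_0(\M)$ is a bijection by hypothesis, its base change along $\pi_0(\N)\to\pi_0(\M)$ is a bijection of sets as well, whence $\pi_0(\N')\to\pi_0(\N)$ is a bijection. (Alternatively, one can phrase this via Remark~\ref{finalremark}: morphisms $\ast\to\N'$ are pairs of morphisms $\ast\to\N$, $\ast\to\M'$ agreeing over $\M$, and a bijection $\pi_0(\M')\cong\pi_0(\M)$ makes the projection to morphisms $\ast\to\N$ a bijection.)

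Putting these three facts together, $\N'\to\N$ is a fibration which is \'etale and a bijection on classical loci, i.e.\ a weak equivalence that is a fibration, i.e.\ a trivial fibration. I do not expect any real obstacle here: the statement is essentially a formal bookkeeping corollary of the two propositions immediately preceding it, and the only mildly substantive point---stability of the classical-locus bijection---is already packaged in Proposition~\ref{pullex}(ii). The proof is therefore just a two-line assembly, and I would write it as such.

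\begin{pf}
Let $\M'\to\M$ be a trivial fibration and $\N\to\M$ an arbitrary morphism of derived manifolds; put $\N'=\N\times_\M\M'$. By Proposition~\ref{pullex} the projection $\N'\to\N$ is a fibration, and by the previous proposition it is \'etale, since $\M'\to\M$ is. Finally, by Proposition~\ref{pullex}(ii), $\pi_0(\N')=\pi_0(\N)\times_{\pi_0(\M)}\pi_0(\M')$, and since $\pi_0(\M')\to\pi_0(\M)$ is a bijection, so is its base change $\pi_0(\N')\to\pi_0(\N)$. Hence $\N'\to\N$ is an \'etale fibration inducing a bijection on classical loci, i.e.\ a trivial fibration.
\end{pf}
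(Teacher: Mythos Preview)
Your proof is correct and is exactly the argument the paper intends: the corollary is stated without proof, as it follows immediately from combining Proposition~\ref{pullex} (pullbacks of fibrations exist, are fibrations, and induce pullbacks on classical loci) with the preceding proposition (pullbacks of \'etale fibrations are \'etale). Your write-up spells out precisely the one step the reader is meant to supply, namely that a bijection on classical loci is preserved under base change via Proposition~\ref{pullex}(ii).
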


Therefore, the only thing left to prove that derived manifolds form a category of fibrant objects is the factorization property.

\subsubsection{Openness}

Let $(f,\phi):(M,E, \lambda)\to (N,F, \mu)$ be a morphism of derived manifolds, and $P\in M$ a  point. We say that $(f,\phi)$ is a fibration at $P$, if 
\begin{items}
\item $Tf|_P:T_M|_P\to T_N|_{f(P)}$ is surjective,
\item $\phi_1|_P:E|_P\to F|_{f(P)}$ is degree-wise surjective. 
\end{items}
Note that being a fibration is an open property: if $(f,\phi)$ is a fibration at $P\in M$, then there exists an open neighborhood $P\in U\subset M$, such that the restriction of $(f,\phi)$ to $U$ is a fibration in the sense of Definition~\ref{defnfib}.

\begin{prop}\label{prop:OpennessEtale}
Let $(f,\phi):(M,E,\lambda)\to (N,F,\mu)$ be a fibration of derived manifolds. Let $P$ be a classical point of $(M,E,\lambda)$. Suppose that $(f,\phi)$ is  \'etale at $P$. Then there exists an open neighborhood $U$ of $P$ in $M$, such that $(f,\phi)$ is   \'etale at every classical point of $(U,E|_U,\lambda|_U)$. 
\end{prop}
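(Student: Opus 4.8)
The plan is to reduce the statement to a pointwise condition that varies semicontinuously, and then to observe that the relevant semicontinuous functions are in fact already controlled by the hypothesis at $P$. First I would set up notation: let $F = \lambda_0$ be the curvature of $(M,E,\lambda)$, and recall that the classical points of $(M,E,\lambda)$ near $P$ are the points of $Z(F)$ near $P$. For a classical point $P'$ near $P$, being étale at $P'$ means that the induced map of tangent complexes
$$T(M,E,\lambda)|_{P'}\longrightarrow T(N,F,\mu)|_{f(P')}$$
is a quasi-isomorphism. Since $(f,\phi)$ is already a fibration, this map of complexes is degree-wise surjective in positive degrees (by the surjectivity of $\phi_1$) and surjective in degree $0$ on the image of the differential (here one uses that $Tf$ is a submersion). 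So the mapping cone $C(P')$ of $Tf|_{P'}$ is an acyclic-in-the-relevant-sense complex precisely when $(f,\phi)$ is étale at $P'$, and in general its cohomology measures the failure of étaleness.

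The key point is that the entries of the cone complex $C(P')$, and all the differentials in it, depend on $P'$ in a way that is smooth on a neighborhood in $M$ (the curvature derivative $D_{P'}F$, the twisted differentials $\lambda_1|_{P'}$ and $\mu_1|_{f(P')}$, and the linear part $\phi_1|_{P'}$ are all smooth in $P'$), and crucially the \emph{ranks} of these differentials are lower-semicontinuous functions of $P'$. The Euler characteristic of $C(P')$ is locally constant (it is the difference of virtual dimensions, by Remark~\ref{rmk:DerDim&FibProd}-type bookkeeping, and in any case is determined by the bundle ranks and $\dim M$, $\dim N$, which are locally constant). Now, $C(P)$ is acyclic by hypothesis; acyclicity of a bounded complex of finite-dimensional vector spaces is equivalent to the ranks of its differentials attaining their maximal possible values subject to the fixed Euler characteristic — equivalently, $\sum_i \rk(\partial_i)$ is maximal. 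Since each $\rk(\partial_i(P'))\geq \rk(\partial_i(P))$ on a neighborhood, and since the alternating constraint forces the total to not exceed the value at $P$, we conclude $\rk(\partial_i(P')) = \rk(\partial_i(P))$ for all $i$ on that neighborhood, hence $C(P')$ is acyclic there. Intersecting the finitely many open sets on which the individual rank functions are locally constant (bounded below by their value at $P$, which already witnesses maximality), we obtain the desired open neighborhood $U$ of $P$ in $M$ such that $(f,\phi)$ is étale at every classical point of $(U, E|_U, \lambda|_U)$.

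I expect the main obstacle to be the careful handling of degree $0$: the tangent complex has $TM|_{P'}$ in degree $0$ and there is no ``degree $-1$'' term, so surjectivity of $Tf|_{P'}$ onto $TN|_{f(P')}$ is not automatic from the fibration condition at a single point $P$ — one must use that $f$ is a submersion (part (i) of the fibration hypothesis), which holds on all of $M$, so this causes no trouble, but it does mean the cone complex must be set up so that the rank of the degree-$0$ differential $TM|_{P'}\to TN|_{f(P')}\oplus E^1|_{P'}$ is what controls $H^0$. The other mild subtlety is making sure that the finitely many rank functions can be simultaneously controlled: this is immediate since a finite intersection of open sets is open, and lower-semicontinuity of rank gives, for each $i$, an open set on which $\rk(\partial_i)\geq\rk(\partial_i(P))$; the Euler-characteristic constraint then upgrades ``$\geq$'' to ``$=$'' on the intersection. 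Everything else — smoothness of the operations in $P'$, the equivalence between acyclicity and maximal total rank — is routine linear algebra over the smoothly varying family.
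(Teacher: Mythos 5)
Your proposal is correct and follows essentially the same route as the paper: both reduce \'etaleness at nearby classical points to the openness of the acyclicity locus of a bounded complex of (topological) vector bundles over the classical locus, the only cosmetic difference being that you use the mapping cone of the map of tangent complexes while the paper uses the kernel of the degree-wise surjection, and you spell out the rank-semicontinuity plus Euler-characteristic argument that the paper simply cites as a standard fact.
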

\begin{pf}
For any derived manifold $\M= (M,E,\lambda)$, the tangent complexes at the points of $Z=\pi_0(\M)$ fit together into a topological vector bundle over the topological space $Z$. The morphism $(f,\phi)$ defines a degree-wise surjective morphism of complexes over $Z$. Hence the kernel is a complex of vector bundles on $Z$, which is acyclic at the point $P\in Z$.  For a bounded complex of topological vector bundles, the acyclicity locus is open.
\end{pf}

\subsection{Global section functor}

If $(M,\A,Q)$ is a differential graded manifold, $\big(\Gamma(M,\A),Q\big)$  is a differential graded commutative
algebra, called the  {\em algebra of global
  sections }of $(M,\A,Q)$. Every morphism of differential graded manifolds induces a
morphism of  differential graded algebras of global sections in the opposite direction.

\begin{defn}
A morphism of differential graded manifolds is a {\bf 
  quasi-isomorphism}, if it induces a quasi-isomorphism on algebras of
global sections.
\end{defn}

\begin{rmk}
We \emph{conjecture} that a morphism of derived manifolds is a weak equivalence, if and only if the induced morphism of differential graded manifolds is a quasi-isomorphism.  We prove the `only if' part in Theorem~\ref{thm:WeakEqImplyQuasiIso}. 
As evidences for the `if' part, we include 
Proposition~\ref{prop:QuasiIsoBijMC} and Proposition~\ref{prop:QuasiIsoWkEqFib} below. 
Note that our conjecture is indeed
true in the case when  the base manifold $M$ is a point $\{*\}$ due  to Hinich \cite[Proposition 3.3.2]{MR1843805}.
\end{rmk}

\begin{prop}\label{prop:QuasiIsoBijMC}
If a morphism of derived manifolds induces a quasi-isomorphism on global section algebras, it induces a bijection on classical loci.
\end{prop}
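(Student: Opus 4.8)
The plan is to work with the algebra model via the equivalence of categories in Proposition~\ref{pro:Batchelor}, and to relate the cohomology of the global section algebra to the curvature $\lambda_0$. Let $(f,\phi):\M=(M,L,\lambda)\to\M'=(M',L',\lambda')$ be a morphism of derived manifolds inducing a quasi-isomorphism $\Phi^\ast:\big(\Gamma(M',\A'),Q'\big)\to\big(\Gamma(M,\A),Q\big)$ on global section algebras, where $\A=\Sym_{\O_M}L^\vee$ and $\A'=\Sym_{\O_{M'}}{L'}^\vee$. The key observation is that the degree-zero cohomology $H^0\big(\Gamma(M,\A),Q\big)$ is canonically the ring of $C^\infty$-functions on $M$ that are annihilated by $Q$, i.e. functions $g\in\O_M(M)$ whose differential along the fibers, paired with $\lambda_0$, vanishes; more precisely, since $\A^{-1}=\Gamma(M,L^{\vee,1})$ and $Q$ on $\A^{-1}$ is (dual to) $\lambda_1$ while $Q$ on $\A^0=\O_M(M)$ lands in $\A^1$ via the transpose of $\lambda_0$ composed with $\lambda_1^\vee$, one sees that $H^0$ is the algebra $\O_M(M)/I$, where $I$ is the ideal generated by the components of $\lambda_0$ (i.e. by $\langle\lambda_0,\xi\rangle$ for local sections $\xi$ of $L^{\vee,1}$). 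That ideal cuts out exactly the classical locus $Z(\lambda_0)=\pi_0(\M)$ as a set.

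So the first step is to establish that $H^0$ of the global section dg algebra, as a commutative $\rr$-algebra, detects the classical locus as a set: two classical loci are in bijection if the corresponding $H^0$'s are isomorphic as $\rr$-algebras in a way compatible with the map induced by $\Phi$. Concretely, I would argue that $\pi_0(\M)$ is recovered from $H^0\big(\Gamma(M,\A),Q\big)$ as its set of $\rr$-algebra characters (maximal ideals with residue field $\rr$) — because $\pi_0(\M)=\{P\in M:\lambda_0(P)=0\}$, and evaluation at such a $P$ is precisely an $\rr$-point of $\O_M(M)/I$, using that $M$ is a (Hausdorff, second countable) manifold so that $\rr$-points of $\O_M(M)$ are exactly the points of $M$. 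The second step is then purely formal: a quasi-isomorphism of dg algebras induces an isomorphism on $H^0$; hence it induces a bijection on the sets of $\rr$-algebra characters of $H^0$; hence it induces a bijection on classical loci, and one checks this bijection is the set map $\pi_0(\M)\to\pi_0(\M')$ already induced by $(f,\phi)$ (by naturality: the character at $P$ pulls back to the character at $f(P)$).

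The main obstacle is the first step — pinning down $H^0$ of the global section complex and, in particular, showing that the natural map $\O_M(M)/I\to H^0\big(\Gamma(M,\A),Q\big)$ is an isomorphism, and that the $\rr$-points of $\O_M(M)/I$ are exactly the points of $Z(\lambda_0)$. The surjectivity onto $H^0$ is clear since $\A$ is generated in degree $0$ and negative degrees do not contribute to degree-$0$ cocycles beyond $\O_M(M)$; the content is that a degree-$0$ function $g$ with $Qg=0$ is exactly one killed by the first-order part, together with the fact that the ideal generated by $Q(\A^{-1})\cap\A^0$ equals $I$. (Here one must be slightly careful: $Q$ raises degree, so $Q:\A^0\to\A^1$, and the relations imposed in $H^0$ come from the image of $Q:\A^{-1}\to\A^0$; dualizing $\lambda_0:M\to L^1$ and $\lambda_1:L\to L$ shows this image is generated by the functions $\langle\lambda_0,-\rangle$.) For the $\rr$-points: a smooth function algebra $\O_M(M)$ has $\rr$-spectrum equal to $M$ (this uses partitions of unity / properness, standard for manifolds), and quotienting by $I$ restricts the spectrum to the vanishing set of $I$, which is $Z(\lambda_0)$. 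Once this identification is in hand, everything else is a one-line consequence of $H^0$ being a quasi-isomorphism invariant. A harmless technical point to address: the quasi-isomorphism goes from $\Gamma(M',\A')$ to $\Gamma(M,\A)$ (contravariant), so the induced bijection on $\rr$-spectra goes from $\pi_0(\M)$ to $\pi_0(\M')$, in the same direction as the map induced by $(f,\phi)$, and the two coincide by construction.
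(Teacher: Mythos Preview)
Your approach is correct and is essentially the same as the paper's: both identify the classical locus of $\M$ with the set of $\rr$-algebra homomorphisms $H^0\big(\Gamma(M,\A),Q\big)\to\rr$, using that $\rr$-points of $C^\infty(M)$ are the points of $M$, and then conclude by functoriality of $H^0$.

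One small slip to clean up: since $\A=\Sym_{\O_M}L^\vee$ with $L^\vee$ concentrated in negative degrees, the algebra $\A$ lives entirely in non-positive degrees, so $\A^1=0$ and $Q|_{\A^0}$ is identically zero---there is nothing to check for cocycles in degree~$0$. The relations cutting out $H^0$ come solely from $\im\big(Q:\A^{-1}\to\A^0\big)$, which (as you later say correctly) is the $\O_M(M)$-submodule generated by the pairings $\langle\lambda_0,\xi\rangle$ for $\xi\in\Gamma(M,(L^1)^\vee)$. The paper's proof does not bother to name this ideal explicitly; it simply observes that an $\rr$-algebra map $A^0=C^\infty(M)\to\rr$ (which is evaluation at some $P\in M$) factors through $H^0(A)$ if and only if $\lambda_0(P)=0$.
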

\begin{pf}
Let $\M$ and $\N$ be derived manifolds with base manifolds $M$ and $N$, respectively.  We denote by $A$ and $B$ the corresponding cdgas of global sections. Let $(f,\phi): \M \to \N$ be a morphism of derived manifolds. Note that the set of points of $M$ is identified with the set of
$\rr$-algebra morphisms $A^0\to \rr$, where $A^0 = C^\infty(M)$. See \cite[Problem 1-C]{MR0440554}.  It follows that the set of
classical points of $\M$ is equal to the set of algebra morphisms
$H^0(A)\to\rr$. Similarly, the set of classical points of $\N$ is the
set of algebra morphisms $H^0(B)\to \rr$.  Since the quasi-isomorphism
$B\to A$ induces an isomorphism $H^0(B)\to H^0(A)$, it follows that $f$
induces a bijection $\pi_0(\M)\to\pi_0(\N)$ on classical loci.
\end{pf}

\begin{prop}\label{prop:QuasiIsoWkEqFib}
If a fibration of quasi-smooth derived manifolds induces a quasi-isomorphism of dg manifolds, it is a weak equivalence.
\end{prop}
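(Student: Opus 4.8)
The statement to prove is: if $(f,\phi):\M\to\N$ is a fibration of quasi-smooth derived manifolds (so amplitudes are $1$; $\M=(M,L^1,\lambda_0)$, $\N=(N,E^1,\mu_0)$ with $\lambda_0\in\Gamma(M,L^1)$, $\mu_0\in\Gamma(N,E^1)$) that induces a quasi-isomorphism on global-section cdgas, then $f$ is a weak equivalence, i.e. $f$ is a bijection on classical loci and \'etale. The plan is to produce the two conditions in Definition of weak equivalence separately. The bijection on classical loci is already handed to us: by Proposition~\ref{prop:QuasiIsoBijMC} a morphism inducing a quasi-isomorphism on global sections induces a bijection $\pi_0(\M)\to\pi_0(\N)$. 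So the whole content is \'etaleness: at each classical point $P\in Z(\lambda_0)$ with image $P'=f(P)\in Z(\mu_0)$, the induced map of tangent complexes
$$\xymatrix{TM|_P\rto^{D_P\lambda_0}&L^1|_P}\quad\longrightarrow\quad\xymatrix{TN|_{P'}\rto^{D_{P'}\mu_0}&E^1|_{P'}}$$
must be a quasi-isomorphism of two-term complexes (concentrated in degrees $0$ and $1$).

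\textbf{Reducing to linear fibrations and setting up local coordinates.} By Proposition~\ref{prop:IsoToLinearMor} every fibration factors as a linear fibration followed by an isomorphism; since isomorphisms are \'etale and inducing-quasi-isomorphism is stable under composing with isomorphisms, I may assume $(f,\phi)$ is linear, so $\phi=\phi_1:L^1\to f^\ast E^1$ is a degreewise-surjective bundle map over the submersion $f:M\to N$, and $\phi_1(\lambda_0)=f^\ast\mu_0$. First I would work locally near a classical point $P$: pick coordinates so that $f$ looks like a projection $M=N\times W\to N$ and split $L^1\cong f^\ast E^1\oplus K$ with $\phi_1$ the projection; write $\lambda_0=(f^\ast\mu_0,\kappa)$ where $\kappa\in\Gamma(M,K)$. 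The dg algebra of global sections of $\M$ is the Koszul-type cdga $\Gamma(M,\Sym L^{1\vee})$ with differential contracting against $\lambda_0$; similarly for $\N$. The quasi-isomorphism hypothesis on global sections, localized, says the map of Koszul complexes $(\Sym_{\O_N}E^{1\vee},\ \iota_{\mu_0})\to(\Sym_{\O_M}L^{1\vee},\ \iota_{\lambda_0})$ is a quasi-isomorphism.

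\textbf{From quasi-isomorphism of Koszul complexes to acyclicity of the relative complex.} The key algebraic step: because $(f,\phi)$ is a linear fibration, the cdga $A=\Gamma(M,\Sym L^{1\vee})$ is a Koszul-type extension of $B=\Gamma(N,\Sym E^{1\vee})$ — concretely $A\cong B\otimes_{C^\infty(N)}C^\infty(M)\otimes\Sym K^\vee$ with the differential built from $f^\ast\mu_0$ (already in $B$) plus the extra piece $\kappa$. I would show that $B\to A$ being a quasi-isomorphism forces the ``relative'' Koszul complex $(C^\infty(M)\otimes\Sym K^\vee,\iota_\kappa)$ to be a resolution of $C^\infty$ of the zero locus $Z(\kappa)\subset M$ — more precisely, pulling everything back along $f$, the hypothesis is equivalent to the fibrewise statement that the section $\kappa$ cuts out exactly what it should. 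Then I translate this cohomological statement at the classical point $P$ into linear-algebra: $H^0$ of the relative complex computes (germs of) functions on $Z(\kappa)$, $H^{>0}$ vanishing is a regularity/acyclicity condition, and taking the derivative at $P$ converts this into: $K|_P\xrightarrow{(D_P\kappa)^T?}$ is such that $D_P\kappa:T_PW\to K|_P$ (the vertical derivative) is surjective with the expected kernel, equivalently the mapping cone of $Tf|_P$ on tangent complexes is acyclic. I expect this translation — carefully going from ``quasi-isomorphism of the full function-algebra Koszul complexes'' down to ``quasi-isomorphism of the finite-dimensional two-term tangent complexes at each $P$'' — to be the main obstacle, since it requires knowing that the derived structure of $\M$ near $P$ is detected by the cdga, i.e. a local-to-infinitesimal comparison; the cleanest route is probably to use that both complexes are complexes of (topological) vector bundles over the classical locus (as in the proof of Proposition~\ref{prop:OpennessEtale}) and that a degreewise surjection of such complexes over a point-set $Z$ which is a quasi-isomorphism on global sections/stalks is fibrewise a quasi-isomorphism — a Nakayama/support argument using that $C^\infty$-functions separate points and the acyclicity locus of a bounded complex of bundles is open. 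Once \'etaleness at every classical point is established, combining with the bijection on $\pi_0$ from Proposition~\ref{prop:QuasiIsoBijMC} finishes the proof. \qed
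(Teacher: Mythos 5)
Your high-level plan is the right one, and parts of it match the paper: the bijection on classical loci is indeed delegated to Proposition~\ref{prop:QuasiIsoBijMC}, and the whole content is \'etaleness at classical points, i.e.\ that the degreewise surjective map of two-term tangent complexes $\big(TM|_P\to L^1|_P\big)\to\big(TN|_{f(P)}\to E^1|_{f(P)}\big)$ induces a bijection on kernels. (Your reduction to linear fibrations is harmless but vacuous here: in amplitude $1$ every morphism has $\phi_k=0$ for $k>1$ automatically.) The problem is that the step you yourself flag as ``the main obstacle'' --- passing from a quasi-isomorphism of the infinite-dimensional global-section cdgas to a fibrewise statement about finite-dimensional tangent complexes --- is precisely the entire content of the proposition, and the tools you propose for it do not work. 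A ``Nakayama/support argument'' is unavailable for $C^\infty(M)$-modules ($C^\infty$-rings are not Noetherian, ideals of sections are not finitely generated in any useful sense, and flat functions defeat naive localization); the openness of the acyclicity locus from Proposition~\ref{prop:OpennessEtale} presupposes that you already know acyclicity at some point, so it cannot bootstrap the fibrewise statement; and your reduction to ``the relative Koszul complex $(C^\infty(M)\otimes\Sym K^\vee,\contract\kappa)$ is a resolution'' requires a K\"unneth/flat-base-change argument over $C^\infty$-rings that is itself not obviously valid and is nowhere justified.

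The missing idea, which is how the paper actually closes this gap, is a dualization trick that avoids all localization. One uses only the \emph{right end} of the quasi-isomorphism hypothesis: the three-term sequence $\Gamma(M,\Lambda^2L^\vee)\to\Gamma(M,L^\vee)\to\Gamma(M,\O)$, mapped into by $\Gamma(N,E^\vee)\to\Gamma(N,\O)$, is exact on the right. Applying the exact functor $\Hom_\rr(\argument,\rr)$ turns this into a sequence of $\rr$-linear duals that is exact on the left. Inside these duals one identifies $TM|_P$ as the derivations $\Gamma(M,\O)\to\rr$ at $P$ and $L^1|_P$ as the $\Gamma(M,\O)$-linear functionals on $\Gamma(M,L^\vee)$; an element $x\in L^1|_P$ killed by $\phi_1$ also dies in $\Gamma(M,\Lambda^2L^\vee)^\ast$ because $\lambda_0(P)=0$, so left-exactness produces a unique $\rr$-linear $v:\Gamma(M,\O)\to\rr$ with $v\circ f^\sharp=0$ and $v\circ(\contract\lambda_0)=x$. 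The remaining work --- and the place where $H^0(B)\cong H^0(A)$ is used, since it shows $\Gamma(M,\O)$ is spanned over $\rr$ by pullbacks from $N$ together with the image of $\contract\lambda_0$ --- is an explicit two-case Leibniz computation showing that $v$ is a derivation, hence a genuine tangent vector. This gives the bijection on kernels directly, with no local coordinates, no splitting of $L^1$, and no commutative-algebra finiteness input. Without some replacement for this dualization step, your argument does not go through.
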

\begin{pf}
We are in the quasi-smooth case. So $L=L^1$, and $E=E^1$.  By assumption, the following three-term sequence is exact on the right:
$$\xymatrix{
\Gamma(M,\Lambda^2 L^\vee)\rto^{\contract \lambda} & \Gamma(M,L^\vee)\rto^{\contract\lambda} & \Gamma(M,\O)\\
&\Gamma(N,E^\vee)\rto^{\contract \mu}\uto & \Gamma(N,\O)\uto}$$
As a formal consequence, the following diagram is exact on the left:
\begin{equation}\label{left}
\vcenter{\xymatrix{
\Gamma(M,\O)^\ast\rto\dto & \Gamma(M,L^\vee)^\ast\rto\dto & \Gamma(M,\Lambda^2 L^\vee)^\ast\\
\Gamma(N,\O)^\ast\rto & \Gamma(N,E^\vee)^\ast}}
\end{equation}
Here the asterisques denote $\rr$-linear maps to $\rr$. 
Our claim is that for $P\in Z(\lambda)\subset M$, the square
\begin{equation}\label{squr}
\vcenter{\xymatrix{TM|_P\rto\dto &L|_P\dto\\
TN|_{f(P)}\rto & E|_{f(P)}}}\end{equation}
is exact. By our assumption that $(f,\phi)$ is a fibration, the vertical maps are surjective, so we need to prove that (\ref{squr}) induces a bijection on kernels. The point $P$ defines a $\Gamma(M,\O)$-module structure on $\rr$, and the point $f(P)$ defines a $\Gamma(N,\O)$-module structure on $\rr$.  The vector space 
$$TM|_P\subset \Gamma(M,\O)^\ast$$
consists of the linear maps $\Gamma(M,\O)\to\rr$ which are derivations.  The vector space 
$$L|_P\subset\Gamma(M,L^\vee)^\ast$$
consists of the $\Gamma(M,\O)$-linear maps $\Gamma(M,L^\vee)\to\rr$. 

Consider an element $x\in L|_P$, mapping to zero in $E|_{f(P)}$. The element $x$ maps to zero in $\Gamma(M,\Lambda^2L^\vee)^\ast$, because 
the map $L|_P\to \Lambda^2 L|_P$ induced by multiplication by $\lambda$ is zero, as $\lambda$ vanishes at $P$.  Comparing with (\ref{left}), we see that there exists a unique $\rr$-linear map $v:\Gamma(M,\O)\to\rr$, such that 
\begin{items}
\item $v\circ f^\sharp:\Gamma(N)\to\rr$ vanishes,
\item $v\circ(\contract\lambda):\Gamma(L^\vee)\to \rr$ is equal to $x$.
\end{items}
We need to show that $v$ is a derivation, i.e., that for $g,h\in\Gamma(M,\O)$, we have
$$v(gh)=v(g)h+gv(h)\,.$$
Since $\Gamma(M,\O)$ is generated as an $\rr$-vector space by the images of $\Gamma(N,\O)$ and $\Gamma(M,L^\vee)$, it suffices to consider the following two cases.

{\bf Case 1.} Assume both $g,h$ are pullbacks from $N$. So there exist $\tilde g,\tilde h:N\to\rr$, such that $g=\tilde g\circ f$ and $h=\tilde h\circ f$. We have
$$v(gh)=v((\tilde g\circ f)(\tilde h\circ f))=v((\tilde g\tilde h)\circ f)=0=v(\tilde g\circ f)h+gv(\tilde h\circ f)=v(g)h+gv(h)\,.$$

{\bf Case 2.} Assume   that $h=\alpha\contract\lambda$ comes from $\alpha\in \Gamma(M,L^\vee)$. We have
$$v(gh)= v(g(\alpha\contract\lambda))=v((g\alpha)\contract\lambda)
=x(g\alpha)=g(P)x(\alpha)\,,$$
and
$$v(g)h+gv(h)=v(g)(\alpha\contract\lambda)(P)+g(P) v(\alpha\contract\lambda)=0+g(P)x(\alpha)\,.$$
This finishes the proof.
\end{pf}

\subsection{The shifted tangent derived manifolds}\label{sec:Ping}

\newcommand{\qQ}{Q}
\newcommand{\tQ}{\hat{\qQ}[-1]}
\newcommand{\tQQ}{\tilde{\qQ}}
\newcommand{\calx}{\XX}
\newcommand{\Aaa}{\AA}
\newcommand{\plambda}{\hat{\lambda}}

Let $\M=(M,\A, \qQ)$ be a dg manifold. Let $\Omega^1_\A$ be the sheaf of
differentials of $\A$, which is a sheaf of graded $\A$-modules.
Then $\Sym_\A(\Omega^1_\A[1])$ is  new sheaf of  graded
$\O_M$-algebras on $M$, and
 $\big(M, \Sym_\A(\Omega^1_\A[1])\big)$ is a graded manifold, denoted  $T\M[-1]$. 
The Lie derivative $\LL_\qQ$ with respect to $\qQ$ defines the structure of a
sheaf of differential graded $\A$-modules on $\Omega^1_\A$. We pass to
$\Sym_\A(\Omega_\A[1])$ to define  new sheaf of differential graded
$\rr$-algebras on $M$. Thus  $T\M[-1]=(M, \Sym_\A(\Omega^1_\A[1]), \LL_\qQ)$
is  a dg manifold.

\begin{rmk}
For  a  graded manifold $\M=(M,\A)$, $T\M$ is the graded manifold
$(M, T_\A)$, where $T_\A=\Sym_\A \dual{\Der_{\A}}  $ is a sheaf of  graded
$\O_M$-algebras on $M$. Here $\Der_{\A}$ stands for the sheaf of
graded derivation of  $\A$, which is a sheaf of graded $\A$-modules. 
$T\M$ is called the tangent bundle of $\M$ and
$T\M\to \M$ is a vector bundle in the category of graded manifolds.
If $\M=(M,\A, \qQ)$ is a dg manifold, 
it is standard \cite{MR3319134,MR3754617} that its tangent bundle $T\M$ is naturally  equipped
with a homological vector field $\hat{\qQ}$, called the complete lift \cite{MR0350650}, which makes it into a dg manifold.
The degree $1$ derivation $\hat{\qQ}: T_\A\to T_\A$ is
essentially induced by the Lie   derivative $\LL_\qQ$. 
 According to Mehta \cite{Mehta},
$\hat{\qQ}$ is a linear vector field with respect to
the tangent vector bundle $T\M\to \M$, therefore the 
shifted functor makes sense \cite[Proposition 4.11]{Mehta}. 
As a consequence, $T\M[-1]$, together with the homological
vector field $\tQ$,   is a dg manifold. It is simple
to check that the resulting dg manifold 
coincides with  the dg manifold  $(M, \Sym_\A(\Omega^1_\A[1]), \LL_\qQ)$ 
described above. Note that in this case
both $T\M\to \M$ and $T\M[-1]\to \M$  are vector bundles in the category
of dg manifolds.
\end{rmk}

If $(M,\A,Q)$ comes from a derived manifold $(M,L,\lambda)$ via the comparison functor (\ref{eq3}), 
then so does $(M,\Sym_\A(\Omega_\A[1]),\LL_\qQ)$ as we shall  see  below.

The underlying graded manifold of $T\M[-1]$ is $TL[-1]$, which
admits a double vector bundle structure
\begin{equation}\label{squr1}
\vcenter{\xymatrix{TL[-1]\rto\dto &TM[-1]\dto\\
L\rto &M }}
\end{equation}
A priori, $TL[-1]$ is {\it  not} a vector bundle over $M$.
However, by choosing a  connection $\nabla$ on $L\to M$,
 one  can identify  $TL$ with $L\times_M TM \times_M L$.
Hence, one obtains a diffeomorphism  
\begin{equation}
\label{eq:phinabla}
\phi^\nabla: \ \ TL[-1]\xxto{\cong}   L\times_M TM \times_M L \, .
\end{equation}
The latter is a graded vector bundle over $M$. 
On the level of sheaves, \eqref{eq:phinabla} is equivalent to
a splitting of the following
 short exact sequence of sheaves of graded $\A$-modules over $\O_M$
\begin{equation}\label{notsplit}
\xymatrix{
0\rto & \Omega^1_M [1]\otimes_{\O_M}\A\rto & \Omega^1_\A [1]\rto &
\Omega^1_{\A/\O_M}[1]\rto & 0\rlap{\,.}}
\end{equation}
where  $\A=\Sym_{\O_M}L^\vee$.

Thus one can transfer, via $\phi^\nabla$,  the homological vector field
  $\LL_\qQ$ on $TL[-1]$ into a  homological vector field $\tQQ$ on
the graded vector bundle $TM[-1]\oplus L[-1]\oplus L$. In what follows, we show that
$\tQQ$ is indeed tangent to the fibers of the graded vector  
bundle $TM[-1]\oplus L[-1]\oplus L$. Therefore, we obtain
a derived manifold with base manifold $M$.
To describe the $L_\infty[1]$-operations on
 $TM[-1]\oplus L[-1]\oplus L$, induced by $\tQQ$,  
let us introduce a formal variable $dt$ of degree $1$, and write
 $TM[-1]=TM\,dt$ and $L[-1]=L\, dt$. In the sequel, we will use
both notations $[-1]$ and $dt$ interchangely.

\begin{prop}
\label{pro:TM1}
Let $\M=(M,L,\lambda)$ be a derived manifold.
Any connection $\nabla$ on $L$ induces a derived manifold
structure on $(M, \, TM \, dt\oplus L \, dt \oplus L,\,  \mu)$, where 
$$\mu = \lambda+\tilde\lambda+\nabla\lambda\,.$$
Here, for all $k\geq0$, $\lambda$, $\tilde\lambda$, and $\nabla\lambda$ are
given, respectively, by  
\begin{eqnarray}
&&\lambda_k(\xi_1 ,\ldots,\xi_k)=
\lambda_k(x_1,\ldots,x_k), \label{eq:lambda}\\
&&\tilde\lambda_k(\xi_1 ,\ldots,\xi_k)=
\sum_{i=1}^k(-1)^{|x_{i+1}|+\ldots+ |x_k|}\lambda_k(x_1,\ldots,y_i,\ldots,x_k)\,dt\, , \\
&&(\nabla\lambda)_{k+1}(\xi_0 ,\ldots,\xi_k)
=\sum_{i=0}^k(-1)^{|x_{i+1}|+\ldots+ |x_k|}(\nabla_{v_i}\lambda_k)(x_0,\ldots,\hat
x_i,\ldots,x_k)\,dt\, , \qquad
\end{eqnarray}
$\forall \, \xi_i = v_i \, dt + y_i \, dt + x_i \in TM \, dt\oplus L \, dt \oplus L$,   
$i=0, \ldots k$.

Moreover,  different choices of connections $\nabla$ on $L$ induce
isomorphic derived manifold structures on $TM \, dt\oplus L\, dt\oplus L$.
\end{prop}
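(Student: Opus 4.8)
The plan is to extract everything from the already-established fact that $T\M[-1]=(M,\Sym_\A(\Omega^1_\A[1]),\LL_Q)$ is a dg manifold, where $\A=\Sym_{\O_M}L^\vee$ and $Q=Q_\lambda$. First I would note that $\LL_Q$ is tangent to the fibres of the projection $T\M[-1]\to M$, i.e.\ $\LL_Q(\O_M)=0$: on the subalgebra $\A\subset\Sym_\A(\Omega^1_\A[1])$ one has $\LL_Q|_\A=Q$, and $Q(\O_M)=0$ because, as observed in the proof of Proposition~\ref{pro:Batchelor}, the homological vector field $Q_\lambda$ is tangent to the fibres of $L\to M$. Choosing a connection $\nabla$ on $L$ provides the splitting of the exact sequence \eqref{notsplit}, equivalently the diffeomorphism $\phi^\nabla$ of \eqref{eq:phinabla} identifying the underlying graded manifold of $T\M[-1]$ with the graded \emph{vector} bundle $TM\,dt\oplus L\,dt\oplus L$ over $M$; as $\phi^\nabla$ covers $\id_M$, transporting $\LL_Q$ through it yields a homological vector field $\tQQ$ on $TM\,dt\oplus L\,dt\oplus L$ which is again tangent to the fibres, and $[\tQQ,\tQQ]=0$ since $[\LL_Q,\LL_Q]=0$. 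The graded vector bundle $TM\,dt\oplus L\,dt\oplus L$ is finite-rank and concentrated in degrees $1,\dots,n+1$, hence of positive amplitude, so the construction in the proof of Proposition~\ref{pro:Batchelor} applies verbatim: the fibrewise homological vector field $\tQQ$ determines a unique family $\mu=(\mu_k)_{k\geq0}$ of fibrewise operations of degree $+1$ satisfying $\mu\circ\mu=0$, and these are automatically differentiable over $M$ because $\LL_Q$ and $\phi^\nabla$ are. This already gives the derived manifold $(M,TM\,dt\oplus L\,dt\oplus L,\mu)$.

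Next I would identify $\mu$ explicitly. The complete lift is characterised by $\LL_Q(f)=Q(f)$ for $f\in\A$ and $\LL_Q(df)=d(Qf)$; thus in a local frame $(e_i)$ of $L$ over a chart $(x^\mu)$ of $M$, writing $Q(\xi^i)=a^i(x,\xi)$ for the fibre coordinates $\xi^i$ (so $a^i$ is the fibrewise polynomial encoding the $\lambda_k$, with no $x$-differentiation part since $Q(x^\mu)=0$), one has $\LL_Q(\xi^i)=a^i$ and $\LL_Q(d\xi^i)=\sum_\mu(\partial_{x^\mu}a^i)\,dx^\mu+\sum_j(\partial_{\xi^j}a^i)\,d\xi^j$ up to signs. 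The coordinates $dx^\mu$ span the $TM\,dt$-summand, $\xi^i$ the $L$-summand, and the $L\,dt$-summand is spanned not by $d\xi^i$ but by the $\nabla$-covariant combination $D\xi^i=d\xi^i+\Gamma^i_{j\mu}\xi^j\,dx^\mu$, which is exactly what the splitting of \eqref{notsplit} produces. Re-expressing $\LL_Q(D\xi^i)$ in the coordinates $\{\xi^i,dx^\mu,D\xi^i\}$, the term $\sum_j(\partial_{\xi^j}a^i)\,d\xi^j$ becomes the ``one-slot-replaced'' operation with values in $L\,dt$, namely $\tilde\lambda$, while the $dx^\mu$-coefficient $\partial_{x^\mu}a^i$ together with the Christoffel corrections assembles into the covariant derivative $\nabla_{\partial_{x^\mu}}\lambda$, i.e.\ $\nabla\lambda$, using that the connection on the bundle of multilinear maps $L^{\otimes k}\to L$ is the one induced from $\nabla$ by Leibniz; the purely $\xi$-part $\LL_Q(\xi^i)=a^i$ gives back $\lambda$ on the $L$-summand. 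The Koszul signs $(-1)^{|x_{i+1}|+\dots+|x_k|}$ are precisely those produced by commuting the degree-$(+1)$ symbol $dt$ (equivalently, the degree-raising differentiation) past the remaining arguments. I expect this sign-and-coordinate bookkeeping to be the only genuinely laborious step; everything else is formal.

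For the last assertion, let $\nabla,\nabla'$ be two connections on $L$ and put $A=\nabla'-\nabla\in\Gamma(T^*M\otimes\End L)$. The two splittings of \eqref{notsplit} agree on $\A$ and differ only in the horizontal lift of $\Omega^1_M[1]$, by $A$; dually, $\Psi:=\phi^{\nabla'}\circ(\phi^\nabla)^{-1}$ is a degree-$0$ automorphism of the graded manifold $TM\,dt\oplus L\,dt\oplus L$ over $M$ which is the identity on the $TM\,dt$- and $L$-summands and adds to the $L\,dt$-coordinate the quadratic term built from $A$ (so its linear part $\Psi_1$ is the identity). By construction $\phi^{\nabla'}=\Psi\circ\phi^\nabla$, hence $\Psi$ intertwines $\tQQ^\nabla$ with $\tQQ^{\nabla'}$, i.e.\ it is an isomorphism of dg manifolds $(M,TM\,dt\oplus L\,dt\oplus L,\tQQ^\nabla)\to(M,TM\,dt\oplus L\,dt\oplus L,\tQQ^{\nabla'})$; applying the inverse of the equivalence of Proposition~\ref{pro:Batchelor} (or Proposition~\ref{oneiso} directly) turns it into an isomorphism of the corresponding derived manifold structures. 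As an alternative to invoking $T\M[-1]$, one could instead define $\mu=\lambda+\tilde\lambda+\nabla\lambda$ by the stated formulas and verify $\mu\circ\mu=0$ directly from the curved $L_\infty[1]$-axioms for $\lambda$ and the Leibniz rule for $\nabla$ --- the terms organise so that the curvature of $\nabla$ drops out --- but that route is computationally heavier.
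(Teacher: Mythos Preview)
Your proposal is correct and follows essentially the same strategy as the paper: both transport $\LL_Q$ through the connection-induced identification $\phi^\nabla$, verify fibre-tangency via $Q(\O_M)=0$, compute $\tQQ$ on the three types of generators, and handle the connection-independence via $\Psi=\phi^{\nabla'}\circ(\phi^\nabla)^{-1}$ (the paper even writes out your $\Psi$ explicitly as $\Phi_1=\id$, $\Phi_2(\xi_1,\xi_2)=\alpha(v_2,x_1)\,dt+(-1)^{|x_2|}\alpha(v_1,x_2)\,dt$, $\Phi_{\geq3}=0$). The only stylistic difference is that where you compute in local coordinates with Christoffel symbols, the paper packages the same calculation intrinsically by decomposing $d=d^\nabla+d_{\mathrm{rel}}$ and using $[\LL_Q,d]=0$ to obtain $\tQQ(\eta[1])=-d_{\mathrm{rel}}(\LL_Q\eta)-[\LL_Q,d^\nabla]\eta$, the two summands yielding $\tilde\lambda$ and $\nabla\lambda$ respectively.
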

\begin{pf}
Let  $\A=\Sym_{\O_M}L^\vee$ be the sheaf of functions on $\M$,
$R=\Gamma (M, \O_M)$  the algebra of $C^\infty$-functions on $M$,
  and
$\Aaa=\Gamma (M, \Sym L^\vee)$ the space of the global sections
of $\A$  being considered as  fiberwise  polynomial functions on $L$.

To describe the homological vector field $\tQQ$ on the graded vector
 bundle $TM \, dt \oplus L \, dt \oplus L$, 
we  will  compute its induced  degree 1 derivation on the
algebra of global sections of the  sheaf of functions.
First of all,  since  $Q$ is tangent to the fibers of 
 $\pi: L\to M$, it follows
that 
$$ \tQQ (f)= L_Q (\pi^* f)=0,  \ \ \forall f \in  R.$$
 As a consequence, the induced homological
vector field $\tQQ$ on  $TM\, dt\oplus L\, dt \oplus L$  is tangent
to the fibers. Hence we indeed obtain a derived manifold
on $TM\, dt\oplus L\, dt\oplus L$ with base manifold $M$.

To compute the $L_\infty[1]$-operations, we need to compute $\tQQ$
on the three types of generating functions $\Omega^1 (M)[1]$, $ \Gamma (M,  L^\vee[1])$ and
 $\Gamma (M, L^\vee)$, by using the identification  $\phi^\nabla$ in  \eqref{eq:phinabla}
and    applying $\LL_Q$.

Note that the identification  $\phi^\nabla$ induces the identity map
on $\Aaa=\Gamma (M, \Sym L^\vee)$. Therefore,
 for any  $\xi \in \Gamma (M, L^\vee)$,
$$\tQQ \xi=\LL_Q \xi=Q (\xi)=\sum_{k=0}\lambda_k^\vee (\xi )\in \bigoplus_k 
 \Gamma (M, \Sym^k L^\vee)$$
Hence, by taking its dual,   we obtain $\lambda$ as in Eq. \eqref{eq:lambda}.

The map  $\phi^\nabla$ in  \eqref{eq:phinabla} induces an isomorphism:
\begin{equation}
\label{eq:psi}
\psi^\nabla: \ \Gamma(M,\Omega_\A^1[1]) \xxto{\cong}   \big ( \Omega^1 (M) [1]\oplus \Gamma (M,  L^\vee[1])\big)\otimes_{R}\Aaa \,.
\end{equation}

From now on, we identify the  right hand side with the left hand side.
Denote by $d$ the composition of the de Rham differential 
(shifted by degree $1$) with the isomorphism $\psi^\nabla$ as in
\eqref{eq:psi}: 
$$d: \Aaa\xxto{d_{DR}} \Gamma(M,\Omega_\A^1[1]) \xxto{\psi^\nabla}   \big ( \Omega^1  (M) [1]\oplus \Gamma (M,  L^\vee[1])\big)\otimes_{R}\Aaa$$
By $d^\nabla$,  we denote the covariant differential (shifted by degree $1$)
induced by the connection $\nabla$:
$$d^\nabla: \ \   \Aaa \to \Omega^1 (M)[1] \otimes_{R}\Aaa \hookrightarrow
 \big ( \Omega^1 (M) [1]\oplus \Gamma (M,  L^\vee[1])\big)\otimes_{R}\Aaa$$
It is simple to see that the image
 $(d-d^\nabla)(\Aaa) \subseteq  \Gamma (M,  L^\vee[1])\otimes_{R}\Aaa$.
Thus we obtain a map 
$$d_\rel:  \Aaa\to \Gamma (M,  L^\vee[1])\otimes_{R}\Aaa$$
such that 
$$d_\rel=d-d^\nabla .$$
It is easy to check that  $d_\rel$ is in fact  $R$-linear, and therefore
it corresponds to a bundle map 
$$d_\rel: \ \Sym L^\vee \to L^\vee[1]\otimes \Sym L^\vee$$
over $M$, by abuse of notations.
 Furthermore, for any $\eta \in  \Gamma (M, L^\vee)$ considered
as a linear function, we have 
$$d_\rel \eta =\eta [1] \, .$$
That is, when being restricted to  $L^\vee$, the bundle
map $d_\rel$ becomes the degree shifting
map  $L^\vee \to L^\vee [1]$,
and for general  $\Sym L^\vee$, one needs to apply the Leibniz rule.

Since $[\LL_Q, \ d]=0$,  it thus  follows that, for any
$\eta \in \Gamma (M, L^\vee)$,
$$ \tQQ (\eta[1])=  \LL_Q\circ d_\rel \eta= - d_\rel\circ \LL_Q\eta - [\LL_Q, d^\nabla]\eta\,.$$ 
Now $ \LL_Q\eta\in \Aaa$ and $-d_\rel\circ \LL_Q\eta\in \Gamma (M, L^\vee[1])\otimes_{R}\Aaa$.
 By taking its dual, it gives rise to $\tilde\lambda$.

On the other hand, it is simple to see that 
$$[\LL_Q, d^\nabla](\eta ) =(d^\nabla Q)(\eta )\in \Gamma (M, T^\vee M \otimes \Sym L^\vee), $$
where the homological vector field $Q$ is considered as a section
in $\Gamma (M, 	\Sym L^\vee \otimes L)$, and
 $d^\nabla Q\in \Gamma (M, T^\vee M \otimes \Sym L^\vee \otimes L)$. Hence it follows
that $-[\LL_Q, d^\nabla](\eta)=-(d^\nabla Q)(\eta)\in \Gamma (M, T^\vee M \otimes \Sym L^\vee)$.
 By taking its dual, we obtain  $\nabla\lambda$.

Finally,  since $\pi_* Q=0$, it follows that
$$  \tQQ \theta=\LL_Q(\pi^* \theta )=0, \ \ \forall \theta\in \Omega^1 (M) [1]. $$
Hence the $\Omega^1 (M) [1]$-part does not contribute to  any  $L_\infty[1]$-operations. Therefore we conclude that $\mu = \lambda + \tilde \lambda + \nabla \lambda$.

To prove the last part, let $\nabla$ and $\bar\nabla$ be any two connections on $L$. 
Denote by $\mu^\nabla$ and $\mu^{\bar\nabla}$ their corresponding $L_\infty[1]$-operations on $TM \, dt \oplus L\, dt \oplus L$, respectively. 
There exists a bundle map $\alpha:TM \otimes L \to L$ such that 
$
\alpha(v,x) = \bar\nabla_v x - \nabla_v x, \, \forall \, v \in \Gamma(TM), \, x \in \Gamma(L).
$ 
Let $\phi^{\bar\nabla}$ and $\phi^\nabla$ be the maps defined as in \eqref{eq:phinabla}.  Although the map $\Phi^{\bar\nabla,\nabla} = \phi^{\bar\nabla} \circ (\phi^\nabla)^{-1}$ is not an isomorphism of vector bundles over $M$, it induces an isomorphism of derived manifolds:
\begin{equation}\label{eq:CanonicalIsoTM[-1]}
 \Phi^{\bar\nabla,\nabla} :\big( M, TM \, dt \oplus L\, dt \oplus L, \mu^\nabla \big) \to \big( M, TM \, dt \oplus L\, dt \oplus L, \mu^{\bar\nabla} \big)
\end{equation}
given explicitly by 
\begin{align*}
\Phi^{\bar\nabla,\nabla}_1 &= \id, \\
\Phi^{\bar\nabla,\nabla}_2(\xi_1, \xi_2)&= \alpha(v_2,x_1) \, dt + (-1)^{|x_2|}\alpha(v_1,x_2) \, dt,  \\
\Phi^{\bar\nabla,\nabla}_n & = 0, \quad \forall n \geq 3,
\end{align*}
for any $\xi_i = v_i\, dt + y_i \, dt + x_i \in TM \, dt \oplus L\, dt \oplus L$, $i =1,2$. 
This concludes the proof.
\end{pf}

\begin{defn}
Let $\M=(M,L,\lambda)$ be a derived manifold.
 Choose a  connection $\nabla$ on $L$.
We call the induced  derived manifold $(M,TM\, dt\oplus L\, dt\oplus L, \mu)$
as in Proposition \ref{pro:TM1} the {\bf shifted tangent bundle }of $\M$, denoted $T\M[-1]$.
\end{defn}

\subsubsection*{Twisted shifted tangent derived manifolds}

Recall that a  degree $0$  vector field $\tau\in  \XX(L)$  is said to be
{\it  linear} \cite{MR1617335} if  $\tau$ is projectible, i.e.
$\pi_* (\tau)=X\in \XX(M)$ is a well-defined
vector field on $M$, and, furthermore, the diagram
$$
\xymatrix{
L \ar[d] \ar[r]^{\tau} & TL \ar[d] \\
M \ar[r]_{X} & TM
}
$$
is a morphism of vector bundles. Geometrically, linear vector fields
correspond exactly  to those whose flows are automorphisms
 of the vector bundle $L$.

Given a linear vector field  $\tau\in  \XX(L)$, 
consider the map $ \tilde{\delta}  : L\to L[-1]$
defined by
$$\tilde{\delta} (l)=\tau|_l -\widehat{X(\pi (l))}|_l, \  \ \ \forall l\in L,$$
where $\widehat{X(\pi (l))}|_l\in TL|_l $ denotes the horizontal lift of $X(\pi (l))\in TM|_{\pi (l)}$ at $l$.

\begin{lem}
\label{lem:Rome}
The map $\tilde{\delta}  : L\to L[-1]$ is a bundle map.
\end{lem}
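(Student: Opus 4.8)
The plan is to reduce the statement to a local, coordinate-based verification, exploiting the fact that ``bundle map'' is a local condition on $M$ and that linearity of $\tau$ is precisely the assertion that, in any vector bundle chart, $\tau$ has components that are affine-linear in the fibre coordinates. First I would fix a local trivialization $L|_U \cong U \times V$ with fibre coordinates $(x^a)$ on the base $U$ and linear coordinates $(u^i)$ on $V$, together with the induced horizontal lift coming from a connection (equivalently, from the chosen local frame). In such coordinates a linear vector field $\tau$ takes the form $\tau = X^a(x)\,\partial_{x^a} + \big(A^i_j(x)\,u^j\big)\,\partial_{u^i}$, where $X = \pi_\ast\tau = X^a\partial_{x^a}$, and the horizontal lift of $X$ at the point $l = (x,u)$ is $\widehat{X}|_l = X^a(x)\,\partial_{x^a} - \Gamma^i_{aj}(x)\,X^a(x)\,u^j\,\partial_{u^i}$ for the connection coefficients $\Gamma^i_{aj}$.

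The key computation is then immediate: subtracting, the base components cancel and one gets
$$
\tilde\delta(l) = \tau|_l - \widehat{X(\pi(l))}|_l = \big(A^i_j(x) + \Gamma^i_{aj}(x)\,X^a(x)\big)\,u^j\,\partial_{u^i},
$$
which is a \emph{vertical} tangent vector at $l$, hence canonically an element of the fibre $L|_x$ (shifted in degree to $L[-1]$), and whose dependence on $u = (u^j)$ is manifestly linear. Writing $B^i_j(x) = A^i_j(x) + \Gamma^i_{aj}(x)X^a(x)$, the map $\tilde\delta$ is $l = (x,u) \mapsto (x, B(x)u)$, which is exactly a vector bundle morphism $L \to L[-1]$ covering $\mathrm{id}_M$. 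One then checks this is independent of the chosen trivialization — but since we have already exhibited $\tilde\delta$ intrinsically as $\tau - \widehat{X\circ\pi}$, both terms of which are globally defined, the local formulas automatically patch, and the only content is that the difference is fibrewise linear and vertical, which the computation above establishes.

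The main (and only real) obstacle is bookkeeping: making sure that the horizontal lift $\widehat{X(\pi(l))}|_l$ is the one implicitly used in the ambient setup (it is the lift associated to the linear structure, via the Euler vector field / the splitting $TL|_l \cong TM|_{\pi(l)} \oplus L|_{\pi(l)}$ at points, not an arbitrary connection), and confirming that the vertical part of a linear vector field, evaluated along the fibre, is genuinely linear rather than merely affine — this is where the hypothesis that $\tau$ is linear (flows are bundle automorphisms, so no constant term appears) is used, as opposed to merely projectible. Once that is pinned down, the lemma follows from the displayed one-line computation, and no global argument beyond ``a fibrewise-linear vertical-valued map is a bundle morphism'' is needed.
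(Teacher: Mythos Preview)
Your proof is correct and takes a genuinely different route from the paper's. The paper argues geometrically: it realizes the horizontal lift $\widehat{X(\pi(l))}|_l$ as the derivative at $t=0$ of parallel transport of $l$ along an integral curve of $X$, and then uses that parallel transport is fibrewise linear to conclude that $l\mapsto \widehat{X(\pi(l))}|_l$ is linear in $l$; since $\tau$ is linear by hypothesis, the difference $\tilde\delta$ is as well. Your approach instead fixes a local trivialization, writes both $\tau$ and the horizontal lift explicitly in coordinates, and reads off that the difference is vertical with coefficients $B^i_j(x)u^j$. Your method has the advantage of simultaneously exhibiting smoothness and fibrewise linearity in one line, and of making the dependence on the connection transparent; the paper's argument is coordinate-free and perhaps more conceptual, but only verifies additivity explicitly. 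One small point: your closing remark about the horizontal lift being ``associated to the linear structure, via the Euler vector field'' is off --- there is no canonical connection coming from the vector bundle structure alone; the horizontal lift here is with respect to the chosen connection $\nabla$ on $L$ (as the paper's use of parallel transport confirms), exactly as your $\Gamma^i_{aj}$ computation assumes. This does not affect the validity of your argument.
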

\begin{pf}
 Let $\alpha$ be a path in $M$ such that $\alpha(0) = \pi(l), \alpha'(0) 
= X(\pi(l))$, and let $\gamma(l,t)$ be the parallel transport of $l$
 along $\alpha(t)$. Then the horizontal lift of $X$ at $l$ is
 $\gamma'(l,0)$. Since the parallel transport
 $L|_{\alpha(0)} \to L|_{\alpha(t)}: l \mapsto \gamma(l,t)$
 is linear at each $t$, the horizontal lift of $X$ at $l+\tilde l$ is
 $\gamma'(l + \tilde l,0) = \gamma'(l,0) + 
\gamma'(\tilde l,0)$.
Hence $\tilde \delta(l+\tilde l) = \tau(l+\tilde l) - ( \gamma'(l,0) + \gamma'(\tilde l,0)) = \tilde\delta(l)+ \tilde\delta(\tilde l)$.
\end{pf}

 Consider the dg manifold 
$$(TL[-1], \ \  \iota_\tau)$$
Again, a priori, this dg manifold is not a derived manifold over $M$.
However, if $\tau$ is a linear vector field on $L$,
 a connection $\nabla$ on
$L$ will make it into a derived manifold.

The following proposition follows from a straightforward
verification, which is left to the reader.

\begin{prop}
\label{pro:TM2}
Let $L$ be a positively graded vector bundle, and $\tau \in \calx (L)$
a linear   vector field on $L$. Then any connection $\nabla$ on $L$ 
induces a derived manifold structure on $(M, TM\, dt\oplus L\, dt\oplus L, \nu)$,
where
$$\nu=X+{\delta}. $$
Here $X=\pi_* (\tau) \in \Gamma (M, TM \, dt\, )$ is the curvature, and 
${\delta}$ is the degree 1 bundle map on $TM\, dt\oplus L\, dt\oplus L$
naturally extending the bundle map $ \tilde{\delta}: L\to L\, dt$
as in Lemma \ref{lem:Rome}.

Moreover,  different choices of connections $\nabla$ on $L$ induce
isomorphic derived manifold  structures on $TM\, dt\oplus L\, dt\oplus L$.
\end{prop}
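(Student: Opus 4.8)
The plan is to mimic the structure of the proof of Proposition \ref{pro:TM1}, replacing the homological vector field $\widehat{Q}[-1]$ (the complete lift of $Q$) by the contraction $\iota_\tau$ on $TL[-1]$, and the Lie derivative computations by the corresponding Cartan-calculus identities for $\iota_\tau$. First I would fix a connection $\nabla$ on $L$ and use the diffeomorphism $\phi^\nabla\colon TL[-1]\xrightarrow{\cong} L\times_M TM\times_M L$ of \eqref{eq:phinabla} (equivalently, the splitting of \eqref{notsplit}) to transport $\iota_\tau$ to a degree-$1$ vector field $\widetilde\nu$ on the graded vector bundle $TM\,dt\oplus L\,dt\oplus L$. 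One checks $\iota_\tau\iota_\tau=0$ automatically (contraction with a degree-$0$ vector field squares to zero on forms), so $\widetilde\nu$ is homological, and $(TL[-1],\iota_\tau)$ is a dg manifold.

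Next I would verify that $\widetilde\nu$ is tangent to the fibers of $TM\,dt\oplus L\,dt\oplus L\to M$, exactly as in Proposition \ref{pro:TM1}: evaluating $\iota_\tau$ on $\pi^\ast f$ for $f\in\Gamma(M,\O_M)$ gives $\iota_\tau(d(\pi^\ast f))=(\pi^\ast\text{-linear in }df)$, which since $\tau$ is a linear vector field projecting to $X$ yields precisely $\pi^\ast(Xf)$, a function on $M$ — so $\widetilde\nu$ sends $\O_M$-generators to $\O_M$, hence is fiberwise, and we genuinely obtain a derived manifold with base $M$. Then I would compute $\widetilde\nu$ on the three classes of generators $\Omega^1(M)[1]$, $\Gamma(M,L^\vee[1])$, and $\Gamma(M,L^\vee)$. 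On $\Gamma(M,L^\vee)$, $\iota_\tau\eta$ pairs the linear function $\eta$ with $\tau$; splitting $\tau$ into its horizontal lift $\widehat X$ and the vertical part $\widetilde\delta$ (Lemma \ref{lem:Rome}), the horizontal part contributes the $TM\,dt$-component $X$ (the ``curvature''), and the vertical part contributes $\widetilde\delta(\eta[1])$, i.e.\ the degree-$1$ bundle map $\delta$ extended by the Leibniz rule to $\Sym L^\vee$. On $\Gamma(M,L^\vee[1])=d_\rel(\Gamma(M,L^\vee))$ one uses $[\iota_\tau,d]=\LL_\tau$ together with $\iota_\tau^2=0$ to see there is no further contribution beyond what $X$ and $\delta$ already produce, and on $\Omega^1(M)[1]=\pi^\ast\Omega^1(M)$ the contraction with the projectible $\tau$ lands back in $\pi^\ast$-pulled-back data, contributing nothing new. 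Dualizing assembles $\nu=X+\delta$.

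For the last statement, I would copy the argument for Proposition \ref{pro:TM1}: given two connections $\nabla,\bar\nabla$ with difference tensor $\alpha\colon TM\otimes L\to L$, the composite $\Phi^{\bar\nabla,\nabla}=\phi^{\bar\nabla}\circ(\phi^\nabla)^{-1}$ is not a vector bundle isomorphism over $M$ but defines an isomorphism of derived manifolds between the two structures $\nu^\nabla$ and $\nu^{\bar\nabla}$, with $\Phi_1=\id$, a quadratic term built from $\alpha$ (here it will be linear, of the form $\xi\mapsto\alpha(v,x)\,dt$, since $\nu$ has no genuine bracket), and $\Phi_n=0$ for $n\geq 3$; the $L_\infty[1]$-morphism identity $\Phi\circ\nu^\nabla=\nu^{\bar\nabla}\bullet\Phi$ is then a direct check. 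The main obstacle is bookkeeping rather than conceptual: keeping the Koszul signs straight when passing between $\iota_\tau$, the shift functor $[-1]$, and the formal symbol $dt$, and confirming that the horizontal/vertical decomposition of $\tau$ interacts with $d_\rel$ and $d^\nabla$ exactly so that only the two terms $X$ and $\delta$ survive — this is why the proposition is stated as ``a straightforward verification left to the reader.''
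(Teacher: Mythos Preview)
Your overall strategy---transport $\iota_\tau$ through $\phi^\nabla$ and compute on generators, mirroring the proof of Proposition~\ref{pro:TM1}---is exactly what the paper intends (the proof is omitted there as a ``straightforward verification''). However, you have the action of $\iota_\tau$ on the four classes of generators essentially reversed, and this is a genuine gap, not just bookkeeping.

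The contraction $\iota_\tau$ is a derivation of $\Sym_\A(\Omega^1_\A[1])$ that \emph{vanishes identically on $\A$}: in particular $\iota_\tau(f)=0$ for $f\in\O_M$ and $\iota_\tau(\eta)=0$ for $\eta\in\Gamma(M,L^\vee)$. So tangency to the fibers over $M$ is immediate (your argument that ``$\widetilde\nu$ sends $\O_M$-generators to $\O_M$'' would only give projectability, not tangency; what you need, and what actually holds, is $\iota_\tau|_{\O_M}=0$). Likewise the generators in $L^\vee$ contribute nothing---there is no ``pairing of the linear function $\eta$ with $\tau$'' here. The curvature $X$ comes from $\Omega^1_M[1]$, not from $L^\vee$: for $\theta\in\Omega^1(M)$ one has $\iota_\tau(\pi^\ast\theta)=\pi^\ast(\iota_X\theta)\in\O_M$, and dualizing yields $\nu_0=X\in\Gamma(M,TM\,dt)$. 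Your claim that $\Omega^1_M[1]$ ``contributes nothing new'' is precisely backwards. The map $\delta$ comes from $L^\vee[1]$: writing $\eta[1]=d_\rel\eta=d\eta-d^\nabla\eta$ and decomposing $\tau=\widehat X+\widetilde\delta^{\mathrm{vert}}$, one computes
\[
\iota_\tau(\eta[1])=\tau(\eta)-\iota_\tau(d^\nabla\eta)=\bigl(\widehat X(\eta)+\widetilde\delta^\vee\eta\bigr)-\nabla_X\eta=\widetilde\delta^\vee\eta,
\]
since $\widehat X(\eta)=\nabla_X\eta$ cancels the $d^\nabla$ term; dualizing recovers $\widetilde\delta\colon L\to L\,dt$. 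Once the generators are assigned correctly, the rest of your plan (including the isomorphism for different connections via $\Phi^{\bar\nabla,\nabla}$) goes through as you describe.
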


Below is the situation
which is a combination of Proposition \ref{pro:TM1} and Proposition \ref{pro:TM2}. 
\begin{prop}
\label{pro:TM3}
Let $\M=(M,L,\lambda)$ be a derived manifold with $Q$ 
being its corresponding homological vector field.
Let $\tau \in \calx (L)$ be a degree $0$ linear  vector field on
 $L$ such that
$\pi_* (\tau)=X\in \calx (M)$.
Assume that 
\begin{equation}
\label{eq:tauQ}
[\tau , \ Q]=0.
\end{equation}
Choose a  connection $\nabla$ on $L$. Then there is an induced
  derived manifold structure on $(M, TM\, dt\oplus L\, dt\oplus L, \nu+\mu)$,
 where $\mu$ and $\nu$ are described as in Proposition \ref{pro:TM1} and Proposition \ref{pro:TM2}, respectively.

Moreover,  different choices of connections $\nabla$ on $L$ induce
isomorphic derived manifold  structures on $TM\, dt\oplus L\, dt\oplus L$.
\end{prop}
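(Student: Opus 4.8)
The plan is to realize the combined object intrinsically on $TL[-1]$ and then transport it to a graded vector bundle over $M$, exploiting the fact that the constructions in Propositions~\ref{pro:TM1} and~\ref{pro:TM2} are \emph{additive} in the homological vector field. On the graded manifold underlying $T\M[-1]=\big(M,\Sym_\A(\Omega^1_\A[1]),\LL_Q\big)$, with $\A=\Sym_{\O_M}L^\vee$, I would consider the degree $1$ operator $\LL_Q+\iota_\tau$, where $\LL_Q$ is the homological vector field used in Proposition~\ref{pro:TM1} and $\iota_\tau$ is the contraction operator used in Proposition~\ref{pro:TM2}. The first step is to check that $\LL_Q+\iota_\tau$ is again homological: $\LL_Q^2=\tfrac12\LL_{[Q,Q]}=0$ since $Q$ is homological; $\iota_\tau^2=0$ since $\tau$ has degree $0$; and the graded Cartan identity gives $[\LL_Q,\iota_\tau]=\iota_{[Q,\tau]}$, which vanishes by hypothesis~\eqref{eq:tauQ}. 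Hence $(\LL_Q+\iota_\tau)^2=0$, so $\big(TL[-1],\LL_Q+\iota_\tau\big)$ is a dg manifold, defined with no reference to a connection.

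Next, fixing a connection $\nabla$ on $L$, I would use $\phi^\nabla$ from \eqref{eq:phinabla} (equivalently, the induced splitting of \eqref{notsplit}) to transport $\LL_Q+\iota_\tau$ to a homological vector field on the graded vector bundle $TM\,dt\oplus L\,dt\oplus L$ over $M$. Since pushforward along the fixed diffeomorphism $\phi^\nabla$ is $\rr$-linear, this transported field is the sum of the transports of $\LL_Q$ and of $\iota_\tau$; by Propositions~\ref{pro:TM1} and~\ref{pro:TM2} these are the homological vector fields with fiberwise operations $\mu$ and $\nu$ respectively, and each is tangent to the fibers of $TM\,dt\oplus L\,dt\oplus L\to M$. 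Therefore the sum is tangent to the fibers as well, so it defines a bundle of curved $L_\infty[1]$-algebras on $TM\,dt\oplus L\,dt\oplus L$; and since reading off the operations from a fiberwise homological vector field is again an $\rr$-linear operation, the resulting operations are $\nu+\mu$. This yields the asserted derived manifold structure.

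For the last assertion I would argue by naturality rather than recompute. The dg manifold $\big(TL[-1],\LL_Q+\iota_\tau\big)$ is intrinsic, and for two connections $\nabla,\bar\nabla$ the maps $\phi^\nabla$ and $\phi^{\bar\nabla}$ exhibit it as the graded vector bundle $TM\,dt\oplus L\,dt\oplus L$ equipped with $\nu^\nabla+\mu^\nabla$ and with $\nu^{\bar\nabla}+\mu^{\bar\nabla}$ respectively. Hence the change-of-splitting map $\Phi^{\bar\nabla,\nabla}=\phi^{\bar\nabla}\circ(\phi^\nabla)^{-1}$, namely the map appearing in \eqref{eq:CanonicalIsoTM[-1]}, is an isomorphism of derived manifolds between them; equivalently, $\Phi^{\bar\nabla,\nabla}$ simultaneously intertwines the $\mu$-structures (Proposition~\ref{pro:TM1}) and the $\nu$-structures (Proposition~\ref{pro:TM2}), hence intertwines their sums.

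I do not expect a serious obstacle here: the substance is contained in the two preceding propositions, and what remains is the additivity of the transfer together with the curvature equation on $TL[-1]$. The one place that requires genuine care is the sign bookkeeping in the identity $[\LL_Q,\iota_\tau]=\iota_{[Q,\tau]}$ in the $\zz$-graded setting, with $Q$ of degree $1$, $\tau$ of degree $0$, and the shift implicit in passing from $\Omega^1_\A$ to $\Omega^1_\A[1]$; once this is pinned down, the hypothesis $[\tau,Q]=0$ is precisely what kills the cross term and nothing more is needed.
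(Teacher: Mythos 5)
Your proposal is correct and follows essentially the same route as the paper: the paper likewise forms $\iota_\tau+\LL_Q$ on $TL[-1]$, checks $[\iota_\tau+\LL_Q,\iota_\tau+\LL_Q]=2[\iota_\tau,\LL_Q]=\iota_{[\tau,Q]}=0$ to get a dg manifold, and then transports via $\phi^\nabla$, deferring the tangency-to-fibers and canonicality claims to the proofs of Propositions~\ref{pro:TM1} and~\ref{pro:TM2}. Your additivity and naturality remarks just make explicit what the paper leaves implicit.
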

\begin{pf}
Consider the graded manifold $TL[-1]$. Both $\iota_\tau$  and $\tQQ$
are homological vector fields on $TL[-1]$. Now
\begin{eqnarray*}
[\iota_\tau+\tQQ, \ \iota_\tau+\tQQ]&=&2[\iota_\tau, \tQQ]\\
&=&2[\iota_\tau, \LL_Q]\\
&=&\iota_{[\tau, Q]}\\
&=&0.
\end{eqnarray*}
Therefore $\big( TL[-1], \iota_\tau+\tQQ \big)$ is indeed
 a dg manifold.  Choosing a  connection $\nabla$ on $L$,  one can identify $TL[-1]$
with $TM\, dt\oplus L\, dt\oplus L$ via the map $\phi^\nabla$ as
in \eqref{eq:phinabla}.
The rest of the proof follows exactly from that of
Proposition \ref{pro:TM1} and Proposition \ref{pro:TM2}.
\end{pf} 

\begin{rmk}
\label{rmk:tauQ}
Note that $\tau$ is degree $0$ vector field on $L$, while
$Q$ is of degree $1$. The commutator $[\tau , \ Q]$ is a
vector field of degree $1$. By $\phi_t$, we denote the (local)
flow on $L$   generated by $\tau$. Then \eqref{eq:tauQ}
is equivalent to saying that $\phi_t$ is a family of (local)
isomorphisms  of the dg manifold $(\M, Q)$. 
Since  $\tau$ is a linear vector field,
$\phi_t$ is a family automorphisms of the vector bundle $L$.
Then \eqref{eq:tauQ} is equivalent to saying that 
the  (local) flow $\phi_t$ is a family of isomorphisms  of the derived manifold
$\M=(M,L,\lambda)$.
\end{rmk}

\section{The derived path space}\label{sec:DerPathSp}

The purpose of what follows is to construct the derived path space of a
derived manifold. This will lead to a proof  of the factorization
property for derived manifolds.

\subsection{The case of a manifold}\label{sec:coam}

\subsubsection{Short geodesic paths}

We choose an affine connection in $M$, which gives rise to the notion of geodesic path in $M$, and defines an exponential map for $M$.

Let $I=(a,b)$ be an open interval containing $[0,1]$.  We will use $I$ as domain for all our paths. 

\begin{prop}
There exists a family of geodesic paths, parameterized by a manifold $P_gM$ 
\begin{equation}
\label{eq:geodesic}
P_gM\times I\longrightarrow M\,, (\gamma, t)\to \gamma (t) 
\end{equation}
such that, in the diagram
\begin{equation}
\label{eq:PgM}
\begin{split}
\xymatrix{
& P_gM\drto^{\gamma(0)\times \gamma(1)}\dlto_{\gamma(0)\times \gamma'(0)}\\
TM&& M\times M\\
&M\rlap{\,,}\ulto^0\uuto^{\text{\rm const}}\urto_\Delta}
\end{split}
\end{equation}
the two upper diagonal maps are open embeddings. We call $P_gM$ a manifold of {\bf short geodesic paths }in $M$. 
\end{prop}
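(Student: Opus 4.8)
The plan is to construct $P_gM$ as an open neighbourhood of the zero section $0_M \subseteq TM$ carrying the tautological family of short geodesics, and then to identify the two diagonal maps in \eqref{eq:PgM} with the inclusion $P_gM \hookrightarrow TM$ and with the map $(x,v)\mapsto(x,\exp_x v)$, the latter being an open embedding near $0_M$ by the tubular neighbourhood theorem.

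First I would fix the exponential map $\exp\colon \mathcal O \to M$, $(x,v)\mapsto \exp_x(v)$, of the chosen affine connection, defined on an open neighbourhood $\mathcal O$ of $0_M$ in $TM$, and then cut down $\mathcal O$ so that all the geodesics $t\mapsto\exp_x(tv)$ are defined on the whole interval $I=(a,b)$, not just at $t=1$. Concretely, the scaling map $s\colon TM\times[a,b]\to TM$, $s(x,v,t)=(x,tv)$, is continuous, $s^{-1}(\mathcal O)$ is open and contains $0_M\times[a,b]$, and since $[a,b]$ is compact the tube lemma yields an open $W$ with $0_M\subseteq W\subseteq TM$ and $W\times[a,b]\subseteq s^{-1}(\mathcal O)$. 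Then $\gamma_{(x,v)}(t):=\exp_x(tv)$ is a geodesic $I\to M$ for every $(x,v)\in W$, and $(x,v,t)\mapsto\gamma_{(x,v)}(t)$ is smooth by smooth dependence of geodesics on initial data; this is the candidate family \eqref{eq:geodesic}.

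Next I would analyze the two evaluation maps. The map $\gamma\mapsto(\gamma(0),\gamma'(0))$ sends $\gamma_{(x,v)}$ to $(x,v)$, so on $W$ it is literally the inclusion into $TM$, already an open embedding. The map $\Psi\colon W\to M\times M$, $\Psi(x,v)=(\gamma_{(x,v)}(0),\gamma_{(x,v)}(1))=(x,\exp_x v)$, has differential at a point $(x,0)$ of the zero section equal (in suitable coordinates, using $d(\exp_x)_0=\id$) to $(\dot x,\dot v)\mapsto(\dot x,\dot x+\dot v)$, which is invertible; hence by the inverse function theorem $\Psi$ is a local diffeomorphism on a neighbourhood of $0_M$, and $\Psi|_{0_M}$ is the injection $x\mapsto(x,x)$ onto the diagonal. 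I would then invoke the standard shrinking argument to produce an open $P_gM$ with $0_M\subseteq P_gM\subseteq W$ on which $\Psi$ is injective, hence an open embedding onto an open neighbourhood of $\Delta$; $P_gM$ with the restricted family \eqref{eq:geodesic} is the desired manifold of short geodesic paths. Commutativity of \eqref{eq:PgM} is then immediate: the constant path at $x$ is $\gamma_{(x,0)}$, so $\mathrm{const}\colon M\to P_gM$ is the zero section; composing it with $\gamma(0)\times\gamma'(0)$ gives $0$, composing it with $\gamma(0)\times\gamma(1)$ gives $\Delta$, and the remaining triangle commutes because the inclusion $P_gM\hookrightarrow TM$ respects zero sections.

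The hard part will be the shrinking step: passing from ``$\Psi$ is a local diffeomorphism near $0_M$ and injective on $0_M$'' to ``$\Psi$ is injective on some open neighbourhood of $0_M$''. This is exactly the content of the tubular neighbourhood theorem, and it is where second countability/paracompactness of $M$ enters — for instance one equips $M$ with a Riemannian metric and argues by contradiction with two sequences $(x_n,v_n)\neq(x_n',v_n')$ collapsing onto the zero section while $\Psi(x_n,v_n)=\Psi(x_n',v_n')$, contradicting local injectivity. All the remaining ingredients — the tube lemma, smooth dependence of geodesics on initial conditions, and the computation of $d\Psi$ along $0_M$ — are routine.
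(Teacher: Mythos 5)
Your proposal is correct and follows essentially the same route as the paper: both construct $P_gM$ as an open neighbourhood of the zero section in $TM$ on which the geodesics $t\mapsto\exp_x(tv)$ are defined for all $t\in I$, identify $\gamma\mapsto(\gamma(0),\gamma'(0))$ with the inclusion into $TM$, and check that $(x,v)\mapsto(x,\exp_x v)$ has invertible differential along the zero section so that, after shrinking, it is an open embedding onto a neighbourhood of the diagonal. You merely spell out two steps the paper leaves implicit (the tube-lemma shrinking that makes the geodesics defined on all of $I$, and the passage from local injectivity to injectivity near the zero section, which is the tubular neighbourhood argument the paper also uses, cf.\ its Lemma~\ref{lem:TopoLem}).
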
 
\begin{pf}
Let 
$V\subset TM$ be an open neighborhood of the zero section, where the exponential map is
defined. Replacing $V$ by a smaller open neighborhood, if necessary, we may assume $\exp(tv)$ is defined for any $t \in (a,b)$, $v \in V$. Then $V$ parameterizes a family of geodesic paths with domain $I$:
\begin{align*}
V\times I&\longrightarrow M\\
(x,v,t)&\longmapsto \gamma_{x,v}(t)=\exp_x(tv)\,.
\end{align*}
By considering the derivatives  
of the maps
$$V\longrightarrow  TM\,,\qquad (x,v)\longmapsto \big(\gamma_{x,v}(0),\gamma_{x,v}'(0)\big)\,,$$
and
$$V\longrightarrow M\times M\,,\qquad (x,v)\longmapsto \big(\gamma_{x,v}(0),\gamma_{x,v}(1)\big)\,,$$
at the zero section, we can find an open neighborhood $U\subset V$ of the zero section, such that both induced maps $U\to TM$ and $U\to M\times M$ are diffeomorphisms onto open submanifolds.
Then $U=P_gM$ works.
\end{pf}

For any $t\in I$, the map $\ev_t: P_g M\to M$, $t\mapsto \gamma (t)$,
(see \eqref{eq:geodesic}) is called the {\bf evaluation map}.

Note that as a  smooth manifold, $P_gM$ is diffeomorphic to an 
 open neighborhood of  the zero section of the tangent bundle $TM$
via the left  upper diagonal map in diagram \eqref{eq:PgM}.
Under such an identification,  the evaluation map
$\ev_0: P_gM\to M$ becomes the projection $ TM \to M$,  while
$\ev_1: P_gM\to M$ becoming the map $TM \to M, \ \ v_x\to\exp_x  v_x$.
The combination of upper diagonal maps in diagram \eqref{eq:PgM}
gives rise to a diffeomorphism from an open neighborhood of  the zero section of the tangent bundle $TM$ to an open neighborhood of the diagonal
of $M\times M$, known as tubular  neighborhood theorem of smooth manifolds.

\subsubsection{The derived path space of a manifold}

Over $P_gM$, we consider the vector bundle $P_\con TM$, whose fiber over the short 
geodesic path $\gamma$ is the set of covariant constant sections of
$\gamma^\ast TM$ over $I$.  We can identify $P_\con TM$
with the pullback of $TM$ to $P_g M$ via any evaluation
map $P_gM\to M$, see Lemma~\ref{pulcon}.

There is a canonical section of $P_\con TM$ over $P_gM$, which maps a path to its derivative, see Lemma~\ref{dersec}.  We denote it
by $D:P_gM\to P_\con TM$. Then 
$$\P=(P_gM,P_\con TM\,dt,D)$$ is a derived manifold of amplitude~1, the
{\bf derived path space }of $M$. (Here $dt$ is a formal variable of degree $+1$.) The classical locus of $\P$ is the
set of constant paths, as a geodesic path is constant if and only if
its derivative vanishes.

Mapping a point $P\in M$ to the constant path at $P$ gives rise to a
morphism of derived manifolds $M\to \P$. In fact, this map is a weak equivalence: It is
a bijection on classical loci because the classical locus of $\P$
consists of the constant paths.  To see that it is \'etale, consider a
point $P\in M$, and the corresponding constant path $a:I\to
M$. Then we have a short exact sequence of vector spaces
\begin{equation}\label{ses1}
TM|_P\longrightarrow T(P_gM)|_a\longrightarrow TM|_P\,.
\end{equation}
The first map is the map induced by $M\to P_gM$ on tangent spaces, the
second is the derivative of $D$, occurring in the definition of the
tangent complex of $\P$. To see that this sequence is exact, identify
$P_gM$ with an open neighborhood of the zero section in $TM$. This
identifies $T(P_gM)|_a$ with $TM|_P\times TM|_P$. The first map in
(\ref{ses1}) is then the inclusion into the first component. The second map is the
projection onto the second component. Hence (\ref{ses1}) is a short exact sequence, and $M\to \P$ is \'etale.

Moreover, the product of the two evaluation maps defines a fibration
of derived manifolds $\P\to M\times M$.  We have achieved the desired
factorization of the diagonal of $M$ via $\P$, proving that $\P$ is,
indeed, a path space for $M$ in the sense of a category of fibrant objects \cite{MR341469}.

\subsection{The general case}

\subsubsection{The derived  path space of a derived manifold}\label{sec:DrivedPathSp}

Now we extend the construction above   to a general derived manifold.
The main theorem is summarized below. 
See Section~\ref{sec:PathSpCntn} for the notations of $P_\con$ and $P_\lin$.

\begin{thm}
\label{thm:Seoul}
Let $\M=(M,L,\lambda)$ be a derived manifold. Choose an affine connection
on $M$ and a connection on the vector bundle $L$. There is an induced
derived manifold 
\begin{equation}
\label{eq:Cov19}
\P\M=\big(P_g M, P_\con (TM\oplus L)\,dt\oplus P_\lin L, \, \delta + \nu \big)\,,
\end{equation}
such that
\begin{enumerate}
\item 
the bundle map
\begin{equation}
\label{eq:UBC1}
\begin{split}
\xymatrix{
L
 \ar[d] \ar[r]^-{\iota} & P_\con (TM\oplus L)\,dt\oplus P_\lin L \ar[d] \\
M \ar[r]_-{\iota} & P_g M
}
\end{split}
\end{equation}
is a {\em linear} weak equivalence of derived manifolds  $\M \to \P\M$, where
$\iota$ stands for the natural inclusion map of constant paths.

\item The bundle map 
\begin{equation}
\label{eq:UBC2}
\begin{split}
\xymatrix@C=4pc{
P_\con (TM\oplus L)\,dt\oplus P_\lin L \ar[d] \ar[r]^-{(\ev_0\oplus \ev_1)\circ \pr} & \ev_0^*L \oplus \ev_1^*L \ar[d] \\
P_g M \ar[r]_-{\ev_0\times \ev_1} & M\times M
}
\end{split}
\end{equation}
is a {\em linear} morphism of derived manifolds  $\P\M\to\M\times\M$.

\end{enumerate}
Moreover, for any two choices of affine connections on $M$ and connections on $L$, there exists an open neighborhood $U$ of the constant paths in $P_gM$ such that the restrictions to $U$ of their corresponding derived path spaces are isomorphic. 
\end{thm}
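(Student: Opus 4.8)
The plan is to construct $\P\M$ by carrying out, in a fiberwise‐but‐smooth manner, the homotopy transfer theorem for curved $L_\infty[1]$-algebras along an explicit contraction, and then to read off properties (1) and (2) from the structure of the transferred operations.  First I would fix the two connections and form the (infinite-dimensional) dg manifold $T\M[-1]$ of Proposition~\ref{pro:TM3}, or rather its path-space incarnation: over the manifold $P_gM$ of short geodesic paths, the graded vector bundle whose fiber over $a:I\to M$ is $\Gamma\big(I,a^\ast(TM\oplus L)\big)\,dt\oplus\Gamma(I,a^\ast L)$, with the curved $L_\infty[1]$-operations coming from the de~Rham differential $dt\,\partial_t$ on $I$ together with the pulled-back operations $a^\ast\lambda$ (this is the AKSZ-type mapping space $\Map(TI[1],\M)$, cf.\ Appendix~\ref{sec:AKSZ}).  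Using the affine connection on $M$ (giving parallel transport along geodesics) and the connection on $L$, I would write down the standard contraction onto covariant-constant, resp.\ covariant-affine-linear, sections — projection = ``average along the geodesic'' / evaluate-and-parallel-transport, inclusion = parallel extension, homotopy = integrate along $t$ — exactly the contraction underlying $P_\con$ and $P_\lin$ in Section~\ref{sec:PathSpCntn}.  Applying the curved $L_\infty[1]$-transfer theorem \cite{ezra,MR1950958} fiberwise produces operations $\delta+\nu$ on the finite-rank bundle \eqref{grvb}; because the transfer formulas are universal finite sums over rooted trees built from parallel transport, integration, and the smooth operations $\lambda_k$, the resulting $\mu_k=( \delta+\nu)_k$ are smooth bundle maps over $P_gM$, and \eqref{grvb} is genuinely a vector bundle (isomorphic to $\ev_0^*(TM\oplus L)\,dt\oplus\ev_0^*L\oplus\ev_1^*L$ via the evaluation-and-parallel-transport trivialization).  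This establishes that $\P\M$ is a derived manifold.

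Next I would verify (1).  The inclusion $\iota$ of constant paths is a morphism of contractions in the obvious sense, hence by naturality of the transfer theorem it upgrades to an $L_\infty[1]$-morphism $\M\to\P\M$; since on the nose it sends $L|_P$ into the fiber over the constant path at $P$ by the constant (= parallel, for a constant path) section, it is \emph{linear}, i.e.\ $\iota_k=0$ for $k\geq2$.  It induces a bijection on classical loci: the Maurer–Cartan locus of $\P\M$ consists of constant paths lying over $\pi_0(\M)$, exactly as in the $L=0$ case treated in Section~\ref{sec:coam}, because the curvature component coming from $dt\,\partial_t$ forces the path to be covariant-constant, hence geodesically constant, and then the $a^\ast\lambda_0$ component forces the base point into $Z(\lambda_0)$.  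That it is étale is the computation of the tangent complex at a constant path $a$ at $P\in\pi_0(\M)$: the homotopy underlying the contraction is a chain homotopy between the identity on $T\M\resto_P$ regarded as sitting inside the (acyclic-modulo-$T\M\resto_P$) complex of sections, so the tangent complex of $\P\M$ at $a$ is canonically quasi-isomorphic — in fact, using the splitting \eqref{ses1} and its $L$-analogue, built from parallel transport — to $T\M\resto_P$, and $T\iota$ realizes this quasi-isomorphism.  For (2), the map $(\ev_0\oplus\ev_1)\circ\pr$ is manifestly a bundle map covering $\ev_0\times\ev_1:P_gM\to M\times M$; it is linear because it is literally a projection followed by evaluation (no higher components), and it is a morphism of $L_\infty[1]$-algebras because evaluation at $t=0$ and $t=1$ intertwines the transferred operations with $a^\ast\lambda$ at the two endpoints — this is again a naturality statement for the transfer applied to the ``evaluation'' morphism of the underlying $I$-complexes, so it requires only bookkeeping once the tree formulas are in hand.

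Finally, for the connection-independence clause I would argue as in the proof of Proposition~\ref{pro:TM1}: two choices $(\nabla^M,\nabla^L)$ and $(\bar\nabla^M,\bar\nabla^L)$ give two contractions of the \emph{same} infinite-dimensional dg manifold $\Map(TI[1],\M)$; the transfer theorem, applied to a homotopy between the two contractions, produces an $L_\infty[1]$-isomorphism between the two transferred structures whose linear part is the identity (after identifying both finite models with $\ev_0^*(TM\oplus L)\,dt\oplus\ev_0^*L\oplus\ev_1^*L$), and by Proposition~\ref{oneiso} any such morphism with invertible linear part is an isomorphism of derived manifolds.  The only point needing care is that the affine connection on $M$ also changes the manifold $P_gM$ itself (which geodesics are ``short'' depends on $\nabla^M$); this is why the statement only claims an isomorphism after restricting to a common open neighborhood $U$ of the constant paths — on such a $U$ both geodesic exponential maps are defined and the tubular-neighborhood identifications with a neighborhood of the diagonal in $M\times M$ agree up to a diffeomorphism fixing the diagonal, over which the $L$-part argument then runs.

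The main obstacle I expect is not any single calculation but making the ``fiberwise transfer is smooth over $P_gM$'' statement precise and clean: one must check that parallel transport along the geodesic family \eqref{eq:geodesic}, the integration operators, and their iterated compositions appearing in the tree formulas all vary smoothly with the path, so that each $\mu_k$ is a bona fide smooth bundle map — this is where the explicit, universal nature of the curved $L_\infty[1]$-transfer formulas is essential, and it is the place where, alternatively, one could invoke the diffeological transfer theorem of the appendix to avoid hand-checking smoothness.
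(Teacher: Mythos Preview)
Your overall strategy matches the paper's: construct the infinite-dimensional path space of $T\M[-1]$ over $P_gM$ via Proposition~\ref{pro:paris}, write down the Fiorenza--Manetti contraction $\eta$ onto $P_\con(TM\oplus L)\,dt\oplus P_\lin L$, and apply the curved $L_\infty[1]$-transfer theorem fiberwise; smoothness follows from the universal tree formulas.  That part is fine.

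There are, however, two places where your sketch is too optimistic.

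\textbf{Linearity in (1) and (2).}  For (1), saying that $\iota$ is linear ``by naturality of transfer'' glosses over a genuine computation: you must show that the transferred brackets $\nu_n$, when fed only elements of $P_\lin L$, land back in $P_\lin L$ and agree with $\pi_\lin(a^\ast\lambda_n)$.  The paper does this as Lemma~\ref{lem:ForSubalg}, using that $a^\ast\tilde\lambda+a^\ast\nabla\lambda$ vanishes on $\Gamma_\lin(I,a^\ast L)$ and an induction on the tree depth.  For (2), the paper \emph{defines} the map to $\M\times\M$ as the $L_\infty[1]$-composition $(\ev_0\oplus\ev_1)\circ\phi$ with the transferred inclusion $\phi$, which is automatically a morphism but \emph{a~priori} nonlinear; linearity then follows from the key observation $\im\eta\subset\ker(\ev_0\oplus\ev_1)$ (Proposition~\ref{wow1}).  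Your route---declare the linear map and then check it is a morphism---is viable but still requires Lemma~\ref{lem:lambda} or its equivalent; it is not merely ``bookkeeping''.

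\textbf{Canonicality.}  Here there is an actual gap.  You claim the comparison between the two transferred structures has linear part equal to the identity, and attribute the restriction to~$U$ solely to the change of $P_gM$ under a change of affine connection on~$M$.  This is not correct.  Even with $P_gM$ fixed, the paper's comparison map is $\Psi=\tilde\pi^{\bar\nabla}\bullet(a^\ast\Phi^{\bar\nabla,\nabla})\bullet\phi^\nabla$, whose linear part computes to $\Psi_1|_a=\pi^{\bar\nabla}\iota^\nabla$.  This is the identity at constant paths, but the paper gives an explicit example (with $M=\rr$, $L=\rr^2[-1]$, and a rotation connection) where $\Psi_1|_a$ has nontrivial kernel for $a(t)=t$.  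So the restriction to a neighborhood of constant paths is forced by the failure of $\pi^{\bar\nabla}\iota^\nabla$ to be invertible in general, not only by the dependence of $P_gM$ on $\nabla^M$.  Your ``homotopy between the two contractions'' idea does not produce a global isomorphism, and Proposition~\ref{oneiso} only applies once you know $\Psi_1$ is invertible, which you must establish by the open-condition argument near constant paths.
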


The derived manifold $\P\M$ in \eqref{eq:Cov19} is called the {\bf derived path space }of $\M$.

\begin{rmk}\label{rmk:DerPathSpAsVB}
As  graded vector bundles  over $P_g M$,
 $P_\con (TM\oplus L)\,dt\oplus P_\lin L$ is 
 isomorphic to $\ev_0^*(TM\oplus  L)\,dt\oplus \ev_0^*L \oplus \ev_1^*L$,
 where $\ev_0, \ev_1 : P_g M \to M$ are the
evaluation maps at $0$ and $1$, respectively. (When
$P_g M$ is identified with an open neighborhood of the diagonal
of $M\times M$,  $\ev_0$ and  $\ev_1$ correspond
to the projections to the first and the second component, respectively.)
Under this identification, the bundle map \eqref{eq:UBC1} becomes

\begin{equation}
\label{eq:UBC3}
\begin{split}
\xymatrix{
L
 \ar[d] \ar[r]^-{0\oplus \Delta} & \ev_0^*(TM\oplus  L)\,dt\oplus \ev_0^*L \oplus \ev_1^*L \ar[d] \\
M \ar[r]_-{\iota} & P_g M
}
\end{split}
\end{equation}
 where $\Delta: L \to \ev_0^*L \oplus \ev_1^*L$ is the diagonal map.
Similarly,  the bundle map \eqref{eq:UBC2} becomes

\begin{equation}
\label{eq:UBC4}
\begin{split}
\xymatrix{
\ev_0^*(TM\oplus  L)\,dt\oplus \ev_0^*L \oplus \ev_1^*L \ar[d] \ar[r]^-{\pr} & \ev_0^*L \oplus \ev_1^*L \ar[d] \\
P_g M \ar[r]_-{\ev_0\times \ev_1} & M\times M
}
\end{split}
\end{equation}
\end{rmk}

\begin{rmk}
Theorem \ref{thm:Seoul} (2) is equivalent to stating
that for $t= 0$ and  $1$,  the bundle map
$$
\xymatrix{
P_\con (TM\oplus L)\,dt\oplus P_\lin L\ar[d] \ar[r]^-{\ev_t \circ \pr} & L  \ar[d] \\
P_g M \ar[r]_-{\ev_t} & M
}
$$
is a {\em linear} morphism of derived manifolds  $\P\M\to\M$.
This, however, may be   false for arbitrary $t\in I$, which may
need a  morphism with  higher terms. 
\end{rmk}

Our approach is based  on path space construction of derived manifolds  
together with the $L_\infty[1]$-transfer theorem.
We will  divide the proof of  Theorem \ref{thm:Seoul}
into several steps, which are discussed in subsequent subsections.

\subsubsection{The path space of the shifted tangent bundle}

We first construct an infinite-dimensional curved $L_\infty[1]$-algebra over each path $a$ in $M$. The consideration of path spaces is motivated by AKSZ construction which is summarized in Appendix~\ref{sec:AKSZ}.

Here we prove Proposition~\ref{pro:paris} by  a  direct verification, 
without referring to the infinite dimensional derived manifolds of path spaces.
 The discussion in Appendix~\ref{sec:AKSZ}, however, gives a heuristic
reasoning as to where such a curved $L_\infty[1]$-structure formula comes from.

Recall from Proposition~\ref{pro:TM1} that, for each derived manifold $\M = (M,L,\lambda)$ and a connection $\nabla$ on $L$, we have an associated derived
 manifold, the shifted tangent bundle,
 $T\M[-1] = (M,T_M \, dt  \oplus L \, dt \oplus L, \mu)$.
The family of $L_\infty[1]$-operations is
$\mu=\lambda+\tilde\lambda+\nabla\lambda$.

For every path $a:I\to M$ in $M$, we get an induced curved $L_\infty[1]$-structure in the vector space
$\Gamma(I,a^\ast T\M[-1])=\Gamma(I,a^\ast(T_M\oplus L)\,dt\oplus a^\ast L)$, by pulling back $\mu$ via $a$.

The covariant derivative with respect to $a^\ast\nabla$ is  a linear map
$\delta:\Gamma(I,a^\ast L)\to \Gamma(I,a^\ast L)\,dt$.
That is, $\delta (l(t))=(-1)^{k} \, \nabla_{a' (t)}l(t)\, dt$, \, $\forall l(t)\in
\Gamma(I,a^\ast L^{k})$. 
As $\delta^2=0$, it induces the structure of a complex on
$\Gamma(I,a^\ast T\M[-1])$.

We also consider the derivative $a'\, dt \in \Gamma(I,a^\ast T_M)\,dt$.

\begin{prop}
\label{pro:paris}
The sum $a'\, dt \, +a^\ast\mu$ is a curved $L_\infty[1]$-structure
on the complex $\big(\Gamma(I,a^\ast T\M[-1]), \delta \big) = \big( \pathast ,    \delta\big)$.
\end{prop}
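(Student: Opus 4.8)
The plan is to verify the single master equation $(a'\,dt + a^\ast\mu)\circ(a'\,dt + a^\ast\mu) = 0$ on $\pathast$, by organizing the computation according to which of the four generating pieces of the operation is involved. Here the four pieces are: (1) the curvature-type term $a'\,dt$, living in $\Gamma(I, a^\ast T_M)\,dt$; (2) the part of $a^\ast\mu$ coming from $a^\ast\lambda$, which is fiberwise the pulled-back $L_\infty[1]$-structure on $L$; (3) the part coming from $a^\ast\tilde\lambda$; and (4) the part coming from $a^\ast(\nabla\lambda)$, which involves $\nabla_{a'(t)}$. The key observation that makes this manageable is that $\delta$ is exactly the covariant-derivative operator $l(t)\mapsto (-1)^{|l|}\nabla_{a'(t)}l(t)\,dt$, so the compatibility of $\delta$ with $a^\ast\mu$ is encoded in the already-established identity $\mu\circ\mu=0$ for $T\M[-1]$ (Proposition~\ref{pro:TM1}) differentiated along $a$.

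The main steps, in order, would be:
First, record that $a^\ast\mu\circ a^\ast\mu = 0$ already holds pointwise in $t\in I$, since for each fixed $t$ the triple $(T_M\oplus L\oplus L)|_{a(t)}\,dt$ with operations $\mu|_{a(t)}$ is a curved $L_\infty[1]$-algebra by Proposition~\ref{pro:TM1}; pulling back along $a$ preserves this. So the terms in $(a'\,dt+a^\ast\mu)^{\circ 2}$ that involve only $a^\ast\mu$ cancel among themselves.
Second, analyze the cross terms between $a'\,dt$ (inserted in one slot) and $a^\ast\mu$. Because $a'\,dt$ is a degree-$1$ element of $\Gamma(I,a^\ast T_M)\,dt$, and because the $T_M\,dt$-component of $\mu$ acts only through the connection terms $\tilde\lambda$ and $\nabla\lambda$ (recall from Proposition~\ref{pro:TM1} that the $\Omega^1(M)[1]$-directions contribute nothing), inserting $a'(t)\,dt$ into $a^\ast(\nabla\lambda)_{k+1}$ reproduces precisely $\delta$ applied to $a^\ast\lambda_k$, with signs matching the definition $\delta(l) = (-1)^{|l|}\nabla_{a'}l\,dt$. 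This is the heart of the matter: the bracket of the curvature $a'\,dt$ with the operations of $a^\ast\mu$ is what converts the static fiberwise structure into the complex $(\pathast,\delta)$ plus corrections.
Third, check the remaining purely curvature-type contribution: $[a'\,dt, a'\,dt]$-type terms vanish for degree/symmetry reasons (both entries are the same section, degree $1$, in the abelian directions $T_M\,dt$), and the ``$\delta^2=0$'' claim is just $\nabla_{a'}\nabla_{a'} = 0$ on sections over the one-dimensional domain $I$ — there is no curvature obstruction because everything is pulled back to the interval. Assembling steps one through three, every term in the expansion either cancels against a counterpart or is absorbed into the identity $\mu\circ\mu=0$ pulled back along $a$, giving the result.

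The step I expect to be the main obstacle is the second one: carefully matching the signs and the combinatorial coefficients (sums over insertion positions, the Koszul signs $(-1)^{|x_{i+1}|+\cdots+|x_k|}$ appearing in the definitions of $\tilde\lambda$ and $\nabla\lambda$) so that the cross terms between $a'\,dt$ and $a^\ast(\nabla\lambda)$, together with the $\tilde\lambda$-contributions, reorganize exactly into $\delta\circ a^\ast\lambda \pm a^\ast\lambda\circ\delta$ with the correct sign. This is bookkeeping rather than conceptual difficulty, and the cleanest way to bypass most of it is the alternative argument indicated in the text: recognize $a'\,dt + a^\ast\mu$ as the homological vector field obtained by pulling back $\iota_{\partial_t} + \widehat{Q}[-1]$ along the evaluation map $I \times \{a\} \to M$ in the AKSZ picture of Appendix~\ref{sec:AKSZ}, where $\partial_t$ generates translations on $TI[1]$ and hence automatically squares to zero. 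I would present the direct verification as the main proof (as the statement of the proposition requests) but invoke the AKSZ heuristic of the appendix as the conceptual explanation of why the signs must work out.
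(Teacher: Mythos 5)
Your plan is correct and takes essentially the same route as the paper's proof: expand the master equation, note that every term vanishes either for degree reasons or because $a^\ast(\mu\circ\mu)=0$ (pulled back pointwise in $t$, as you say), except for the three terms $\delta\circ a^\ast\lambda + a^\ast\tilde\lambda\circ\delta + a^\ast\nabla\lambda\circ a'\,dt$, whose sum vanishes by the Leibniz rule defining $\nabla_{a'}\lambda_n$. One small imprecision in your second step: inserting $a'\,dt$ into $a^\ast(\nabla\lambda)_{k+1}$ yields $(\nabla_{a'}\lambda_k)(x_1,\ldots,x_k)\,dt$ rather than $\delta\bigl(a^\ast\lambda_k(x_1,\ldots,x_k)\bigr)$ — the discrepancy is exactly the $a^\ast\tilde\lambda\circ\delta$ terms you fold in afterwards, so the cancellation closes as you expect.
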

\begin{pf}
The condition we need to check is
$$(\delta+a'\, dt +a^\ast\mu)\circ(\delta+a'\, dt +a^\ast\mu)=0\,.$$
This reduces  to the condition
$$\delta\circ a^\ast\lambda+a^\ast\tilde\lambda\circ\delta+ a^\ast\nabla\lambda\circ
a'\, dt=0\,,$$
as all other terms vanish, either for degree reasons, or because
$\mu\circ\mu=0$. 

Let $x_1,\ldots,x_n$, $n\geq0$, be homogeneous elements  in $\Gamma(I,a^\ast
L)$. We need to prove that
  \begin{multline*}
\delta\big((a^\ast\lambda_n)(x_1,\ldots,x_n)\big) \\
+ \sum_i(-1)^{|x_i|(1+|x_1|+ \ldots + |x_{i-1}|)}(a^\ast\tilde \lambda_n)(\nabla_{a'} x_i\,dt,x_1,\ldots,\hat x_i,\ldots,
x_n) \\
+(a^\ast\nabla\lambda_{n})(a'\, dt ,x_1,\ldots,x_n)=0\,.
\end{multline*}
Using the definitions of $\tilde \lambda$, and $\nabla\lambda$, and
canceling a factor of $dt$, we are reduced to
\begin{multline*}
(-1)^{1+|x_1|+ \ldots + |x_n|}\nabla_{a'} \big(\lambda_n(x_1,\ldots,x_n)\big) \\
+ \sum_i(-1)^{|x_i|(|x_1|+ \ldots + |x_{i-1}|) + |x_1|+ \ldots + |x_n|}(a^\ast\tilde \lambda_n)(\nabla_{a'} x_i,x_1,\ldots,\hat x_i,\ldots,
x_n) \\
+(-1)^{|x_1|+ \ldots + |x_n|}(\nabla_{a'} \lambda_{n} )(x_1,\ldots,x_n)=0\,.
\end{multline*}
which is true by the definition of $\nabla_{a'} \lambda_{n}$. 
\end{pf}

The diagram below describes all operations with 0 or 1 inputs: 
$$\xymatrix@C=4pc{
a'\in \Gamma(I,a^\ast TM)\,dt\rto^-{a^\ast \nabla\lambda_0} &
\Gamma(I,a^\ast L^1)\,dt\rto^-{a^\ast\tilde\lambda_1}
&
\Gamma(I,a^\ast L^2)\,dt\rto^-{a^\ast\tilde\lambda_1}
&\ldots\\
a^\ast\lambda_0\in \Gamma(I,a^\ast L^1)\urto^-\delta\rto_-{a^\ast\lambda_1}  &
\Gamma(I,a^\ast L^2)\urto^-\delta\rto_-{a^\ast\lambda_1}& \Gamma(I,a^\ast L^3)\urto^-\delta\rto_-{a^\ast\lambda_1} &\ldots  }$$

\begin{rmk}
When the base manifold  $M$ is a point $\{*\}$,  a derived
manifold is simply a curved $L_\infty [1]$-algebra $({\mathfrak g}, \lambda)$.
One can check that the  curved $L_\infty [1]$-algebra described in
Proposition \ref{pro:paris}
 can be identified with the one on
 $\Omega (I)\otimes {\mathfrak g}$ induced from $({\mathfrak g}, \lambda)$
by tensoring the cdga $(\Omega (I), d_{DR})$, where   $d_{DR}$ stands
for the de Rham differential.
We are thus reduced to the case studied by Fiorenza-Manetti  \cite{FioMan}
(they consider $L_\infty$-algebras without curvatures and differential
forms of polynomial functions). 
\end{rmk}

\subsubsection{Restriction to short geodesic paths}

We now choose an affine connection in $M$, and restrict to the case where $a\in P_gM$ is a short geodesic path. 
Write $\Gamma_\con(I,a^\ast TM)$ for the subspace of covariant constant sections of $a^\ast TM$ over $I$. Then $D(a) = a'\, dt\, \in
\Gamma_\con(I,a^\ast TM)\,dt$, and we have a curved
$L_\infty[1]$-subalgebra 
$$\widetilde P T\M[-1]|_a := \Gamma_\con(I,a^\ast TM)\,dt\oplus\Gamma(I,a^\ast L)\,dt\oplus \Gamma(I,a^\ast L)$$ 
in $\Gamma(I,a^\ast TM)\,dt\oplus\Gamma(I,a^\ast L)\,dt\oplus \Gamma(I,a^\ast L)$.

\subsubsection{The contraction and transfer}

Following the Fiorenza-Manetti method  \cite{FioMan}, we define 
\begin{align*}
\eta: \Gamma(I,a^\ast L^k)\,dt&\longrightarrow \Gamma(I,a^\ast L^k)\\
\alpha(t)\,dt&\longmapsto(-1)^{k} \Big(\int_0^t\alpha(u)\,du-t\int_0^1\alpha(u)\,du\,\Big).
\end{align*}
The connection $\nabla$
trivializes the vector bundle $a^\ast L$ over the one-dimensional
manifold $I$,
i.e., we have a canonical identification $a^\ast L\cong I\times V$, where 
$V=\Gamma_\con(I,a^\ast L)$. This identifies $\Gamma(I,a^\ast L)$ with
$C^\infty(I,V)$. Then $\int \alpha(u)\,du$ is the Riemann integral of
the vector valued function $\alpha$ defined on the open interval $I$. 

We consider $\eta$ as a linear endomorphism of degree $-1$ in the graded vector space $\widetilde P T\M[-1]|_a$. 
 We check that
\begin{items}
\item $\eta^2=0$,
\item $\eta \delta \eta=\eta$. 
\end{items}
Therefore,  $\delta\eta$ and $\eta\delta$ are orthogonal idempotents in $\widetilde P T\M[-1]|_a$,
and hence decompose this space into a direct sum
$$\widetilde P T\M[-1]|_a =H\oplus \im \delta\eta\oplus \im \eta\delta\,,\qquad H=\ker
\delta\eta\cap \ker \eta \delta\,.$$ 
The differential $\delta$ preserves $H$.  The  inclusion $\iota:H\to
\widetilde P T\M[-1]|_a$, and the projection
$\pi=1-[\delta,\eta]:\widetilde P T\M[-1]|_a \to H$ set up a homotopy 
equivalence between $(H,\delta)$ and $(\widetilde P T\M[-1]|_a, \delta)$.

The transfer theorem for curved $L_\infty[1]$-algebras, Proposition~\ref{transfertheorem}, gives rise to a
curved $L_\infty[1]$-structure on $(H,\delta)$, and a morphism of curved
$L_\infty[1]$-algebras from $(H,\delta)$ to $(\widetilde P T\M[-1]|_a ,\delta)$.  To apply the transfer theorem, we use the filtration given by  $F_k=\Gamma(I,a^\ast L^{\geq k}\oplus L^{\geq k}\,dt))$, for $k\geq1$.

The kernel of $\delta\eta$ on $\Gamma(I,a^\ast L)\,dt$ is the space of
covariant constant sections  $V\,dt=\Gamma_\con(I,a^\ast L)\,dt$.

The kernel of $\eta\delta$ on $\Gamma(I,a^\ast L)$ is the
space of {\em linear }sections, denoted $\Gamma_\lin(I,a^\ast
L)$. These are the maps $s:I\to V$ which interpolate linearly between $s(0)$
and $s(1)$, i.e., which satisfy the equation
$$s(t)=(1-t)\,s(0)+t\,s(1)\,,\qquad\text{for all $t\in I$\,.}$$

The projection $\pi_\lin:\Gamma(I,a^\ast L)\to \Gamma_\lin(I,a^\ast L)$ maps the path
$\alpha$ to the linear path with the same start and end point:
\begin{equation}\label{eq:piLin}
\pi_\lin(\alpha)(t)=(1-t)\alpha(0)+t\,\alpha(1)\,.
\end{equation}
The projection $\pi_\con:\Gamma(I,a^\ast L)\,dt\to
\Gamma_\con(I,a^\ast L)\, dt$ maps the
path $\alpha\, dt$ to its integral:
\begin{equation}\label{eq:piCon}
\pi_\con(\alpha\, dt)=\Big( \int_0^1\alpha( u )\,du\,\Big) \,  dt \, .
\end{equation}

To sum up, for  each $a \in P_gM$, we have  a curved $L_\infty[1]$-algebra 
\begin{equation}\label{eq:FibDerPathSp}
\big(\Gamma_\con(I,a^\ast TM) \, dt  \oplus \Gamma_\con(I, a^\ast L)\,dt \oplus \Gamma_\lin(I,a^\ast L), \, \delta + \nu \big)
\end{equation}
together with $L_\infty[1]$-morphisms
\begin{eqnarray} 
&&\phi: \Gamma_\con(I,a^\ast TM) \, dt  \oplus \Gamma_\con(I, a^\ast L)\,dt \oplus \Gamma_\lin(I,a^\ast L) \longrightarrow   \widetilde P T\M[-1]|_a \qquad \label{eq:KIAS1}\\
&&\tilde{\pi}: \widetilde P T\M[-1]|_a \longrightarrow \Gamma_\con(I,a^\ast TM) \, dt  \oplus \Gamma_\con(I, a^\ast L)\,dt \oplus \Gamma_\lin(I,a^\ast L) \qquad \label{eq:KIAS2}
\end{eqnarray}
obtained from the homotopy transfer theorem. See Proposition~\ref{transfertheorem}.

\subsubsection{The $L_\infty[1]$-operations}

We apply Proposition~\ref{transfertheorem} to compute the zeroth and first
$L_\infty[1]$-operations in \eqref{eq:FibDerPathSp}. 

Recall that we transfer the $L_\infty[1]$-structure $\delta + a'\, dt \, + a^\ast\lambda +  a^\ast \tilde\lambda + a^\ast \nabla\lambda$ on $\widetilde P T\M[-1]|_a$ to an $L_\infty[1]$-structure $\delta + \nu$ on $\Gamma_\con(I,a^\ast TM) \, dt  \oplus \Gamma_\con(I, a^\ast L)\,dt \oplus \Gamma_\lin(I,a^\ast L)$. The transfer is via the homotopy operator $\eta$, and the associated projection is $\pi = \pi_\con + \pi_\lin$ as defined in \eqref{eq:piLin} and \eqref{eq:piCon}. 
All the $L_\infty[1]$-operations and morphisms are considered as maps 
$\Sym E \to F$, where $E$ and $F$ denote the source and target spaces of the $L_\infty[1]$-operations/morphisms, respectively. 
The compositions are denoted by $\bullet$ or $\circ$ 
as defined in Appendix~\ref{linfty}.

The transferred curvature is 
\begin{equation}\label{eq:TransferredCurvature}
\pi(\mu_0)=a'\, dt \, +\pi_\lin (a^\ast \lambda_0)\,.
\end{equation}
Here the second term is the linearization of the curvature: the linear path
from the starting point of the curvature to the end point of the
curvature along the geodesic path $a$.

Let $\phi$ be the $L_\infty[1]$-morphism \eqref{eq:KIAS1}
 induced from the homotopy transfer, 
and $\iota: \Gamma_\con(I,a^\ast TM \oplus a^\ast L)\,dt \oplus \Gamma_\lin(I,a^\ast L) \to   \widetilde P T\M[-1]|_a$  the inclusion map. Since the homotopy operator
 $\eta$ vanishes on $\Gamma(I,a^\ast L)$ and $\Gamma(I,a^\ast TM )dt$, it follows
that $\eta(a'\, dt \, + a^\ast\lambda) \bullet \phi =0$. Hence the homotopy transfer equation \eqref{recu} reduces to 
\begin{equation}\label{eq:TransInc}
\phi   
= \iota - \eta \, (a^\ast \tilde\lambda) \bullet \phi - \eta\, (a^\ast \nabla\lambda)\bullet \phi.
\end{equation}
The transferred $L_\infty[1]$-structure is $\delta+ \nu$, where 
\begin{equation}\label{eq:TransLoo}
\nu = \pi\, (a'\, dt \, + a^\ast \lambda + a^\ast\tilde\lambda + a^\ast\nabla\lambda )\bullet \phi.
\end{equation}

\begin{lem}
\label{lem:lambda}
$
\pi(a^\ast \lambda)\bullet \phi 
= \pi(a^\ast \lambda)\bullet \iota.
$ 
\end{lem}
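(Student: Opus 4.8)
The plan is to exploit the explicit form of the transferred $L_\infty[1]$-morphism $\phi$ together with the fact that the terms $a^\ast\lambda_k$ depend only on the $L$-components of their arguments, not on the $TM\,dt$ or $L\,dt$ components. First I would recall from \eqref{eq:TransInc} that
$$\phi = \iota - \eta\,(a^\ast\tilde\lambda)\bullet\phi - \eta\,(a^\ast\nabla\lambda)\bullet\phi\,,$$
so that $\phi$ and $\iota$ differ only by terms lying in the image of $\eta$. Since $\eta$ maps $\Gamma(I,a^\ast L)\,dt \to \Gamma(I,a^\ast L)$, the correction term $\phi - \iota$ takes values in the $\Gamma(I,a^\ast L)$-summand (the ``undecorated'', degree-$L$ part). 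The key observation is then that $a^\ast\lambda$ only sees the $\Gamma(I,a^\ast L)$-component of its inputs, and $\pi(a^\ast\lambda) = \pi_\lin(a^\ast\lambda)$ lands in $\Gamma_\lin(I,a^\ast L)$.

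The main step is to show that feeding the correction $\phi - \iota$ into $\pi(a^\ast\lambda)\bullet(\,\cdot\,)$ produces zero. Concretely, when we expand $\pi(a^\ast\lambda)\bullet\phi$ as a sum over ways of distributing inputs, each term that uses at least one ``corrected'' slot (i.e.\ a component of $\phi - \iota = -\eta(a^\ast\tilde\lambda)\bullet\phi - \eta(a^\ast\nabla\lambda)\bullet\phi$) must vanish. I would argue this by a degree/grading bookkeeping: the elements of $\im\eta$ lie in $\Gamma(I,a^\ast L)$ but — after composition with $\pi_\lin \circ a^\ast\lambda$ — one checks these contributions cancel, using that $\pi_\lin\eta = 0$ (the linear projection of an $\eta$-image vanishes, since $\eta\alpha$ has vanishing endpoints by the very formula $\eta(\alpha\,dt)(t) = (-1)^k(\int_0^t\alpha - t\int_0^1\alpha)$, so $\eta\alpha(0) = \eta\alpha(1) = 0$, hence $\pi_\lin(\eta\alpha) = 0$). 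This is the identity that makes everything collapse: $\pi(a^\ast\lambda)$ factors through $\pi_\lin$, and $\pi_\lin$ kills exactly the correction terms by which $\phi$ differs from $\iota$.

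So the argument runs: expand $\pi(a^\ast\lambda)\bullet\phi$; substitute $\phi = \iota + (\phi-\iota)$ with $\phi - \iota \in \im\eta \subset \ker\pi_\lin$; observe that in every summand a corrected input is hit (through the $\bullet$-composition and the structure of $a^\ast\lambda$) by an operation whose output is run through $\pi_\lin$, which annihilates it; conclude $\pi(a^\ast\lambda)\bullet\phi = \pi(a^\ast\lambda)\bullet\iota$. The main obstacle I anticipate is the combinatorial bookkeeping of the $\bullet$-composition of $L_\infty[1]$-morphisms: one has to be careful that it is genuinely the \emph{output} of each internal $a^\ast\lambda$-application that passes through a projector killing $\im\eta$, rather than merely an intermediate slot — this requires tracking how $\pi = \pi_\con + \pi_\lin$ acts on the various summands $\Gamma_\con(I,a^\ast TM)\,dt$, $\Gamma_\con(I,a^\ast L)\,dt$, $\Gamma_\lin(I,a^\ast L)$ and checking that $a^\ast\lambda$, whose image lies in $\Gamma(I,a^\ast L)$ (no $dt$), is composed only with $\pi_\lin$. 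Once that is set up cleanly, the vanishing is immediate from $\pi_\lin\eta = 0$.
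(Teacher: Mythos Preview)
Your approach is the same as the paper's, and your identification of the key fact $\pi_\lin\eta=0$ (i.e., elements of $\im\eta$ vanish at $t=0$ and $t=1$) is exactly right. However, you have not quite closed the argument: knowing $\pi_\lin\eta=0$ is not by itself enough to conclude $\pi_\lin\big(a^\ast\lambda_n(\ldots,\eta(\text{sth}),\ldots)\big)=0$, which is what you actually need.

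The missing observation is that $a^\ast\lambda_n$ is applied \emph{pointwise in $t$} and is multilinear in each fiber. Hence if even one input vanishes at $t=0$ and $t=1$, the output $a^\ast\lambda_n(\ldots)$ also vanishes at $t=0$ and $t=1$; since $\pi_\lin$ is linear interpolation between the values at $0$ and $1$, the projection is then zero. This is the bridge between ``$\im\eta$ has vanishing endpoints'' and ``$\pi(a^\ast\lambda)\bullet\phi$ sees only the $\iota$-part.'' Once you state this, the combinatorial bookkeeping you worry about in your last paragraph disappears: every summand of $(a^\ast\lambda)\bullet\phi$ that uses at least one $\phi-\iota$ slot has an input in $\im\eta$, hence the output of $a^\ast\lambda_n$ vanishes at both endpoints, hence $\pi_\lin$ kills it. There is no cancellation to track --- each such term vanishes individually.
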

\begin{pf}
The output of $\eta$ consists of sections $\alpha\in\Gamma(I,a^\ast L)$
 such that $\alpha(0)=\alpha(1)=0$.
For any $n\geq 1$, by multi-linearity of $\lambda_n$,
  $(a^\ast \lambda_n)\big( \ldots, \eta(sth), \ldots\big)(t)=0$, for $t=0$ and
$t=1$.
  As the projection onto $P_\lin L$ is the linear interpolation, 
it follows that $\pi(a^\ast \lambda)\big( \ldots, \eta(sth), \ldots\big)=0$.
 Therefore, by  \eqref{eq:TransInc} and the definition of $\bullet$, 
\begin{align*}
\pi(a^\ast \lambda)\bullet \phi & = \pi(a^\ast \lambda)\bullet \iota + \pi(a^\ast \lambda)\big( \ldots, \eta(sth), \ldots\big) \\
& = \pi(a^\ast \lambda)\bullet \iota.
\end{align*}
\end{pf}

In particular, if we are given only one input, the recursive equation \eqref{eq:TransInc} reduces to 
\begin{equation}\label{eq:FirstMapInclusion}
\phi_1  = \iota - \eta \, (a^\ast \tilde\lambda_1 + a^\ast \nabla\lambda_0) \phi_1,
\end{equation}
and thus 
\begin{align*}
\nu_1 & = \pi\, ( a^\ast \lambda_1) \phi_1 + \pi\, (a^\ast\tilde\lambda_1 + a^\ast\nabla\lambda_0 ) \phi_1 \\
& = \pi\, ( a^\ast \lambda_1) \iota + \pi\, (a^\ast\tilde\lambda_1 + a^\ast\nabla\lambda_0 ) \iota + \pi\, (a^\ast\tilde\lambda_1 + a^\ast\nabla\lambda_0 ) \big(\eta(sth)\big) \\
& = \pi\, ( a^\ast \lambda_1) \iota + \pi\, (a^\ast\tilde\lambda_1 + a^\ast\nabla\lambda_0 ) \iota \\
& = \pi_\lin \, a^\ast \lambda_1 + \pi_\con \, a^\ast\tilde\lambda_1 + \pi_\con \, a^\ast\nabla\lambda_0. 
\end{align*}
Here we applied  Lemma \ref{lem:lambda} in  the second equality.

The operations with $0$ and $1$ inputs are summarized as follows.
\begin{equation}\label{eq:01inputs}
\begin{split}
\xymatrix@C=4pc{
a'\, dt \in  \Gamma_\con(I,a^\ast TM)\, dt\rto^-{\pi_\con a^\ast(\nabla\lambda_0)} &
\Gamma_\con(I,a^\ast L^1)\,dt \rto^-{\pi_\con a^\ast\tilde\lambda_1 }
&\ldots\\
 \pi_\lin a^\ast\lambda_0\in
\Gamma_\lin(I,a^\ast L^1)\ar[ru]_-{\delta}\rto_-{\pi_\lin
  a^\ast\lambda_1} & 
\Gamma_\lin(I,a^\ast L^2)\ar[ru]_-{\delta}\rto_-{\pi_\lin a^\ast\lambda_1} & \ldots 
}
\end{split}
\end{equation}

%

We need the following:

\begin{lem}\label{lem:ForSubalg}
For any $v_1, \ldots, v_n \in \Gamma_\lin(I,a^\ast L)$, we have 
\begin{items}
\item\label{term:TransIncProp}
$\phi_1(v_1) = \iota(v_1)$, $\phi_n(v_1, \ldots, v_n) = 0, \; \forall \, n \geq 2$;
\item
$\nu_n(v_1, \ldots, v_n) = \pi_\lin \big((a^\ast\lambda_n)(v_1, \ldots, v_n)\big),  \; \forall \, n \geq 1$.
\end{items}
\end{lem}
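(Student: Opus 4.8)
The plan is to derive both statements formally from the recursive transfer identities \eqref{eq:TransInc} and \eqref{eq:TransLoo}, once one records how the three operations $a^\ast\lambda$, $a^\ast\tilde\lambda$, $a^\ast\nabla\lambda$ and the homotopy $\eta$ interact with the decomposition $\widetilde P T\M[-1]|_a=\Gamma_\con(I,a^\ast TM)\,dt\oplus\Gamma(I,a^\ast L)\,dt\oplus\Gamma(I,a^\ast L)$. I write a general element as $\xi=w\,dt+y\,dt+x$ with respect to this splitting.

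First I would extract the following facts from Proposition~\ref{pro:TM1} and the definition of $\eta$: (a) $a^\ast\lambda$ uses only the $x$-components of its arguments and is valued in the $\Gamma(I,a^\ast L)$-summand; (b) $a^\ast\tilde\lambda$ uses only the $y$-components and is valued in the $\Gamma(I,a^\ast L)\,dt$-summand; (c) $a^\ast\nabla\lambda$ uses only the $w$-components and is valued in the $\Gamma(I,a^\ast L)\,dt$-summand; (d) $\eta$ annihilates the $\Gamma_\con(I,a^\ast TM)\,dt$- and the $\Gamma(I,a^\ast L)$-summands and maps the $\Gamma(I,a^\ast L)\,dt$-summand into the $\Gamma(I,a^\ast L)$-summand. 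From these I would isolate the two consequences that drive the argument: if $\xi$ has vanishing $w$- and $y$-components --- in particular if $\xi=\iota(v)$ with $v\in\Gamma(I,a^\ast L)$ --- then the one-input components of $a^\ast\tilde\lambda$ and of $a^\ast\nabla\lambda$ kill $\xi$; and for any $\xi$, the element $\eta\big(a^\ast\tilde\lambda+a^\ast\nabla\lambda\big)(\xi)$ lies in the $\Gamma(I,a^\ast L)$-summand. (A preliminary application of the second consequence to \eqref{eq:TransInc} in arity $0$ also gives $\phi_0=0$, so only partitions into positive parts occur in the $\bullet$-products below.)

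Part (i) I would prove by strong induction on $n$. For $n=1$: writing $z=\phi_1(v_1)$, equation \eqref{eq:FirstMapInclusion} reads $z=\iota(v_1)-\eta\big(a^\ast\tilde\lambda_1+a^\ast\nabla\lambda_0\big)(z)$; by (d) the right-hand side lies in $\Gamma(I,a^\ast L)$, hence the $w$- and $y$-components of $z$ vanish, the correction term is $0$ by (b)--(c), and $z=\iota(v_1)$. For $n\geq2$: expand \eqref{eq:TransInc} in arity $n$ on $v_1,\dots,v_n\in\Gamma_\lin(I,a^\ast L)$. The terms of the $\bullet$-product with a single $\phi$-factor sum to $-\eta\big(a^\ast\tilde\lambda+a^\ast\nabla\lambda\big)_1\big(\phi_n(v_1,\dots,v_n)\big)$, while every term with two or more $\phi$-factors feeds $a^\ast\tilde\lambda$ or $a^\ast\nabla\lambda$ an argument of the form $\phi_1(v_j)=\iota(v_j)$ (all $\phi_i$ with $i\geq2$ vanishing by the inductive hypothesis), hence vanishes by (b)--(c). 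Thus $z=\phi_n(v_1,\dots,v_n)$ satisfies $z=-\eta\big(a^\ast\tilde\lambda+a^\ast\nabla\lambda\big)_1(z)$, which by the same reasoning as in the case $n=1$ forces $z\in\Gamma(I,a^\ast L)$ and then $z=0$.

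Part (ii) then follows by substituting (i) into \eqref{eq:TransLoo}: evaluating $\nu_n$ on $v_1,\dots,v_n$ with $n\geq1$, the only surviving term of $\pi\big(a'\,dt+a^\ast\lambda+a^\ast\tilde\lambda+a^\ast\nabla\lambda\big)\bullet\phi$ is the one in which every $\phi$-factor equals $\phi_1=\iota$ (the piece $a'\,dt$ being $0$-ary, and the $\phi_i$ with $i\geq2$ vanishing on $\Gamma_\lin$-inputs), which equals $\pi\big[\big(a^\ast\lambda+a^\ast\tilde\lambda+a^\ast\nabla\lambda\big)_n(\iota v_1,\dots,\iota v_n)\big]$, the subscript denoting the arity-$n$ component. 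Since each $\iota v_i$ has vanishing $w$- and $y$-components, (b)--(c) remove the $a^\ast\tilde\lambda$- and $a^\ast\nabla\lambda$-contributions, leaving $\pi\big((a^\ast\lambda_n)(v_1,\dots,v_n)\big)$; as this lies in the $\Gamma(I,a^\ast L)$-summand, on which $\pi$ restricts to $\pi_\lin$, we obtain $\nu_n(v_1,\dots,v_n)=\pi_\lin\big((a^\ast\lambda_n)(v_1,\dots,v_n)\big)$. The one place demanding care is the arity-$n$ bookkeeping of \eqref{eq:TransInc} in the inductive step: one must cleanly separate the ``diagonal'' contribution $\big(a^\ast\tilde\lambda+a^\ast\nabla\lambda\big)_1\circ\phi_n$ --- treated by the summand argument above without any prior knowledge of $\phi_n$ --- from the strictly lower-arity contributions handled by induction. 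Because every $\phi$-factor that survives the reduction is the strict inclusion $\iota$, the Koszul signs and combinatorial coefficients generated by $\bullet$ all trivialize, so no separate sign computation is needed, and no analytic input beyond the already established properties $\eta^2=0$ and $\eta\delta\eta=\eta$ is required.
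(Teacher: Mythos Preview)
Your proof is correct and follows the same route as the paper: both use the recursion \eqref{eq:TransInc}--\eqref{eq:TransLoo}, the vanishing of $a^\ast\tilde\lambda+a^\ast\nabla\lambda$ on inputs lying in $\Gamma(I,a^\ast L)$, and induction on arity. Your handling of the self-referential ``diagonal'' term $(a^\ast\tilde\lambda+a^\ast\nabla\lambda)_1\circ\phi_n$ via a fixed-point argument is actually more explicit than the paper's (which phrases the induction through an intermediate claim that $\phi_n(\Gamma_\lin^{\otimes n})\subset\Gamma_\lin$); the one imprecision is that your (b)--(c) are literally too strong---$\tilde\lambda_k$ and $(\nabla\lambda)_{k+1}$ also depend on the $x$-components of their inputs---but you only ever use the correct weaker fact that they vanish when all $y$-components, respectively all $w$-components, are zero.
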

\begin{pf}
By \eqref{eq:TransLoo}, the second statement follows from the first one and the fact $a^\ast \tilde\lambda + a^\ast \nabla\lambda$ vanishes on $\Gamma_\lin(I,a^\ast L)$. 
To prove the first assertion, we  will show the following claim by induction on $n$. 

{\bf Claim.} The image of $\Gamma_\lin(I,a^\ast L)^{\otimes n}$ under $\phi_n$ is still in $\Gamma_\lin(I,a^\ast L)$. \\
In fact, since the operation $a^\ast \tilde\lambda + a^\ast \nabla\lambda$ vanishes if all the inputs are elements in $\Gamma_\lin(I,a^\ast L)$, Equation \eqref{eq:TransInc}, together with the claim, imply that $\phi_n(v_1, \ldots, v_n) = 0$,  for any $n \geq 2$ and any $v_1, \ldots, v_n \in \Gamma_\lin(I,a^\ast L)$.

We start with computing $\phi_1$. Since the operator $\big( \eta (a^\ast \tilde\lambda_1 + a^\ast \nabla\lambda_0)\big)^2$ vanishes, it follows from \eqref{eq:FirstMapInclusion} that  
\begin{align*}
\phi_1 & = \iota - \eta \, (a^\ast \tilde\lambda_1 + a^\ast \nabla\lambda_0) \phi_1 \\
& = \iota - \eta \, (a^\ast \tilde\lambda_1 + a^\ast \nabla\lambda_0) \big(\iota - \eta \, (a^\ast \tilde\lambda_1 + a^\ast \nabla\lambda_0) \phi_1 \big) \\
& = \iota - \eta \, (a^\ast \tilde\lambda_1 + a^\ast \nabla\lambda_0) \iota + \big( \eta (a^\ast \tilde\lambda_1 + a^\ast \nabla\lambda_0)\big)^2\phi_1 \\
 & = \iota - \eta (a^\ast \tilde\lambda_1 + a^\ast \nabla\lambda_0) \iota.
\end{align*}
Thus, the restriction of $\phi_1$ on $\Gamma_\lin(I,a^\ast L)$ coincides with $\iota$, which verifies the first part of  \ref{term:TransIncProp}.  Now assume the claim is true for $n$. Then since $a^\ast \tilde\lambda + a^\ast \nabla\lambda$ vanishes on $\Gamma_\lin(I,a^\ast L)$, it follows from \eqref{eq:TransInc} that the claim is also true for $n+1$.  
\end{pf}

\subsubsection{Differentiability}

The vector spaces $\Gamma_\con(I,a^\ast TM)\,dt$, as $a$ varies over $P_gM$, combine into a vector bundle $P_\con TM$ over $P_gM$, as mentioned above. Similarly, the vector spaces $\Gamma_\con(I,a^\ast L)\,dt$ and $\Gamma_\lin(I,a^\ast L)$ combine 
into vector bundles $P_\con L\,dt$ and $P_\lin L$ over $P_gM$, respectively. Note that $P_\lin L=\ev_0^\ast L\oplus \ev_1^\ast L$, canonically.  

We have constructed curved $L_\infty[1]$-structures in the fibers of $(P_\con TM\oplus P_\con L)\,dt\oplus P_\lin L$ over $P_gM$.  These combine into a bundle of smooth
curved $L_\infty[1]$-algebras, because all ingredients,
 $D$, $\lambda$, $\delta$ and $\eta$ are differentiable with respect to the base manifold $P_gM$, and all transferred operations are given by explicit formulas involving finite sums over rooted trees (see Remark~\ref{rmk:TreeFormula}).

\subsubsection{Factorization property}

Let $\M=(M,L,\lambda)$ be a derived manifold.
Consider the derived path space
$$\P\M=\big(P_g M, P_\con (TM\oplus  L)\,dt\oplus P_\lin L, \delta +\nu\big).$$
Let us denote the embedding of constant paths by $\iota:\M\to \P\M$.
By Lemma~\ref{lem:ForSubalg}, the map $\iota$ is an embedding of derived manifolds.

\begin{prop}
The morphism $\iota: \M\to \P\M$ of constant paths is a weak equivalence.
\end{prop}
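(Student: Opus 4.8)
The plan is to verify the two defining conditions of a weak equivalence for $\iota:\M\to\P\M$: that it induces a bijection on classical loci, and that it is \'etale, i.e. induces a quasi-isomorphism on tangent complexes at every classical point. Since $\iota$ is linear (Lemma~\ref{lem:ForSubalg}, part~\ref{term:TransIncProp}), only $\phi_1=\iota$ enters the tangent complex maps, which will simplify the bookkeeping considerably. Throughout, I will use the identification of $P_gM$ with an open neighborhood of the zero section of $TM$, and of the graded vector bundle $P_\con(TM\oplus L)\,dt\oplus P_\lin L$ with $\ev_0^*(TM\oplus L)\,dt\oplus\ev_0^*L\oplus\ev_1^*L$ from Remark~\ref{rmk:DerPathSpAsVB}.

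First I would identify the classical locus of $\P\M$. By \eqref{eq:TransferredCurvature}, the curvature of $\P\M$ at a short geodesic path $a$ is $a'\,dt+\pi_\lin(a^\ast\lambda_0)$. The summand $a'\,dt$ lives in $\Gamma_\con(I,a^\ast TM)\,dt$ and the summand $\pi_\lin(a^\ast\lambda_0)$ in $\Gamma_\lin(I,a^\ast L^1)$, so the curvature vanishes iff $a'=0$ (i.e. $a$ is a constant path, since a geodesic with zero initial velocity is constant) and $\pi_\lin(a^\ast\lambda_0)=0$. But for a constant path $a\equiv P$ we have $a^\ast\lambda_0$ constant equal to $\lambda_0(P)$, so $\pi_\lin(a^\ast\lambda_0)=0$ iff $\lambda_0(P)=0$, i.e. $P\in\pi_0(\M)$. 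Hence $\pi_0(\P\M)$ is exactly the set of constant paths at classical points of $\M$, and $\iota$ is a bijection on classical loci. (This is essentially the same argument as in the $L=0$ case treated in Section~\ref{sec:coam}.)

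Next I would check the \'etale condition at a classical point $P\in\pi_0(\M)$, with $a:I\to M$ the corresponding constant path. The tangent complex of $\M$ at $P$ is $TM|_P\xrightarrow{D_PF}L^1|_P\xrightarrow{d}L^2|_P\to\cdots$, and that of $\P\M$ at $a$ is built from $T(P_gM)|_a\cong TM|_P\oplus TM|_P$ together with the fibers $(TM|_P\oplus L|_P)\,dt\oplus L|_P\oplus L|_P$, with differentials given by the derivative of the curvature of $\P\M$ plus $\delta+\nu_1$ as displayed in \eqref{eq:01inputs}. The strategy is to produce an explicit contracting homotopy exhibiting the mapping cone of $T\iota|_P$ as acyclic, or equivalently to filter the tangent complex of $\P\M$ so that the ``extra'' generators cancel in acyclic two-term pieces. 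Concretely: the $\Gamma_\con TM\,dt$ summand together with the second copy of $TM|_P$ in $T(P_gM)|_a$ and the derivative of $a'\,dt$ form an acyclic piece (exactly as in the short exact sequence \eqref{ses1} of Section~\ref{sec:coam}); and in each degree $k\geq1$ the pair consisting of $\Gamma_\con(I,a^\ast L^k)\,dt$ and one of the two copies of $L^k|_P$ in $P_\lin L$ is identified by $\delta$ (which on the constant path is an isomorphism from the $dt$-part onto the appropriate complement), leaving precisely $TM|_P\xrightarrow{D_PF}L^1|_P\xrightarrow{d}L^2|_P\to\cdots$ as the cohomology — matched to $T\M|_P$ via $\iota=0\oplus\Delta$ followed by projection. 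I would make this precise by writing down the two-term acyclic subcomplexes and checking $\nu_1$ restricted appropriately recovers $d$ and $D_PF$ using the formulas $\nu_1=\pi_\lin a^\ast\lambda_1+\pi_\con a^\ast\tilde\lambda_1+\pi_\con a^\ast\nabla\lambda_0$.

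The main obstacle, I expect, is the careful identification at a classical point of how the differential $\delta$ and the transferred linear operation $\nu_1$ interact on the nose — in particular verifying that $\delta$ really does pair up the covariant-constant $dt$-summands $\Gamma_\con(I,a^\ast L^k)\,dt$ acyclically with the ``difference'' part $\ker\pi$ inside $P_\lin L^k=\ev_0^*L^k\oplus\ev_1^*L^k$, so that exactly one copy of $L^k|_P$ survives, and that under this surviving copy the induced differential is $\lambda_1|_P$ and the connecting map from $TM|_P$ is $D_PF$. This is where the homotopy $\eta$, the projections $\pi_\con,\pi_\lin$, and the explicit form of $\nu$ must all be used in concert; once the pairing is pinned down, the quasi-isomorphism statement and the compatibility with $T\iota|_P$ follow formally. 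Since everything is happening fiberwise over a single point and all operations are linear there, the computation is finite and explicit, but it requires attention to signs and to the degree-$1$ shift $dt$.
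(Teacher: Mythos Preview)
Your proposal is correct and follows essentially the same route as the paper: the bijection on classical loci is argued identically, and for the \'etale condition the paper writes out the explicit formulas for $D_P\nu_0$ and $(\delta+\nu_1)|_P$ (exactly the verification you flag as the main obstacle), then observes that the cokernel of $T\iota|_P$ is the mapping cone of the identity on $T\M|_P$ and concludes by a spectral sequence argument---which is precisely your ``pairing off acyclic two-term pieces'' rephrased. One small slip: $\delta$ runs from $P_\lin L$ to $P_\con L\,dt$, not the other way, so on the constant path it identifies the antidiagonal in $L^k|_P\oplus L^k|_P$ with $L^k|_P\,dt$ in the next degree; once you correct this direction your description matches the paper's.
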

\begin{pf}
The classical locus of $\M$ is the zero locus of $\lambda_0:M\to L^1$.  The classical locus of $\P\M$ is the zero locus of 
$$D+P_\lin\lambda_0:P_gM\longrightarrow P_\con TM \, dt \, \oplus P_\lin L^1: \, a \mapsto a'\, dt\, + \pi_\lin a^\ast \lambda_0 .$$
The canonical inclusion of the former into the latter is a bijection, because we can first compute the zero locus of $D$, which consists of constant paths. 

We compute the induced map on tangent complexes at a classical point $P$ of $\M$.  It is given by the following 
$$ \xymatrix{
TM|_P\rto^-{D_P\lambda_0}\dto_-{\Delta} & L^1|_P\rto^-{\lambda_1|_P}\dto_-{0\oplus\Delta} & L^2|_P\rto^-{\lambda_1|_P} \dto_-{0\oplus\Delta} &\ldots\\
TM|_P\oplus TM|_P\rto_-{D_P\nu_0} & TM\,dt|_P\oplus (L^1\oplus L^1)|_P\rto_-{(\delta+\nu_1)|_P } &L^1\,dt|_P\oplus(L^2\oplus L^2)|_P\rto_-{(\delta+\nu_1)|_P } & \ldots} $$
Here the bottom row, i.e., the tangent complex of  $\P\M$ at the classical point $P$, follows from \eqref{eq:01inputs}. More 
explicitly, the maps are 
\begin{gather*}
(D_P\nu_0)(v,w) = \big((w-v)\, dt, \, D_P\lambda_0(v), \, D_P\lambda_0(w)\big), \\
(\delta+\nu_1)|_P(v\, dt, y_1,z_1) = \big(
 D_P\lambda_0(v) \, dt +  (y_1-z_1)\, dt\, ,\, \lambda_1(y_1), \,  \lambda_1(z_1) \big), \\
(\delta+\nu_1)|_P(x_k \, dt, y_{k+1},z_{k+1}) = \big( \lambda_1(x_k) \, dt\, +(-1)^{k+1}(z_{k+1}-y_{k+1})\, dt\, , \, \lambda_1(y_{k+1}), \,  \lambda_1(z_{k+1}) \big),
\end{gather*}
where $v,w \in TM|_P$, $x_k, y_k, z_k \in L^k|_P$. Note that the first component $(w-v)\, dt$ in $(D_P\nu_0)(v,w)$  
is computed by the composition 
$$
T(M\times M)|_{(P,P)} \xrightarrow{T(\exp^\nabla)^{-1}|_{(P,P)}} T(TM)|_{0_P} \xrightarrow{\pr} T^\fiber_{0_P}(TM) \cong T M|_P.
$$ 

After computing cohomology vertically, we are left with 
the mapping cone of the identity cochain map of the tangent complex of $\M$ at $P$. 
Since the remaining complex is exact, a standard spectral sequence argument shows that the morphism induced on tangent complexes at $P$ is a quasi-isomorphism, as required. 
\end{pf}

This concludes the proof of   Theorem~\ref{thm:Seoul} (1).

We define a morphism $\P\M\to\M\times\M$ as the fiberwise composition of $L_\infty[1]$-morphisms 
\begin{equation}\label{eq:Evaluation}
P\M|_a \xrightarrow{\phi} \widetilde PT\M[-1]|_a 
\xrightarrow{\ev_0 \oplus \ev_1} L|_{a(0)} \oplus L|_{a(1)},
\end{equation}
where $\phi$ is defined by the homotopy transfer equation \eqref{eq:TransInc}, and $\ev_t$ is the composition of the projection onto $\Gamma(I, a^\ast L)$ followed by the evaluation map at $t$.

\begin{prop}\label{wow1}
The morphism $\P\M\to \M \times \M$ defined above 
is the linear morphism given by projecting onto $P_\lin L$ composing with evaluation at $0$ and $1$. 
\end{prop}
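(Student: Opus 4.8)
The plan is to show that the composite $L_\infty[1]$-morphism \eqref{eq:Evaluation} has no higher terms, i.e.\ that $(\ev_0\oplus\ev_1)\circ\phi$ equals its linear part, which is $(\ev_0\oplus\ev_1)\circ\iota$ restricted to the $P_\lin L$-summand (and which kills the $P_\con(TM\oplus L)\,dt$-summands). The composite of $L_\infty[1]$-morphisms $\psi=(\ev_0\oplus\ev_1)\bullet\phi$ is given by the usual formula as a sum over ways of grouping inputs, applying $\phi$ to each group and then the (strict) morphism $\ev_0\oplus\ev_1$; since $\ev_0\oplus\ev_1$ is strict (it is just projection onto $\Gamma(I,a^\ast L)$ followed by evaluation, hence has only a linear term), $\psi_n(\xi_1,\dots,\xi_n)=(\ev_0\oplus\ev_1)\big(\phi_n(\xi_1,\dots,\xi_n)\big)$. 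So the claim reduces to: for $n\geq 2$, $\ev_0$ and $\ev_1$ annihilate $\phi_n(\xi_1,\dots,\xi_n)$, and for $n=1$ the composite $\psi_1$ is exactly projection onto $P_\lin L$ followed by $\ev_0\times\ev_1$.

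First I would record the structure of $\phi$ coming from the transfer equation \eqref{eq:TransInc}: $\phi=\iota-\eta\,(a^\ast\tilde\lambda+a^\ast\nabla\lambda)\bullet\phi$. The key observation, already used in the proof of Lemma~\ref{lem:lambda} and Lemma~\ref{lem:ForSubalg}, is that every term of $\phi$ except the leading $\iota$ has an outermost $\eta$, and the image of $\eta$ consists of sections $\alpha\in\Gamma(I,a^\ast L)$ with $\alpha(0)=\alpha(1)=0$. Hence $\ev_0$ and $\ev_1$ vanish on $\eta(\text{anything})$, and therefore on every non-leading term of $\phi$. For $n\geq 2$ the leading term $\iota$ contributes nothing (it is linear), so $\ev_t\circ\phi_n=0$ for $t=0,1$ and all $n\geq 2$; this gives $\psi_n=0$ for $n\geq2$. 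For $n=1$, $\phi_1=\iota-\eta(a^\ast\tilde\lambda_1+a^\ast\nabla\lambda_0)\iota$ by \eqref{eq:FirstMapInclusion} together with nilpotency of $\eta(a^\ast\tilde\lambda_1+a^\ast\nabla\lambda_0)$ (as in Lemma~\ref{lem:ForSubalg}); the second term again lands in $\ker\ev_0\cap\ker\ev_1$, so $\ev_t\circ\phi_1=\ev_t\circ\iota$. Now $\iota$ is the inclusion of the three summands $\Gamma_\con(I,a^\ast TM)\,dt$, $\Gamma_\con(I,a^\ast L)\,dt$, $\Gamma_\lin(I,a^\ast L)$ into $\widetilde P T\M[-1]|_a$; the first two land in the $\Gamma(I,a^\ast TM)\,dt$ and $\Gamma(I,a^\ast L)\,dt$ components, on which $\ev_t$ (defined as projection to $\Gamma(I,a^\ast L)$ then evaluate) vanishes, while on the third summand $\Gamma_\lin(I,a^\ast L)$ it is precisely evaluation at $t$. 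So $\psi_1$ is the projection onto $P_\lin L$ followed by $\ev_0\times\ev_1$, as claimed, and $\psi$ is the asserted linear morphism. I would then note differentiability over $P_gM$ is already established in the Differentiability subsection, so this identification holds bundle-wise.

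The one point requiring care — the main (mild) obstacle — is bookkeeping the composition formula for $L_\infty[1]$-morphisms: one must be sure that because $\ev_0\oplus\ev_1$ is strict, the composite $\psi$ really does satisfy $\psi_n=(\ev_0\oplus\ev_1)\circ\phi_n$ with no contributions that regroup inputs nontrivially, and that the ``outermost $\eta$'' description of the non-leading terms of $\phi$ is literally correct as read off from \eqref{eq:TransInc} (the recursion is applied to $\phi$ on the right, so each iteration prepends one $\eta$ and never removes it). Both are immediate from the recursive definition and the shape of the transfer formula as a sum over rooted trees (Remark~\ref{rmk:TreeFormula}), where $\eta$ decorates every internal edge above the root; evaluating at $0$ or $1$ kills any tree with at least one internal vertex. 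Once this is spelled out, the proposition follows. $\Box$
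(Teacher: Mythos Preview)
Your proof is correct and follows essentially the same approach as the paper: the paper's argument is simply the observation that $\im(\phi-\iota)\subset\im(\eta)\subset\ker(\ev_0\oplus\ev_1)$, read off from \eqref{eq:TransInc}, which is exactly the ``outermost $\eta$'' mechanism you isolate. Your version is more detailed in that you explicitly verify the composition formula for strict $\ev_0\oplus\ev_1$ and spell out what $\psi_1$ does on each summand, whereas the paper leaves these implicit.
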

\begin{pf}
It suffices to show that the composition of $L_\infty[1]$-morphisms \eqref{eq:Evaluation} is a linear morphism. 
To prove linearity, we observe from \eqref{eq:TransInc} that $\im(\phi - \iota) \subset \im(\eta)$. Here $\phi$ and $\iota$ are considered as maps $\Sym\P\M|_a \to \widetilde PT\M[-1]|_a$. Thus, the linearity follows from the fact $\im(\eta) \subset \ker(\ev_0 \oplus \ev_1)$.
\end{pf}

This concludes the proof of   Theorem~\ref{thm:Seoul} (2).

\subsubsection{Canonicality}

We now prove the last part of Theorem~\ref{thm:Seoul}: canonicality. First, different choices of affine connections on $M$ result diffeomorphic $P_gM$ (along an open neighborhood of constant paths). Hence, without loss of generality, we may fix any affine connection on $M$ and the corresponding $P_gM$.   

Let $\nabla$ and $\bar\nabla$ be any two connections on $L$. We will construct a morphism $\Psi:\P^\nabla\M \to \P^{\bar\nabla}\M$ of the corresponding derived manifolds such that it is an isomorphism in an open neighborhood of constant paths in $P_gM$. 

Following the notations in the proof of Proposition~\ref{pro:TM1}, we denote by $\alpha$ the bundle map $\alpha:TM \otimes L \to L$ such that 
$\alpha(v,x) = \bar\nabla_v x - \nabla_v x, \, \forall \, v \in \Gamma(TM), \, x \in \Gamma(L)$, and denote by  $\Phi = \Phi^{\bar\nabla,\nabla}$ the isomorphism \eqref{eq:CanonicalIsoTM[-1]}.

\begin{lem}
The morphism $\Phi$ induces a morphism of curved $L_\infty[1]$-algebras
$a^\ast\Phi: \widetilde P^\nabla T\M[-1]|_a \to \widetilde P^{\bar\nabla} T\M[-1]|_a$ 
at each short geodesic path $a \in P_gM$. 
\end{lem}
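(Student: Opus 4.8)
The plan is to check that the bundle map $\Phi = \Phi^{\bar\nabla,\nabla}$ from Proposition~\ref{pro:TM1}, which is an isomorphism of the (finite-dimensional) derived manifolds $\big(M, TM\,dt\oplus L\,dt\oplus L, \mu^\nabla\big)$ and $\big(M, TM\,dt\oplus L\,dt\oplus L, \mu^{\bar\nabla}\big)$, restricts/extends to a morphism on the corresponding infinite-dimensional curved $L_\infty[1]$-algebras over each path $a$. Concretely, $T\M[-1]$ in either connection has the same underlying graded bundle $TM\,dt\oplus L\,dt\oplus L$, so for a fixed path $a:I\to M$ both $\widetilde P^\nabla T\M[-1]|_a$ and $\widetilde P^{\bar\nabla}T\M[-1]|_a$ have the same underlying graded vector space, namely a subspace of $\Gamma(I,a^\ast(TM\oplus L))\,dt\oplus\Gamma(I,a^\ast L)$. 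The operations on these spaces, by Proposition~\ref{pro:paris}, are $\delta^\nabla + a'\,dt + a^\ast\mu^\nabla$ (and similarly with $\bar\nabla$). So I would define $a^\ast\Phi$ by applying $\Phi$ pointwise along the path — i.e. $(a^\ast\Phi)_k$ sends sections $\xi_1(t),\dots,\xi_k(t)$ to $t\mapsto \Phi_k\big(\xi_1(t),\dots,\xi_k(t)\big)$, using that $\Phi_1=\id$ and $\Phi_2$ is the bundle map built from $\alpha$, and $\Phi_k=0$ for $k\geq3$ — and then verify the $L_\infty[1]$-morphism axioms $a^\ast\Phi\circ(\delta+a'\,dt+a^\ast\mu^\nabla) = (\delta+a'\,dt+a^\ast\mu^{\bar\nabla})\bullet a^\ast\Phi$.

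The first step is to observe that a morphism $\Phi$ of bundles of curved $L_\infty[1]$-algebras over $M$ automatically induces, by pullback along $a$ and pointwise application, a morphism of the pulled-back curved $L_\infty[1]$-algebras $\Gamma(I,a^\ast T\M[-1]_\nabla)\to\Gamma(I,a^\ast T\M[-1]_{\bar\nabla})$ with respect to the $a^\ast\mu$ parts: this is formal, since the $L_\infty[1]$-morphism identity $\phi\circ\lambda = \lambda'\bullet\phi$ is a fiberwise identity that survives pullback and pointwise evaluation. The content to be added is the compatibility with the two extra pieces of the structure that are \emph{not} pulled back fiberwise from $M$: the differential $\delta$ (covariant derivative along $a$) and the curvature term $a'\,dt$. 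For the curvature: since $\Phi_1=\id$ on the $TM\,dt$ summand and $\Phi_k$ for $k\geq2$ takes at least one $L$- or $L\,dt$-input, we have $(a^\ast\Phi)(a'\,dt) = a'\,dt$, matching the target curvature's $a'\,dt$. For $\delta$: here I need $(a^\ast\Phi)\circ\delta^\nabla = \delta^{\bar\nabla}\bullet(a^\ast\Phi)$ as maps into the target; because $\delta^{\bar\nabla}$ and $\delta^\nabla$ differ by the zeroth-order operator $\alpha(a'(t),-)\,dt$ (the difference of the two connections contracted with $a'$), and $\Phi_2$ is precisely built from $\alpha$, this should be a short explicit check that the "connection-difference" discrepancy in $\delta$ is exactly absorbed by the $\Phi_2$ term — this is the path-space shadow of the identity $\mu^{\bar\nabla} = \mu^\nabla$ conjugated by $\Phi$ already verified in Proposition~\ref{pro:TM1}.

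The key steps, in order: (1) write $a^\ast\Phi$ explicitly with components $(a^\ast\Phi)_1=\id$, $(a^\ast\Phi)_2(\xi_1,\xi_2)(t) = \alpha\big(v_2(t),x_1(t)\big)\,dt + (-1)^{|x_2|}\alpha\big(v_1(t),x_2(t)\big)\,dt$, $(a^\ast\Phi)_{\geq3}=0$, and note it has degree $0$ and covers $\id_I$; (2) check it lands in $\widetilde P^{\bar\nabla}T\M[-1]|_a$, i.e. that it preserves the subspace spanned by covariant-constant $TM$-sections (here "covariant constant" is the same for the $TM$-factor since the affine connection on $M$ is fixed; only the $L$-part changes, and $\alpha(v,x)\,dt$ lands in $\Gamma(I,a^\ast L)\,dt$ which is allowed) — this uses that $a$ is a short geodesic so $a'$ is covariant constant, keeping $(a^\ast\Phi)_2$'s image in the allowed summand; (3) verify the $L_\infty[1]$-morphism equation, splitting it by number of inputs and by which structure term ($\delta$, $a'\,dt$, $a^\ast\lambda$, $a^\ast\tilde\lambda$, $a^\ast\nabla\lambda$ vs.\ $\overline{\nabla}\lambda$) appears: the $a^\ast\mu$ part follows formally from Proposition~\ref{pro:TM1}'s fiberwise identity $\Phi\circ\mu^\nabla = \mu^{\bar\nabla}\bullet\Phi$, the $a'\,dt$ part is immediate, and the $\delta$ part is the one genuine computation. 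I expect step (3), specifically reconciling the $\delta$-terms with the $\nabla\lambda$ versus $\bar\nabla\lambda$ terms, to be the main obstacle: one must track signs carefully and use that $\bar\nabla\lambda - \nabla\lambda$, $\delta^{\bar\nabla}-\delta^\nabla$, and $\Phi_2$ are all governed by the single tensor $\alpha$, so that the discrepancies cancel in pairs. Since everything is built from $\alpha$, $a'$, and $\lambda$, all maps are manifestly smooth in $a$, so no analytic subtlety arises; the lemma is essentially bookkeeping organized around the already-proven finite-dimensional statement.
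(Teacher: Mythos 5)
Your proposal is correct and follows essentially the same route as the paper: define $a^\ast\Phi$ by pointwise application of $\Phi$, split the morphism equation by number of inputs, dispose of the zeroth and higher equations via $\Phi_1=\id$ and the fiberwise identity $\Phi\circ\mu^\nabla=\mu^{\bar\nabla}\bullet\Phi$, and isolate the one-input equation as the only genuine computation, where the discrepancies $\delta^{\bar\nabla}-\delta^\nabla$ and $\bar\nabla\lambda_0-\nabla\lambda_0$ are absorbed by $a^\ast\Phi_2$ paired with the curvature $a'\,dt+a^\ast\lambda_0$, all governed by the single tensor $\alpha$. This is exactly the paper's argument.
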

\begin{pf}
To prove $a^\ast\Phi$ is a morphism, we check the defining equation of $L_\infty[1]$-morphism:
$$
a^\ast\Phi \circ (a'\, dt\, + \delta^\nabla + a^\ast\mu^\nabla) = (a'\, dt\, + \delta^{\bar\nabla} + a^\ast\mu^{\bar\nabla}) \bullet a^\ast\Phi
$$
which is considered as a sequence of equations labeled by the number of inputs. In fact, it is simple to see that the higher equations (with 2 or more inputs) follow from the fact $\Phi$ is an $L_\infty[1]$-morphism: $\Phi\circ \mu^\nabla = \mu^{\bar\nabla} \bullet \Phi$, and the zeroth equation $a^\ast\Phi_1(a' \, dt + a^\ast \mu^\nabla_0) = a' \, dt + a^\ast \mu^{\bar\nabla}_0 $ simply follows from the definitions of $\mu^\nabla$, $\mu^{\bar\nabla}$ and the fact $\Phi_1 = \id$.  Thus, it remains to check the first equation:  
For any $\xi = v\, dt + y\, dt + x \in  \widetilde P^\nabla T\M[-1]|_a$, we have 
\begin{align*}
& a^\ast\Phi_2(a' \, dt + a^\ast \mu^\nabla_0 , \xi) + a^\ast\Phi_1\big((\delta^\nabla + a^\ast\mu^\nabla_1)(\xi) \big) \\
& \qquad = 
\alpha(v , a^\ast\lambda_0)\, dt\, + (-1)^{|x|} \alpha(a', x)\, dt \, \\
& \qquad \quad + (-1)^{|x|} \nabla_{a'} x \, dt  \, + a^\ast(\nabla_{v} \lambda_0) \, dt \, + a^\ast\lambda_1(y) \, dt \, + a^\ast\lambda_1(x) \\
& \qquad = (-1)^{|x|} \bar\nabla_{a'} x \, dt  \, + a^\ast(\bar\nabla_{v} \lambda_0) \, dt \, + a^\ast\lambda_1(y) \, dt \, + a^\ast\lambda_1(x) \\
& \qquad  =  (\delta^{\bar\nabla} + a^\ast \mu^{\bar\nabla}_1)\big(\Phi^{\bar\nabla,\nabla}_1(\xi)\big).
\end{align*} 
This completes the proof. 
\end{pf}

By Proposition~\ref{transfertheorem}, we have the $L_\infty[1]$-morphisms
\begin{gather*}
\phi^\nabla: \P^\nabla\M|_a \longrightarrow \widetilde P^\nabla T\M[-1]|_a, \\
\tilde\pi^{\bar\nabla}: \widetilde P^{\bar\nabla} T\M[-1]|_a\longrightarrow \P^{\bar\nabla}\M|_a.
\end{gather*}
See also \eqref{eq:KIAS1} and \eqref{eq:KIAS2}. We define $\Psi:\P^\nabla\M \to \P^{\bar\nabla}\M$ by 
$$
\Psi|_a := \tilde\pi^{\bar\nabla} \bullet (a^\ast\Phi^{\bar\nabla,\nabla}) \bullet \phi^\nabla.
$$

\begin{prop}
There exists an open neighborhood $U$ of constant paths in $P_gM$ such that the $L_\infty[1]$-morphism $\Psi|_a:\P^\nabla\M|_a \to \P^{\bar\nabla}\M|_a$ is an isomorphism for any $a \in U$.
\end{prop}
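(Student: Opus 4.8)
The plan is to reduce the claim to the statement that the \emph{linear part} $\Psi_1|_a$ is an isomorphism for all $a$ in a suitable open neighborhood $U$ of the constant paths, and then invoke Proposition~\ref{oneiso} (applied fiberwise over $P_gM$) to upgrade this to an isomorphism of curved $L_\infty[1]$-algebras. Indeed, once $\Psi_1|_a$ is a linear isomorphism of the underlying graded vector bundles, the full morphism $\Psi|_a = \tilde\pi^{\bar\nabla}\bullet(a^\ast\Phi^{\bar\nabla,\nabla})\bullet\phi^\nabla$ is automatically invertible as an $L_\infty[1]$-morphism by the recursive argument in the proof of Proposition~\ref{oneiso}; and the inverse operations are again built from finite tree sums in the connection data, hence smooth, so the whole thing is an isomorphism of derived manifolds over $U$.

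First I would compute $\Psi_1|_a$ explicitly. By definition, $\Psi_1|_a = \tilde\pi^{\bar\nabla}_1 \circ (a^\ast\Phi_1) \circ \phi^\nabla_1$, and since $\Phi_1 = \id$ (from \eqref{eq:CanonicalIsoTM[-1]}), this is $\tilde\pi^{\bar\nabla}_1\circ\phi^\nabla_1$, i.e.\ the composition of the inclusion of $\P^\nabla\M|_a$ into $\widetilde P^\nabla T\M[-1]|_a$ (corrected by the homotopy $\eta^\nabla$, as in \eqref{eq:FirstMapInclusion}) with the projection $\pi^{\bar\nabla} = \pi_\con + \pi_\lin$. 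On the constant path $a$ at a point $P$, the connection-dependent correction terms $\eta^\nabla(a^\ast\tilde\lambda_1 + a^\ast\nabla\lambda_0)$ and the difference between $\pi_\con,\pi_\lin$ for the two connections all involve $a'$, which vanishes, so $\Psi_1|_a$ reduces to the identity of $\ev_0^\ast(TM\oplus L)\,dt\oplus\ev_0^\ast L\oplus\ev_1^\ast L$ (using Remark~\ref{rmk:DerPathSpAsVB} to identify source and target as the same graded vector bundle). Hence $\Psi_1|_a$ is an isomorphism along the constant paths.

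The key step is then an openness argument: $\Psi_1$ is a smooth bundle map of the (finite-rank) graded vector bundle $P_\con(TM\oplus L)\,dt\oplus P_\lin L$ over $P_gM$ to itself, it is degreewise a linear map, and it restricts to the identity (hence an isomorphism) on the closed submanifold of constant paths. Since being a linear isomorphism is an open condition on the total space of $\Homu$ of two vector bundles of equal rank (the determinant of the matrix representing $\Psi_1|_a$ in a local frame is a continuous, indeed smooth, nonvanishing function along the constant paths), there is an open neighborhood $U\subset P_gM$ of the constant paths on which $\Psi_1|_a$ is an isomorphism in every degree. On this $U$ we apply the fiberwise version of Proposition~\ref{oneiso}: with $f = \id_{P_gM}$ and $\phi_1 = \Psi_1$ a degreewise isomorphism of graded vector bundles, $\Psi$ is an isomorphism of derived manifolds $\P^\nabla\M|_U \to \P^{\bar\nabla}\M|_U$.

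The main obstacle — really the only non-formal point — is verifying that $\Psi_1|_a$ is genuinely the identity (and not merely an isomorphism) along the constant paths, which requires tracking that every correction term in $\phi^\nabla_1$ and in the projections $\pi_\con^{\bar\nabla}, \pi_\lin^{\bar\nabla}$ carries a factor that vanishes when $a' = 0$; this is where the explicit formulas \eqref{eq:FirstMapInclusion}, \eqref{eq:piLin}, \eqref{eq:piCon} and the identification of Remark~\ref{rmk:DerPathSpAsVB} are used. Everything else — the openness of the isomorphism locus, and the passage from linear isomorphism to $L_\infty[1]$-isomorphism via Proposition~\ref{oneiso} — is standard. (One could alternatively argue more softly: $\Psi$ is, by construction, a weak equivalence fitting into a commuting triangle over $\M\times\M$ with the two weak equivalences $\iota$, and a weak equivalence which is degreewise a linear isomorphism on tangent complexes at all classical points and restricts to an iso near the classical locus can be promoted to an honest isomorphism near there; but the direct computation above is cleaner.)
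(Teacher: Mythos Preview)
Your overall strategy is correct and matches the paper's: reduce to showing that the linear part $\Psi_1|_a$ is an isomorphism at constant paths, then invoke openness and Proposition~\ref{oneiso}. The openness argument and the final step are fine.

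The gap is in your justification that $\Psi_1|_a=\id$ at a constant path. You assert that the ``connection-dependent correction terms $\eta^\nabla(a^\ast\tilde\lambda_1+a^\ast\nabla\lambda_0)$ \ldots\ all involve $a'$, which vanishes.'' This is not correct: the operation $a^\ast(\nabla\lambda_0)$ takes an input $v\,dt\in\Gamma_\con(I,a^\ast TM)\,dt$ and returns $(\nabla_v\lambda_0)|_P\,dt$, which has nothing to do with $a'$ and is genuinely different from $(\bar\nabla_v\lambda_0)|_P\,dt$. So on a constant path the curved $L_\infty[1]$-structures on $\widetilde P^\nabla T\M[-1]|_a$ and $\widetilde P^{\bar\nabla} T\M[-1]|_a$ still differ, and hence $\phi_1^\nabla$ and $\tilde\pi_1^{\bar\nabla}$ still carry connection-dependent correction terms that do \emph{not} individually vanish. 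You cannot conclude $\Psi_1|_a=\tilde\pi_1^{\bar\nabla}\phi_1^\nabla=\tilde\pi_1^\nabla\phi_1^\nabla=\id$ this way.

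What the paper does instead is an algebraic simplification valid for \emph{all} $a\in P_gM$, not just constant paths. Using Remark~\ref{rmk:0th1stMapInHptTransfer} to expand $\phi_1^\nabla$ and $\tilde\pi_1^{\bar\nabla}$, and then the identities $\eta^{\bar\nabla}\eta^\nabla=0$, $\pi^{\bar\nabla}\eta^\nabla=0$ (both hold because $\im\eta^\nabla$ consists of sections vanishing at $t=0,1$, and such sections are killed by $\pi_\lin$ and by $\eta^{\bar\nabla}$; compare the argument for Lemma~\ref{lem:lambda}), one finds that all the correction terms cancel and
\[
\Psi_1|_a=\pi^{\bar\nabla}\iota^\nabla
\]
for every $a$. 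Only now does one specialize to a constant path: there both connections induce the same $\delta$ (ordinary differentiation in $t$), hence the same $\pi$ and $\iota$, so $\pi^{\bar\nabla}\iota^\nabla=\pi^\nabla\iota^\nabla=\id$ via Remark~\ref{rmk:DerPathSpAsVB}. The point is that the connection-dependent corrections cancel \emph{algebraically}, not because they each contain a factor of $a'$.
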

\begin{pf}
For $a \in P_gM$, the linear part $\Psi_1|_a$ is given by   
\begin{align*}
\Psi_1|_a & = \tilde\pi^{\bar\nabla}_1  \id  \phi^\nabla_1  \\
& = \pi^{\bar\nabla}( \id + \nu^{\bar\nabla}_1\eta^{\bar\nabla})^{-1} (\id + \eta^\nabla \nu^\nabla_1)^{-1} \iota^\nabla 
\qquad (\text{by Remark~\ref{rmk:0th1stMapInHptTransfer}}) \\
& = \pi^{\bar\nabla}( \id - \nu^{\bar\nabla}_1\eta^{\bar\nabla}) (\id - \eta^\nabla \nu^\nabla_1) \iota^\nabla  \\
& = \pi^{\bar\nabla}\iota^\nabla - \pi^{\bar\nabla}\nu^{\bar\nabla}_1\eta^{\bar\nabla}\iota^\nabla - \pi^{\bar\nabla}\eta^\nabla \nu^\nabla_1 \iota^\nabla  \qquad   \\
& = \pi^{\bar\nabla}\iota^\nabla.    
\end{align*}
Here we used the equations $\eta^{\bar\nabla}\eta^{\nabla} = 0$, $\pi^{\bar\nabla}\nu^{\bar\nabla}_1\eta^{\bar\nabla} =0$ and $\pi^{\bar\nabla}\eta^\nabla =0$.  
The last two equations can be proved by a similar argument as in the proof of Lemma~\ref{lem:lambda}. 

Now let $a: I \to M$ be a constant path at $P\in M$. 
Under the identification in Remark~\ref{rmk:DerPathSpAsVB}, the map $\Psi|_a = \pi^{\bar\nabla}\iota^\nabla$ is the identity map and hence an isomorphism. Therefore, $\Psi|_a$ is an isomorphism in an open neighborhood of constant paths. 
\end{pf}

The following example shows that $\Psi|_a$ might not be an isomorphism when $a$ is far away from constant paths.

\begin{ex}
Let $M=\rr$, $L = M \times \rr^2[-1]$. Let $\nabla^0$ and $\nabla$ be the connections with the properties:
\begin{gather*}
\nabla^0_{\partial_t}e_1 = 0, \quad \nabla^0_{\partial_t}e_1 = 0; \\
\nabla_{\partial_t}e_1 = -2\pi \, e_2, \quad \nabla_{\partial_t}e_2 = 2\pi \, e_1;
\end{gather*}
where $\{e_1, e_2\}$ is the standard frame for $L$. For the path $a(t)=t$, the $\nabla$-constant paths $\Gamma^{\nabla}_\con(I,a^\ast L)$ over $a$ is the subspace in $\Gamma(I,a^\ast L) \cong C^\infty(\rr, \rr^2)$ spanned by the basis 
\begin{align*}
e^\nabla_1 &= \hphantom{-} \cos(2\pi t) e_1 + \sin(2\pi t) e_2, \\
e^\nabla_2 &= -\sin(2\pi t) e_1 + \cos(2\pi t) e_2. 
\end{align*}
Then 
\begin{multline*}
\Psi_1^{\nabla^0 \nabla}|_a(e_1^\nabla \, dt) = \pi^{\nabla^0}\iota^\nabla(e_1^\nabla \, dt) \\
 = \Big( \int_0^1\cos(2\pi s) \, ds\, \cdot \, e_1 + \int_0^1 \sin(2\pi s) \, ds \, \cdot\, e_2 \Big) \, dt = 0.
\end{multline*}
In fact, one can show that $\Psi_1^{\nabla^0 \nabla}|_a$ is isomorphic to the projection operator $\rr \, dt  \oplus \rr^2[-1] \, dt  \oplus \rr^4[-1] \to \rr \, dt  \oplus \rr^2[-1] \, dt  \oplus \rr^4[-1]: v\, dt\, + y\, dt\, + x \mapsto v\, dt\, + x$.
\end{ex}

\subsubsection{Categorical structure}

The composition $\M\to\P\M\to\M\times\M$ is clearly the
 diagonal.

\begin{prop}
The morphism $\P\M\to\M\times\M$ is a fibration of derived manifolds.
\end{prop}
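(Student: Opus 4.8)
The plan is to verify the two conditions of Definition~\ref{defnfib} for the morphism $\P\M\to\M\times\M$ constructed in Proposition~\ref{wow1}. By that proposition, the morphism is \emph{linear}: on base manifolds it is the open embedding $\ev_0\times\ev_1:P_gM\hookrightarrow M\times M$, and on graded vector bundles it is the composition of the projection onto $P_\lin L$ with evaluation at $0$ and $1$, i.e. the bundle map \eqref{eq:UBC4} under the identification of Remark~\ref{rmk:DerPathSpAsVB}. So all that is needed is to check that this underlying morphism of manifolds is a submersion, and that the bundle map $P_\con(TM\oplus L)\,dt\oplus P_\lin L\to \ev_0^\ast L\oplus\ev_1^\ast L$ is degreewise surjective.

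For condition (i), I would invoke the Proposition on short geodesic paths at the start of Section~\ref{sec:coam}: the upper-right diagonal map $\gamma(0)\times\gamma(1):P_gM\to M\times M$ in diagram \eqref{eq:PgM} is an open embedding, hence in particular a submersion. For condition (ii), using the canonical isomorphism $P_\lin L=\ev_0^\ast L\oplus\ev_1^\ast L$ and $P_\con(TM\oplus L)\,dt\oplus P_\lin L\cong\ev_0^\ast(TM\oplus L)\,dt\oplus\ev_0^\ast L\oplus\ev_1^\ast L$ from Remark~\ref{rmk:DerPathSpAsVB}, the bundle map \eqref{eq:UBC4} is literally the projection $\ev_0^\ast(TM\oplus L)\,dt\oplus\ev_0^\ast L\oplus\ev_1^\ast L\to\ev_0^\ast L\oplus\ev_1^\ast L$ onto the last two summands, which is visibly surjective in each degree (the target in degree $k$ is $\ev_0^\ast L^k\oplus\ev_1^\ast L^k$, a direct summand of the source in degree $k$). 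Equivalently, by the remark following Definition~\ref{defnfib}, one checks that the total bundle map is a submersion of differentiable manifolds, which again follows since it is, fiberwise and in the base, a projection off a direct summand composed with the open embedding $\ev_0\times\ev_1$.

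I do not expect any genuine obstacle here; the content of the statement has already been front-loaded into Proposition~\ref{wow1} (linearity and explicit description of the morphism) and into the structural results on $P_gM$. The only point requiring a word of care is making sure the identification of Remark~\ref{rmk:DerPathSpAsVB} is compatible with the morphism \eqref{eq:Evaluation} used to define $\P\M\to\M\times\M$ — but this is exactly the content of the diagram \eqref{eq:UBC4} in that remark, so it may simply be cited. Thus the proof is essentially a two-line verification.

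\begin{pf}
By Proposition~\ref{wow1}, the morphism $\P\M\to\M\times\M$ is linear, with underlying map of base manifolds the evaluation map $\ev_0\times\ev_1:P_gM\to M\times M$, and with underlying bundle map \eqref{eq:UBC4}. The map $\ev_0\times\ev_1$ is an open embedding, by the Proposition on short geodesic paths (diagram \eqref{eq:PgM}), hence a submersion. Under the identification of Remark~\ref{rmk:DerPathSpAsVB}, the bundle map \eqref{eq:UBC4} is the projection
$$\ev_0^\ast(TM\oplus L)\,dt\oplus \ev_0^\ast L\oplus \ev_1^\ast L\longrightarrow \ev_0^\ast L\oplus \ev_1^\ast L$$
onto the last two summands, which is surjective in each degree. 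Thus both conditions of Definition~\ref{defnfib} hold, and $\P\M\to\M\times\M$ is a fibration.
\end{pf}
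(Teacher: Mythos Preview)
Your proof is correct and takes essentially the same approach as the paper: invoke Proposition~\ref{wow1} to reduce to the linear part, note that the base map $P_gM\to M\times M$ is an open embedding (hence a submersion), and observe that the bundle map is the projection onto $P_\lin L\cong\ev_0^\ast L\oplus\ev_1^\ast L$, hence a degreewise epimorphism. The paper's proof is slightly terser but identical in content.
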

\begin{pf}
On the level of the base manifolds, this morphism is the open embedding $P_gM\to M\times M$. The linear part (which is the whole, by 
Proposition~\ref{wow1}) of the $L_\infty[1]$-morphism from
 $P_\con(TM\oplus L)\,dt \oplus P_\lin L$ to $\ev^*_0L\oplus \ev^*_1 L$ is simply the projection onto $P_\lin L$, followed by the identification $P_\lin L 
\cong \ev^*_0L\oplus \ev^*_1 L$.  This is an epimorphism of vector bundles.
\end{pf}

\begin{thm}
\label{thm:main}
The category of derived manifolds forms a category of fibrant objects.
\end{thm}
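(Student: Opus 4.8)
The plan is to verify the six axioms of a category of fibrant objects, drawing on the results already assembled in the paper. Axiom~(i) (two-out-of-three for weak equivalences, isomorphisms are weak equivalences) is recorded in the remark following the definition of weak equivalence, since being a quasi-isomorphism of tangent complexes satisfies two-out-of-three and every isomorphism is \'etale and bijective on classical loci. Axiom~(ii) (isomorphisms are fibrations) is immediate from Definition~\ref{defnfib}: an isomorphism of derived manifolds has underlying diffeomorphism (hence submersion) of base manifolds and an isomorphism, hence degreewise surjection, on the linear parts. Axiom~(v) (existence of a final object, and all morphisms to it are fibrations) follows from Remark~\ref{finalremark} together with the observation, made explicitly after Definition~\ref{defnfib}, that for every derived manifold $\M$ the unique morphism $\M\to\ast$ is a linear fibration.

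Axioms~(iii) and~(iv) concern base change. The existence of pullbacks of fibrations, and the fact that they are again fibrations, is Proposition~\ref{pullex}. That pullbacks of trivial fibrations are trivial fibrations is the Corollary stated immediately after Proposition~\ref{prop:OpennessEtale} (itself a consequence of the fact that pullbacks of \'etale fibrations are \'etale fibrations, together with Proposition~\ref{pullex}(ii) identifying classical loci of pullbacks with pullbacks of classical loci). So axioms~(i)--(v) are all in hand from the preceding development, and I would simply cite them in order.

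The only remaining axiom is~(vi), the factorization property: every morphism factors as a weak equivalence followed by a fibration. The standard reduction — which I would invoke explicitly — is that it suffices to factor the diagonal $\M\to\M\times\M$ of every derived manifold $\M$, because a general morphism $f:\M\to\N$ factors through its graph, $\M \xrightarrow{(\id,f)} \M\times\N$, and this graph map is obtained from the diagonal $\N\to\N\times\N$ by the base change along $f\times\id:\M\times\N\to\N\times\N$, which is a fibration by Proposition~\ref{pullex} (pullbacks of fibrations exist and are fibrations); composing with the projection $\M\times\N\to\N$, which is a fibration since it is a base change of $\M\to\ast$, gives the required factorization of $f$ once we know the first factor is a weak equivalence, and base change of a trivial fibration is a trivial fibration. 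For the diagonal itself, the factorization is exactly the content of Theorem~\ref{thm:Seoul}: choosing an affine connection on $M$ and a connection on $L$, we obtain the derived path space $\P\M$ together with the linear weak equivalence $\iota:\M\to\P\M$ (part~(1) of that theorem, proved via the transfer-theorem computation and the spectral-sequence/mapping-cone argument) and the linear morphism $\P\M\to\M\times\M$ (part~(2)); the latter is a fibration by the Proposition preceding this theorem (its base component is the open embedding $P_gM\to M\times M$, a submersion, and its linear part is the projection onto $P_\lin L\cong\ev_0^\ast L\oplus\ev_1^\ast L$, an epimorphism of vector bundles), and the composite $\M\to\P\M\to\M\times\M$ is the diagonal. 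This establishes axiom~(vi) and completes the proof. The genuine mathematical work — the construction of the finite-dimensional derived path space via the curved $L_\infty[1]$-transfer theorem and the verification that $\iota$ is \'etale — has already been carried out in Section~\ref{sec:DerPathSp}, so at this point the theorem is just the assembly of the pieces.
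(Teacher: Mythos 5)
Your proposal is correct and follows essentially the same route as the paper, whose proof of this theorem is a one-line assembly of the same ingredients (the remarks and propositions establishing axioms (i)--(v), plus Theorem~\ref{thm:Seoul} and the fibration property of $\P\M\to\M\times\M$ for the factorization of the diagonal). The only quibble is that in the reduction to diagonals one base-changes the factorization $\N\xrightarrow{\sim}\P\N\twoheadrightarrow\N\times\N$ along $f\times\id$ rather than the diagonal itself (which is not a fibration), obtaining $\M\times_{\N,\ev_0}\P\N$ with its section over $\M$; but you clearly have the standard Brown factorization-lemma argument in mind, and the paper likewise treats this reduction as well known.
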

\begin{pf}
The factorization $\M\to\P\M\to\M\times\M$ of the diagonal proved above was the last ingredient needed.
\end{pf}

\subsubsection{Further remarks}

As we have seen in the previous subsections, the derived path space $\P\M$ of the derived manifold $\M=(M,L,\lambda)$ is given by the graded vector bundle
$$\P\M=P_\con(TM\oplus L)\,dt\oplus P_\lin L\,,$$
over the manifold $P_gM$.  Here $P_\con(TM\oplus L)\,dt$ is a curved $L_\infty[1]$-ideal in $\P\M$.  The fibration $\P\M\to\M\times \M$ is given by dividing out by this $L_\infty[1]$-ideal, and identifying the quotient $P_\lin L$ with $\ev_0^\ast L\oplus \ev_1^\ast L$, over the embedding $P_gM\subset M\times M$. 

Note that $P_\lin L\subset \P\M$ is not a curved $L_\infty[1]$-subalgebra, as the embedding does not preserve the curvature.

If $\M$ is of amplitude $n$, then $\P\M$ is a derived manifold of amplitude $n+1$, and thus for degree reasons, the $k$-th brackets on $\P\M$ vanish for all $k \geq n+1$.

The curvature on $\P\M$ is given in \eqref{eq:TransferredCurvature}. 
In general, the operations on $\P\M$ can be computed by the recursive formulas \eqref{eq:TransInc} and \eqref{eq:TransLoo}, or by the tree formulas in Remark~\ref{rmk:TreeFormula}. More explicitly, the operations $\delta+\nu$ on $\P\M$ decompose into a part that has image in $P_\lin L$, and a part that has image in $P_\con(T_M\oplus L)\,dt$.  The part that has image in $P_\lin L$ is the term $\pi(a^\ast\lambda) \bullet \phi$ in \eqref{eq:TransLoo} and is identified with $\ev_0^\ast L\oplus \ev_1^\ast L$, as seen in Lemma~\ref{lem:lambda}. 

So the only interesting part is that which has image in $P_\con(T_M\oplus L)\,dt$.  The tree sum for this part is such that the operation on every node is $\nabla\lambda+\tilde\lambda$ with exactly one of the incoming edges is labeled with an input from $P_\con(T_M\oplus L)\,dt$.

Let us work out a few examples.  The case where $\M$ is a manifold was treated in Section~\ref{sec:coam}. 

\begin{ex}
Suppose that $L=L^1$. This is the {\em quasi-smooth }case. 
Then the derived path space of $\M=(M,L,F)$ is the following  derived manifold of
amplitude~2: the base is $P_gM$, the manifold of short geodesic paths. The vector bundle in degree 1 has fiber $\Gamma_\con(I,a^\ast TM) \, dt   \oplus \Gamma_\lin(I,a^\ast L)$ over the geodesic path $a:I \to M$. The curvature
is $a'dt \in \Gamma_\con(I,a^\ast TM) \, dt$, and the linear interpolation between
$F|_{a(0)}$ and $F|_{a(1)}$ in $\Gamma_\lin(I,a^\ast L)$. In degree 2, the vector
bundle has fiber $\Gamma_\con(I, a^\ast L)\,dt$, and the twisted differential is given by 
\begin{align*}
\Gamma_\con(I,a^\ast TM) \, dt & \longrightarrow \Gamma_\con(I, a^\ast L)\,dt \\
v & \longmapsto \Big(\int_0^1 \nabla_{v(u)}F(u)\,du \Big)\, dt
\end{align*}
and
\begin{align*}
\Gamma_\lin(I, a^\ast L)& \longrightarrow \Gamma_\con(I, a^\ast L)\,dt\\
\alpha & \longmapsto \delta\alpha = -\big(\alpha(1)-\alpha(0)\big)\,dt \, .
\end{align*}
All the higher operations vanish. 
\end{ex}

\begin{ex}
If $\M=(M,L,\lambda)$ is of amplitude 2, the only non-zero operations are the curvature $\lambda_0$ and the differential $\lambda_1$.  In this  case $\P\M$ has amplitude 3.  
The binary bracket on $\P\M$ is the sum of two operations
$$\pi_\con\nabla\lambda_1:\Gamma_\con(I,a^\ast TM) \, dt\otimes \Gamma_\lin(I, a^\ast L^1)\longrightarrow \Gamma_\con(I, a^\ast L^2)\,dt$$
and the operation 
$$\Gamma_\con(I,a^\ast TM) \, dt\otimes \Gamma_\con(I,a^\ast TM) \, dt \longrightarrow \Gamma_\con(I, a^\ast L^2)\,dt$$
given by 
$$x\otimes y\longmapsto - \pi_\con(\nabla\lambda_1)\big(\eta\nabla\lambda_0(x),y\big) - \pi_\con(\nabla\lambda_1)\big(x,\eta\nabla\lambda_0(y)\big) \,.$$
\end{ex}

\begin{rmk}
It   would be interesting to extend the above  construction of derived path spaces to that of
``higher derived path spaces", and study their relation with
the infinite category structure of derived manifolds.
\end{rmk}

\subsection{Homotopy fibered products and derived intersections}\label{rmk:HptFibProd}

As an important application, Theorem \ref{thm:main} allows us
to form the ``homotopy fibered product"  of derived manifolds.

Let $\C$ be a category of fibrant objects. Consider its \emph{homotopy category} $\Ho(\C)$ which is obtained by formally inverting weak equivalences. 
For given arbitrary morphisms $X \to Z \leftarrow Y$  in $\C$, one can form a well-defined \emph{homotopy fibered product} $X \times_Z^h Y$ in the homotopy category $\Ho(\C)$.  If $P$ is a path space of $Z$, then $X \times_Z P \times_Z Y$ is a representation of the homotopy fibered product $X \times_Z^h Y$ \cite{MR341469}. It is standard that such a fibered product is indeed well-defined. However, since we are not able to locate this result in literature, we sketch a proof below.

\begin{lem}
The isomorphism class of $X \times_Z P \times_Z Y$ in the homotopy category $\Ho(\C)$ is independent of the choice of the path space $P$. 
\end{lem}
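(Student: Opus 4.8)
The plan is to reduce the statement to a standard property of path objects in a category of fibrant objects, namely that any two path objects for a fixed object $Z$ are connected by a zig-zag of weak equivalences compatible with the structure maps, and then to transport this zig-zag through the pullback functor. First I would fix two path spaces $P$ and $P'$ of $Z$, i.e.\ factorizations $Z \to P \toto Z$ and $Z \to P' \toto Z$ of the diagonal $Z \to Z\times Z$ as a weak equivalence followed by a fibration. Using the factorization axiom once more, I would factor the induced map $P \times_{Z\times Z} P' \to Z \times Z$ (or, more directly, apply the factorization to a suitable comparison map) to produce a third path space $P''$ of $Z$ together with trivial fibrations $P'' \to P$ and $P'' \to P'$ over $Z \times Z$; this is the usual argument that the homotopy category's path objects form a (homotopy) cofiltered system. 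Concretely, one takes $P'' $ to be a path-space factorization of $Z \to P\times_Z P'$, where the fibered product $P \times_Z P'$ is formed over one of the two evaluation maps; the two projections $P'' \to P$ and $P'' \to P'$ are then weak equivalences, and after possibly shrinking they are trivial fibrations.

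The second step is to show that replacing $P$ by a weakly equivalent path space $\tilde P$ (with a trivial fibration $\tilde P \to P$ compatible with the two maps to $Z$) induces a weak equivalence $X \times_Z \tilde P \times_Z Y \to X \times_Z P \times_Z Y$. Here I would invoke Proposition~\ref{pullex} (pullbacks of fibrations are fibrations) together with the Corollary that pullbacks of trivial fibrations are trivial fibrations: the map $X\times_Z \tilde P \to X \times_Z P$ is a base change of the trivial fibration $\tilde P \to P$ along the fibration $X\times_Z P \to P$ (using that $X \to Z$ factors appropriately, or forming the iterated pullback), hence is a trivial fibration; pulling back once more along $Y \to Z$ (again a fibration on the relevant leg after the standard symmetrization) keeps it a trivial fibration. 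Iterating, both $X\times_Z P'' \times_Z Y \to X\times_Z P\times_Z Y$ and $X\times_Z P''\times_Z Y \to X\times_Z P'\times_Z Y$ are trivial fibrations, hence weak equivalences, so $X\times_Z P\times_Z Y$ and $X\times_Z P'\times_Z Y$ become isomorphic in $\Ho(\C)$.

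One technical point I would be careful about: to form iterated fibered products like $X \times_Z P \times_Z Y$ I need the relevant structure maps to be fibrations so that the pullbacks exist in $\C$ (axiom (iii)). In a general category of fibrant objects $X \to Z$ and $Y \to Z$ need not be fibrations, so the standard trick is to use that $P \to Z$ (either evaluation) \emph{is} a fibration, form $X \times_Z P$ as the pullback of $P \to Z$ along $X \to Z$ — which exists because $P \to Z$ is a fibration — and note $X \times_Z P \to X$ is a fibration, $X\times_Z P \to P$ exists, and then $X \times_Z P \to Z$ via the \emph{other} evaluation is again a fibration, allowing the further pullback with $Y$. I would spell out this bookkeeping carefully, citing Proposition~\ref{pullex}.

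The main obstacle is the existence and comparison of the ``common refinement'' path space $P''$: one must check that the map $Z \to P \times_Z P'$ used in its construction is indeed a weak equivalence (so that its factorization yields a path space with trivial-fibration legs), and that the two resulting maps $P'' \to P$, $P'' \to P'$ are genuinely trivial fibrations \emph{as maps over $Z\times Z$}, not merely weak equivalences. This is exactly the content of the standard lemma that in a category of fibrant objects path objects are unique up to a zig-zag of trivial fibrations (cf.\ \cite{MR341469}); once that is in place, step two is a routine application of the stability of trivial fibrations under base change. I expect the write-up to be short modulo quoting this classical fact, which is the reason the authors only ``sketch'' the proof.
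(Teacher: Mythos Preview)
Your proposal is correct and follows essentially the same approach as the paper: construct a common refinement $P''$ of the two path spaces via trivial fibrations over $Z\times Z$, then use stability of trivial fibrations under base change to compare the iterated fibered products. The paper carries this out exactly, including the bookkeeping you anticipate about forming $X\times_Z P$ first (using that $P\to Z$ is a fibration) and then pulling back along the other evaluation.

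One small imprecision worth tightening: the paper constructs $P''$ by factoring the canonical map $Z \to P\times_{Z\times Z} P'$ (the fibered product over $Z\times Z$, not over a single copy of $Z$ via one evaluation as you suggest). With this choice the composites $P'' \twoheadrightarrow P\times_{Z\times Z}P' \twoheadrightarrow P$ and $P'' \twoheadrightarrow P\times_{Z\times Z}P' \twoheadrightarrow P'$ are automatically fibrations over $Z\times Z$, and two-out-of-three against $Z\xrightarrow{\sim} P''$ and $Z\xrightarrow{\sim} P$ shows they are trivial. Your alternative with $P\times_Z P'$ over one evaluation would also work but requires an extra argument to see the projections are maps over $Z\times Z$; the $P\times_{Z\times Z}P'$ version avoids this.
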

\begin{pf}
Suppose $P$ and $P'$ are path spaces of $Z$, i.e., we have two factorizations of the diagonal map $Z \xrightarrow{\Delta} Z \times Z$: 
$$
\xymatrix{
Z \ar[r]^\sim \ar[d]_\sim \ar[rd]_\Delta & P' \ar@{->>}[d] \\
P \ar@{->>}[r] & Z \times Z \, .
}
$$
%
Since both $P \twoheadrightarrow Z\times Z$ and $P' \twoheadrightarrow Z\times Z$ are fibrations, there exists the fibered product $P \times_{Z \times Z} P'$ whose projections $P \times_{Z \times Z} P' \twoheadrightarrow P$ and $P \times_{Z \times Z} P' \twoheadrightarrow P$ are also fibrations. By the universal property of $P \times_{Z \times Z} P'$, there is a unique map $Z \to P \times_{Z \times Z} P'$ such that the diagram 
$$
\xymatrix{
Z \ar@/^1pc/[rrd]^\sim \ar@/_1pc/[rdd]_\sim \ar@{-->}[rd] & & \\
& P \times_{Z \times Z} P' \ar@{->>}[r] \ar@{->>}[d]  & P' \ar@{->>}[d] \\
& P \ar@{->>}[r]  &   Z \times Z
}
$$
commutes. By factorizing the morphism $Z \to P \times_{Z \times Z} P'$, there exists $P''$ such that $Z \xrightarrow{\sim} P'' \twoheadrightarrow P \times_{Z \times Z} P'$ is equal to $Z \to P \times_{Z \times Z} P'$. 
Since the diagram 
$$
\xymatrix{
P'' \ar@{->>}[r] & P \times_{Z \times Z} P' \ar@{->>}[d] \\
Z \ar[u]^\sim \ar[r]_\sim & P
}
$$
commutes, it follows from the 2-out-of-3 axiom that the composition $P'' \twoheadrightarrow P \times_{Z \times Z} P' \twoheadrightarrow P$ is a weak equivalence. It is clear that it is also a fibration and hence a trivial fibration. Similarly, so is the composition $P'' \twoheadrightarrow P \times_{Z \times Z} P' \twoheadrightarrow P'$. 
In summary, we obtain a third path space $P''$ which dominates $P$ and $P'$ in the sense that the diagram
$$
\xymatrix{
 & P' \ar@{->>}[rd] &  \\
Z \ar[ru]^\sim \ar[r]^\sim \ar[rd]_\sim & P'' \ar@{->>}[r] \ar@{->>}[u]_\sim \ar@{->>}[d]^\sim & Z \times Z  \\
 & P \ar@{->>}[ru] & 
}
$$
commutes.

Recall that the fibered product $X \times_Z P \times_Z Y$ can be obtained by first pulling back $P \twoheadrightarrow Z$ along $Y \to Z$ and then pulling back $P\times_Z Y \twoheadrightarrow Z$ along $X \to Z$: 
$$
\xymatrix{
X \times_Z P \times_Z Y \ar[r] \ar@{->>}[dd] & P \times_Z Y \ar[d] \ar@/_1pc/@{->>}[dd] \ar@{->>}[r] & Y \ar[d] \\
& P \ar@{->>}[r] \ar@{->>}[d] & Z \, . \\
X \ar[r] & Z & 
}
$$
See \cite{MR341469} for details. Now we pull back $P'' \twoheadrightarrow P$ along $P \times_Z Y \to  P$ and obtain the trivial fibration
$$
P'' \times_Z Y \cong P'' \times_P (P \times_Z Y) \twoheadlongrightarrow P \times_Z Y \, . 
$$ 
Then we pull back $P'' \times_Z Y \twoheadrightarrow P \times_Z Y$ along $X \times_Z P \times_Z Y \to P \times_Z Y$  and obtain the trivial fibration
$$
X \times_Z P'' \times_Z Y \cong (X \times_Z P \times_Z Y)\times_{P \times_Z Y} (P'' \times_Z Y) \twoheadlongrightarrow  X \times_Z P \times_Z Y \,.
$$
By a similar argument, we also have a trivial fibration 
$$
X \times_Z P'' \times_Z Y  \twoheadlongrightarrow  X \times_Z P' \times_Z Y \,.
$$
Therefore $X \times_Z P \times_Z Y$ and $X \times_Z P' \times_Z Y$ are isomorphic in the homotopy category $\Ho(\C)$. 
\end{pf}

Let $\M \to \N \leftarrow \M'$ be arbitrary morphisms of derived manifolds, and $\P$ be a path space of $\N$. Since there is a weak equivalence $\N \xrightarrow\sim \P$, the virtual dimensions of $\N$ and $\P$ are equal. Thus, by Remark~\ref{rmk:DerDim&FibProd}, we have 
\begin{align*}
\ddim(\M \times^h_\N \M') & = \ddim(\M \times_\N \P \times_\N \M') \\
& = \ddim(\M) - \ddim(\N) + \ddim(\P) -\ddim(\N) + \ddim(\M') \\
& = \ddim(\M) + \ddim(\M') - \ddim(\N)\,.
\end{align*}

%
%

\subsubsection*{Derived intersections}

Let $M$ be a smooth manifold, and $X,Y$ be submanifolds of $M$. The \emph{derived intersection} $X \cap^h_M Y$ of $X$ and $Y$ in $M$ is understood as the homotopy fibered product $X \cap^h_M Y := X \times_M^h Y$ in the homotopy category of derived manifolds. By Section~\ref{sec:coam}, the quasi-smooth derived manifold $\P=(P_gM,P_\con TM\,dt,D)$ is a path space of $M$. Thus, the derived intersection $X \cap^h_M Y$ is represented by the quasi-smooth derived manifold $X \times_M \P \times_M Y = (N,E,\tilde D)$. 
Here, the base space $N = X \times_M P_g M \times_M Y$ is the space of short geodesics which start from a point in $X$ and end at a point in $Y$. Since $P_gM$ can be identified with an open neighborhood of the diagonal in $M \times M$, the base space $N$ is an open submanifold of $X\times Y$
consisting of pairs of points $(x , y)\in X\times Y$
such that both $x$ and $y$ are sufficiently close to
the set-theoretical intersection $X \cap Y$. 
 The fiber $E|_a$ over $a \in N$ is the space 
$$
E|_a = \{ \alpha\, dt \mid \alpha \in \Gamma(a^\ast TM), \; (a^\ast\nabla) (\alpha) = 0 \} \,,
$$ 
and the section $\tilde D: N \to E: a \mapsto a' \, dt$ is given by derivatives. 
The classical locus of $X \cap^h_M Y$ is the set-theoretical intersection $X \cap Y$. 
The virtual dimension of $X \cap^h_M Y$ is 
\begin{align*}
\ddim(X \cap^h_M Y) &= \ddim(X \times_M^h Y )\\
 & = \dim(X) + \dim(Y)- \dim(M) \,.
\end{align*}

\section{Inverse function theorem} 

\subsection{The inverse function theorem for derived manifolds}

\begin{thm}\label{ift}
Let $\M=(M,L,\lambda)$ and $\N=(N,L',\lambda')$ be derived manifolds, and
$(f,\phi):\M\to\N$ a fibration.  Let $Z\subset \pi_0(\M)$ be a subset
of points at which $(f,\phi)$ is \'etale. Then
there exists  \eetale \
  of manifolds  $Y\to N$ and a commutative diagram
$$\xymatrix{
Z\rto\dto & \M\dto\\
\Y\rto\urto & \N\rlap{\,.}}$$
Here $\Y$ is the pullback of  $(L',\lambda')$ via $Y\to N$, and
the lower triangle consists of   morphisms of derived manifolds.   

If, moreover,  $\pi_0(\M) \to\pi_0(\N)$ is injective, we can choose $Y$ to be an open
 submanifold  of $N$.
\end{thm}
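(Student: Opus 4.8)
The plan is to reduce to a linear fibration, build the required lift locally near each point of $Z$, and then glue, the extra injectivity hypothesis being spent only at the gluing stage. First I would invoke Proposition~\ref{prop:IsoToLinearMor} to factor $(f,\phi)$ as a linear fibration followed by an isomorphism; since isomorphisms are \'etale and can be absorbed into the conclusion, I may assume $(f,\phi)$ is linear, so $L = f^{*}L' \oplus K$ as graded vector bundles with $\phi_{1}$ the projection and $K = \ker\phi_{1}$ a curved $L_\infty[1]$-ideal. For $P\in Z$, $P'=f(P)$, the kernel of the quasi-isomorphism $T\M|_{P}\to T\N|_{P'}$ is the complex $\bigl[\ker(df|_{P})\xrightarrow{D_{P}F}K^{1}|_{P}\xrightarrow{\lambda_{1}}K^{2}|_{P}\to\cdots\bigr]$ and is therefore acyclic; by Proposition~\ref{prop:OpennessEtale} this persists on a neighbourhood of $Z$ in $\pi_{0}(\M)$, so I may enlarge $Z$ to the entire (open) locus of \'etale classical points.

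Fix $P\in Z$. Acyclicity makes $D_{P}F$ injective on $\ker(df|_{P})$ and forces $\dim\ker(df|_{P}) = \sum_{i}(-1)^{i-1}\rk K^{i}$. Peeling off the degrees of the acyclic bundle complex $K^{\bullet}$ and applying the implicit function theorem degree by degree produces, near $P$, a submanifold $Y_{P}\subset M$ with $P\in Y_{P}$ on which $f$ restricts to a local diffeomorphism $g_{P}:Y_{P}\to N$ and such that $\pi_{0}(\M)$ coincides near $P$ with $g_{P}^{-1}(\pi_{0}(\N))$; in particular $f$ is a local homeomorphism $\pi_{0}(\M)\to\pi_{0}(\N)$ near $Z$. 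Over $Y_{P}$ I must then produce a morphism $\psi:\Y_{P}\to\M$ covering the inclusion $Y_{P}\hookrightarrow M$ with $\phi\bullet\psi$ equal to the canonical morphism $\Y_{P}\to\N$ --- equivalently a section of the fibrewise-trivial fibration $\M|_{Y_{P}}\to\Y_{P}$. Taking $\psi_{1}$ to be the inclusion $g_{P}^{*}L'\hookrightarrow L|_{Y_{P}}$ and solving $\psi\circ g_{P}^{*}\lambda' = \lambda\bullet\psi$ recursively for $\psi_{2},\psi_{3},\dots$, each successive obstruction lies in the fibrewise acyclic complex $(K^{\geq 2},\lambda_{1})$ over $Y_{P}$ and can be removed --- this is the family version of the fact (cf.\ Hinich \cite{MR1843805}) that a trivial fibration of curved $L_\infty[1]$-algebras over a point admits a section, with differentiability over $Y_{P}$ automatic because the recursion and the chosen contracting homotopy are given by universal formulas, exactly as in the differentiability argument of Section~\ref{sec:DerPathSp}.

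For the general statement I take $Y=\bigsqcup_{P\in Z}Y_{P}$ with $g=\coprod g_{P}$ a local diffeomorphism, $h=\coprod(Y_{P}\hookrightarrow M)$ and $\psi=\coprod\psi_{P}$ defining $\Y\to\M$; the map $Z\to Y$, $P\mapsto P\in Y_{P}$, then makes the triangle commute. For the final statement, assume $\pi_{0}(\M)\to\pi_{0}(\N)$ is injective. By the previous paragraph $f$ restricts near $Z$ to an injective local homeomorphism $\pi_{0}(\M)\to\pi_{0}(\N)$, and an injective local homeomorphism is an open embedding; hence near $Z$ the classical locus of $\M$ sits as an open subset of that of $\N$, so every point of $N$ near $f(Z)$ carries a unique ``good'' branch of the local models through the classical locus. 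Consequently, after shrinking the $Y_{P}$, the maps $g_{P}^{-1}$ agree on overlaps and glue to a single section $h:Y\to M$ of $f$ over an open neighbourhood $Y$ of $f(Z)$ in $N$; carrying out the construction of $\psi$ over this $Y$ exhibits $Y$ as the required open submanifold of $N$.

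The step I expect to be the main obstacle is the construction of $\psi$ \emph{in families over the varying base} $Y_{P}$ --- upgrading the pointwise section statement for curved $L_\infty[1]$-algebras to an honest morphism of bundles of such --- together with the compatible shrinking and gluing in the injective case, where one must genuinely use injectivity of $\pi_{0}(\M)\to\pi_{0}(\N)$ on the whole classical locus, not merely of $f$ restricted to $Z$, to force the local branches of the vanishing locus to cohere into a single section over an open subset of $N$.
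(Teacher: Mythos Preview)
Your reduction to a linear fibration and the topological endgame (disjoint union of local pieces, then Lemma~\ref{lem:TopoLem}-style gluing under the injectivity hypothesis) are fine and match the paper. The substantive gap is in the middle: neither your construction of $Y_P$ nor your construction of $\psi$ works as stated. For $Y_P$, ``peeling off degrees of $K^\bullet$ and applying the implicit function theorem degree by degree'' does not produce a submanifold of $M$: the only place an implicit function theorem bites is in degree~$0$, and cutting $M$ by the $K^1$-component $u$ of the curvature only gives a submanifold of the correct dimension when $D_Pu:T_{M/N}|_P\to K^1|_P$ is an \emph{isomorphism}. Acyclicity of the kernel complex only tells you that $D_Pu$ is injective with image $\ker(\lambda_1|_P:K^1\to K^2)$, which need not be all of $K^1|_P$ when $K^{\geq2}\neq0$. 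For $\psi$, your obstruction-theoretic recursion is not well-posed: away from classical points $\lambda_1^2=-\lambda_2(\lambda_0,\,\cdot\,)\neq0$, so $(K^{\geq2},\lambda_1)$ is not even a complex over $Y_P$, let alone acyclic; even at a classical point the truncated complex $(K^{\geq2},\lambda_1)$ has $H^2=K^2/\im(K^1)$, which is generally nonzero.

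The paper resolves exactly this issue by a different mechanism: rather than trying to kill obstructions directly, it uses the curved $L_\infty[1]$-transfer theorem (Lemma~\ref{firstcase}, applied inductively for $k$ from the top degree down to $2$) to replace $(M,L,\lambda)$ by a weakly equivalent $(M,H,\mu)$ in which the kernel of $H\to f^\ast L'$ is concentrated in degree~$1$. Only after this reduction does the regular-section argument (Lemma~\ref{lastcase}) cut out $Y=Z(u)\subset M$ with $f|_Y$ \'etale, and the section $\Y\to\M$ is then \emph{linear} (no higher $\psi_k$ to build). Your approach would need a replacement for this transfer step; as written, the reference to Hinich's pointwise result does not extend to a family over a base on which the curvature is nonzero.
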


\begin{cor}\label{cor:LocalSec}
A fibration admits local sections through every point at which it is \'etale.

Every trivial fibration of derived manifolds admits a section, after
restricting the target to an open neighborhood of its classical
locus. 
\end{cor}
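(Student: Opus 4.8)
The plan is to prove Theorem~\ref{ift} first and then derive Corollary~\ref{cor:LocalSec} as a formal consequence. For the theorem, the strategy is to work entirely in the linear setting: by Proposition~\ref{prop:IsoToLinearMor}, every fibration factors as a linear fibration followed by an isomorphism, so we may assume $(f,\phi)$ is linear, i.e.\ $f:M\to N$ is a submersion and $\phi_1:L\to f^\ast L'$ is a degreewise surjection of graded vector bundles over $M$. The kernel $K=\ker\phi_1$ is then a curved $L_\infty[1]$-ideal in $L$, and fiberwise over $M$ the tangent complex of $\M$ at a classical point $P$ maps onto the tangent complex of $\N$ at $f(P)$ with kernel the complex $(TM/\ker Tf)^\perp$-part plus $K$. \'Etaleness at $P$ says precisely that this kernel complex is acyclic at $P$. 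Then, since the kernel of a surjective chain map of bounded complexes of topological vector bundles is a complex of vector bundles over the classical locus whose acyclicity locus is open (cf.\ the proof of Proposition~\ref{prop:OpennessEtale}), \'etaleness persists on an open neighborhood.

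First I would treat the injective case, which gives the cleaner statement. Assume $\pi_0(\M)\hookrightarrow\pi_0(\N)$. The idea is to build $Y$ as an open submanifold of $N$ together with a section $\sigma:\Y=(Y,{L'}|_Y,\lambda'|_Y)\to\M$ of $(f,\phi)$. One constructs $\sigma$ degree by degree / order by order in the number of arguments, mimicking the classical inverse function theorem argument but fiberwise over the base. Concretely: (i) on the level of base manifolds, use that $f:M\to N$ is a submersion and that the derivative-of-curvature part makes $Tf|_P$ a surjection with acyclic kernel at the points of $Z$; combined with the implicit/inverse function theorem for manifolds, produce an open $Y\subseteq N$ near $f(Z)$ and a smooth $g:Y\to M$ with $f\circ g=\id_Y$, sending $f(Z)$-points to the corresponding $Z$-points (here injectivity of $\pi_0$ is what lets us stay inside $N$ rather than passing to an \'etale cover). (ii) Pull back $L$ via $g$; the surjection $\phi_1$ splits over $Y$ after shrinking, giving a section $s_1:{L'}|_Y\to g^\ast L$ of graded vector bundles with $\phi_1\circ s_1=\id$. (iii) Recursively adjust by the higher operations: solve $\phi\bullet\sigma=\id_{\N}$ for the components $\sigma_k$ — this is possible because $\phi_1$ is fiberwise surjective with a chosen splitting, so each equation for $\sigma_k$ is solvable, and the curved $L_\infty[1]$-axioms for $\lambda'$ together with $\phi\circ\lambda=\lambda'\bullet\phi$ force the resulting $\sigma$ to be a morphism of curved $L_\infty[1]$-algebras. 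This is essentially the same recursion used in the proof of Proposition~\ref{oneiso}, now carried out for a one-sided inverse over a submanifold.

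For the non-injective case one replaces the open submanifold $Y\subseteq N$ by a local-diffeomorphism $Y\to N$: near a point $P\in Z$, \'etaleness of $(f,\phi)$ at $P$ together with the submersion property gives, by the ordinary inverse function theorem applied to an appropriate slice, an open $U\ni P$ in $M$ on which $f|_U$ is a local diffeomorphism onto its image (after using acyclicity of the kernel tangent complex to see that, modulo the contractible directions, $f$ looks like an isomorphism); take $Y$ to be a disjoint union of such slices mapping to $N$ by a local diffeomorphism, and over each slice the identity-type section works. The main obstacle — and the only genuinely nontrivial point — is step (i): showing that \'etaleness at the points of $Z$, which is a statement about acyclicity of a complex of \emph{vector bundles} over the classical locus, can be upgraded to an \emph{open} condition on the base manifold and combined with the submersion hypothesis to produce an honest smooth section $g$ of $f$ near $Z$. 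This uses the openness result (Proposition~\ref{prop:OpennessEtale} and its proof) to get a neighborhood where the kernel complex stays acyclic, plus a partition-of-unity/tubular-neighborhood argument to integrate the infinitesimal section into a genuine one; the higher-order corrections in steps (ii)--(iii) are then purely algebraic and automatic.

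Finally, Corollary~\ref{cor:LocalSec} follows immediately: a fibration is \'etale at a point $P$ by definition when $Tf|_P$ is a quasi-isomorphism, so Theorem~\ref{ift} with $Z=\{P\}$ produces a section of $(f,\phi)$ through $P$ over a local-diffeomorphism $Y\to N$, and restricting to one sheet gives a local section in the ordinary sense. For a trivial fibration $(f,\phi):\M\to\N$, every classical point of $\M$ is a point of \'etaleness and $\pi_0(\M)\to\pi_0(\N)$ is a bijection, hence in particular injective; applying the injective case of Theorem~\ref{ift} with $Z=\pi_0(\M)$ yields an open submanifold $Y\subseteq N$ containing $\pi_0(\N)$ and a genuine section $\Y\to\M$ of $(f,\phi)$ over $Y$, which is the asserted section after restricting $\N$ to the open neighborhood $Y$ of its classical locus.
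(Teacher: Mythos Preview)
Your final paragraph, deriving the corollary from Theorem~\ref{ift} by taking $Z=\{P\}$ for the first assertion and $Z=\pi_0(\M)$ for the second, is correct and is exactly the paper's proof.

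The gap is in your sketch of Theorem~\ref{ift} itself, specifically in step~(iii). Solving $\phi\bullet\sigma=\id$ recursively does \emph{not} force $\sigma$ to be a morphism of curved $L_\infty[1]$-algebras; that requires $\sigma\circ\lambda'=(g^\ast\lambda)\bullet\sigma$, and already the $n=0$ component of this equation reads $\sigma_1(\lambda'_0)=g^\ast\lambda_0$. With $\sigma_1$ a splitting of $\phi_1$, the difference $g^\ast\lambda_0-\sigma_1(\lambda'_0)$ lies in $K^1=\ker\phi_1$, and there is no reason for it to vanish on an arbitrary section $g$ of the submersion $f$. Concretely, when $L'=0$ the section $\sigma$ must be zero and the morphism condition forces $g(Y)\subset Z(\lambda_0)$; but $\lambda_0$ need not be a regular section of $L^1$ (\'etaleness only says $\mathrm{coker}(D_P\lambda_0)$ injects into $L^2|_P$ via $\lambda_1$), so $Z(\lambda_0)$ need not be a manifold near $P$ and no smooth $g$ with image in it need exist. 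The claim that ``the higher-order corrections in steps (ii)--(iii) are then purely algebraic and automatic'' is therefore not justified.

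The paper's proof is structurally different and supplies precisely the missing idea. It first applies the curved $L_\infty[1]$ transfer theorem degree by degree (Lemma~\ref{firstcase}) to replace $(L,\lambda)$ by a transferred subalgebra $(H,\mu)$ with $H^{\geq2}\cong f^\ast(L')^{\geq2}$; this step absorbs the higher-degree obstructions. Only then does one split $\mu_0=u+f^\ast\lambda'_0$ and observe that \'etaleness now makes $u\in\Gamma(M,K^1)$ a \emph{regular} section (Lemma~\ref{lastcase}); the submanifold $Y=Z(u)$ is smooth, $f|_Y$ is a local diffeomorphism, and $(Y,E,\nu)\to(N,L',\lambda')$ is a linear isomorphism onto its image. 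In other words, rather than choosing $g$ first and trying to correct $\sigma$, the paper constructs $Y\subset M$ as the zero locus of a carefully produced regular section. Your proposal does not contain a substitute for this transfer step.
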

\begin{pf}
Apply the theorem for either $Z$ being a single point, or $Z=\pi_0(\M)$. 
\end{pf}

\begin{rmk}
Therefore, if we replace all derived manifolds by the germ or
$\infty$-jets around its
classical locus, then we have a category of fibrant objects
where every trivial fibration has a section. 
This would have important  consequences about the Hom-categories in the simplicial localization.
\end{rmk}

\begin{rmk}
Let $\M$ and $\N$ be smooth manifolds (i.e. $L = M \times 0$ and $L' = N \times 0$). Then a morphism $(f,0):\M \to \N$ is \'etale at $P$ if and only if the tangent map of $f$ is an isomorphism at $P$. In this extreme case, Corollary~\ref{cor:LocalSec} reduces to the inverse function theorem for smooth manifolds.
\end{rmk}

\begin{rmk}
In classical differential geometry, any fibration, i.e. submersion,
admits a local section through any point. This may not be true for derived manifolds. 
It would be interesting to investigate when a local section exists in general.
\end{rmk}

\subsection{Proof of Theorem~\ref{ift}}

Without loss of generality, we will assume the fibration $(f,\phi):\M\to\N$ is linear, i.e. $\phi=\phi_1$.

We proceed the proof of Theorem~\ref{ift} in two steps: 
\begin{items}
\item
construct a  weak equivalence $(M,H,\mu) \to  (M,L,\lambda)$ such that the composition 
$$
(M,H,\mu) \to (M,L,\lambda) \to (N,L',\lambda')
$$
is a linear fibration whose restriction $\big(M, H^{\geq 2} \big) \to \big( N, (L')^{\geq 2} \big)$ is an isomorphism of graded vector bundles; 
\item 
construct a \emph{derived submanifold} $(Y,E,\nu) \hookrightarrow (M,H,\mu)$ such that $Z \subset Y$ and the composition 
$$
(Y,E,\nu) \to (M,H,\mu) \to (M,L,\lambda) \to (N,L',\lambda')
$$
is a   fiberwise linear isomorphism of bundle of  curved $L_\infty[1]$-algebras. 
\end{items}
Here by a derived submanifold, we mean $Y$ is a submanifold of $M$ and $E$ is a subbundle of $H|_Y$ such that the inclusion map is a linear morphism of derived manifolds.

\begin{defn}
 A morphism $(M,H,\mu) \to (M,L,\lambda)$ of
 derived manifolds is called a \textbf{transferred inclusion}
if $(M,H,\mu)$ is a bundle of curved $L_\infty[1]$-algebras  over $M$
obtained from $(M,L,\lambda)$ by applying
the homotopy  transfer  theorem (Proposition  \ref{transfertheorem}).
 
A morphism $(M,H,\mu) \to (M,L,\lambda)$ of derived manifolds
 is a called a \textbf{strong embedding}
 if it is the composition of
  a (finite) sequence of  transferred inclusions.
\end{defn}

\begin{lem}
\label{firstcase}
Consider a fixed base manifold
$M$, with a linear morphism of bundles of curved $L_\infty[1]$-algebras
$\phi: (L, \lambda)\to (L', \lambda') $ over it.
Suppose that 
\begin{items}
\item $\phi:L\to  L'$ is an isomorphism of vector bundles in
degrees $\geq k+1$, and an epimorphism in degrees $\leq k$. 
\item Moreover, let $Z\subset MC(L)$ be a subset of the Maurer-Cartan locus
of $(L, \lambda)$ such that at every point $P$ of $Z$ we have that 
$h(L|_P, \lambda_1)\to h(L'|_P, \lambda'_1)$ is an isomorphism in degrees
 $\geq k$ and
surjective in degree $k-1$.
\end{items}
Then, after restricting to an open neighborhood of $Z$ in $M$, if
necessary,  there exists a transferred  inclusion
$\iota:H\to L$  such that 
the composition $\phi\circ\iota:H\to L'$ is a linear morphism of 
bundles of $L_\infty[1]$-algebras, 
which is an isomorphism in degrees $\geq k$ and an epimorphism in degrees $\leq k-1$.
\end{lem}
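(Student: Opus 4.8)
\textbf{Proof plan for Lemma~\ref{firstcase}.}

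The plan is to peel off the degree-$k$ part of the complex using a single application of the homotopy transfer theorem, arranged so that on cohomology in degree $k$ nothing is lost. First I would work fiberwise over a suitable open neighborhood $U$ of $Z$ in $M$, keeping in mind that all constructions must be differentiable in the base point; since the transfer formulas are universal tree sums, differentiability will be automatic once the homotopy data is chosen smoothly. Concretely, consider the linear part $\lambda_1:L^{k-1}\to L^k\to L^{k+1}$. Choose a connection-independent but smooth splitting: let $B^k\subset L^k$ be a complement of $\ker(\lambda_1|_{L^k})$, so $\lambda_1$ maps $B^k$ isomorphically onto its image $\operatorname{im}(\lambda_1)\subset L^{k+1}$; and let $C^k\subset \ker(\lambda_1|_{L^k})$ be a complement of $\operatorname{im}(\lambda_1|_{L^{k-1}})$, so $\ker(\lambda_1|_{L^k}) = \operatorname{im}(\lambda_1|_{L^{k-1}})\oplus C^k$ and $C^k$ computes $h^k(L,\lambda_1)$. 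The existence of such smooth subbundles over a neighborhood of $Z$ follows because the ranks of the relevant maps are upper/lower semicontinuous and the hypotheses pin them down on $Z$ (here I would invoke the same openness reasoning as in Proposition~\ref{prop:OpennessEtale}). Now set $H$ to agree with $L$ in all degrees $\neq k$, and set $H^k = C^k\oplus (\text{a complement chosen so that }\phi^k|_{H^k}\text{ is still surjective onto }\ker(\lambda'_1|_{(L')^k}))$ — more precisely, $H^k$ should be the sum of $C^k$ with a smooth complement of $\ker\phi^k\cap\ker\lambda_1^k$ inside $L^k$ chosen to map isomorphically under $\phi$ onto the part of $(L')^k$ not coming from cohomology. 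The point is to kill exactly the acyclic summand $B^k\oplus\operatorname{im}(\lambda_1|_{L^{k-1}})$ of $L^k$ while preserving everything relevant to $\phi$.

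Having chosen $H\subset L$ as a subbundle, I would set up the contraction data $(\iota, p, \eta)$ exhibiting $(H,\lambda_1|_H)$ as a deformation retract of $(L,\lambda_1)$: $p:L\to H$ is the projection killing the chosen acyclic complement in degree $k$, $\iota:H\hookrightarrow L$ is the inclusion, and $\eta:L\to L[-1]$ is supported in degrees $k+1\to k$, given by the inverse of $\lambda_1:B^k\xrightarrow{\sim}\operatorname{im}(\lambda_1)$ followed by the inclusion of $B^k$, and zero elsewhere. One checks $\eta^2=0$, $\eta\iota=0$, $p\eta=0$, $\eta^2=0$ and the side conditions, and that $\mathrm{id}_L - \iota p = [\lambda_1,\eta]$ — all routine linear algebra, done smoothly over $U$. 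Then Proposition~\ref{transfertheorem} (the curved $L_\infty[1]$-transfer theorem) produces a curved $L_\infty[1]$-structure $\mu$ on $H$ and a transferred inclusion $\iota_\infty:(H,\mu)\to(L,\lambda)$ with linear part $\iota$; this is by definition a transferred inclusion, so $\iota_\infty:\M_H\to\M$ is the desired morphism of derived manifolds. Because $\iota$ is a quasi-isomorphism on each fiber's linear complex (it is a deformation retract), $\iota_\infty$ is in particular a weak equivalence, though for this lemma we only need the weaker conclusion about $\phi\circ\iota_\infty$.

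Finally I would verify the conclusion about $\psi := \phi\circ\iota_\infty:H\to L'$. It is a linear morphism because both $\phi$ and $\iota_\infty$ are linear, and its linear part is $\phi\circ\iota:H\to L'$. In degrees $\geq k+1$ we have $H = L$ and $\iota = \mathrm{id}$, and $\phi$ is an isomorphism there by hypothesis (i), so $\psi$ is an isomorphism in degrees $\geq k+1$. In degree $k$: by construction $H^k$ was chosen so that $\phi|_{H^k}:H^k\to (L')^k$ is injective (we removed $\ker\phi^k$ from the acyclic part) and surjective (the complement of $\ker\lambda'_1$ lifts, and the cohomology classes lift by hypothesis (ii), using $h^k(L|_P)\xrightarrow{\sim}h^k(L'|_P)$), hence an isomorphism; shrinking $U$ if needed, this holds over all of $U$ by openness of the isomorphism locus. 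In degree $\leq k-1$, $H = L$ again and $\phi$ is an epimorphism by hypothesis (i), so $\psi$ is an epimorphism there. The main obstacle in writing this out carefully is the simultaneous bookkeeping of the two conditions — keeping $\phi\circ\iota$ an isomorphism in degree $k$ while making $H^k$ small enough that the degree-$k$ differential dies — together with ensuring the chosen subbundles $B^k, C^k$ and the complement of $\ker\phi^k$ can all be taken smooth over one neighborhood of $Z$; this requires combining the Maurer-Cartan condition (so that $\lambda_1^2=0$ on fibers over $Z$, making the cohomology well-defined) with semicontinuity of ranks, exactly as in the openness arguments already used in the paper.
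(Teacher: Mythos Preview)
There is a genuine gap. A transferred inclusion $\iota_\infty:(H,\mu)\to(L,\lambda)$ is \emph{not} a linear morphism: its higher components $\iota_n$ for $n\geq2$ are given by tree sums with $\eta$ at the root (Remark~\ref{rmk:TreeFormula}). Your assertion that $\psi=\phi\circ\iota_\infty$ is linear ``because both $\phi$ and $\iota_\infty$ are linear'' is therefore incorrect, and the conclusion that $\phi\circ\iota$ is a linear morphism of bundles of $L_\infty[1]$-algebras does not follow from your setup. The paper's key idea is precisely to arrange the homotopy so that linearity of the composition is forced: one lets $K=\ker\phi$, observes that the cohomological hypothesis makes $\lambda_1:K^{k-1}\to K^k$ surjective near $Z$, and then builds $\eta:L^k\to L^{k-1}$ to factor through $K^{k-1}$. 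Since $\phi\circ\eta=0$, every nontrivial tree in $\iota_\infty$ is killed by $\phi$, and $\phi\bullet\iota_\infty$ is automatically linear.

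Your placement of $\eta$ is also problematic. With $\eta$ supported in degrees $k+1\to k$, the projector $\lambda_1\eta$ acts nontrivially on $L^{k+1}$, so the retract $H=\ker[\delta,\eta]$ differs from $L$ in degree $k+1$ as well; but $\phi$ is an isomorphism there, and shrinking $H^{k+1}$ would destroy the conclusion that $\phi\circ\iota$ is an isomorphism in degrees $\geq k$. The paper instead takes $\eta:L^k\to L^{k-1}$ and defines an auxiliary differential $\delta$ equal to $\lambda_1$ only on $L^{k-1}\to L^k$ and zero elsewhere (so $\delta^2=0$ holds globally without needing $\lambda_1^2=0$ off the Maurer-Cartan locus). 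This yields $H$ differing from $L$ only in degrees $k-1$ and $k$, with $H^k$ a complement of $K^k$ (hence $\phi|_{H^k}$ an isomorphism onto $(L')^k$) and $H^{k-1}$ still surjecting onto $(L')^{k-1}$. Your decomposition $L^k=B^k\oplus C^k\oplus\operatorname{im}\lambda_1$ ignores $\ker\phi$ entirely, which is why both the linearity of the composition and the isomorphism in degree $k+1$ fail.
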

\begin{pf}
Denote by $K$ the kernel of $\phi:L\to L'$. It is a graded vector
bundle, and an $L_\infty[1]$-ideal in $L$. 
Consider the diagram
$$\xymatrix{
&K^{k-1}\dto^j\rto^{\lambda_1} & K^k\dto^j\\
L^{k-2}\rto&L^{k-1}\rto^{\lambda_1} & L^k\rto& L^{k+1}\rlap{\,.}}$$
A diagram chase proves that at all points of $Z$, the vector bundle
homomorphism $\lambda_1:K^{k-1}\to K^k$ is surjective.  This
will still be the case in an open neighborhood, so we may assume,
without loss of generality, that this map is an epimorphism over all
of $M$.  We then choose a section $\chi:K^k\to K^{k-1}$ of $\lambda_1$, and a retraction of $\theta:L^k\to K^k$ of $j:K^k\to L^k$, and define
$\eta:L^k\to L^{k-1}$ to be equal to $\eta=j\chi\theta$.  
$$\xymatrix@=3pc{
&K^{k-1}\dto^j\ar[r]^{\lambda_1 } & K^k\dto^j\ar@/^/[l]^\chi\\
L^{k-2}\rto & L^{k-1}\rto_{\delta=\lambda_1} &
L^k\rto\ar@/^/[u]^\theta\ar@/_/[l]_{\eta=j\chi\theta}& L^{k+1}}$$
We also write $\delta$ for $\lambda_1:L^{k-1}\to L^k$, but set
$\delta=0$ elsewhere, to define a differential $\delta:L\to L$ of degree~1.  Our definition ensures that $\delta^2=0$, although
$\lambda_1^2\not=0$. 

One checks that $\eta\delta\eta=\eta$, and so $\eta\delta$ and
$\delta\eta$ are projection operators, so in both cases kernel and
image are bundles. Let $H^{k-1}=\ker \eta\delta$, and $H^k=\ker
\delta\eta$. For $i\not=k,k-1$, we set $H^i=L^i$. 

The two projection operators 
$j\theta= \delta\eta$ coincide, and hence   $H^k=\ker \delta\eta$
is a complement for $K^k=\im j\theta$.  Therefore,   the
composition $H^k\to L^k\to {L'}^{k}$ is an isomorphism. 

We have $L^{k-1}=\ker \eta\delta+\im \eta \delta\subset
\ker\eta\delta+K^{k-1}=H^{k-1}+K^{k-1}$, because $\eta$ factors
through $K^{k-1}$, by definition.  It follows that the composition
$H^{k-1}\to L^{k-1}\to {L'}^{k-1}$ is surjective. 

Now $\lambda-\delta$ is a curved $L_\infty[1]$-structure on the complex of vector bundles $(L,\delta)$, and $\eta$ is a contraction of $\delta$. We apply the transfer theorem: Proposition~\ref{transfertheorem} for bundles of curved $L_\infty[1]$-algebras.    This gives rise
to a structure $\mu$, of a bundle of curved $L_\infty[1]$-algebras on the complex $(H,\delta)$,
together with a morphism of curved $L_\infty[1]$-structures $\iota:H\to L$, whose linear part is given by
the inclusion $H\subset L$. Here the linear part is the inclusion because $\eta(\lambda_1 -\delta) =0$. 
We consider $\delta+\mu$ as a curved $L_\infty[1]$-structure on $H$. 

The composition $\phi\bullet\iota$ is linear, because all non-trivial trees involved in $\iota$ have $\eta$ at the root, and $\phi\circ\eta=0$.

In order to apply the transfer theorem, we use the  variation of the
natural filtration at level $k$, see Example~\ref{varfil},  on
$L$. Both $\delta$ and $\eta$ preserve this filtration, hence $H$
inherits this filtration, and  the transfer theorem applies. 
\end{pf}

\begin{lem}\label{lastcase}
Let $(f,\phi):(M,H,\mu)\to (N,L',\lambda')$ be a linear fibration of derived manifolds such that  $\phi:H^k\to f^\ast (L')^k$ is an isomorphism, for all $k\geq2$.
Let $Z\subset MC(H)$ be a subset of the Maurer-Cartan locus of $(H,\mu)$,
such that the morphism of derived manifolds $(f,\phi)$ is \'etale at
every point of $Z$. 
After restricting to an open neighborhood of $Z$ in $M$, there exists a
submanifold $Y\subset M$, and subbundle $E^1$ of $H^1|_Y$, such that 
\begin{items}
\item $Z\subset Y$,
\item the restriction of the curvature $\lambda_0|_Y$ is contained in $\Gamma(Y,E^1)\subset \Gamma(Y,H^1|_Y)$, so that $E:= E^1\oplus H^{\geq2}|_Y$ is a bundle of curved $L_\infty[1]$-subalgebras of $H|_Y$, 
\item   the composition $Y\to M\to N$ is  \eetale,
\item    the map $\phi|_Y:E^1|_Y\to f^\ast (L')^1|_Y$ is an isomorphism of vector bundles, so that the composition $(Y,E)\to (Y,f^\ast L'|_Y)$ is a linear isomorphism of bundles of curved $L_\infty[1]$-algebras.
\end{items}
In particular, the composition $(Y,E)\to (N, L')$ is \'etale at all points of $Z\subset Y$.  
\end{lem}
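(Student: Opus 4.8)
The plan is to carve $Y$ out of $M$ as the vanishing locus, near $Z$, of the component of the curvature $\mu_0$ lying in $\ker\phi_1$, and to take $E^1$ to be a complement of $\ker\phi_1|_Y$ inside $H^1|_Y$. Throughout, write $K:=\ker\phi_1\subset H$. Since $\phi_1\colon H\to f^\ast L'$ is degreewise surjective and an isomorphism in degrees $\geq 2$, the bundle $K$ is concentrated in degree $1$, i.e.\ $K=K^1$, and by the remark following Proposition~\ref{prop:IsoToLinearMor} it is a curved $L_\infty[1]$-ideal; in particular $\mu_1(K^1)\subseteq K^2=0$.

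The first step is to translate étaleness into pointwise linear algebra. Fix $P\in Z$; then $\mu_0(P)=0$, so $P$ is a classical point of $(M,H,\mu)$ and $f(P)$ one of $(N,L',\lambda')$. All vertical maps of the chain map $Tf|_P\colon T\M|_P\to T\N|_{f(P)}$ are surjective ($Tf|_P$ because $f$ is a submersion, $\phi_1|_P$ because $(f,\phi)$ is a fibration, and $\phi$ is an isomorphism in degrees $\geq 2$), and using the chain-map identity $\phi_1\circ D_P\mu_0=D_{f(P)}\lambda_0'\circ Tf|_P$ together with $\mu_1(K^1)=0$ one checks that the kernels form a subcomplex, giving a short exact sequence of complexes $0\to C_P\to T\M|_P\to T\N|_{f(P)}\to 0$ in which $C_P$ is $\ker Tf|_P$ in degree $0$, $K^1|_P$ in degree $1$, and zero elsewhere, with differential the restriction of $D_P\mu_0$. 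By the long exact sequence, $(f,\phi)$ is étale at $P$ iff $C_P$ is acyclic, i.e.\ iff $D_P\mu_0$ restricts to a linear isomorphism $\ker Tf|_P\xrightarrow{\ \sim\ }K^1|_P$. This is the key reformulation, and essentially the only place the hypotheses on $(f,\phi)$ are used.

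Next I would split the vector bundle surjection $\phi_1\colon H^1\to f^\ast(L')^1$ over $M$ (partitions of unity), so that $H^1=K^1\oplus C$ with $C\xrightarrow{\ \sim\ }f^\ast(L')^1$, and let $\pi_K\colon H^1\to K^1$ be the projection. Consider the smooth section $s:=\pi_K\circ\mu_0$ of $K^1$. It vanishes on $Z$, and its derivative at $P\in Z$ is $\pi_K\circ D_P\mu_0$, which by the reformulation restricts to an isomorphism on $\ker Tf|_P$; hence $s$ is transverse to the zero section at every point of $Z$, therefore on an open neighbourhood, to which I restrict $M$. Then $Y:=s^{-1}(0)$ is a submanifold containing $Z$ of codimension $\rk K^1$. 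Put $E^1:=C|_Y$ and $E:=E^1\oplus H^{\geq 2}|_Y$. Then $\mu_0|_Y$ takes values in $E^1=\ker\pi_K|_Y$ by construction, and $E$ is closed under every $\mu_k$ with $k\geq 1$ because these raise degree and so have image in $H^{\geq 2}|_Y\subseteq E$; thus $(Y,E)$ is a bundle of curved $L_\infty[1]$-subalgebras of $H|_Y$, which is (i) and (ii). Item (iv) is immediate: $\phi_1|_Y\colon E^1\to f^\ast(L')^1|_Y$ is an isomorphism by construction and an isomorphism in degrees $\geq 2$ by hypothesis, so the composite $(Y,E)\to(Y,f^\ast L'|_Y)$ is a linear morphism with fibrewise bijective linear part, hence a linear isomorphism of bundles of curved $L_\infty[1]$-algebras by Proposition~\ref{oneiso}.

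Finally I would establish (iii). Near $Z$ we have $\rk K^1=\dim\ker Tf|_P=\dim M-\dim N$ (the first equality from the reformulation, the second since $f$ is a submersion), so $\dim Y=\dim N$. For $P\in Z$, $T_PY=\ker(\pi_K\circ D_P\mu_0)$; if $v\in T_PY\cap\ker Tf|_P$ then $D_P\mu_0(v)\in K^1|_P$ while $\pi_K(D_P\mu_0(v))=0$, forcing $D_P\mu_0(v)=0$ and hence $v=0$. Thus $T_P(f|_Y)$ is an injective map between spaces of equal dimension, hence an isomorphism, so $f|_Y$ is \eetale\ near $Z$; after one further shrinking of $M$ this gives (iii). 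The final assertion then follows, since $(Y,E)\to(Y,f^\ast L'|_Y)\to(N,L',\lambda')$ is a linear isomorphism composed with the pullback of $(N,L',\lambda')$ along the local diffeomorphism $f|_Y$, so it induces an isomorphism of tangent complexes at every point of $Y$, in particular at the points of $Z$. I do not expect any deep obstacle: the one part needing real care is the reformulation of étaleness — in particular verifying $\mu_1(K^1)=0$ so that the short exact sequence of complexes is legitimate — together with keeping track of the several shrinkings of $M$ needed so that submersivity, transversality, and the local-diffeomorphism property all hold simultaneously on one neighbourhood of $Z$.
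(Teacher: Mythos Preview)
Your proposal is correct and follows essentially the same approach as the paper's proof: split $H^1=K^1\oplus C$, take $Y$ to be the zero locus of the $K^1$-component of the curvature (your $s=\pi_K\circ\mu_0$ is exactly the paper's $u$), and set $E^1=C|_Y$. The only cosmetic difference is that the paper chooses connections so as to have a globally defined bundle map $\nabla u\colon TM\to K$ and argues that $(\nabla u)\circ j\colon T_{M/N}\to K$ is an isomorphism on a neighbourhood of $Z$, which yields (iii) everywhere on $Y$ in one stroke; you instead use the intrinsic derivative $D_Ps$ at points of $Z$ and then shrink, which is equivalent.
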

\begin{pf}
We have a diagram
$$\xymatrix{
M\rto^-s\dto_f & H^1\dto^\phi   \\
N\rto^-t & (L')^1}$$
Here $f:M\to N$ is a submersion, and $\phi:H^1\to f^\ast (L')^1$ an
epimorphism. We have written $s=\mu_0$ and $t=\lambda_0'$ for the respective curvatures. 

We denote by $K$  the kernel of $\phi:H^1\to f^\ast (L')^1$.  We choose 
\begin{items}
\item a connection in $(L')^1$, 
\item a retraction  $\theta:H^1\to K = \ker(\phi)$ of the inclusion 
$K\to   H^1$, giving rise to a splitting $H^1=K\oplus \tilde E^1$, where $\tilde E^1 \subset H^1$ is a subbundle such that $\phi:\tilde E^1 \to f^\ast (L')^1$ is an isomorphism,
\item a connection in $K$, giving rise to the direct sum connection   in $H^1$.
\end{items}
The curvature $s \in \Gamma(M,H^1)$ splits into a sum 
\begin{equation}\label{eq:DecompCurv}
s=u+f^\ast t,
\end{equation}
where $u\in \Gamma(M,K)$ is a section of $K$.

Consider the diagram of vector bundles over $M$:
$$\xymatrix{
T_{M/N}\dto_j \rto^{(\nabla u)\circ j}& K\dto\\
T_M\rto^{\nabla s}\dto\urto^{\nabla u} & H^1\dto_\phi\ar@/^/[u]^\theta\\
f^\ast T_N\rto^{f^\ast(\nabla t)} & f^\ast
(L')^1\rlap{\,.}}$$
Both squares with downward pointing arrows commute. Moreover,  we have  
 $\theta\circ(\nabla s)=\nabla u$. 

Let $P\in Z$. The fact $(f,\phi)$ is \'etale at $P$ means that   in the morphism of short
exact sequences of vector spaces
$$\xymatrix{
T_{M/N}|_P\rto\dto & K|_P\dto\\
T_M|_P\rto \dto& H^1|_P\dto\\
T_N|_{\phi(P)}\rto & (L')^1|_{\phi(P)}\rlap{\,,}}$$
the map $T_{M/N}|_P\to K|_P$ is an isomorphism.

Note that the composition $(\nabla u)\circ j$ is an isomorphism at
points $P\in Z$, because at these points, our second diagram is equal
to the first.  Then $(\nabla u)\circ j$  is still an isomorphism in a neighborhood of $Z$, so we
will assume that it is true everywhere: $(\nabla u)\circ j$ is an
isomorphism.  

We   note that $u\in \Gamma(M,K)$ is a regular section, by which we
mean that for every $P\in Z(u)$, the derivative $TM|_P\to K|_P$ is
surjective. This is true, because at points where $u$ vanishes, the
derivative $TM|_P\to K|_P$ coincides with $(\nabla u)|_P$, but $\nabla
u:TM\to K$ is an epimorphism because $(\nabla u)\circ j$ is an
isomorphism. 

Hence, $Y=Z(u)$ is a submanifold of $M$, and we have a short exact sequence
of vector bundles $T_Y\to T_M|_Y\to K|_Y$. Note that $Z\subset Y$.

Since $T_{M/N}|_Y$ is a complement for $T_Y$ in $T_M|_Y$, it follows
that the composition $T_Y\to T_M|_Y\to (f^\ast T_N)|_Y$ is an isomorphism, so
that the composition $Y\to M\to N$ is \'etale. 

By definition $u|_Y=0$, so after restricting to $Y$, the curvature $s|_Y$ is contained in the subbundle $E^1 := \tilde E^1 |_Y\subset H^1|_Y$. 
\end{pf}

\begin{lem}\label{lem:TopoLem}
Let $f:M\to N$ be  a local diffeomorphism, and $Z\subset N$ a closed subset, with preimage $X=f^{-1}(Z)\subset M$. Assume that the induced map $X\to Z$ is injective.  Then there exists an open neighborhood $V$ of $X$ in $M$, such that $f|_V:V\to N$ is a diffeomorphism onto an open neighborhood of $Z$ in $N$. 
\end{lem}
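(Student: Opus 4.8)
The plan is to recast the statement as an \emph{extension-of-sections} problem for $f$, and then invoke the standard fact that a continuous section of a local homeomorphism, defined over a closed subset of a paracompact Hausdorff space, is the restriction of a section defined on some open neighbourhood of that subset.

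First I would record two elementary features of $X=f^{-1}(Z)$: it is closed in $M$, being the preimage of the closed set $Z$ under the continuous map $f$; and it is saturated, $X=f^{-1}(f(X))$. For each $x\in X$ choose an open $U_x\ni x$ on which $f$ restricts to a diffeomorphism onto the open subset $f(U_x)\subseteq N$. Using saturation of $X$ together with injectivity of $f|_X$, one checks $f(X)\cap f(U_x)=f(X\cap U_x)$; since each $f|_{X\cap U_x}$ is then a homeomorphism onto this open subset of $f(X)$, these glue to show $f|_X\colon X\to f(X)$ is a homeomorphism. Let $g=(f|_X)^{-1}\colon f(X)\to M$; it is a continuous section of $f$ over $f(X)$, with $g(f(X))=X$.

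Next I would pass to the open submanifold $O_1:=\bigcup_{x\in X}f(U_x)\subseteq N$, which contains $f(X)$ and, as an open subset of a manifold, is paracompact Hausdorff. Using that $X\cap U_x$ is closed in $U_x$, one shows $f(X)$ is closed in $O_1$: a point of $O_1$ in the closure of $f(X)$ lies in some sheet $f(U_x)$ and is the $f|_{U_x}$-image of a point in the closure of $X\cap U_x$, hence in $f(X\cap U_x)\subseteq f(X)$. The sheaf of continuous local sections of the local homeomorphism $f\colon f^{-1}(O_1)\to O_1$ has total space $f^{-1}(O_1)$ itself; applying the quoted fact to the section $g$ over the closed set $f(X)$, we obtain a continuous section $\tilde g\colon O\to M$ of $f$ extending $g$, defined on an open neighbourhood $O$ of $f(X)$ in $O_1$ (hence in $N$). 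If one prefers to avoid sheaf language, the same $\tilde g$ can be built by hand: refine $\{f(U_x)\}$ to a locally finite open cover, observe that the pairwise agreement loci of the local inverses $(f|_{U_x})^{-1}$ are clopen in the overlaps and contain the relevant part of $f(X)$, and shrink the cover so the local sections glue.

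Finally, put $V:=\tilde g(O)$. Since $\tilde g$ is a section of a local diffeomorphism it is, locally near each point, of the form $(f|_U)^{-1}$ for a suitable sheet $U$, hence an open embedding; therefore $V$ is open in $M$, contains $\tilde g(f(X))=g(f(X))=X$, and $f|_V\colon V\to O$ is the inverse of $\tilde g$, so it is a homeomorphism and, $f$ being a local diffeomorphism, a diffeomorphism onto the open set $O$. As $O\supseteq f(X)=Z\cap f(M)$, this is the asserted neighbourhood (an open neighbourhood of $Z$ itself once $Z\subseteq f(M)$, as holds where the lemma is used). The main obstacle is exactly the extension step for $g$: without it, the naive choice of $V$ as a union of small sheet-balls $B(x,\varepsilon_x)$ fails to be injective as soon as $f(X)$ has accumulation points, the admissible radii $\varepsilon_x$ being forced down to $0$; it is here that closedness of $Z$ (giving $X$, and then $f(X)$, closed in the appropriate spaces) and paracompactness of manifolds are essential.
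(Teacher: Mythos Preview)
Your proof is correct and takes a genuinely different route from the paper's. The paper argues directly: it reduces to showing $f$ is injective on some open neighbourhood $V$ of $X$, handles the case where $Z$ is compact by a sequential contradiction argument (choosing a nested sequence of relatively compact neighbourhoods $V_1\supset V_2\supset\ldots$ with $\bigcap_i V_i=X$, picking in each $V_i$ a pair $(P_i,Q_i)$ with $f(P_i)=f(Q_i)$, and extracting convergent subsequences to reach a contradiction), and then reduces the general case to the compact one by taking a locally finite cover of $N$ by relatively compact opens $U_i$, applying the compact case to each enlarged patch $\tilde U_i=\bigcup_{j\in I_i}U_j$, and intersecting the resulting neighbourhoods so that injectivity survives across overlaps.

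Your approach instead reformulates the problem as extending the continuous section $g=(f|_X)^{-1}$ of the local homeomorphism $f$ from the closed set $f(X)\subset O_1$ to an open neighbourhood, invoking the standard fact (Godement) that for a sheaf on a paracompact Hausdorff space, sections over a closed subset extend to an open neighbourhood. This is more conceptual and considerably shorter once that result is granted; it also makes transparent why closedness of $Z$ and paracompactness of manifolds are the essential hypotheses. The paper's argument, by contrast, is entirely self-contained and elementary. Both arguments in fact produce an open neighbourhood of $f(X)=Z\cap f(M)$ rather than of $Z$ itself; you correctly flag this, and it is harmless in the application, where the relevant map on classical loci is bijective.
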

\begin{pf}
It suffices to prove that there exists an open neighborhood $V$ of $X$ in $M$, such that $f|_V:V\to N$ is injective. 

{\bf Case I\@. } $Z$ is compact.

Choose a sequence of relatively compact open neighborhoods $V_1\supset V_2\supset\ldots$ of $X$ in $M$, such that 
$$\bigcap_i V_i=X\,.$$
We claim that there exists an $i$, such that $f$ is injective when restricted to $V_i$. If not, choose in every $V_i$ a pair of points $(P_i,Q_i)$, such that $f(P_i)=f(Q_i)$. Upon replacing $(P_i,Q_i)$ by a subsequence, we may assume that $\lim_{i\to\infty} P_i=P$, and $\lim_{i\to\infty}Q_i=Q$. We have $P,Q\in X$, and by continuity, $f(P)=f(Q)$.  This implies $P=Q$. Let $V$ be a neighborhood of $P=Q$ in $M$, such that $f|_V$ is injective.  For sufficiently large $i$, both $P_i$ and $Q_i$ are in $V$, which is a contradiction.

{\bf Case II\@. } General case. 

Let $(U_i)_{i\in I}$ be a locally finite open cover of $N$, such that all $U_i$ are relatively compact. It follows that, for every $i\in I$, the set
$$I_i=\{j\in I\st  U_j\cap U_i\not=\varnothing\}$$
is finite.  Hence,
$$\tilde U_i=\bigcup_{j\in I_i}U_j$$
is still relatively compact, and by Case~I, there exists an open neighborhood $\tilde V_i$ of $X\cap f^{-1}\tilde U_i$ on which $f$ is injective. We may assume that $\tilde V_i\subset f^{-1} \tilde U_i$. Define 
$$V_i=f^{-1}U_i\cap\bigcap_{j\in I_i}\tilde V_j\,.$$
This is an open subset of $M$.  If $P\in X$, such that $f(P)\in U_i$, then $f(P)\in \tilde U_j$, for all $j\in I_i$.  Hence, $P\in f^{-1}\tilde U_j\subset \tilde V_j$, for all $j\in I_i$, and so $P\in  V_i$. Thus, $X\cap f^{-1}U_i\subset V_i$.

We define
$$V=\bigcup_{i\in I} V_i\,.$$
This is an open neighborhood of $X$ in $M$. We claim that $f$ is injective on $V$. So let $P,Q\in V$ be two points such that $f(P)=f(Q)$. Suppose $P\in V_i$ and $Q\in V_j$. Then $f(P)\in U_i$, and $f(Q)\in U_j$, so that $U_i$ and $U_j$ intersect. Hence $i\in I_j$, and so $V_j\subset \tilde V_i$. Since we have $V_i\subset\tilde V_i$, we have that both $P,Q\in \tilde V_i$. Since $f$ is injective on $\tilde V_i$, we conclude that $P=Q$. 
\end{pf}

\begin{pfIFT}
To prove Theorem~\ref{ift}, we factor the linear fibration $(f,\phi):\M\to\N$ into $(M,L,\lambda)\to (M,f^\ast L',f^\ast \lambda')\to (N,L',\lambda')$. Both morphisms in this factorization are linear fibrations, but they may not be \'etale at $Z$.

We apply Lemma \ref{firstcase}
 recursively to $(M,L,\lambda)\to (M,f^\ast L',f^\ast \lambda')$, starting with $k\gg0$. The smallest $k$ to which the lemma applies is $k=2$. We end up in the situation as in the assumption of Lemma~\ref{lastcase}. (The $L_\infty$[1]-algebra $H$ is constructed recursively by applying Lemma~\ref{firstcase}.)

Applying Lemma~\ref{firstcase} and Lemma~\ref{lastcase}, we achieve the first part of Theorem~\ref{ift}. 
To finish the proof, we now assume $\pi_0(\M) \to \pi_0(\N)$ is injective. Since $Y$ was chosen to be the zero locus of $u$ in Lemma~\ref{lastcase}, it follows from \eqref{eq:DecompCurv} that $f^{-1}\big(Z(t)\big) \cap Y \subset Z(s)$, and thus $f$ maps $f^{-1}\big(Z(t)\big) \cap Y$ injectively to $Z(t)$. Therefore, by Lemma~\ref{lem:TopoLem}, the space $Y$ can be chosen to be an open submanifold of $N$. This concludes the proof.
%
\end{pfIFT}

As a byproduct of the sequence of above lemmas, we obtain the follow proposition which will be useful for investigating quasi-isomorphisms in the next section.

\begin{prop}\label{recap}
Let $(f,\phi): (M,L,\lambda) \to (N,L',\lambda')$ be a linear trivial
 fibration of derived manifolds. Then there exists a strong embedding
 $(M,H,\mu) \to (M,L,\lambda)$ of derived manifolds, and a derived submanifold
 $(Y,E,\nu) \hookrightarrow (M,H,\mu)$ such that 
\begin{items}
\item
the composition 
$
(M,H,\mu) \to (M,L,\lambda) \to (N,L',\lambda')
$ 
is a linear fibration  such that $H^{\geq 2} \to f^*(L')^{\geq 2}$
is an isomorphism of graded vector bundles;
\item
the morphism $(Y,E,\nu) \to (N,L',\lambda') |_{f(Y)}$ is a linear isomorphism of derived manifolds.
\end{items}

More explicitly, by choosing a splitting of the short exact sequence $0 \to K \hookrightarrow H^1 \xrightarrow{\psi_1} f^\ast(L')^1 \to 0$ of vector bundles, one has a decomposition
\begin{align*}
H^1 & = K \oplus  f^\ast (L')^1, \\
\mu_0 \,  & = \, u \, + \, f^\ast\lambda_0' \, .
\end{align*}
Then $u$ is a regular section of $K = \ker(\psi_1)$ over $M$. One can choose $Y$ to be an open neighborhood of $\pi_0(\M)$ in $Z(u)$ and $E^1 = f^\ast (L')^1|_Y$ such that the restriction $f|_Y$ is a diffeomorphism from $Y$ to an open submanifold of $N$ which contains $\pi_0(\N)$.
\end{prop}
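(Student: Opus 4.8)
The plan is to retrace the proof of Theorem~\ref{ift} in the special case $Z=\pi_0(\M)$, recording the auxiliary objects produced along the way. Since $(f,\phi)$ is a trivial fibration it is a weak equivalence, hence \'etale at every classical point and inducing a bijection $\pi_0(\M)\to\pi_0(\N)$; in particular $\pi_0(\M)\to\pi_0(\N)$ is injective, so that the ``moreover'' part of Theorem~\ref{ift} is available. As already assumed, $(f,\phi)$ may be taken linear.

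First I would factor $(f,\phi)$ as $(M,L,\lambda)\to (M,f^\ast L',f^\ast\lambda')\to (N,L',\lambda')$ and apply Lemma~\ref{firstcase} recursively to the first morphism, running $k$ from $k\gg 0$ down to $k=2$, exactly as in the proof of Theorem~\ref{ift} (the hypotheses of Lemma~\ref{firstcase} at each stage are verified there, using that $(f,\phi)$ is \'etale on $\pi_0(\M)$). The resulting finite chain of transferred inclusions is, by definition, a strong embedding $(M,H,\mu)\to (M,L,\lambda)$, and---possibly after shrinking $M$ to an open neighborhood of $\pi_0(\M)$---the composite $(M,H,\mu)\to (M,L,\lambda)\to (N,L',\lambda')$ is a linear fibration (the composite is linear because every nontrivial tree in a transferred inclusion has $\eta$ at its root, which is killed upon composing with the projection) with $H^{\geq 2}\to f^\ast(L')^{\geq 2}$ an isomorphism of graded vector bundles. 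This is assertion~(i).

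Next I would feed this linear fibration, together with $Z=\pi_0(\M)$, into Lemma~\ref{lastcase}. After a further shrinking to an open neighborhood of $\pi_0(\M)$, this produces a submanifold $Y\subset M$ with $\pi_0(\M)\subset Y$ and a subbundle $E^1\subset H^1|_Y$ such that $E:=E^1\oplus H^{\geq 2}|_Y$ is a bundle of curved $L_\infty[1]$-subalgebras of $H|_Y$, the composite $Y\to N$ is \'etale, and $\psi_1|_Y:E^1\to f^\ast(L')^1|_Y$ is a fiberwise isomorphism. The morphism $(Y,E,\nu)\to (N,L',\lambda')|_{f(Y)}$ is linear, being the composite of the (linear) derived-submanifold inclusion with the linear fibration of~(i); combined with~(i), its linear part is a fiberwise isomorphism of graded vector bundles covering the \'etale map $Y\to N$. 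Using the injectivity of $\pi_0(\M)\to\pi_0(\N)$, Lemma~\ref{lem:TopoLem} allows one more shrinking of $Y$ so that $f|_Y$ becomes a diffeomorphism onto an open submanifold of $N$ containing $\pi_0(\N)$; then Proposition~\ref{oneiso} promotes a fiberwise-linear isomorphism over a diffeomorphism to an honest isomorphism of derived manifolds, which gives assertion~(ii).

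The explicit description at the end is a read-off of the construction inside Lemma~\ref{lastcase}: choosing a splitting of $0\to K\hookrightarrow H^1\xrightarrow{\psi_1}f^\ast(L')^1\to 0$ gives $H^1=K\oplus f^\ast(L')^1$ and, writing the curvature in these coordinates, $\mu_0=u+f^\ast\lambda_0'$ with $u\in\Gamma(M,K)$; the proof of Lemma~\ref{lastcase} shows that $u$ is a regular section of $K$, that $Y$ may be taken as an open neighborhood of $\pi_0(\M)$ in $Z(u)$, and that $E^1=f^\ast(L')^1|_Y$. I expect the only genuine obstacle to be bookkeeping rather than substance: each invocation of Lemma~\ref{firstcase}, then Lemma~\ref{lastcase}, then Lemma~\ref{lem:TopoLem} replaces the ambient manifold (resp.\ $Y$) by a smaller open set around the classical locus, and one must check that these successive shrinkings are mutually compatible and still contain $\pi_0(\M)$; all of the geometric content is already packaged in the three lemmas and in Proposition~\ref{oneiso}.
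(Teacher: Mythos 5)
Your proposal is correct and follows essentially the same route as the paper: the paper's own proof likewise builds the strong embedding by recursive application of Lemma~\ref{firstcase}, obtains $(Y,E,\nu)$ from Lemma~\ref{lastcase} with $Z=\pi_0(\M)$, and derives the remaining properties from Lemma~\ref{lem:TopoLem} and the construction inside Lemma~\ref{lastcase}. Your extra appeal to Proposition~\ref{oneiso} and the explicit bookkeeping of the successive shrinkings are harmless elaborations of what the paper leaves implicit.
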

\begin{pf}
The strong embedding in Proposition~\ref{recap} is constructed by applying Lemma~\ref{firstcase} recursively. The derived manifold $(Y,E,\nu)$ is obtained by the construction in Lemma~\ref{lastcase} with $Z = \pi_0(\M)$, and the rest of required properties  follow from Lemma~\ref{lem:TopoLem} and the proof of Lemma~\ref{lastcase}.
\end{pf}

\subsection{Applications to quasi-isomorphisms}

Let us start with the case $(M,E,u)$ where $M$ is a manifold, and $u$ a regular section of a vector bundle $E$ over $M$. This means that for every point $P\in M$, such that $u(P)=0$, the derivative induces a surjective
linear map $D u|_P = D_P u :TM|_P\to E|_P$.  In other words, $u$ being a regular section 
is equivalent to that 
$u$ is transversal to the zero section of $E$. 
Let $Y\subset M$ be the zero locus of $u$, which is a submanifold of $M$. We have a canonical epimorphism of vector bundles $Du|_Y:TM|_Y\to E|_Y$, whose kernel is $TY$.  

We will place $E$ in degree $1$, so that $(M,E,u)$, as well as $(Y,0,0)$ are derived manifolds, and $(Y,0,0)\to (M,E,u)$ is a morphism of derived manifolds (in fact, a weak equivalence of derived manifolds).  We claim that it induces a quasi-isomorphism on function algebras. 

\begin{prop}\label{koszul}
The restriction map $\Gamma(M,\Sym E^\vee,\iota_u)\to\Gamma(Y,\O_Y)$ is a quasi-isomorphism.
\end{prop}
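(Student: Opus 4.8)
The plan is to recognize the complex $\bigl(\Gamma(M,\Sym E^\vee),\iota_u\bigr)$ as a Koszul-type complex and to show it resolves $\Gamma(Y,\O_Y)$ because $u$ is a regular (i.e.\ transversal) section. First I would set up the algebraic picture: writing $r=\rk E$, the graded algebra $\Sym E^\vee$ is, in each degree $-k$, the bundle $\Sym^k E^\vee$, and the differential $\iota_u:\Sym^{k}E^\vee\to \Sym^{k-1}E^\vee$ is contraction with the section $u\in\Gamma(M,E)$. On global sections this is the (shifted) Koszul complex of the section $u$. The cohomology in top degree $0$ is the cokernel of $\iota_u:\Gamma(M,E^\vee)\to\Gamma(M,\O_M)$, which is exactly $\Gamma(M,\O_M)/(u)$, where $(u)$ denotes the ideal generated by the component functions of $u$ in any local frame; I would check that restriction to $Y=Z(u)$ identifies this quotient with $\Gamma(Y,\O_Y)$, using that $Y$ is a closed submanifold and that $C^\infty$ functions vanishing on a submanifold cut out transversally are generated by the defining equations (a standard Hadamard-type lemma / Malgrange-type division argument; near $Y$ one may even choose coordinates in which $u$ is a coordinate projection).

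The heart of the argument is the vanishing of cohomology in all negative degrees. I would prove this locally and then globalize with a partition of unity. Locally near a point of $Y$, transversality lets me pick a local frame $e_1,\dots,e_r$ of $E$ and coordinates such that $u=(x_1,\dots,x_r,0,\dots,0)$, i.e.\ the first $r$ component functions of $u$ form part of a coordinate system; then the Koszul complex on $(x_1,\dots,x_r)$ over $C^\infty$ is exact in negative degrees because $x_1,\dots,x_r$ is a "regular sequence" in the $C^\infty$ sense — one builds an explicit contracting homotopy by integrating along the $x_i$-directions, exactly as in the smooth Poincaré/Koszul lemma. Away from $Y$, on the open set $M\setminus Y$, the section $u$ is nowhere zero, so $\iota_u$ admits a contracting homotopy given by $\tfrac{1}{\langle u,u\rangle}\, u^\vee\wedge(-)$ using a metric on $E$; hence the complex is exact there in \emph{all} degrees (including degree $0$). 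Covering $M$ by such local pieces and patching the homotopies with a subordinate partition of unity yields a global contracting homotopy for the truncation in negative degrees, proving $H^{<0}=0$.

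Finally I would assemble: $H^0$ of $\bigl(\Gamma(M,\Sym E^\vee),\iota_u\bigr)$ is $\Gamma(M,\O_M)/(u)\cong \Gamma(Y,\O_Y)$ and all other cohomology vanishes, and the restriction map is precisely the projection onto $H^0$ followed by this identification, so it is a quasi-isomorphism. The main obstacle is the patching step: the local contracting homotopies for the Koszul differential do not agree on overlaps, so one cannot simply glue them; the standard fix is to glue not the homotopies but to run the usual argument that a complex of (fine) sheaves which is locally exact has vanishing cohomology of global sections — i.e.\ observe that $\bigl(\Sym E^\vee,\iota_u\bigr)$ is a complex of fine sheaves which is exact as a complex of sheaves (by the two local computations above, one near $Y$ and one on $M\setminus Y$), so taking global sections is exact in negative degrees and computes $\Gamma(Y,\O_Y)$ in degree $0$. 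I would phrase the proof in this sheaf-theoretic way to avoid the awkward explicit patching, and the only genuinely delicate point that remains is the Hadamard-type statement that the degree-$0$ sheaf cohomology is $\O_Y$, which is where transversality of $u$ to the zero section is essential.
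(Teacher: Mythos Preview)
Your proof is correct and takes a genuinely different route from the paper's. The paper proceeds constructively and globally: it first shows (using a bump function $f$ with $f|_Y=1$ and the $H^0$-module structure on the $H^i$) that one may replace $M$ by any open neighborhood of $Y$; then it passes to a tubular neighborhood, produces a vector-bundle isomorphism $\Psi:T_{M/Y}\to E$ carrying the Euler vector field $v$ to $u$, thereby reducing to the case $E=T_{M/Y}[-1]$, $u=v$; and finally it writes down an explicit \emph{global} contracting homotopy $\eta(\omega)=\int_0^1 \sigma^\ast d\omega\wedge \tfrac{dt}{t}$ using the multiplicative flow $\sigma$ of $v$, verifying $[\eta,\iota_v]=\id-\rho^\ast\iota^\ast$ by a local computation. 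Your approach is instead sheaf-theoretic: you establish local exactness of the Koszul complex (near $Y$ via normal coordinates and the smooth Koszul lemma, off $Y$ via a metric contraction) and then invoke fineness of the terms to pass to global sections. Your method sidesteps the global straightening $\Psi$ and the flow integral, and is arguably more conceptual; the paper's method yields an explicit homotopy and is self-contained, requiring no hypercohomology. Both arguments ultimately rest on the same Hadamard-type input to identify $\O_M/(u)$ with $\O_Y$---this is also implicit in the paper's first step, where the assertion that each $H^i$ is a $\Gamma(Y,\O_Y)$-module amounts to knowing that functions vanishing on $Y$ lie in $\im(\iota_u)$.

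One small correction: since $E$ is placed in degree $1$, the degree $-k$ piece of $\Sym E^\vee$ is $\Lambda^k E^\vee$, not $\Sym^k E^\vee$; this does not affect your argument but is the correct picture (and the paper uses it explicitly after the reduction to $E=T_{M/Y}[-1]$).
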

\begin{pf}
Let $Y\subset U\subset M$ be an open neighborhood of $Y$ in $M$. 

\paragraph{Claim.} The restriction map
$$\Gamma(M,\Sym E^\vee,\iota_u)\longrightarrow\Gamma(U,\Sym E^\vee|_U,\iota_u)$$ is a quasi-isomorphism.
To prove this, let $f:M\to\rr$ be a differentiable function such that $f|_Y=1$, and $f|_{M\setminus U}=0$. 
Multiplication by $f$ defines a homomorphism of differential graded  
$\Gamma(M,\Sym E^\vee,\iota_u)$-modules 
$$\Gamma(U,\Sym E^\vee|_U,\iota_u)\longrightarrow \Gamma(M,\Sym E^\vee,\iota_u)\,,$$
in the other direction, which is a section of the restriction map.  
We now claim that for every $i\leq0$, multiplication by $f$ induces the identity on $H^i(\Gamma(M,\Sym E^\vee,\iota_u))$.  This follows from the fact that $f$ restricts to the identity in $\Gamma(Y,\O_Y)$, and that every $H^i(\Gamma(M,\Sym E^\vee, \iota_u))$ is a $\Gamma(Y,\O_Y)$-module. 

By the claim, we can replace $M$ by any open neighborhood of $Y$. 
In fact, we will assume that $M$ is a tubular neighborhood of $Y$, with Euler vector field $v$ and projection $\rho:M\to Y$. We denote the relative tangent bundle by $T_{M/Y}$. The Euler vector field is a regular section of $T_{M/Y}$, with zero locus $Y$. 

\paragraph{Claim.} After restricting to a smaller neighborhood of $Y$ if necessary, there exists an isomorphism of vector bundles $\Psi:T_{M/Y}\to E$, such that $\Psi(v)=u$. 

To prove this claim, first assume that there exists a normal coordinate system $(x^i)_{i=0,\ldots,n}$ on $M$, compatible with the tubular neighborhood structure.  This means that the Euler vector field has the form 
$v=\sum_{i=1}^k x^i\del_i$, where $k$ is the rank of $E$, and the codimension of $Y$ in $M$. Also assume that $E$ is trivial, with basis $(e_i)_{i=1,\ldots,k}$. We have $u=\sum u^ie_i$, and define $\Psi:T_{M/Y}\to E$ in coordinates by the matrix
$$\Psi_i^j=\int_0^1(\del_iu^j)(tx_1,\ldots,tx_k,x_{k+1},\ldots,x_n)\,dt\,.$$
Then $\Psi(v)=u$, and $\Psi|_Y$ is the canonical isomorphism $Du|_Y$. 

For the general case, construct a family of $\Psi_\alpha$ locally, and define  a global endomorphism $\Psi:T_{M/Y}\to E$ using a partition of unity $(\psi_\alpha)$:
$$\Psi=\sum_{\alpha}\psi_\alpha \Psi_\alpha\,.$$
We have that $\Psi(v)=u$, and $\Psi|_Y=D u|_Y$.  Hence $\Psi$ is an isomorphism in an open neighborhood of $Y$.  Upon replacing $M$ by this neighborhood we may assume that $\Psi$ is an isomorphism globally.

We are now reduced to the case where $E=T_{M/Y}[-1]$, and $u$ is the Euler vector field. In this case, $\Sym \dual E = \Sym(T_{M/Y}^\vee[1]) \cong \Lambda T_{M/Y}^\vee$.  
We define a contraction operator  $\eta:\Gamma(M,\Lambda T_{M/Y}^\vee) \to \Gamma(M,\Lambda T_{M/Y}^\vee)$ by the formula 
$$\eta(\omega)= \int_0^1 \sigma^\ast d\omega\wedge \frac{dt}{t}\,,$$
where $\sigma:[0,1]\times M\to M$ is the multiplicative flow of $v$ in the tubular neighborhood $M$.  

\paragraph{Claim.} $[\eta,\iota_v]=\id - \rho^\ast\circ \iota^\ast$.  

Here, $\rho: M \to Y$ is the projection, $\iota:Y\to M$ is the inclusion morphism;  $\rho^\ast:\Gamma(Y,\O_Y) \to \Gamma(M,\Lambda T_{M/Y}^\vee)$ and  $\iota^\ast:\Gamma(M,\Lambda T_{M/Y}^\vee) \to \Gamma(Y,\O_Y)$ are the corresponding induced maps on function algebras.

The claim can be checked locally, so we may assume given normal coordinates  $(x^i)$, as above.  The  the multiplicative flow is given by 
$$\sigma(t,x^1,\ldots, x^n)=(tx^1,\ldots tx^i,x^{i+1},\ldots,x^n)\,,$$
and using this, the claim is straightforward to prove.

Thus, $\big(\Gamma(Y,\O_Y), 0 \big)$ is homotopy equivalent to $\big(\Gamma(M,\Lambda T_{M/Y}^\vee),\iota_v\big)$. This completes the proof.
\end{pf}

We need the following more general case. 

Suppose $(L,\lambda)$ is a bundle of curved $L_\infty[1]$-algebras over the manifold $M$.  Suppose that $L^1$ is split into a direct sum $L^1=E\oplus H$, in such a way that the curvature $\lambda_0\in \Gamma(M,L^1)$ splits as $\lambda_0=u+s$, where $u$ is a regular section of $E$. Let $Y=Z(u)$ be the vanishing locus of $u$, and consider the induced morphism of derived manifolds
$$(Y,H|_Y\oplus L^{\geq2}|_Y,\lambda)\longrightarrow (M,L,\lambda)\,.$$

\begin{cor}\label{former}
The restriction map $$\Gamma(M,\Sym L^\vee,Q_\lambda)\longrightarrow\Gamma\big(Y,\Sym(H^\vee|_Y\oplus L^{\geq2,\vee}|_Y),Q_\lambda\big)$$ is a quasi-isomorphism.
\end{cor}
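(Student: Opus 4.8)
The plan is to deduce this corollary from Proposition~\ref{koszul} by a spectral-sequence argument, viewing the homological vector field $Q_\lambda$ as the Koszul differential $\iota_u$ perturbed by terms of strictly lower ``weight''. Using the splitting $L^1=E\oplus H$ write $\mathcal A=\Sym_{\O_M}L^\vee=\Sym_{\O_M}E^\vee\otimes_{\O_M}\mathcal W$ with $\mathcal W=\Sym_{\O_M}(H^\vee\oplus L^{\geq2,\vee})$ (a graded bundle, of finite rank in each degree), and set $\mathcal W_Y=\Sym_{\O_Y}(H^\vee|_Y\oplus L^{\geq2,\vee}|_Y)$; the restriction map of the corollary is then the composite $\mathcal A\to\mathcal A|_Y\to\mathcal W_Y$ of restriction to $Y$ with the augmentation of the $E^\vee$-factor. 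Note first that since $Q_\lambda(e_j^\vee)=\langle\lambda_0,e_j^\vee\rangle=u^j$ (all higher brackets vanishing here for degree reasons) and $u^j|_Y=0$, the ideal $(E^\vee|_Y)$ is a differential ideal in $\Sym L^\vee|_Y$, so $Q_{\lambda|_Y}$ on $\mathcal W_Y$ is well defined; this is exactly the statement that $(Y,H|_Y\oplus L^{\geq2}|_Y,\lambda)$ is a derived manifold.

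The first step is to introduce the weight: assign weight $0$ to generators in $E^\vee$, weight $1$ to those in $H^\vee$, and weight $C^i$ to those in $L^{i,\vee}$ for $i\geq2$, where $C$ is a fixed integer exceeding the amplitude $n$ of $(L,\lambda)$; extend additively to monomials and let $F_p\subseteq\Gamma(M,\mathcal A)$ be the span of elements of weight $\leq p$. I claim $Q_\lambda=\iota_u+R$, where $\iota_u=\iota_u^E\otimes\id_{\mathcal W}$ preserves the weight (here $\iota_u^E$ is the Koszul contraction of $u$ on $\Sym_{\O_M}E^\vee$) and $R$ strictly decreases it. Indeed $R=Q_\lambda-\iota_u$ is a derivation; it kills $E^\vee$ by the computation above; and on a generator in $H^\vee$ or $L^{i,\vee}$ every term of $Q_\lambda$ lowers the weight, because the only contribution of $\lambda_0$ lands in weight $0$ and, for $k\geq1$, the dual of $\lambda_k:L^{a_1}\otimes\dots\otimes L^{a_k}\to L^{(\sum_j a_j)+1}$ replaces one generator of weight $\leq C^{(\sum_j a_j)+1}$ by a product of weights summing to at most $\sum_j C^{a_j}<C^{(\sum_j a_j)+1}$ (here $C>n$ is used, and $1<C^2$ at the bottom). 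The Leibniz rule propagates strict weight decrease from generators to all of $\mathcal A$. Since the weight takes finitely many values in each cohomological degree, the filtration $F_\bullet$ is exhaustive, bounded below, and finite in each cohomological degree, so the associated spectral sequence converges.

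On the associated graded, $\gr Q_\lambda=\iota_u^E\otimes\id_{\mathcal W}$, so $E_1=H^\bullet\!\big(\Gamma(M,\Sym_{\O_M}E^\vee\otimes_{\O_M}\mathcal W),\,\iota_u\big)$. This is Proposition~\ref{koszul} \emph{with coefficients in the bundle $\mathcal W$}, and its proof carries over verbatim: the $u^j$ are still null-homotopic (contract with $e_j^\vee$), so the argument localises to a tubular neighbourhood of $Y$; over such a neighbourhood every vector bundle is pulled back from $Y$, so after shrinking one has $\mathcal W\cong\rho^\ast\mathcal W_Y$, and the explicit contraction $\eta$ of Proposition~\ref{koszul} extends as $\eta\otimes\id$ (using the canonical flat relative connection on $\rho^\ast\mathcal W_Y$). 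Hence $E_1\cong\Gamma(Y,\mathcal W_Y)$, the isomorphism being induced precisely by the restriction map of the corollary. I expect this step --- promoting the Koszul quasi-isomorphism to one with coefficients --- to be the main obstacle, since it requires rerunning the tubular-neighbourhood reduction of Proposition~\ref{koszul} while carrying the coefficient bundle along.

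Finally I would compare spectral sequences. On $\Gamma(Y,\mathcal W_Y)$ equipped with the same weight filtration, the curvature $u|_Y$ vanishes, so by the computation of the second step $Q_{\lambda|_Y}=R_Y$ with $R_Y$ strictly weight-decreasing; thus $\gr Q_{\lambda|_Y}=0$ and the target spectral sequence has $E_1^Y=\Gamma(Y,\mathcal W_Y)$. The restriction map respects both filtrations, and by the previous paragraph it induces the identity of $\Gamma(Y,\mathcal W_Y)$ on $E_1$-pages; since both filtrations are finite in each cohomological degree, the comparison theorem for spectral sequences shows that the restriction map is a quasi-isomorphism on the total complexes. The remaining work --- the weight bookkeeping of the second step and the convergence and comparison of the last --- is routine.
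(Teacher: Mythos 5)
Your proof is correct, but it is not the route the paper takes. The paper disposes of Corollary~\ref{former} by flat base change: after observing (as you do) that $Q_\lambda$ restricted to $E^\vee$ is $\iota_u$ for degree reasons, it views $(\Sym E^\vee,\iota_u)\to(\Sym L^\vee,Q_\lambda)$ as a morphism of sheaves of differential graded algebras, notes that $\Sym L^\vee$ is locally free, hence flat, over $\Sym E^\vee$, and tensors the quasi-isomorphism $\Sym E^\vee\to\O_Y$ of Proposition~\ref{koszul} with $\Sym L^\vee$ over $\Sym E^\vee$ to get $\Sym L^\vee\longiso\O_Y\otimes_{\O_M}\Sym(H^\vee\oplus L^{\geq2,\vee})$. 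That argument reuses Proposition~\ref{koszul} as a black box, at the cost of invoking that graded flatness of the dg module $\Sym L^\vee$ suffices for $-\otimes_{\Sym E^\vee}-$ to preserve quasi-isomorphisms; in this bounded, nilpotent setting that invocation is justified by precisely the kind of filtration you build, so your weight filtration with $\gr Q_\lambda=\iota_u\otimes\id$ can be read as making the paper's flatness step explicit. Your bookkeeping checks out: $Q_\lambda$ of a degree $-1$ generator lands in $\O_M$ because $(\Sym^k L^\vee)^0=0$ for $k\geq1$, and for a generator of $L^{i,\vee}$, $i\geq2$, the output monomials have weight at most $k\,C^{i-1}<C^i$ since the number of inputs is less than $C$; the convergence and comparison of the two spectral sequences are then routine. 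The price of your route is having to rerun the tubular-neighbourhood contraction of Proposition~\ref{koszul} with coefficients in $\mathcal{W}\cong\rho^\ast\mathcal{W}_Y$, which the paper avoids entirely; that extension is legitimate, since $\rho\circ\sigma_t=\rho$ identifies $\sigma_t^\ast\rho^\ast\mathcal{W}_Y$ with $\rho^\ast\mathcal{W}_Y$ canonically and the homotopy identity $[\eta,\iota_v]=\id-\rho^\ast\iota^\ast$ persists with these coefficients. In short, both proofs rest on the same inputs (Proposition~\ref{koszul} and the splitting $L=E\oplus H\oplus L^{\geq2}$); the paper packages the reduction as a one-line flat base change of dg algebras, while yours is a longer but more self-contained filtration argument.
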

\begin{pf}
Let us write $\tilde L=H\oplus L^{\geq2}$. 
By the decomposition $L = E \oplus \tilde L$ of graded vector bundles, we obtain an induced morphism of sheaves of differential graded algebras $\Sym E^\vee\to
\Sym L^\vee$ over $M$. Since $\Sym L^\vee$ is locally free over $\Sym E^\vee$, the morphism $\Sym E^\vee\to \Sym L^\vee$ is flat. Therefore the quasi-isomorphism $\Sym E^\vee\to \O_Y$ gives rise to another quasi-isomorphism
$$\Sym E^\vee\otimes_{\Sym E^\vee}\Sym L^\vee\longiso \O_Y\otimes_{\Sym E^\vee}\Sym L^\vee\,.$$
The left hand side is equal to $\Sym L^\vee$, and the right hand side is equal to $\O_Y\otimes_{\O_M}\Sym \tilde L^\vee$, proving that we have a quasi-isomorphism
$\Sym L^\vee\to \O_Y\otimes_{\O_M}\Sym \tilde L^\vee$.
\end{pf}

\begin{thm}\label{thm:WeakEqImplyQuasiIso}
Suppose that $(M,L,\lambda)\to (N,L',\lambda')$ is a weak equivalence of derived manifolds.  Then the induced morphism of differential graded algebras $\Gamma\big(N,\Sym (L')^\vee\big)\to\Gamma(M,\Sym L^\vee)$ is a quasi-isomorphism.
\end{thm}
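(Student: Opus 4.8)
The plan is to reduce the statement to the case of \emph{trivial fibrations} (using the category-of-fibrant-objects structure of Theorem~\ref{thm:main}), and then to dissect a trivial fibration with the machinery behind the inverse function theorem, chiefly Proposition~\ref{recap}, Corollary~\ref{former}, and the Koszul argument of Proposition~\ref{koszul}. For the first step I would run the standard mapping-path-space factorization \cite{MR341469}: given a weak equivalence $f\colon\M\to\N$, the evaluation $\ev_0\colon\P\N\to\N$ is a fibration (submersion on bases, and degreewise surjective linear part $\ev_0^\ast L'\oplus\ev_1^\ast L'\to\ev_0^\ast L'$), and it is a weak equivalence because its composite with the constant-path section is the identity and that section is a weak equivalence by Theorem~\ref{thm:Seoul}(1); hence $\ev_0$ is a trivial fibration. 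Pulling back along $f$, the mapping path space $\M\times_\N\P\N$ maps to $\M$ by a trivial fibration that is split by $\M$, and to $\N$ by a fibration $q$ with $q\circ(\text{section})=f$; since $f$ is a weak equivalence, two-out-of-three forces $q$ to be a trivial fibration. On global sections, the map induced by a section of a trivial fibration is the cohomology-inverse of the map induced by that trivial fibration, so it suffices to prove: \emph{every trivial fibration induces a quasi-isomorphism of global section algebras.}

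By Proposition~\ref{prop:IsoToLinearMor} I may assume the trivial fibration is linear (isomorphisms induce isomorphisms of algebras, and the linear factor is again a trivial fibration by two-out-of-three). Then Proposition~\ref{recap} applies and yields a strong embedding $(M,H,\mu)\to(M,L,\lambda)$ together with a derived submanifold $(Y,E,\nu)\hookrightarrow(M,H,\mu)$ such that $(Y,E,\nu)\to(N,L',\lambda')|_{f(Y)}$ is a linear isomorphism, where $Y$ is an open neighborhood of the classical locus inside $Z(u)$ for a regular section $u$ of $K=\ker(\psi_1)\subset H^1$, $f|_Y$ is a diffeomorphism onto an open neighborhood of $\pi_0(\N)$, and $\mu_0=u+f^\ast\lambda_0'$, $H^{\geq2}\cong f^\ast(L')^{\geq2}$. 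Passing to global sections produces a commuting chain
\[
\Gamma\!\big(N,\Sym(L')^\vee\big)\xrightarrow{\,a\,}\Gamma(M,\Sym L^\vee)\xrightarrow{\,b\,}\Gamma(M,\Sym H^\vee)\xrightarrow{\,c\,}\Gamma(Y,\Sym E^\vee),
\]
in which $a$ is the map to be understood, $b$ comes from the strong embedding, $c$ from the derived-submanifold inclusion, and $c\,b\,a$ is induced by $(Y,E,\nu)\to(N,L',\lambda')$. By two-out-of-three it is enough to show that $b$, $c$ and $c\,b\,a$ are quasi-isomorphisms.

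The map $c$ is exactly the restriction map of Corollary~\ref{former} for the splitting $\mu_0=u+f^\ast\lambda_0'$ (composed, if $Y\subsetneq Z(u)$, with a further restriction to the open neighborhood $Y$ of the classical locus), hence a quasi-isomorphism once one knows the open-restriction statement. The map $c\,b\,a$ factors as the algebra isomorphism coming from $(Y,E,\nu)\cong(N,L',\lambda')|_{f(Y)}$ followed by the restriction $\Gamma(N,\Sym(L')^\vee)\to\Gamma(f(Y),\Sym(L')^\vee|_{f(Y)})$; the latter is again an instance of the same open-restriction statement. So I would isolate the lemma: for a derived manifold $(M,L,\lambda)$ and an open neighborhood $U$ of $\pi_0$, restriction $\Gamma(M,\Sym L^\vee,Q_\lambda)\to\Gamma(U,\Sym L^\vee|_U,Q_\lambda)$ is a quasi-isomorphism. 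This is proved by the bump-function argument of Proposition~\ref{koszul} — multiplication by a cutoff is a chain map since $Q$ annihilates $\O_M$ — together with the fact that the cohomology is supported on $\pi_0$, equivalently that $(\Sym L^\vee,Q_\lambda)$ is locally acyclic wherever $\lambda_0\neq0$; the local acyclicity follows from Corollary~\ref{former} applied locally, splitting off a nowhere-vanishing piece of the curvature.

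Finally, $b$: a strong embedding is a finite composite of transferred inclusions, so it suffices to treat one transferred inclusion $(M,H,\mu)\to(M,L,\lambda)$. Writing $L=H\oplus A$ with $A$ the $\delta$-acyclic complement carrying the smooth fibrewise contraction $\eta$ of Lemma~\ref{firstcase}, one has $\Sym L^\vee=\Sym H^\vee\otimes\Sym A^\vee$ as sheaves of graded algebras, and $(\Sym A^\vee,\delta)$ is smoothly contractible; tensoring the contraction shows that already the \emph{linear} part $\delta$ makes $\Gamma(M,\Sym L^\vee,\delta)\to\Gamma(M,\Sym H^\vee,\delta)$ a homotopy equivalence. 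The full operator $Q_\lambda$ is a perturbation of $\delta$ that strictly raises the level-$k$ variation of the natural filtration (Example~\ref{varfil}), which, $L$ having finite amplitude, is finite in each cohomological degree; the transferred inclusion is a map of the associated filtered complexes, it is the above equivalence on $E_1$, the spectral sequences converge, and the comparison theorem promotes this to a quasi-isomorphism of abutments. I expect this last step to be the main obstacle: it requires the transfer data $\delta$ and $\eta$ to be genuinely global and smooth (they are, by construction) and careful spectral-sequence bookkeeping to treat the higher $L_\infty[1]$-operations as a bounded perturbation; the local-acyclicity input of the previous paragraph is a secondary technical point.
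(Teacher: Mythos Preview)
Your plan is essentially the paper's: reduce to a linear trivial fibration via the mapping-path-space factorization, invoke Proposition~\ref{recap}, and then show separately that the derived-submanifold inclusion and the strong embedding induce quasi-isomorphisms, the first via Corollary~\ref{former} and the second via a filtration/spectral-sequence argument. Your explicit isolation of the open-restriction lemma (needed both because $Y$ may be strictly smaller than $Z(u)$ and because $f(Y)$ may be strictly smaller than $N$) is a point the paper leaves implicit.

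The one place your sketch diverges from the paper is the transferred-inclusion step, and here your choice of filtration is the weak point. The filtration of Example~\ref{varfil} lives on $L$, not on $\Sym L^\vee$, and extending it multiplicatively while checking that $\phi^\sharp$ (which has contributions from all $\phi_k$) is a filtered map inducing the projection on the associated graded is not automatic. The paper instead bigrades $A=\Gamma(M,\Sym L^\vee)$ by symmetric power and cohomological degree and filters by the first index; then on $E_1$ the differential is $\delta+q_1$, the map reduces to $\phi_1^\sharp$, and the required quasi-isomorphism is exhibited by an explicit homotopy $h=m\circ w\circ(\phi_1^\sharp\tilde\pi_1^\sharp\otimes\tilde\eta')\circ\Delta$ built from the Hopf-algebra coproduct on $\Sym L^\vee$ and the deformed contraction $\tilde\eta$. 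Your instinct that this step is ``the main obstacle'' is correct; the weight filtration, rather than a degree filtration on $L$, is what makes it go through cleanly.
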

\begin{proof}
By Proposition~\ref{prop:IsoToLinearMor} and Factorization Lemma \cite{MR341469}, we may assume that $(M,L,\lambda)\to (N,L',\lambda')$ is a linear trivial fibration of derived manifolds.  Then we produce a retraction of $(M,L,\lambda)\to(N,L',\lambda')$ in two steps as in Proposition~\ref{recap}.  It suffices to prove that both $(Y,E,\nu)\to(M,H,\mu)$ and $(M,H,\mu)\to (M,L,\lambda)$ induce quasi-isomorphisms on differential graded function algebras. The former is done in Corollary~\ref{former}.  
The latter follows from the following general fact.
\end{proof}

\begin{prop}
Any transferred inclusion  $(\id,\phi): (M,H,\mu) \to (M,L,\lambda)$ of
 derived manifolds induces a quasi-isomorphism on differential graded function algebras.
\end{prop}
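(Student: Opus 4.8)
The plan is to prove this by passing to the associated graded for the filtration that underlies the transfer theorem, where the assertion linearizes. Recall the data packaged in a transferred inclusion: by Proposition~\ref{transfertheorem}, $(M,H,\mu)$ is obtained from $(M,L,\lambda)$ together with a degree-$1$ bundle endomorphism $\delta$ of $L$ with $\delta^2=0$ (a piece of $\lambda_1$, exactly as in the proof of Lemma~\ref{firstcase}) and a degree-$(-1)$ endomorphism $\eta$ with $\eta^2=0$ and $\eta\delta\eta=\eta$ satisfying the side conditions, which exhibit $H$ as a deformation retract of $(L,\delta)$: a decomposition $L=H\oplus\im(\delta\eta)\oplus\im(\eta\delta)$ of complexes of vector bundles, with linear inclusion $i\colon H\hookrightarrow L$, linear projection $p\colon L\to H$, and homotopy $\eta$. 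The morphism $(\id,\phi)\colon(M,H,\mu)\to(M,L,\lambda)$ is the transferred $L_\infty[1]$-morphism with $\phi_1=i$, whose higher components $\phi_k$ ($k\geq2$) are tree sums each carrying an $\eta$ at the root; the same proposition also provides $(\id,\psi)\colon(M,L,\lambda)\to(M,H,\mu)$ with $\psi\bullet\phi=\id_H$. On function algebras $\phi$ induces a cdga morphism $\phi^\ast\colon\bigl(\Gamma(M,\Sym L^\vee),Q_\lambda\bigr)\to\bigl(\Gamma(M,\Sym H^\vee),Q_\mu\bigr)$ and $\psi$ induces $\psi^\ast$ with $\phi^\ast\circ\psi^\ast=\id$, so $\phi^\ast$ is automatically split surjective on cohomology and the real point is that it is injective.

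Next I would filter both function algebras by the filtration induced from the complete, bounded filtration of $L$ relative to which Proposition~\ref{transfertheorem} was applied (for instance the natural filtration of $L$ or its variation of Example~\ref{varfil}); then $\phi^\ast$ and $\psi^\ast$ are filtered. By the defining property of that filtration, $\gr Q_\lambda$ is the derivation differential $Q_\delta$ dual to $\delta$ --- the curvature $\lambda_0$, the higher brackets $\lambda_{\geq2}$, and the complement of $\delta$ inside $\lambda_1$ all strictly shift the filtration --- and likewise $\gr Q_\mu=Q_{\delta|_H}$, while $\gr\phi^\ast$ is precisely the cdga morphism $\Sym L^\vee\to\Sym H^\vee$ induced by the linear inclusion $i$ (each $\phi_k$ with $k\geq2$ strictly shifts the filtration thanks to the $\eta$ at its root). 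Since in each cohomological degree only finitely many symmetric powers occur and the filtration is bounded, the two spectral sequences converge and collapse after finitely many pages, so it suffices to show that $\gr\phi^\ast$ is a quasi-isomorphism.

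On the associated graded this is immediate: $(L,\delta)$ and $(H,\delta|_H)$ are chain homotopy equivalent complexes of finite-rank vector bundles over $M$, with explicit homotopy $\eta$, and $\gr\phi^\ast$ is the map they induce under the functor $E\mapsto\Gamma(M,\Sym_{\O_M}E^\vee)$. This functor preserves chain homotopy equivalences --- it is assembled from tensor powers, graded symmetrization, duals of finite-rank bundles, and $\Gamma(M,-)$, which is exact since $M$ admits partitions of unity, and each of these operations, as well as passage to retracts, preserves homotopy equivalences --- so $\gr\phi^\ast$ is a chain homotopy equivalence, in particular a quasi-isomorphism, and hence so is $\phi^\ast$. (Equivalently, $L\cong H\oplus C$ with $C=\im(\delta\eta)\oplus\im(\eta\delta)$ acyclic, so $\Sym L^\vee\cong\Sym H^\vee\otimes_{\O_M}\Sym C^\vee$ and $(\Sym C^\vee,Q_\delta)$ is a Koszul-type complex with cohomology $\O_M$.) The one step requiring genuine care is the identification $\gr Q_\lambda=Q_\delta$ (and $\gr Q_\mu=Q_{\delta|_H}$): one must use the precise filtration relative to which Proposition~\ref{transfertheorem} was applied to see that the curvature together with all higher and complementary operations are strictly filtration-decreasing, leaving only the square-zero piece $\delta$. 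But this is exactly the hypothesis under which that proposition was invoked to build $(M,H,\mu)$, so nothing new is needed; the remaining ingredients --- convergence of a bounded spectral sequence and homotopy invariance of $\Sym$ of the dual of a complex of vector bundles --- are routine.
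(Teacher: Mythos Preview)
Your approach is correct in spirit but genuinely different from the paper's, and a couple of details deserve care.

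\textbf{Comparison.} The paper filters by the \emph{symmetric-power weight}: it introduces the bigrading $A^{k,\ell}=\Gamma(M,\Sym^{-\ell}L^\vee)^{k+\ell}$ and filters by the first index, so that on $E_0$ only the curvature term $q_0$ survives, and on $E_1$ the differential is the \emph{full linear part} $\delta+q_1$. It then exhibits an explicit homotopy on $E_1$ built from the Hopf coproduct on $\Sym L^\vee$, the deformed projection $\tilde\pi_1$, and the deformed contraction $\tilde\eta$: $h=m\circ w\circ(\phi_1^\sharp\tilde\pi_1^\sharp\otimes\tilde\eta')\circ\Delta$. Your route instead filters by the transfer-theorem filtration on $L$, so that on the associated graded only $Q_{\gr\delta}$ remains, and then uses the splitting $\gr L\cong\gr H\oplus\gr C$ with $\gr C$ contractible to see directly that $\Sym(\gr L)^\vee\to\Sym(\gr H)^\vee$ is a quasi-isomorphism. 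Your argument is shorter and avoids the explicit Hopf-algebra homotopy; the paper's has the virtue of being independent of which filtration produced the transfer.

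\textbf{Two inaccuracies to fix.} First, the natural filtration is not a valid example here: since $\eta$ has cohomological degree $-1$, it cannot preserve the natural filtration unless $\eta=0$, so no nontrivial transferred inclusion arises from it; you must use a filtration (such as the variation of Example~\ref{varfil}) in which $\eta$ genuinely has filtered degree $0$. Second, it is not true in general that $\phi_1=i$; rather $\phi_1=(1+\eta\lambda_1)^{-1}\iota$ (Remark~\ref{rmk:0th1stMapInHptTransfer}). What you need, and what is true, is that $\gr\phi_1=\iota$ and $\gr\phi_k=0$ for $k\ge2$, since every tree contributing to $\phi_k$ for $k\ge2$ (and every correction to $\phi_1$) carries at least one $\lambda_j$-node with $j\ge1$, hence strictly raises the $L$-filtration. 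With these adjustments your spectral-sequence argument goes through: the filtration is bounded in each cohomological degree, the associated graded of the differential is $Q_{\gr\delta}$ (the curvature contribution, if any, lands in $\gr H$ and causes no trouble), and $\gr C$ is contractible because $[\gr\delta,\gr\eta]=\gr[\delta,\eta]=\id$ on $\gr C$.
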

\begin{proof}
 We assume that $H\to L$ is obtained by an application of the transfer theorem for bundles of
 curved $L_\infty[1]$-algebras as in Proposition~\ref{transfertheorem}. More precisely, let $(M,L,\delta)$  be a bundle of complexes endowed with the curved $L_\infty[1]$-structure $\lambda$, let $\eta$ be a contraction of $\delta$ and $(M,H,\delta)$ the bundle of complexes onto which $\eta$ contracts $(M,L,\delta)$.  Let $\mu$ be the family of transferred curved $L_\infty[1]$-structures on $(M,H,\delta)$ and $\phi:H\to L$ the transferred inclusion morphism.

Let $A=\Gamma(M,\Sym L^\vee)$   and $B=\Gamma(M,\Sym H^\vee)$.  Let us denote the derivations induced by the dual maps of $\delta$ and $\lambda_k$ on $A$ by $\delta$ and $q_k$, respectively. Similarly, denote the derivations induced by the dual maps of $\delta$ and $\mu_k$ on $B$ by $\delta$ and $r_k$, respectively.  

\paragraph{Claim.}
The morphism of function algebras
$$\phi^\sharp:(A,\delta+q)\longto (B,\delta+r)\,,$$
associated to the morphism of derived manifolds $\phi:(M,H,\delta+\mu)\to (M,L,\delta+\lambda)$ is a quasi-isomorphism.

To prove the claim, refine the grading on $A$ to a double grading
$$A^{k,\ell}=\Gamma(M,\Sym^{-\ell}L^\vee)^{k+\ell}\,.$$
It is contained in the region defined by $\ell\leq0$ and $k+\ell\leq0$. Moreover $A$ has only finitely many non-zero terms for each fixed value of $k+\ell$. This will imply that the spectral sequences we construct below will be bounded and hence convergent to the expected limit.

Note that $\delta$ and all $q_n$ are bigraded: the degree of $\delta$ is $(1,0)$, and the degree of $q_n$ is $(n,1-n)$. 

If we filter $A$ by $F_kA=A^{\bullet,\geq k}$, the differential $\delta +q$ preserves the filtration, and we obtain a bounded spectral sequence $E_n^{k,\ell}$, which is convergent to $H^{k+\ell}(A,\delta+q)$.  
$$E_n^{k,\ell}\Longrightarrow H^{k+\ell}(A,\delta+q)\,.$$

The same construction applies to $(B,\delta+r)$, and we get a bounded spectral sequence
$$\tilde E_n^{k,\ell}\Longrightarrow H^{k+\ell}(B,\delta+r)\,.$$
The morphism of differential graded algebras $\phi^\sharp$ induces a morphism of spectral sequences $E\to \tilde E$, because $\phi$  respects the filtrations introduces above. 

So to prove our claim, it will be enough to show that $\phi^\sharp$ induces a quasi-isomorphism $E_1\to \tilde E_1$. 

The differential on $E_1$ is induced by $\delta+q_1$ on the cohomology of $E_0$ with respect to $q_0$. Similarly, the differential on $\tilde E_1$ is induced by $\delta+r_1$ on the cohomology of $\tilde  E_0$ with respect to $r_0$. The homomorphism $E_1\to \tilde E_1$ is induced by $\phi_1^\sharp$. 

Recall the deformed projection $\tilde\pi_1:L\to H$.  It induces an algebra morphism $B\to A$, which we will denote by $\tilde\pi_1^\sharp$.

Also recall the deformed contraction $\tilde\eta:L\to L$ of degree $-1$. 
We extend its dual to a fiberwise derivation $\Sym L^\vee \to \Sym L^\vee$, which we also denote by $\tilde\eta$. We denote by $\tilde\eta': \Sym L^\vee \to \Sym L^\vee$ the endomorphism obtained by dividing the derivation $\tilde\eta$ by the weight (and setting $\tilde\eta'(1)=0$).

We will now construct a fiberwise homotopy operator $h:\Sym L^\vee \to \Sym L^\vee$, with the property that 
\begin{equation}\label{heq}
[\delta+q_1, h]=1-\phi_1^\sharp\tilde\pi_1^\sharp\,.
\end{equation}
To do this, recall that $\Sym L^\vee$ is a bundle of Hopf algebras. Let us denote the coproduct by $\Delta$, the product by $m$. Then we define $h$ fiberwisely by the formula
$$h= m \circ w\circ(\phi_1^\sharp\tilde\pi_1^\sharp\otimes\tilde\eta')\circ \Delta\,.$$
Here $w:\Sym L^\vee \otimes \Sym L^\vee \to \Sym L^\vee \otimes \Sym L^\vee$ is the operator that divides an element of bi-weight $(k,\ell)$ by the binomial coefficient $\frac{(k+\ell)!}{k!\ell!}$.

We conclude that $\phi$ induces a homotopy equivalence on $E_1$.  This concludes the proof.
\end{proof}

We note that a
transferred inclusion  $(\id,\phi): (M,H,\mu) \to (M,L,\lambda)$ of
derived manifolds  is a  weak equivalence according to 
Proposition~\ref{prop:TransInclWkEq}.

\appendix

\section{Some remarks on diffeology}\label{sec:Diffeology}

Although all our constructions have inputs and outputs in
finite-dimensional manifolds and finite-rank vector bundles over them,
some constructions have infinite-dimensional intermediate steps.  

This does not cause problems in any of the proofs of the paper.  Nevertheless, it is possible to endow these infinite-dimensional intermediate spaces with diffeologies, turning them into geometric objects. It helps to understand some of the constructions in Section~\ref{sec:DerPathSp} in a more conceptual way.

  We will collect all the constructions and
facts we need here.  They are all straightforward to check.  As reference, we refer to 
the textbook~\cite{IZ}. 

We denote by $\Hom$ the degree preserving linear maps, by $\Mor$ the morphisms of graded manifolds, and by $\Map$ the general smooth maps which may not preserve degrees.

\subsection{Diffeological spaces}

A {\em domain }is an open subset of a {\em cartesian space }$\rr^n$,
$n\geq0$. 

A {\em diffeological space} is a set $X$, together with a
distinguished family $\D$ of maps $U\to X$, where $U$ is a domain, called
the family of {\em plots }of $X$, or the {\em diffeology }on $X$, such that the
three {\em diffeology axioms }are 
satisfied:
\begin{items}
\item if $V\to U$ is a differentiable map between domains, and 
$U\to   X$ is a plot, then so is the composition $V\to X$, 
\item if a map $U\to X$ is locally (in $U$) a plot, then it is a plot,
\item every map from the one-point domain $\rr^0$ to $X$ is a plot.
\end{items}
A map $f:X\to Y$ is called {\em smooth}, if the composition
of $f$ with every plot of $X$ is a plot of $Y$. A {\em morphism }of
diffeological spaces is a smooth map. Isomorphisms of diffeological
spaces are called {\em diffeomorphisms}. We say $Y$ is a {\em diffeological space over $X$} (or $Y$ is a {\em $X$-space}) if we are given a smooth map $Y \to X$.

Products and fibered products of diffeological spaces are defined in
the obvious way.  If $Y$ is a diffeological space over $X$,  
and $U\to X$ is a morphism, we often write the fibered product $U\times_X Y$ as $Y|_U$. 

We denote by $\Map(X,Y)$ 
 the set of smooth maps from $X$ to $Y$. It is the diffeological space, whose underlying set of points also denoted
$\Map(X,Y)$, 
which is characterized by the property
$$\Map(U,\Map(X,Y))=\Map(U\times X,Y)\,.$$
 (The category of
diffeological spaces is {\em cartesian closed}.) Sometimes, we will
abbreviate $\Map(X,Y)$ by $Y^X$.  When we have fixed
an interval $I\subset\rr$, we denote $X^I$ also by $PX$, and call it
the {\em path space }of $X$.

If $Y,Z$ are two diffeological spaces over the diffeological space $X$, we denote by $\Mapu_X(Y,Z)$ the diffeological space of smooth maps over $X$, whose fiber over $x\in X$ is the set of maps $Y|_x\to Z|_x$. For every diffeological $X$-space $U$, we have
$$\Map_X(U,\Mapu_X(Y,Z))=\Map_X(U\times_XY,Z)\,.$$
We denote the diffeological space of sections of $\Mapu_X(Y,Z)$ over $X$ by $\Map_X(Y,Z)$.  For a diffeological space $V$, we have
$$\Map(V,\Map_X(Y,Z))=\Map_X(V\times Y,Z)\,.$$

A diffeology $\D$ on $X$ is {\em finer }than another diffeology $\D'$,
if every $\D$-plot is a $\D'$-plot. If $\D$ is finer than $\D'$, then
$(X,\D)\to (X,\D')$ is smooth. 

Any subset $Z$ of a diffeological space $X$ is a
diffeological space by endowing it with the coarsest diffeology making
the inclusion $Z\to X$ smooth (this is the {\em subspace
  diffeology}). If $Z$ is endowed with the subspace diffeology, then a
map $U\to Z$ is a plot of $Z$ if and only if the composition $U\to X$
is a plot of $X$. 

If $Z$ is a smooth retract of the diffeological space $X$, then $Z$ is
diffeomorphic to the image of $Z$ in $X$ endowed with the subspace
diffeology. 

Diffeological spaces are topological spaces: a subset $V\subset X$ is
open if and only if for every plot $U\to X$, the inverse image of $V$
in $U$ is open in $U$. 

Every manifold $M$ is a diffeological space by defining a map $U\to M$, from a domain $U$ to $M$, 
to be a plot if it is differentiable. A map between manifolds is
differentiable, if and only if it is a smooth map of the diffeological
spaces defined by the manifolds. The category of  manifolds and
differentiable maps is a full subcategory of the category of
diffeological spaces and smooth maps.

\subsection{Diffeological  vector spaces}

Let $X$ be a diffeological space. Following \cite{tangent}, we define
a {\em diffeological vector space over }$X$, or a {\em diffeological $X$-vector space},  to be a diffeological
space $E$, endowed with a smooth map $E\to X$, and vector space
structures on all fibers of $E\to X$, such that addition
$E\times_XE\to E$, scalar multiplication $\rr\times E\to E$ and the
zero section $X\to E$ are smooth. 

For $X=\rr^0$, we get the definition of diffeological vector space. 

Morphisms of diffeological vector spaces over $X$ are smooth maps,
which are fiberwise linear. 

Pullbacks of diffeological vector spaces are diffeological vector
spaces, and for a diffeological  $X$-vector 
space $E$, the set of global sections $\Gamma(X,E)$ is a diffeological
vector space, in fact a subspace of $\Map(X,E)$. 

A morphism $\phi:V\to W$ of diffeological vector spaces is an {\em epimorphism }if every plot of $W$ admits local lifts to $V$. Suppose $\phi:V\to W$ is an epimorphism of diffeological vector spaces over $X$. The kernel $K$ of $\phi$ is the fibered product $K=V\times_{X,0}W$. The formation of the kernel commutes with arbitrary base changes $Y\to X$. Moreover, for every section $s:Y\to W$, the preimage $V\times_{W,s}Y$ is a principal homogeneous $K_Y$-space.

If $M$ is a manifold, and $E$ a vector bundle over $M$, then $E$
defines a diffeological vector space over $M$.  The category of vector
bundles over $M$, with fiberwise linear differentiable maps is a full
subcategory of the category of diffeological vector spaces over $M$.

For diffeological $X$-vector spaces $E$ and $F$, we have the diffeological $X$-vector space $\Homu_X(E,F)$ of linear maps from $E$ to $F$. (It is a subspace of the space of morphisms $\Mapu_X(E,F)$ where $E$ and $F$ are considered as $X$-spaces without vector spaces structure.) The fiber of $\Homu_X(E,F)$ over $x\in X$ is the set of linear maps from $E|_x$ to $F|_x$. The set of global sections of $\Homu_X(E,F)$ is denoted by $\Hom_X(E,F)$.  We also have $X$-spaces of (symmetric) multilinear maps from $E$ to $F$, and their spaces of global sections.

Let $M$ be a manifold and $E$ a vector bundle over $M$. Let
$I\subset\rr$ be an open interval. Let 
$PM=\Map(I,M)$ and $PE=\Map(I,E)$ be the respective path spaces. 
Then $PE$ is a
diffeological vector space over $PM$.  
The fiber of $PE$ over the path $a\in PM$ is $\Gamma(I,a^\ast E)$. 

\begin{lem}\label{dersec}
If $E=TM$ is the tangent bundle of $M$, then  $a\mapsto a'$ is a
smooth section of $PTM\to PM$. 
\end{lem}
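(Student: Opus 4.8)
The plan is to unwind the definitions of the diffeologies on $PM=\Map(I,M)$ and $PTM=\Map(I,TM)$ and check the two assertions implicit in the statement: that $D\colon a\mapsto a'$ lands in the fibers of $PTM\to PM$, and that $D$ is smooth.

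First I would observe that, by the exponential law for diffeological spaces, a path $a\in PM$ is simply a smooth map $a\colon I\to M$, and its velocity $a'\colon t\mapsto a'(t)\in T_{a(t)}M$ is a smooth map $I\to TM$ lying over $a$; that is, $a'\in\Gamma(I,a^\ast TM)$, which by the discussion preceding the lemma is exactly the fiber of $PTM\to PM$ over $a$. Hence $D$ is a set-theoretic section of $PTM\to PM$, and it remains to prove smoothness.

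For smoothness I would use the defining property of smooth maps: $D$ is smooth iff for every domain $U$ and every plot $U\to PM$ the composite $U\to PTM$ is a plot. Again by the exponential law, a plot of $PM$ over $U$ is the same datum as a differentiable map $h\colon U\times I\to M$, and a plot of $PTM$ over $U$ is a differentiable map $U\times I\to TM$. The composite of such an $h$ with $D$ is the fiberwise velocity $(u,t)\mapsto\partial_t h(u,t)$, and I would exhibit this as a composition of differentiable maps of finite-dimensional manifolds: compose the tangent map $Th\colon T(U\times I)\to TM$ with the differentiable map $U\times I\to T(U\times I)\cong TU\times TI$ sending $(u,t)$ to the pair consisting of the zero vector in $T_uU$ and the standard coordinate vector $\partial_t$ in $T_tI$. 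A composition of differentiable maps is differentiable, hence a plot of $TM$; thus $D\circ h$ is a plot of $PTM$ and $D$ is smooth.

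I do not anticipate a genuine obstacle here: the content is entirely the routine bookkeeping of the exponential law $\Map(U,\Map(I,M))\cong\Map(U\times I,M)$ together with the canonical splitting $T(U\times I)\cong TU\times TI$. The only thing to be careful about is to phrase the identification of $\partial_t h$ with $Th$ applied to the coordinate lift precisely enough that it is visibly differentiable in both variables simultaneously, rather than merely separately in $t$.
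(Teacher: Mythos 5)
Your proof is correct and takes essentially the same route as the paper: the paper's own proof reduces the claim to the statement that for any differentiable family $U\times I\to M$ the partial derivative with respect to the second variable is a differentiable map $U\times I\to TM$, which is exactly your reduction. Your extra step of factoring $\partial_t h$ as $Th$ composed with the section $(u,t)\mapsto(0_u,\partial_t|_t)$ of $T(U\times I)\cong TU\times TI$ just makes explicit the differentiability that the paper takes as standard.
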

\begin{pf}
This amounts to saying that whenever $U\times I\to M$ is a differentiable
family of paths in $M$ ($U$ a domain), then the partial derivative $U\times
I\to TM$ with respect to the second component is again differentiable. 
\end{pf}

\subsection{Path spaces and connections}\label{sec:PathSpCntn}

Fix an open interval $I\subset\rr$. 
Let $\nabla$ be a connection in the vector bundle $E$ over the
manifold $M$.  In the following, $dt$ is a formal symbol of cohomological degree $1$.

\begin{lem}\label{covdev}
The covariant derivative is a smooth linear map $\delta:PE\to PE\,dt$ over $PM$.  
\end{lem}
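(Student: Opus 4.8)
The plan is to verify directly the two conditions in the definition of a morphism of diffeological vector spaces over $PM$: that $\delta$ covers the identity of $PM$ and is $\rr$-linear on fibers, and that $\delta$ is smooth. Fiberwise, over a path $a\in PM$, the map $\delta$ sends a section $s\in\Gamma(I,a^{\ast}E)$ to $(a^{\ast}\nabla)_{\partial_t}s\in\Gamma(I,a^{\ast}E)\,dt$. This leaves the underlying path $a$ unchanged, so $\delta$ lies over $\id_{PM}$, and it is $\rr$-linear in $s$ because covariant differentiation along a fixed curve is $\rr$-linear; the formal symbol $dt$ serves only to raise the cohomological degree by one and is irrelevant to these points.

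For smoothness we must show that the composition of $\delta$ with an arbitrary plot of $PE$ is a plot of $PE\,dt$. By cartesian closedness of the category of diffeological spaces, a plot $U\to PE$, lying over a plot $U\to PM$, is the same datum as a smooth map $F\colon U\times I\to E$ whose composition with the projection $E\to M$ is a smooth family of paths $f\colon U\times I\to M$. The composite $U\to PE\,dt$ is then the map classified by $(u,t)\mapsto\big((f(u,\cdot))^{\ast}\nabla\big)_{\partial_t}F(u,\cdot)\,(t)$, and we must verify that this is a smooth map $U\times I\to E$.

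Since smoothness is local on $M$, I would choose a chart of $M$ with a local trivialization of $E$, in which $\nabla=d+A$ with $A$ a matrix of smooth $1$-forms. Writing $F(u,t)=\big(f(u,t),F^{1}(u,t),\dots,F^{r}(u,t)\big)$ in this trivialization, the covariant derivative along $f(u,\cdot)$ has components
\[
\partial_t F^{i}(u,t)+\sum_{j}A^{i}_{j}\big(f(u,t)\big)\big(\partial_t f(u,t)\big)\,F^{j}(u,t),
\]
which is visibly smooth in $(u,t)$: $F$ and $A$ are smooth, and $(u,t)\mapsto\partial_t f(u,t)$ is smooth by (the evident relative version of) Lemma~\ref{dersec}. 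As the chosen plot was arbitrary and smoothness is local, $\delta$ is smooth.

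The whole argument is routine; the only actual work is the bookkeeping, namely unwinding the notion of a ``plot of $PE$'' through cartesian closedness so that the verification collapses to the one-line coordinate formula above. There is no genuine obstacle: the non-compactness of $I$ causes no trouble, since only differentiation in the $I$-direction is involved, and no global analytic input is needed.
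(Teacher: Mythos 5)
Your proof is correct and follows essentially the same route as the paper's: both reduce smoothness to checking that, for a plot given by a smooth family $U\times I\to E$, the partial covariant derivative in the $I$-direction is again smooth. The paper phrases this last step abstractly as the covariant derivative of the pullback connection on $U\times I$ followed by the projection $\Omega^1_{U\times I}\to\Omega^1_I$, while you unwind it in a local trivialization as $\partial_t F^i+\sum_j A^i_j(f)(\partial_t f)F^j$ --- the same computation in coordinates (the only detail you omit is the sign $(-1)^{|\alpha|}$ the paper inserts for graded $E$).
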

\begin{pf}
If $a:I\to M$  is a path  in $M$, the map $\delta$ is given in the
fiber over $a$ by the
covariant derivative of the pullback connection 
$$\Gamma(I,a^\ast E)\longrightarrow \Gamma(I,a^\ast E\otimes\Omega^1_I)=
\Gamma(I,a^\ast E)\,dt\,.$$
To prove that that $\delta$ is smooth as a map from $PE$ to $PE\,dt$,
we need to prove that for every differentiable  family of paths
$\alpha:U\times I\to E$, the partial covariant  
derivative $dt \, \nabla_{\frac{\del}{\del t}}$ in the direction of $I$
gives rise to a differentiable map $(-1)^{|\alpha|} \nabla_{\frac{\del}{\del
    t}}\alpha\,dt:U\times I\to E\,dt$. (The sign $(-1)^{|\alpha|}$ appears if $E$ is a graded vector bundle.)  This is, of course, simply the
covariant derivative of the pullback connection $a^\ast\nabla:a^\ast
E\to a^\ast E\otimes \Omega^1_{U\times I}$, followed by the projection
$\Omega^1_{U\times I}\to \Omega^1_I$.  Here $a:U\times I\to M$ is the projection of $\alpha$ to $M$. 
\end{pf}

Often we will suppress $dt$ from the notation.

Let $P_\con E\subset PE$ be the kernel of $\delta:PE\to PE$. 
It is the 
diffeological subspace consisting of paths in $E$ which are horizontal
with respect to $\nabla$. The fiber of $P_\con E$ over $a\in PM$
consists of all covariant constant sections of $a^\ast E$. 

\begin{lem}\label{pulcon}
The map $P_\con E\to PM$ is a vector bundle of rank $\rk E$. For any
point $t\in I$, parallel transport gives rise to a pullback diagram of
diffeological spaces 
$$\xymatrix{P_\con E\rto^{\ev_t}\dto & E\dto\\
PM\rto^{\ev_t}& M\rlap{\,.}}$$
More universally, 
$$\xymatrix{
P_\con E\times I\rto^-\ev\dto & E\dto\\
PM\times I\rto^-\ev & M}$$
is a pullback diagram of diffeological spaces.
\end{lem}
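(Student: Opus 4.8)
The plan is to identify $P_\con E$ with the pullback of $E$ along an evaluation map via parallel transport, and to deduce the vector bundle claim and both Cartesian squares from this single identification. Fix $t\in I$ and let $\Phi_t\colon P_\con E\to PM\times_M E$, taken over the map $\ev_t\colon PM\to M$, be the morphism sending $(a,s)$ to $\big(a,s(t)\big)$. It is a map over $PM$, and over a path $a$ it restricts to the linear map from the space of $\nabla$-parallel sections of $a^\ast E$ to $E|_{a(t)}$ given by evaluation at $t$. The first step is to check that this restriction is bijective: a section of $a^\ast E$ is parallel exactly when it solves the parallel transport equation along $a$, a linear first order ODE on $I$, so by existence and uniqueness for linear ODEs (whose solutions do not blow up, hence are defined on all of $I$) such a section exists and is determined uniquely by its value at $t$. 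Thus $\Phi_t$ is a fiberwise linear bijection.

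The second step is to upgrade $\Phi_t$ to a diffeomorphism of diffeological spaces over $PM$. Smoothness of $\Phi_t$ is immediate: a plot of $P_\con E\subset PE$ is a family $\alpha\colon U\times I\to E$ covering $a\colon U\times I\to M$ with each $\alpha(u,\cdot)$ parallel, and $\Phi_t$ composes this with $\ev_t$ and the smooth projection $P_\con E\to PM$. For the inverse one invokes smooth dependence of solutions of linear ODEs on parameters and initial data: given a plot of $PM\times_M E$, that is, a family $a\colon U\times I\to M$ together with $v\colon U\to E$ over $a(\cdot,t)$, the family of parallel sections $\beta\colon U\times I\to E$ with $\beta(u,t)=v(u)$ is obtained, in local coordinates on $M$ and local trivializations of $E$, by solving a linear ODE in the $I$-variable whose coefficients depend smoothly on $(u,\cdot)$ through $a$ and the connection coefficients, with initial value $v(u)$; hence $\beta$ is smooth and is a plot of $P_\con E$. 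Therefore $\Phi_t$ is a diffeomorphism, which is precisely the statement that the first displayed square is Cartesian. Since $E\to M$ is a rank $\rk E$ vector bundle, its pullback $PM\times_M E\to PM$ along $\ev_t$ is a vector bundle of rank $\rk E$ over $PM$ — over any plot $U\to PM$ it restricts to the pullback of $E$ along a smooth map of manifolds — and $\Phi_t$ transports this structure to $P_\con E$, proving the first two assertions.

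For the universal diagram the plan is to run the same argument with $t$ allowed to vary. Define $\Phi\colon P_\con E\times I\to (PM\times I)\times_M E$, over $\ev\colon PM\times I\to M$, by $(a,s,t)\mapsto\big(a,t,s(t)\big)$. Over a point $(a,t)$ it is the evaluation isomorphism from parallel sections of $a^\ast E$ to $E|_{a(t)}$ already established, hence bijective, and it is smooth for the same reason $\Phi_t$ was. Composing with the diffeomorphism $\Phi_{t_0}\colon P_\con E\cong \ev_{t_0}^\ast E$ for a fixed basepoint $t_0\in I$, the inverse of $\Phi$ is identified with the map $(a,e_0,t)\mapsto\big(a,t,\tau^a_{t_0\to t}e_0\big)$, where $\tau^a_{t_0\to t}$ is parallel transport along $a$; smooth dependence of ODE solutions on parameters, now including the endpoint $t$, shows this is smooth. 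Hence $\Phi$ is a diffeomorphism and the second square is Cartesian.

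The only substantive work is the smooth-dependence input in the second step: one must state the classical theorem on smoothness of solutions of linear ODEs in parameters and initial conditions in the parametrized form needed to verify the diffeological plot conditions, where the parameter space is an arbitrary domain $U$. Everything else is routine bookkeeping with the definitions of plots, fibered products, and pullback diffeologies recalled above.
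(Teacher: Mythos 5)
Your proposal is correct and follows essentially the same route as the paper: both reduce the Cartesian property to the plot-lifting condition and identify $P_\con E$ with $\ev_t^\ast E$ via parallel transport, with the only substantive input being smooth dependence of solutions of linear ODEs on parameters, initial data, and (for the universal square) the evaluation time. The paper merely states this verification more tersely, checking the universal plot condition for a family $a:U\times I\to M$ with varying initial time $f:U\to I$ and citing the horizontal-lift smoothness as a standard fact, whereas you spell out the fixed-$t$ case first and then reduce the varying-$t$ case to it.
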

\begin{pf}
Let $a:U\times I\to M$ be a family of paths in $M$, and 
$f:U\to I$ a differentiable function. Furthermore, let
$s:U\to E$ be a lift of the composition $a\circ (\id\times f):U\to M$.
We need to prove that the associated family of horizontal
lifts $\tilde s:U\times I\to E$ is differentiable,
$$\xymatrix{
U\rto^{s}\dto_{\id\times f} & E\dto\\
U\times I\rto^a\ar@{.>}[ur]^{\tilde s} & M}$$
which is a standard fact.  
\end{pf}

\begin{lem}
The contraction operator $\eta:PL\,dt\to PL$ defined in Section~\ref{sec:DrivedPathSp} is a morphism of diffeological vector spaces over $PM$.
\end{lem}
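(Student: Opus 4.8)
The plan is to separate the two things that must be verified. Fibre-wise linearity over $PM$ is immediate from the formula defining $\eta$: Riemann integration, multiplication by the scalar $t$, and the sign $(-1)^{k}$ are all linear in the section, and $\eta$ leaves the underlying path $a$ untouched, so $\eta$ covers $\id_{PM}$ and is linear on every fibre. The only substantive point is smoothness as a map of diffeological spaces. By cartesian closedness and the definition of the diffeology on path spaces, I would unwind this to the following statement: for every domain $U$ and every differentiable family $\alpha\colon U\times I\to L$ of paths lying over a family $a\colon U\times I\to M$, the map $(u,t)\mapsto\eta\big(\alpha(u,\cdot)\,dt\big)(t)$ from $U\times I$ to $L$ is differentiable.

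To prove this I would first make the trivialisation used in the definition of $\eta$ explicit in families. Writing $\Psi_{(u,t)}\colon L|_{a(u,t)}\to L|_{a(u,0)}$ for parallel transport along $a(u,\cdot)$ from time $t$ to time $0$, the same reasoning as in the proof of Lemma~\ref{pulcon} — smooth dependence of solutions of linear ODEs on parameters and initial data — shows that the $\Psi_{(u,t)}$ assemble into a differentiable isomorphism of vector bundles $\Psi\colon a^{\ast}L\xrightarrow{\cong}\pi^{\ast}W$ over $U\times I$, with differentiable inverse, where $W=\big(a|_{U\times\{0\}}\big)^{\ast}L$ and $\pi\colon U\times I\to U$ is the projection. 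Under $\Psi$ the family $\alpha$ becomes a differentiable map $\tilde\alpha\colon U\times I\to W$, and the defining formula for $\eta$ reads
$$\eta\big(\alpha(u,\cdot)\,dt\big)(t)=(-1)^{k}\,\Psi^{-1}_{(u,t)}\!\left(\int_{0}^{t}\tilde\alpha(u,s)\,ds-t\int_{0}^{1}\tilde\alpha(u,s)\,ds\right).$$
Then I would finish by observing, in a local trivialisation of $W$ over $U$, that $(u,t)\mapsto\int_{0}^{t}\tilde\alpha(u,s)\,ds$ and $(u,t)\mapsto t\int_{0}^{1}\tilde\alpha(u,s)\,ds$ are differentiable (differentiation under the integral sign and the fundamental theorem of calculus), hence so is their difference, and applying the differentiable map $\Psi^{-1}$ produces a differentiable map $U\times I\to L$, as wanted. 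This shows $\eta$ sends plots of $PL\,dt$ to plots of $PL$.

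I do not expect a genuine obstacle here: the only non-formal ingredient is the differentiability of parallel transport in families, and that is already recorded inside the proof of Lemma~\ref{pulcon}; everything else is bookkeeping with the integral. An alternative route, avoiding the global trivialisation, is to note that fibre-wise over $PM$ the operator $\eta$ is a composite of parallel transport, Riemann integration over $I$, multiplication by the coordinate function $t$, and inverse parallel transport, each of which is — or is trivially checked to be — smooth over $PM$; I would use whichever of the two presentations reads more cleanly in context.
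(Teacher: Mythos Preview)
Your proposal is correct and follows essentially the same route as the paper: reduce smoothness to a plot-level statement on $U\times I$, use parallel transport along the family of paths to trivialise $a^\ast L$ as the pullback of a bundle over $U$, and then conclude by differentiation under the integral sign and the fundamental theorem of calculus. The only cosmetic difference is that the paper packages the parallel-transport trivialisation via the bundle $P_\con L$ and the pullback square of Lemma~\ref{pulcon} (so that your $W$ is their $a^\ast P_\con L$ and your $\Psi$ is implicit in that identification), whereas you write out $\Psi$ explicitly; the resulting local computation is identical.
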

\begin{pf}
Fix a plot $a:U\to PM$, giving rise to the family of paths $\tilde a:U\times I\to M$. (For this proof it will be appropriate to distinguish between $a$ and $\tilde a$.)
A plot $\alpha:U\to PL\,dt$, lying over the plot $a$ of $PM$,  is the same thing as a section of ${\tilde a}^\ast L$ over $U\times I$. 
Consider the pullback diagram (see Lemma~\ref{pulcon})
$$\xymatrix{
& P_\con L\times I\rto\dto & L\dto\\
U\times I\ar@{.>}[ur]^\alpha\ar@/_/[rr]_{\tilde a}\rto^{a\times \id_I} & PM\times I\rto^{\ev} & M\rlap{\,.}}$$
It shows that a section of ${\tilde a}^\ast L$ over $U\times I$ is the same thing as a dotted arrow.  This, in turn, is the same thing as a lift in the diagram 
$$\xymatrix{
&P_\con L\dto\\
U\times I\ar@{.>}[ur]^\alpha\rto^{a\times \text{proj}}&PM{\,,}}$$
or a section of $a^\ast P_\con L$ over $U\times I$:
$$\xymatrix{
&a^\ast P_\con L\dto\\
U\times I\ar@{.>}[ur]^\alpha\rto^{\text{proj}}& U}$$
The
associated section $\eta(\alpha)$ is defined by 
\begin{equation}\label{defeta}
(x,t)\longmapsto (-1)^{|\alpha|}\, \Big( \int_0^t\alpha(x,u)\,du-t\int_0^1\alpha(x,u)\,du\,\Big) \, ,
\end{equation}
where the integrals are computed inside the fiber of $a^\ast P_\con L$
over $x$, which is $\Gamma_\con(I,a_x^\ast L)$. 

To prove that $\eta(\alpha)$ is differentiable is a local claim in
$U$, and so we may assume that the vector bundle $a^\ast P_\con L$
over $U$ is trivial. Then $\alpha$ is  a map $\alpha:U\times
I\to\rr^k$ and (\ref{defeta}) is visibly differentiable, combining
differentiation under the integral sign (for differentiability with
respect to $x$), and the fundamental theorem of calculus (for
differentiability with respect to $t$.)
\end{pf}

\begin{cor}
We define $P_\lin E\subset PE$ as the kernel of $\delta^2:PE\to PE$. It is a vector bundle of rank $2\rk E$ over $PM$.
\end{cor}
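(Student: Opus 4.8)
The plan is to deduce this statement from the preceding Lemma on $\eta$ together with the vector bundle statement in Lemma~\ref{pulcon}. First I would observe that $\delta^2$ is a well-defined morphism of diffeological vector spaces over $PM$: indeed $\delta:PE\to PE\,dt$ is smooth and fiberwise linear by Lemma~\ref{covdev}, so its square $\delta^2:PE\to PE\,(dt)^2$ is too, but since $(dt)^2=0$ for degree reasons one should rather read $\delta^2$ as the composite $PE\xrightarrow{\delta}PE\,dt\xrightarrow{\delta}PE\,dt\,dt$; to get an endomorphism of $PE$ one re-identifies the target using the formal nature of $dt$. In any case, what matters is that on each fiber over a path $a\in PM$, the operator in question is the second covariant derivative $\nabla_{a'}^2:\Gamma(I,a^\ast E)\to\Gamma(I,a^\ast E)$, which is $\rr$-linear and smoothly varying in $a$.

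The key fiberwise computation is that $\ker(\nabla_{a'}^2)$ inside $\Gamma(I,a^\ast E)$ is exactly the space of \emph{affine-linear} sections: choosing the parallel trivialization $a^\ast E\cong I\times V$ with $V=\Gamma_\con(I,a^\ast E)$, the operator $\nabla_{a'}$ becomes $\tfrac{d}{dt}$ on $C^\infty(I,V)$, so $\ker(\tfrac{d^2}{dt^2})$ consists precisely of the $V$-valued functions $s(t)=s_0+t\,s_1$. Evaluation at $t=0$ and the derivative give an isomorphism of this kernel with $V\oplus V$, hence $\dim\ker(\nabla_{a'}^2)=2\,\rk E$. This identifies $P_\lin E$ with $P_\con E\oplus (P_\con E)\,dt$ as a set, or equivalently, via parallel transport and $\ev_0,\ev_1$, with $\ev_0^\ast E\oplus\ev_1^\ast E$ over $PM$ — which is the description used in Remark~\ref{rmk:DerPathSpAsVB}.

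To upgrade this to a statement about diffeological vector bundles, I would exhibit $P_\lin E$ as a smooth retract of $PE$, or more directly produce an explicit smooth trivialization. The cleanest route: the projection $\pi_\lin:PE\to P_\lin E$ given fiberwise by $\pi_\lin(\alpha)(t)=(1-t)\alpha(0)+t\,\alpha(1)$ (formula~\eqref{eq:piLin}) is smooth — this follows from Lemma~\ref{pulcon}, since evaluation at $0$ and $1$ are smooth maps $PE\to E$ and the affine interpolation of two horizontal-frame-expressed vectors is differentiable in a local trivialization of $\ev_0^\ast E$ and $\ev_1^\ast E$ — and it is a retraction onto $P_\lin E$ with image endowed with the subspace diffeology. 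Combined with the identity $[\delta,\eta]=\delta\eta+\eta\delta$ being an idempotent decomposition (established in Section~\ref{sec:DrivedPathSp} and using the Lemma on $\eta$ just proved), one gets that $P_\lin E=\ker\eta\delta$ is the image of the smooth idempotent $1-\eta\delta$, hence a diffeological vector subbundle. Finally, the isomorphism $P_\lin E\cong \ev_0^\ast E\oplus\ev_1^\ast E$ of Lemma~\ref{pulcon}-type identifies it with an honest rank-$2\rk E$ vector bundle over the manifold $PM$ (or over $P_gM$ in the geodesic case).

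The main obstacle I anticipate is purely bookkeeping around the formal symbol $dt$: one must be careful that $\delta^2$ is interpreted as an endomorphism of $PE$ (not landing in a doubly-shifted bundle), so that $\ker\delta^2$ makes literal sense, and that the "affine-linear sections" are the horizontal-plus-linearly-interpolated sections rather than sections linear in some chart-dependent sense. Once the fiberwise identification $\ker\nabla_{a'}^2=\{$covariant affine sections$\}$ is pinned down and the smoothness of $\pi_\lin$ (equivalently of $\eta\delta$) is in hand from Lemma~\ref{pulcon} and the Lemma on $\eta$, the rank count and the bundle structure are immediate.
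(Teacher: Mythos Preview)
Your proposal is correct and matches the paper's implicit approach (the Corollary is stated without proof, as an immediate consequence of the preceding lemmas): smoothness of $\delta$ and $\eta$ makes $\eta\delta$ a smooth idempotent on $PE$, and the fiberwise computation in the parallel trivialization identifies $\ker\delta^2=\ker\eta\delta$ with the affine sections, giving a rank-$2\rk E$ bundle via $\ev_0^\ast E\oplus\ev_1^\ast E$. Your worry about the formal symbol $dt$ is unnecessary---the paper explicitly adopts the convention of suppressing $dt$ (stated just after Lemma~\ref{covdev}), so $\delta^2:PE\to PE$ is already meant literally.
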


\section{The curved $L_\infty[1]$-transfer theorem}\label{linfty}

\subsection{Symmetric multilinear operations and curved $L_\infty[1]$-algebras}

Let $X$ be a diffeological space.  We will adapt the definition of curved $L_\infty[1]$-algebras to diffeological $X$-vector spaces. We refer the readers to \cite{MR2931635,MR1235010,MR1327129,MR1183483} for a general introduction to $L_\infty$-algebras.

Let $E$ and $F$ be graded  diffeological $X$-vector spaces,
concentrated in finitely many positive degrees. 
Following \cite{ezra}, we denote by $\Symu^{n,i}_{X}(E,F)$, for $n,i\geq0$,
the diffeological $X$-vector space 
whose fiber over $x\in X$ is the space of graded symmetric multilinear maps 
$(E|_x)^n\to F|_x$ of degree $i$. The space of global sections of $\Symu^{n,i}_X(E,F)$ is equal to $\Sym^{n,i}_X(E,F)$, the vector space of  smooth maps
$$\underbrace{E\times_X\ldots\times_XE}_{n}\longrightarrow F$$
which are fiberwise graded symmetric multilinear maps  of degree
$i$.
Note that $\Symu^{n,i}_{X}(E,F)$ vanishes, as soon as $n+i$ exceeds the
highest degree of $F$. We write
$$\Symu^i_{X}(E,F)=\bigoplus_{n\geq0}\Symu^{n,i}_{X}(E,F)\,.$$
There is a binary operation 
$$\Symu^i_{X}(E,F)\times_X \Symu^j_{X}(E,E)\longrightarrow \Symu_{X}^{i+j}(E,F)\,,$$
defined by 
$$(\lambda\circ\mu)_n(x_1,\ldots,x_n)=
\sum_{\sigma\in
  S_n}(-1)^\epsilon\sum_{k=0}^n\frac{1}{k!(n-k)!}\,
\lambda_{n+1-k}\big(\mu_k(x_{\sigma(1)}\ldots),
\ldots,x_{\sigma(n)}\big)\,,$$ 
for $x_1,\ldots,x_n$ elements of a (common) fiber of $E$. Here $(-1)^\epsilon$ is determined by Koszul sign convention. This operation is linear in each argument. It is not associative, but it does satisfy the pre-Lie
algebra axiom.  In particular, taking $F=E$, the graded commutator with respect to
$\circ$ defines the structure of a diffeological graded Lie algebra over $X$ on 
$$\Symu_{X}(E,E)=\bigoplus_{i\geq0}\Symu_{X}^i(E,E)\,.$$
In particular, the space of global sections $\Sym_X(E,E)$ is a diffeological graded Lie algebra. 

If $\delta$ is a smooth differential in $E$, then $[\delta,\argument]$
is an induced differential on $\Symu_{X}(E,E)$ (acting by derivations with respect to the bracket), turning $\Symu_{X}(E,E)$ into a
diffeological differential graded Lie algebra over $X$. The global sections $\Sym_X(E,E)$ form a diffeological differential graded Lie algebra. 

If we fix the  complex of diffeological $X$-vector spaces $(E,\delta)$, then the structure of a {\bf diffeological curved $L_\infty[1]$-algebra }on on $E$ is a Maurer-Cartan element in $\Sym_X(E,E)$, i.e. a global section $\lambda$ of  $\Symu^1_X(E,E)$, satisfying the Maurer-Cartan equation
$$[\delta,\lambda]+\lambda\circ\lambda=0\,.$$
The case where $\delta=0$ is no less general than the case of arbitrary $\delta$, because we can always incorporate $\delta$ into $\lambda$ by adding it to $\lambda_1$.  In the case where $\delta=0$, the Maurer-Cartan equation reduces to 
$$\lambda\circ\lambda=0\,.$$
Nevertheless, for the transfer theorem, the case of general $\delta$ will be important to us.

\begin{lem}\label{symult}
Suppose $L$ is a vector bundle over the manifold $M$, and
$\lambda\in\Sym_M^i(L,L)$ a family of smooth graded symmetric
operations in $L$, then we get an induced family of smooth graded
symmetric operations $P\lambda\in \Sym_{PM}^i(PL,PL)$. 
\end{lem}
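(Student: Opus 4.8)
The plan is to show that the pointwise family $P\lambda$, defined fiberwise over the path $a \in PM$ by pulling back $\lambda$ along $a$, is a smooth map of diffeological vector bundles over $PM$. The statement is really just a "bookkeeping" lemma: we have already established (Lemma~\ref{dersec}, Lemma~\ref{covdev}, and the construction of $PL$ as a diffeological vector space over $PM$) that path spaces of vector bundles behave well, and now we must check that a fiberwise multilinear operation propagates to the path space. So the core of the argument is: unwind the definition of smoothness in the diffeological category, reduce to a statement about ordinary differentiable families, and invoke the fact that composition of a differentiable multilinear bundle map with a differentiable family of tuples of sections is again differentiable.

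\textbf{Setup.} Recall that for $a \in PM = \Map(I,M)$, the fiber $PL|_a$ is $\Gamma(I, a^\ast L)$, and for $\lambda_n : L \times_M \cdots \times_M L \to L$ a smooth fiberwise symmetric $n$-linear map of degree $i$, we set
$$(P\lambda_n)_a(s_1, \ldots, s_n)(t) = \lambda_n\big(s_1(t), \ldots, s_n(t)\big), \qquad s_1, \ldots, s_n \in \Gamma(I, a^\ast L),\ t \in I.$$
This manifestly lands in $\Gamma(I, a^\ast L) = PL|_a$ and is fiberwise graded symmetric $n$-linear of degree $i$; the only content is smoothness as a map of diffeological spaces $\underbrace{PL \times_{PM} \cdots \times_{PM} PL}_n \to PL$ over $PM$.

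\textbf{Reduction to a differentiable-family statement.} By the definition of the diffeology on $PL$ and on fibered products, a plot of $PL \times_{PM} \cdots \times_{PM} PL$ over a domain $U$ is precisely the data of a differentiable family of paths $\tilde a : U \times I \to M$ together with $n$ sections $\alpha_1, \ldots, \alpha_n : U \times I \to L$, each lifting $\tilde a$ (i.e.\ $\pi_L \circ \alpha_j = \tilde a$). Applying $P\lambda_n$ to this plot produces the map $U \times I \to L$ given by $(x,t) \mapsto \lambda_n\big(\alpha_1(x,t), \ldots, \alpha_n(x,t)\big)$. Thus I must show: this map is differentiable, and it lifts $\tilde a$. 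The lifting property is immediate since $\lambda_n$ covers the identity on $M$. For differentiability, observe that $(\alpha_1, \ldots, \alpha_n) : U \times I \to L \times_M \cdots \times_M L$ is differentiable (each $\alpha_j$ is, and they agree over $M$), and $\lambda_n : L \times_M \cdots \times_M L \to L$ is differentiable by hypothesis; hence the composite is differentiable. This is exactly the verification that the resulting map $U \to PL$ is a plot, so $P\lambda_n$ is smooth. Summing over $n$ (the sum is finite in each fixed total degree by the positive-grading hypothesis, as the paper notes for $\Symu^{n,i}$), we obtain $P\lambda \in \Sym_{PM}^i(PL, PL)$.

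\textbf{Expected main obstacle.} There is essentially no serious obstacle here; the one point requiring a word of care is the identification of plots of the iterated fibered product $PL \times_{PM} \cdots \times_{PM} PL$ with $n$-tuples of differentiable lifts of a common family of paths, i.e.\ unwinding two levels of "mapping space over a base" in the diffeological category. Once that identification is in hand, the proof is a one-line appeal to the fact that composing differentiable maps of manifolds (here, a family of tuples of sections followed by the bundle map $\lambda_n$) yields a differentiable map. So I would keep the write-up short: state the formula for $P\lambda$, cite the definition of the diffeology on path spaces and on fibered products, and conclude by composition of differentiable maps, remarking that finiteness of the relevant sums follows from positivity of the grading.
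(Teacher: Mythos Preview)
Your proposal is correct and follows essentially the same approach as the paper: define $P\lambda$ fiberwise by pullback, then verify smoothness by taking a plot of the fibered product (a differentiable family $U\times I\to M$ together with $n$ lifts $\alpha_1,\ldots,\alpha_n:U\times I\to L$) and observing that $\lambda_n(\alpha_1,\ldots,\alpha_n):U\times I\to L$ is differentiable as a composition of differentiable maps. Your write-up is more explicit about unwinding the plot structure of the fibered product, but the argument is the same.
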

\begin{pf}
Consider $PL$ as a diffeological vector space over $X = PM$. 
Let $a\in PM$ be a path in the base.  The fiber of $PL\to PM$ is
$\Gamma(I,a^\ast L)$. The operation $P\lambda$ is given in this fiber
as the operation induced on global sections by the pullback $a^\ast
\lambda$. Let us prove that $P\lambda$ is smooth.  For this choose a
smooth family of paths $a:U\times I\to M$, and a family of lifts
$\alpha_1,\ldots,\alpha_n:U\times I\to L$. Then
$\lambda_n(\alpha_1,\ldots,\alpha_n):U\times I\to L$ is smooth by the
definition of the diffeology on
$\underbrace{PL \times_X\ldots\times_X PL}_{n}$ and the fact that
compositions of smooth maps are smooth.
\end{pf}

There is another binary operation
$$\Symu_X^i(E,E)\times_X\Symu_X^0(F,E)\longrightarrow \Symu_X^i(F,E)\,,$$
defined by 
\begin{multline*}
(\lambda\bullet\phi)_n(x_1,\ldots,x_n)
=\sum_{\sigma\in S_n}(-1)^\epsilon\\
\sum_{k=0}^n\frac{1}{k!}\sum_{n_1+\ldots+n_k=n}\frac{1}{n_1!\ldots n_k!}\lambda_k\big(\phi_{n_1}(x_{\sigma(1)},\ldots),\dots,\phi_{n_k}(\ldots,x_{\sigma(n)})\big)\,.\end{multline*}
This operation is linear only in the first argument, but it is associative. 

If $\lambda$ and $\mu$ are curved $L_\infty[1]$-structures on $(E,\delta)$ and $(F,\delta')$, respectively, and $\phi\in \Sym^0(E,F)$, then $\phi$ is a {\bf morphism of diffeological curved $L_\infty[1]$-algebras }if 
$$\phi\circ(\delta+\lambda)=(\delta'+\mu)\bullet\phi\,.$$
The proof from \cite{ezra}, that any composition of morphisms of curved $L_\infty[1]$-algebras is another morphism of curved $L_\infty[1]$-algebras, carries over to this more general context.  

For example, 
if $\lambda$ is a curved $L_\infty[1]$-structure on $(E,\delta)$, and $F\subset E$ is a subcomplex preserved by all operations $\lambda$, then the inclusion $F\to E$, without any higher correction terms is a morphisms of curved $L_\infty[1]$-algebras. We say that $F$ is a curved $L_\infty[1]$-subalgebra of $E$. 

\subsection{The transfer theorem}\label{tranthem}

Let $M$ be a manifold, and $(L,\delta)$  a complex of diffeological vector spaces over $M$.  Assume that $L=L^1\oplus\ldots L^{n+1}$ is concentrated in finitely many positive degrees. Assume  also  given a descending filtration 
$$L=F_0L\supset F_1L\supset\ldots$$
on $L$, such that both the  grading and the differential $\delta$ are compatible with $F$. 
We assume that   $F_k L=0$, for $k\gg0$. 

Let $\eta$ be a smooth map of degree $-1$ on $L$, which is compatible with the filtration $F$, and satisfies the two conditions
$$\eta^2=0\,,\qquad \eta\delta\eta=\eta\,.$$
(We call $\eta$ a {\em contraction} of $\delta$.)

Under these hypotheses, $\delta\eta$ and $\eta\delta$ are idempotent operators on $L$.  We define $$H=\ker[\delta,\eta]=\ker(\delta\eta)\cap\ker(\eta\delta)=\im(\id_L-[\delta,\eta])\,.$$ Then $H$ is a graded diffeological vector subspace of $L$.  Let us write $\iota:H\to L$ for the inclusion, and $\pi:L\to H$ for the projection. We have 
$$\iota \pi=\id_L-[\delta,\eta]\,, \quad\text{and}\quad \pi\iota=\id_H\,.$$ 
We write $\delta$ also for the induced differential on $H$. Then $\iota:(H,\delta)\to (L,\delta)$ is a homotopy equivalence with homotopy inverse $\pi$. 

Let $\lambda=(\lambda_k)_{k\geq0}$ be a curved $L_\infty[1]$-structure in $(L,\delta)$.  We assume that this $L_\infty[1]$-structure is {\em nilpotent}, 
i.e., that   $\lambda$  increases the filtration degree by 1. This means that, for $n\geq1$, we have  $\lambda_n(F_{k_1},\ldots,F_{k_n})\subset F_{k_1+\ldots+k_n+1}$.  It puts no restriction on $\lambda_0$.  (This convention on $\lambda_0$ is different from \cite{MR1950958,ezra}.  In \cite{MR1950958,ezra}, it is assumed that the image of $\lambda_0$ is contained in $F_1L^1$.  We do not need this stronger requirement, because all our $L_\infty[1]$-algebras are in bounded degree.)

\begin{prop}[Transfer Theorem]\label{transfertheorem}
There is a unique  $\phi\in \Sym^0(H,L)$ satisfying the equation
\begin{equation}\label{recu}
\phi=\iota-\eta \lambda\bullet\phi\,.
\end{equation}
Setting
$$\mu=\pi\lambda\bullet\phi\in S^1(H,H)$$ defines a curved $L_\infty[1]$-structure on $(H,\delta)$, such that $\phi$ is a morphism of curved $L_\infty[1]$-structures from $(H,\delta+\mu)$ to $(L,\delta+\lambda)$. 

Furthermore, there exists a morphism of curved $L_\infty[1]$-algebras $\tilde\pi:(L, \delta + \lambda) \to (H, \delta + \nu)$ satisfying the equations $\tilde\pi_1 = \pi - \tilde\pi_1 \lambda_1 \eta$ and $\tilde\pi \bullet \phi = \id_H$.
\end{prop}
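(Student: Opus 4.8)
The plan is to prove the three assertions in turn, following the template of the curved $L_\infty[1]$-transfer theorem of \cite{ezra, MR1950958} and checking that the curved, filtered and diffeological refinements cost nothing; throughout, everything takes place fiberwise over $X$ (or over the manifold $M$ of the applications), and the identities to be verified are all fiberwise identities of symmetric multilinear maps. For the existence and uniqueness of $\phi$: since each $\lambda_n$ with $n\geq 1$ raises the filtration degree by one and $F_kL=0$ for $k\gg 0$, the operator $\eta\lambda_1$ is nilpotent, so $\id+\eta\lambda_1$ is invertible with a finite Neumann series. Reading off the arity-$n$ part of \eqref{recu} (with $\iota_1=\iota$ and $\iota_n=0$ for $n\neq 1$), and using that $\phi_0$ lands in $L^0=0$ so that only finitely many terms occur, one gets an equation of the shape $(\id+\eta\lambda_1)\,\phi_n=\iota_n-R_n$, where $R_n$ is built from the $\lambda_k$ with $k\geq 2$ and the lower components $\phi_m$, $m<n$; hence $\phi_n$ is uniquely determined. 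Unwinding the recursion gives the familiar finite sum over rooted trees with $\iota$ at the leaves, $\lambda_k$ at the $k$-ary internal vertices, and $\eta$ on every edge; since each $\phi_n$ is thus a finite composite of the smooth $X$-fiberwise maps $\iota,\eta,\lambda_k$ and the smooth operations $\circ,\bullet$ (cf.\ Lemma~\ref{symult}), we obtain $\phi\in\Sym^0(H,L)$.

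Next, writing $Q=\delta+\lambda$ so that the structure on $(L,\delta)$ is the single equation $Q\circ Q=0$, I would verify the morphism identity $\phi\circ(\delta+\mu)=Q\bullet\phi$ directly from \eqref{recu}, using the side relations $\eta^2=0$, $\eta\iota=0$, $\pi\eta=0$, $\iota\pi=\id_L-[\delta,\eta]$ (the last two being formal consequences of $\eta^2=0$ and $\eta\delta\eta=\eta$), the pre-Lie axiom for $\circ$, and $Q\circ Q=0$. Applying $\pi$ on the left and using $\pi\iota=\id_H$ then identifies the transferred structure as $\delta+\mu$ with $\mu=\pi\lambda\bullet\phi$, and the same formal computation (or: cancellation of $\phi$, which is a fiberwise monomorphism since $\phi_1=(\id+\eta\lambda_1)^{-1}\iota$ is) yields $(\delta+\mu)\circ(\delta+\mu)=0$. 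This middle step is where the genuine work lies: it is a sign-and-combinatorics bookkeeping exercise in the pre-Lie operations $(\circ,\bullet)$ with Koszul signs, and the cleanest route is to transcribe the computation of \cite{ezra} verbatim --- it is purely formal, hence valid fiberwise. The two relaxations relative to \cite{ezra, MR1950958} --- allowing $\lambda_0\notin F_1L^1$, and passing to diffeological $X$-families --- are harmless: the first because the bounded positive grading already truncates every tree sum to finitely many terms, the second because the identities are fiberwise and finite tree sums of smooth maps are smooth.

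Finally, for $\tilde\pi$: I would obtain it by homological perturbation at the level of the bar constructions. The contraction $(\iota,\pi,\eta)$ lifts to a contraction of the associated (curved) cofree coalgebras, and perturbing by the coderivation encoding $\lambda$ produces, via the homological perturbation lemma, a morphism of curved coalgebras --- equivalently a curved $L_\infty[1]$-morphism $\tilde\pi\colon(L,\delta+\lambda)\to(H,\delta+\mu)$ --- whose linear part is the perturbed projection $\pi(\id+\lambda_1\eta)^{-1}$, i.e.\ the unique solution of $\tilde\pi_1=\pi-\tilde\pi_1\lambda_1\eta$ (again using nilpotency of $\lambda_1\eta$), and for which $\tilde\pi\bullet\phi=\id_H$, the perturbation lemma returning a genuine contraction whose two legs compose to the identity. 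Alternatively one writes $\tilde\pi$ and the identity $\tilde\pi\bullet\phi=\id_H$ as explicit rooted-tree sums and checks them combinatorially, as in \cite{ezra}; the linear-part check then uses only $\phi_1-\iota\in\im\eta$ and $\tilde\pi_1\eta=0$, whence $\tilde\pi_1\phi_1=\tilde\pi_1\iota=\pi\iota=\id_H$. As in the previous step, none of the curved, filtered, or diffeological refinements adds a real difficulty beyond the bookkeeping already addressed.
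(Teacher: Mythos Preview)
Your proposal is correct and follows essentially the same route as the paper: both defer to \cite{ezra} for the core combinatorial/pre-Lie computation, both explain that the bounded positive grading together with the filtration forces all tree sums to be finite (so the recursion terminates and smoothness is automatic), and both obtain $\tilde\pi$ by a perturbation argument---the paper cites \cite{MR3276839}, while you invoke the homological perturbation lemma on the (co)bar construction, which is the same mechanism. Your write-up is considerably more explicit than the paper's two-sentence proof, but there is no substantive difference in strategy.
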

\begin{proof}
The proof in \cite{ezra} applies to our situation. 
Our assumptions imply that the number of internal nodes (i.e., nodes of valence at least 2)  is bounded above by the highest filtration degree in $L$, and the valence of all  nodes is bounded by the highest degree in $L$. This bounds the number of trees itself, and hence the number of transferred operations.

For the last part, since the curved $L_\infty[1]$-algebras are assumed to be bounded positively graded and nilpotent, one may modify the  approach in \cite{MR3276839} to obtain the transferred projection $\tilde \pi$. 
\end{proof}

\begin{figure}
\begin{subfigure}{.5\textwidth}
\centering
\begin{tikzpicture}[
    sibling distance  = 1.5cm,
    level distance  = 1cm,
    grow=up
  ]
\node {}
child {
node[circle, draw]{$\lambda_3$}
child {
node[circle, draw, black]{$\lambda_2$}
child {
node {}
edge from parent[black]
node[right]{ $\iota$}
}
child {
node {}
edge from parent[black]
node[left] {$\iota$}
}
edge from parent[blue]
node[right, near start] {\; $\eta$}
}
child {
node {}
edge from parent[black]
node[left] {$\iota$}
}
child {
node[circle, draw, black] {$\lambda_1$}
child {
node {}
edge from parent[black]
node[left] {$\iota$}
}
edge from parent[blue]
node[left, near start] {$\eta$ \;}
}
edge from parent[red]
node[right] {$\eta$}
}
;
\end{tikzpicture}
\caption{A tree for transferred inclusion}
  \label{fig:TreeForIncl}
\end{subfigure}
\begin{subfigure}{.5\textwidth}
\centering
\begin{tikzpicture}[
    sibling distance  = 1.5cm,
    level distance  = 1cm,
    grow=up,
  ]
\node {}
child {
edge from parent[red]
node[right] {$\iota$}
}
;
\end{tikzpicture}
\caption{The tree without nodes}
  \label{fig:TreeWithoutNodes}
\end{subfigure}
\caption{Decorated trees in the formulas of transferred inclusion}
\end{figure}
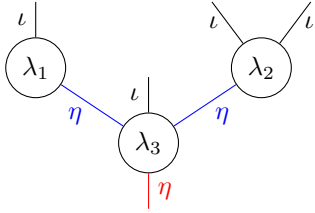
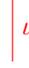

\begin{rmk}\label{rmk:TreeFormula} 
One can find $\phi:H\to L$ (and $\mu$) explicitly by solving (\ref{recu}) recursively.  The result is as follows.

Consider  rooted trees with a positive number $n$ of  leaves, and a non-negative number of nodes. For every such tree define an element of $S^{n,0}(H,L)$ by decorating each leaf (an edge without a node at the top) with $\iota$, each node with the applicable $\lambda_k$, and all edges other than leaves with $\eta$. See Figure~\eqref{fig:TreeForIncl} for an example.  (So the root is labeled with $\eta$, unless the root is a leaf, which happens only for the tree without any nodes as in Figure~\eqref{fig:TreeWithoutNodes}.) Then $\phi$ is equal to the sum over all such trees of the corresponding element of $S^0(H,L)$ with proper coefficients. Since $\lambda$ is nilpotent, only finitely many trees give non-zero contributions. 

Consider 
 also   rooted trees with a non-negative number $n$ of leaves, and a positive number of nodes. For every tree of this kind, define an element of $S^{n,1}(H,H)$ by decorating each leaf with $\iota$, each node with the applicable $\lambda_k$, all internal edges with $\eta$, and the root with $\pi$. See Figure~\eqref{fig:TreeForOp} for an example. Then $\mu$ is equal to the sum over all such trees of the corresponding element of $S^1(H,H)$ with proper coefficients. Again, only finitely many trees contribute to the sum.

A priori, one may have a tree with $\lambda_0$ labeled at a node such as Figure~\eqref{fig:VanishingTree}. Nevertheless, it follows from our degree assumptions that $\eta(\lambda_0)=0$, and thus the terms associated with such trees vanish except the transferred curvature $\pi(\lambda_0)$. 

In the homotopy transfer theorem for non-curved $L_\infty[1]$-algebras \cite{2017arXiv170502880B,FioMan},  the cochain differential $\delta$ is assumed to be the first bracket, and $\lambda$ consists of the operations with two or more inputs. Thus, the nodes in the tree formulas \cite{FioMan} have at least two ascending edges, while the nodes in our trees may have only one ascending edge as in Figure~\eqref{fig:TreeForOp}. 
\end{rmk}

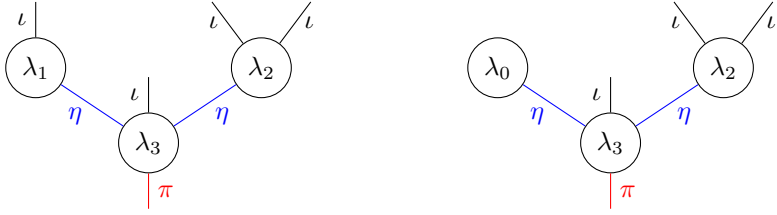
\begin{figure}
\begin{subfigure}{.5\textwidth}
\centering
\begin{tikzpicture}[
    sibling distance  = 1.5cm,
    level distance  = 1cm,
    grow=up
  ]
\node {}
child {
node[circle, draw]{$\lambda_3$}
child {
node[circle, draw, black]{$\lambda_2$}
child {
node {}
edge from parent[black]
node[right]{ $\iota$}
}
child {
node {}
edge from parent[black]
node[left] {$\iota$}
}
edge from parent[blue]
node[right, near start] {\; $\eta$}
}
child {
node {}
edge from parent[black]
node[left] {$\iota$}
}
child {
node[circle, draw, black] {$\lambda_1$}
child {
node {}
edge from parent[black]
node[left] {$\iota$}
}
edge from parent[blue]
node[left, near start] {$\eta$ \;}
}
edge from parent[red]
node[right] {$\pi$}
}
;
\end{tikzpicture}
\caption{A tree for transferred operations}
  \label{fig:TreeForOp}
\end{subfigure}
\begin{subfigure}{.5\textwidth}
\centering
\begin{tikzpicture}[
    sibling distance  = 1.5cm,
    level distance  = 1cm,
    grow=up,
  ]
\node {}
child {
node[circle, draw]{$\lambda_3$}
child {
node[circle, draw, black]{$\lambda_2$}
child {
node {}
edge from parent[black]
node[right]{ $\iota$}
}
child {
node {}
edge from parent[black]
node[left] {$\iota$}
}
edge from parent[blue]
node[right, near start]{\; $\eta$}
}
child {
node {}
edge from parent[black]
node[left] {$\iota$}
}
child {
node[circle, draw, black]{$\lambda_0$}
edge from parent[blue]
node[left, near start] {$\eta$ \;}
}
edge from parent[red]
node[right] {$\pi$}
}
;
\end{tikzpicture}
\caption{An ignorable tree}
  \label{fig:VanishingTree}
\end{subfigure}
\caption{Decorated trees in the formulas of transferred operations}
\end{figure}

\begin{rmk}
Consider the case $\lambda=\lambda_1$. Then   $\eta\lambda$ and $\lambda\eta$ are nilpotent linear operators on $L$.  The transfer theorem yields
$$\phi=\phi_1=(1+\eta\lambda)^{-1}\iota\,,$$
and
$$\mu=\mu_1=\pi\lambda(1+\eta\lambda)^{-1}\iota=\pi(1+\lambda\eta)^{-1}\lambda\iota\,.$$
In this case we have more: if we define 
$$\tilde\pi=\pi(1+\lambda\eta)^{-1}\quad\text{and}\quad
\tilde\eta=\eta(1+\lambda\eta)^{-1}=(1+\eta\lambda)^{-1}\eta\,,$$
we obtain a deformation of the `context' given by the pair $(\delta,\eta)$.  In fact, $\phi:(H,\delta+\mu)\to(L,\delta+\lambda)$ and $\tilde\pi:(L,\delta+\lambda)\to (H, \delta+\mu)$ are homomorphisms of complexes, $\tilde\pi\phi=\id_H$, and $\phi\tilde\pi=\id_L-[\delta+\lambda,\tilde\eta]$. This is a special case of the homological perturbation lemma \cite{MR0220273,2004math......3266C}.
\end{rmk}

\begin{rmk}\label{rmk:0th1stMapInHptTransfer}
In the general case, we always have
$$\phi_1=(1+\eta\lambda_1)^{-1}\iota\,,$$
$$\mu_0=\pi(\lambda_0)\, , \quad  \quad 
\mu_1=\pi\lambda_1(1+\eta\lambda_1)^{-1} \iota=\pi(1+\lambda_1\eta)^{-1}\lambda_1 \iota \,, $$
as well as
$$
\tilde\pi_1 = \pi (1+\lambda_1 \eta)^{-1} \, .
$$
\end{rmk}

\begin{rmk}\label{spec}
Let $P\in M$ be a point.  Since $\tilde\pi_1\circ\phi_1=\id$, we have that $\phi_1:H^1\to L^1$ is an isomorphism onto a subbundle of $L^1$, and thus $\mu_0(P)=0$ is equivalent to $\lambda_0(P)=0$.  In other words, the Maurer-Cartan loci of $(M, H,\delta+\mu)$ and $(M, L,\delta + \lambda)$ are equal.

Suppose $P$ is a Maurer-Cartan point.  Then $(H|_P,\delta+\mu_1)$ and $(L|_P,\delta+\lambda_1)$ are complexes of vector spaces, and $\phi_1$ is a morphism of complexes.  In fact, $$\phi_1: (H|_P,\delta+\mu_1)\to (L|_P,\delta+\lambda_1)$$ is a quasi-isomorphism. 
To prove it, considering the spectral sequence induced by the given filtration, one can reduce the proof to the fact that 
$$\iota:(H|_P,\delta)\longrightarrow (L|_P,\delta)$$ 
is a quasi-isomorphism. 
\end{rmk}

Remark~\ref{spec} leads to the following:

\begin{prop}\label{prop:TransInclWkEq}
Let $(M,L,\delta+\lambda)$ be a derived manifold, and $(M,H,\delta +\mu)$ the derived manifold obtained from applying the transfer theorem to $(M,L,\delta + \lambda)$. The transferred inclusion $(\id,\phi): (M,H,\delta +\mu) \to (M,L,\delta+\lambda)$ is a weak equivalence of derived manifolds.
\end{prop}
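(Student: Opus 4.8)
The plan is to verify the two defining conditions of a weak equivalence (Definition in Section~1.1): that $(\id,\phi)$ induces a bijection on classical loci, and that it is \'etale, i.e., induces a quasi-isomorphism on tangent complexes at every classical point. Both facts are essentially already packaged in Remark~\ref{spec}, so the proof is short; the only care needed is in translating the statements of Remark~\ref{spec} (which are phrased for a fixed point $P$ and a single filtered complex) into the bundle-over-$M$ setting of derived manifolds.

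First I would address the classical loci. By the first paragraph of Remark~\ref{spec}, since $\tilde\pi_1\circ\phi_1=\id_H$, the linear part $\phi_1:H^1\to L^1$ is a fiberwise-injective bundle map onto a subbundle, and $\mu_0=\pi(\lambda_0)$. Hence for $P\in M$ we have $\mu_0(P)=\pi(\lambda_0(P))$, and because $\phi_1(P)$ is injective with $\pi(P)\phi_1(P)=\id$, the vanishing $\mu_0(P)=0$ is equivalent to $\lambda_0(P)=0$. Thus the Maurer--Cartan loci $Z(\mu_0)\subset M$ and $Z(\lambda_0)\subset M$ coincide as subsets of $M$, and since the underlying map of manifolds is $\id_M$, the induced map $\pi_0(M,H,\delta+\mu)\to\pi_0(M,L,\delta+\lambda)$ is the identity, in particular a bijection.

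Next I would verify the \'etale condition. Fix a classical point $P$. Only the linear part $\phi_1$ contributes to $T(\id,\phi)|_P$, and by Definition~\ref{def:tangentcomplex} the tangent complexes are
$$
T\M_H|_P=\big(TM|_P\xrightarrow{D_P\mu_0}H^1|_P\xrightarrow{\mu_1|_P}H^2|_P\to\cdots\big),\qquad
T\M_L|_P=\big(TM|_P\xrightarrow{D_P\lambda_0}L^1|_P\xrightarrow{\lambda_1|_P}L^2|_P\to\cdots\big),
$$
with the map being the identity on $TM|_P$ and $\phi_1|_P$ in positive degrees. In positive degrees this is precisely the morphism of complexes $\phi_1:(H|_P,\delta+\mu_1)\to(L|_P,\delta+\lambda_1)$, which is a quasi-isomorphism by the second paragraph of Remark~\ref{spec}. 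A short diagram chase (or a mapping-cone argument) then shows that adjoining the identity on $TM|_P$ in degree $0$, together with the compatibility $D_P\lambda_0=\phi_1|_P\circ D_P\mu_0$ coming from the fact that $\phi$ is an $L_\infty[1]$-morphism, keeps the map a quasi-isomorphism: indeed the cone of $T(\id,\phi)|_P$ is the cone of $\phi_1|_P$ shifted, which is acyclic. Hence $(\id,\phi)$ is \'etale at $P$, and since $P$ was arbitrary, it is \'etale. Combining the two parts, $(\id,\phi)$ is a weak equivalence.

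The main obstacle, such as it is, is purely bookkeeping: one must be sure that the filtration $F$ used in the transfer theorem (Proposition~\ref{transfertheorem}) restricts fiberwise to the filtration needed in Remark~\ref{spec} at each point $P$, so that the pointwise quasi-isomorphism statement genuinely applies; this is automatic here because all our derived manifolds are in bounded positive degrees and the filtration is the natural (or variant) degree filtration, which is compatible with restriction to fibers. No analytic input or differentiability argument is needed, since the conclusion is a pointwise statement about the tangent complexes and classical loci.
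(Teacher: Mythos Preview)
Your proof is correct and follows essentially the same route as the paper: both use Remark~\ref{spec} to handle the classical loci and the quasi-isomorphism of $\phi_1$ in positive degrees, and then deal separately with the degree-$0$ piece $TM|_P$. One small imprecision: the cone of $T(\id,\phi)|_P$ is not literally the cone of $\phi_1|_P$ shifted (there are extra $TM|_P$ summands in degrees $-1$ and $0$), though your conclusion that it is acyclic is correct, and the diagram-chase alternative you mention does the job; the paper instead observes directly that $\phi_1|_P$ restricts to an isomorphism $\ker(\mu_1|_{H^1})\to\ker(\lambda_1|_{L^1})$, making the two-term complexes $TM|_P\to\ker(\mu_1)$ and $TM|_P\to\ker(\lambda_1)$ isomorphic rather than merely quasi-isomorphic.
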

\begin{pf}
By Remark~\ref{spec}, it suffices to show that for any Maurer-Cartan point $P \in M$, the vertical maps 
$$
\xymatrix{
TM|_P \rto \dto_\id &  \dto^{\phi_1|_P} \tilde H^1|_P\\
TM|_P \rto & \tilde L^1|_P
}
$$
form a quasi-isomorphism, where $\tilde H^1 = \ker(\mu_1|_{H^1})$ and $\tilde L^1 = \ker(\lambda_1|_{L^1})$. By Remark~\ref{spec} again, the map $\phi_1|_P: \tilde H^1|_P \to \tilde L^1|_P$ is an isomorphism of vector spaces. Thus the vertical maps are an isomorphism of complexes and, in particular, a quasi-isomorphism. 
\end{pf}

\begin{ex}
The {\em natural filtration }of $L$ is given by 
$$F_k L=\bigoplus_{i\geq k}L^i\,.$$
Every $L_\infty[1]$-structure on $L$ is filtered with respect to this
natural filtration. 
\end{ex}

\begin{ex}\label{varfil}
In the proof of Lemma~\ref{firstcase}, we need the following variation of the natural
filtration $F$ at level $n$.  In fact, we define 
$$\tilde F_k=F_k\,,\qquad\text{for all $k\not=n,n+1$}\,,$$
and
$$\tilde F_n=\bigoplus_{j\geq n-1}L^{j}\,,$$
$$\tilde F_{n+1}=\bigoplus_{j\geq n+1} L^j\,.$$
The claim (which is easy to check) is that the only operation which is not of filtered degree 1
with respect to $\tilde F$ is $\lambda_1:L^n\to L^{n+1}$. 
\end{ex}

\section{Path space construction}
\label{sec:AKSZ}
In this section, we present constructions
of an   infinite dimensional dg manifolds and derived manifolds
 of path spaces.
The first subsection is to use AKSZ construction, while
the second  subsection is to apply the
derived manifold   construction in
Section \ref{sec:Ping} formally to vector bundles of path spaces. 

The discussion here aims to give a heuristic
reasoning as to where the curved $L_\infty[1]$-structure formula
in Proposition \ref{pro:paris} comes from.

We will not address the subtle issues such as in which
sense a path space is  an  infinite dimensional smooth
manifold. It can be understood as a smooth manifold in the sense of diffeology as in Section~\ref{sec:Diffeology} or in the sense of \cite{MR583436}.

\subsection{Mapping spaces: algebraic approach}

Let $L = \bigoplus_{i=1}^n L^i$ be a graded vector bundle over a manifold $M$, and $\M = (M, \A)$ the associated graded manifold of amplitude $[1,n]$. For any $P \in M$, $l \in L|_P$, one has the evaluation map 
$$
\ev_l: \Gamma(M,\A) \xrightarrow{\ev_P} \Sym L^\vee|_P \to \rr,
$$
where the second map is the evaluation of a polynomial on $L|_P$ at $l$.

\begin{lem}
The evaluation map defines a bijection from $L$ to the algebra morphism (which may not preserve degrees) from $\Gamma(M,\A)$ to $\rr$.  
\end{lem}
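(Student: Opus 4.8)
The statement is essentially a pointwise (fiberwise) assertion combined with the classical fact that points of a manifold are recovered from $\rr$-algebra morphisms $C^\infty(M) \to \rr$ (the same fact used in Proposition~\ref{prop:QuasiIsoBijMC}, via \cite[Problem 1-C]{MR0440554}). So the plan is: first show the map $l \mapsto \ev_l$ is injective, then show it is surjective, reducing in each case to the local model $\A|_U \cong C^\infty(U) \otimes \Sym V^\vee$ where $V$ is the typical fiber of $L$.

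\textbf{Injectivity.} Given $l \in L|_P$ and $l' \in L|_{P'}$ with $\ev_l = \ev_{l'}$ as algebra morphisms $\Gamma(M,\A) \to \rr$, first restrict to the degree-$0$ part $\A^0 = \O_M$: the composites $C^\infty(M) \to \rr$ agree, hence $P = P'$ by the classical identification of points of $M$ with $\rr$-algebra morphisms $C^\infty(M) \to \rr$. Now both $\ev_l$ and $\ev_{l'}$ factor through $\ev_P : \Gamma(M,\A) \to \Sym L^\vee|_P$, and it suffices to see that a vector in $L|_P$ is determined by the induced algebra morphism $\Sym L^\vee|_P \to \rr$ (not necessarily degree-preserving). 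Restricting such a morphism to the degree-$1$ generators $L^\vee|_P \subset \Sym^1 L^\vee|_P$ gives a linear functional on $L^\vee|_P$, i.e. an element of $L|_P = (L^\vee|_P)^\vee$; evaluation at $l$ sends $\xi \in L^\vee|_P$ to $\xi(l)$, so this linear functional is exactly $l$. Hence $l = l'$. One should also note the map $\ev_P : \Gamma(M,\A) \to \Sym L^\vee|_P$ is surjective, which follows from existence of partitions of unity (any polynomial on $L|_P$ with constant coefficients extends, after multiplying by a bump function, to a global section).

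\textbf{Surjectivity.} Let $\chi : \Gamma(M,\A) \to \rr$ be an algebra morphism. Its restriction to $C^\infty(M) = \A^0$ is an $\rr$-algebra morphism, hence equals evaluation at some point $P \in M$ (again by the classical fact). The kernel of $\chi|_{C^\infty(M)}$ is the maximal ideal $\Mm_P$ of functions vanishing at $P$. The key local computation: choose a chart $U \ni P$ trivializing $L$, so $\A|_U \cong C^\infty(U) \otimes \Sym V^\vee$ with $V = L|_P$; for a global section $a \in \Gamma(M,\A)$, its restriction $a|_U$ can be written, using a Taylor-type argument, as $a(P) + (\text{terms in } \Mm_P \cdot \A|_U)$ where $a(P) \in \Sym V^\vee$ is the "value". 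One shows $\chi(\Mm_P \cdot \A) = 0$: an element of $\Mm_P$ is a finite sum $\sum f_i g_i$ with $f_i(P) = g_i(P) = 0$ (using a square-root/bump trick so $\chi(f_i) = \chi(g_i) = 0$), and $\chi$ being multiplicative kills such products even after multiplying by any section. Therefore $\chi$ factors through the "value at $P$" map $\Gamma(M,\A) \to \Sym V^\vee = \Sym L^\vee|_P$, i.e. $\chi = \chi' \circ \ev_P$ for an algebra morphism $\chi' : \Sym L^\vee|_P \to \rr$. Restricting $\chi'$ to the linear generators $L^\vee|_P$ yields an element $l \in L|_P$, and since $\Sym L^\vee|_P$ is generated as an algebra by $L^\vee|_P$ together with $\rr$, multiplicativity forces $\chi' = \ev_l$ on all of $\Sym L^\vee|_P$; hence $\chi = \ev_l$.

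\textbf{Main obstacle.} The genuinely non-formal point is the vanishing $\chi(\Mm_P \cdot \A) = 0$ and the resulting factorization through the value map — i.e. proving that an arbitrary (not necessarily degree-preserving, not necessarily continuous a priori) $\rr$-algebra morphism out of $\Gamma(M,\A)$ is automatically "local at $P$" and "polynomial in the fiber variables." For $C^\infty(M)$ itself this is the content of the cited Milnor–Stasheff exercise, and one extends it by the observation that $\Gamma(M,\A)$ is generated over $C^\infty(M)$ by $\Gamma(M,L^\vee)$, which lands in a single algebra degree, so no new subtleties (like derivations or infinite jets) appear; one must just be careful that the argument about products of functions vanishing at $P$ still applies when multiplied by odd/higher-degree generators, which it does since those generators square to zero (odd case) or can be handled degree by degree. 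Once this locality is in hand, everything else is the routine algebra sketched above.
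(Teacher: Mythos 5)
Your proof is correct and follows essentially the same route as the paper's: restrict the algebra morphism to the degree-zero part $C^\infty(M)$ to recover the base point $P$ via the Milnor--Stasheff fact, then use the linear generators $\Gamma(M,L^\vee)$ (equivalently, the factorization through $\Sym L^\vee|_P$) to recover the fiber element $l$. The only difference is that you spell out the locality/partition-of-unity steps that the paper dismisses as ``well-known,'' which is fine.
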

\begin{pf}
It is clear that $\ev_l$ is an algebra morphism. Conversely, let $\phi: \Gamma(M,\A) \to \rr$ be an algebra morphism. Then $\phi$ is uniquely determined by its restriction to the generators $\Gamma(M,\O_M) \oplus \Gamma(M,L^\vee)$. Decomposing the restriction according to degrees, we have
\begin{items}
\item
an algebra morphism $\phi_0: \Gamma(M,\O_M) \to \rr$, and
\item
$\phi_i: \Gamma(M,{L^i}^\vee) \to \rr$ such that 
$$\phi_i(ab) = \phi_0(a) \phi_i(b),$$ 
for any $a \in \Gamma(M,\O_M)$, $b \in \Gamma(M,{L^i}^\vee)$.  
\end{items}
It is well-known that such an algebra morphism $\phi_0$ is of the form $\ev_P$ for some $P\in M$, and such a map $\phi_i$ is equal to $\ev_l$ for some $l \in L^i|_P$.
\end{pf}
 
\begin{prop}\label{prop:PathTM}
There is a bijection between $TL[-1]$ and the set of algebra morphisms from $\Gamma(M,\A)$ to $\Gamma(\ast, \rr[1]) \cong \rr \oplus \rr[-1]$. 
\end{prop}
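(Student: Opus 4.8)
The statement to prove is that $TL[-1]$ is in natural bijection with the set of (not necessarily degree-preserving) $\rr$-algebra morphisms $\Gamma(M,\A) \to \rr \oplus \rr[-1]$, where $\A = \Sym_{\O_M} L^\vee$. The strategy mirrors the proof of the preceding lemma: an algebra morphism out of $\Gamma(M,\A)$ is determined by its restriction to the generating set $\Gamma(M,\O_M) \oplus \Gamma(M,L^\vee)$, so I would decompose a morphism $\Phi : \Gamma(M,\A) \to \rr \oplus \rr[-1]$ into its components and identify the resulting data with a point of $TL[-1]$, i.e.\ a point of the total space of the (shifted) tangent bundle of the graded manifold $(M,\A)$, which as explained in the excerpt has underlying space $TL[-1]$ sitting in the double vector bundle \eqref{squr1}.

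\textbf{Key steps.} First I would write $B = \rr \oplus \rr[-1]$ with $B^0 = \rr$, $B^{-1} = \rr \cdot \theta$ where $\theta^2 = 0$, and observe that $B$ is a local ring: it has a unique maximal ideal $B^{-1}$, and $B/B^{-1} = \rr$. Composing $\Phi$ with the projection $B \to \rr$ gives an algebra morphism $\Gamma(M,\A) \to \rr$; since $\A$ is generated in degrees $0$ and $\le -1$ and $\rr$ is in degree $0$, this factors through $\Gamma(M,\O_M) \to \rr$, hence by the cited fact (Milnor--Stasheff, ``Problem 1-C'') equals $\ev_P$ for a unique $P \in M$. So $\Phi$ is determined by its behaviour on generators: an algebra map $\phi_0 : \Gamma(M,\O_M) \to B$ lifting $\ev_P$, and $\rr$-linear maps $\phi_i : \Gamma(M,(L^i)^\vee) \to B$ compatible with the $\Gamma(M,\O_M)$-module structure via $\phi_0$. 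Second, I would analyze $\phi_0$: writing $\phi_0 = \ev_P + \theta \cdot D$ with $D : \Gamma(M,\O_M) \to \rr$, the multiplicativity of $\phi_0$ forces $D(fg) = D(f)g(P) + f(P)D(g)$, i.e.\ $D$ is a derivation at $P$, hence $D \in T_PM$. Third, for the $\phi_i$: only $L^1$ can contribute a degree-$0$ component of $B$ (since $(L^i)^\vee$ has degree $-i$ and $B$ lives in degrees $0,-1$, a map of degree $0$ lands in $B^0$ only when $i=0$ — impossible — while a component landing in $B^{-1}$ shifts degree by $+1$, so is allowed for $L^1$, and a component in $B^0$ is allowed for $L^1$ only if... ). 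More carefully: $\phi_1 : \Gamma(M,(L^1)^\vee) \to B$ has a part $\ev_{x} : \Gamma(M,(L^1)^\vee)\to \rr = B^0$ giving $x \in L^1|_P$, plus a part $\theta\cdot(\text{linear functional})$ giving $y \in L^1|_P$; and $\phi_i$ for $i \ge 2$ only has the $\theta$-part, giving $z_i \in L^i|_P$, but wait — degree bookkeeping shows the $\theta$-component of $\phi_i$ shifts degree $-i$ to $-i+1$, which must equal the degree of its target inside $B$, forcing $-i+1 \in \{0,-1\}$, i.e.\ $i \in \{1,2\}$. Hence the only nonzero data is $P\in M$, $D \in T_PM$, $x, y \in L^1|_P$ — and I should double-check this against the expected answer: $TL[-1]$ over a point $P$ should be $T_PM \oplus L|_P \oplus L[-1]|_P = T_PM \oplus \bigoplus L^i|_P \oplus \bigoplus L^i[-1]|_P$; the degree-$0$ part is $T_PM \oplus \bigoplus_i (L^i[-1])|_P$ — aha, so actually I need to track which pieces of $TL[-1]$ land in degree $0$ versus degree $-1$, and a morphism to $B = \rr\oplus\rr[-1]$ should be a "degree-$0$" point, meaning a point of $(TL[-1])^0$, but the statement allows degree-nonpreserving maps, so really it is the full set of $\rr$-points of $TL[-1]$ as a set. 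I would reconcile by noting that the Batchelor-type description gives $TL[-1] \cong L \times_M TM[-1] \times_M L[-1]$ (up to connection), matching exactly the data $(x \text{ from } L, D \text{ from } TM, y \text{ from } L)$ after the degree shift on the $TM$ and one $L$ factor is absorbed into allowing $\theta$-valued components.

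\textbf{Main obstacle.} The genuine work is the careful degree bookkeeping: one must show precisely which homogeneous components of a not-necessarily-degree-preserving algebra map $\Gamma(M,\A)\to \rr\oplus\rr[-1]$ can be nonzero, and check that the constraints imposed by multiplicativity (Leibniz-type identities relating the $\O_M$-component to the $L^\vee$-components) are \emph{exactly} the conditions defining a point of $TL[-1]$ in the chart provided by \eqref{squr1}, with the degree shift $[-1]$ accounted for by the presence of the nilpotent $\theta$. I would also verify naturality/bijectivity by exhibiting the inverse map explicitly: given $(P, D, x, y)$, define $\Phi$ on generators by $f \mapsto f(P) + \theta D(f)$ and, on a section $\xi$ of $(L^1)^\vee$, $\xi \mapsto \langle \xi(P), x\rangle + \theta\langle \xi(P), y\rangle$, on sections of $(L^i)^\vee$ for $i\ge 2$ by $\xi \mapsto \theta \langle \xi(P),\text{(appropriate component)}\rangle$, extend multiplicatively, and check this is well-defined (independent of presentation) and inverse to the restriction map. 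The reason this is not hard in substance is that $B^{-1}$ squares to zero, so all higher Leibniz corrections vanish and the computation is first-order; the only subtlety is being scrupulous about signs and degrees, and about whether $TL[-1]$ here denotes the total space as a \emph{set} (the $\rr$-points) rather than as a graded manifold — I would state this convention explicitly at the start of the proof.
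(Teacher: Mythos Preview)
Your proposal contains a genuine error at the very first step. You claim that composing $\Phi$ with the projection $B \to \rr$ gives an algebra morphism $\Gamma(M,\A) \to \rr$ which ``factors through $\Gamma(M,\O_M) \to \rr$'' because of degree reasons. But the whole setup (as stated explicitly in the preceding Lemma) is that these algebra morphisms are \emph{not} assumed to preserve degree. For instance, if $l \in L^1|_P$ is nonzero and $\xi \in \Gamma(M,(L^1)^\vee)$, then $\ev_l(\xi) = \langle \xi(P), l\rangle$ is a nonzero real number, so $\ev_l : \Gamma(M,\A) \to \rr$ does \emph{not} kill the degree $-1$ generators. The correct conclusion from the preceding Lemma is that the $\rr$-component of $\Phi$ equals $\ev_l$ for a unique point $l$ of the full total space $L$, not merely a point $P \in M$.

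This error is what drives the tangled degree bookkeeping in your third step: you impose constraints like ``$-i+1 \in \{0,-1\}$'' which only make sense for degree-preserving maps, and you end up with data $(P,D,x,y)$ involving only $L^1$, which you yourself notice cannot match $TL[-1]$. The paper's proof avoids all of this by using the preceding Lemma directly: writing $\psi = \psi^0 + \psi^1$ for the two target components, $\psi^0$ is an algebra morphism to $\rr$ and hence equals $\ev_l$ for some $l \in L$; then multiplicativity of $\psi$ together with $\theta^2 = 0$ forces
\[
\psi^1(ab) = \psi^1(a)\,\ev_l(b) + \ev_l(a)\,\psi^1(b),
\]
i.e.\ $\psi^1$ is a derivation at $l$, hence an element of $T_lL$. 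That is the entire argument. Your generator-by-generator decomposition could be made to work, but only after fixing the first step to recover the full point $l \in L$ rather than just $P \in M$ --- at which point you have essentially reproved the preceding Lemma inside this proof.
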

\begin{pf}
Let $\psi:\Gamma(M,\A) \to \rr \oplus \rr[-1]$ be any algebra morphism, and let $\psi^0$ and $\psi^1$ be the compositions of $\psi$ followed by the projections onto $\rr$ and $\rr[-1]$, respectively. Since $\psi$ is algebra morphism, so is $\psi^0$. Thus $\psi^0 = \ev_l$ for some $l\in L$. 

Since $\psi$ is an algebra morphism, from the algebra morphism property
$$
(\psi^0 +\psi^1)(ab) = (\psi^0 +\psi^1)(a)(\psi^0 +\psi^1)(b),
$$
it follows that 
\begin{align*}
\psi^1(a b)&  =  \psi^1(a) \psi^0(b) + \psi^0(a) \psi^1(b) \\
&=  \psi^1(a) \ev_l(b) + \ev_l(a) \psi^1(b)
\end{align*}
for any $a, \, b \in \Gamma(M,\A)$. 
Therefore $\psi^1$ can be identified with a tangent vector in $TL|_l$. This completes the proof. 
\end{pf}

Identifying mapping spaces with algebra morphisms of function algebras in the converse direction, we therefore have that
\begin{equation}
\label{eq:Map(R[1],M)}
\Map(\rr[1],\M) \cong T\M[-1].
\end{equation}

Note that \eqref{eq:Map(R[1],M)} is well-known. Here we  follow
the algebraic approach as in
\cite[Section 3.2.2]{helein2020introduction}.

\subsection{Motivation: AKSZ} 
Recall the following folklore concerning graded manifolds
 and dg manifolds following AKSZ \cite{MR1432574,MR2819233,2003math......7303K}. Here we follow \cite{MR2819233}
closely.  Let $\N$ and $\M$ be graded manifolds. Then
 $\Map (\N, \M)$ is a (usually infinite dimensional) graded manifold satisfying
\begin{align*}
\Mor(\Z\times \N, \M) \cong \Mor(\Z,\Map(\N, \M)).
\end{align*}

If, moreover, both $\N$ and  $\M$ are dg manifolds, then  homological
vector fields on $\N$ and $\M$ induce a homological vector field $Q_\mapnm$
so that $\big(\Map(\N, \M), \  Q_\mapnm\big)$ is a dg manifold.

Indeed, we have
\begin{align*}
Q_\mapnm:=\phi^L ( Q_\N ) + \phi^R (Q_\M),
\end{align*}
where $Q_\N^{L}$ and  $Q_\M^{R}$ denote the  homological vector fields
on   $\N$ and $\M$, respectively, and $\phi^L$ and  $\phi^R$
are the natural morphisms:
\begin{align*}
\xymatrix{\calx(\N)  \ar[rrr]^-{\phi^L} &&& \calx(\Map(\N, \M)) &&& \calx(\M) \ar[lll]_-{\phi^R}.}\end{align*}

Now let $\N=TI[1]$ be the natural dg manifold with
the homological vector field $Q_\N$ being the
de Rham differential, and $\M=(M,L,\lambda)$ be a derived manifold
 with homological vector field $Q$ on $\M$. Then it is known
\cite{MR2819233, 2003math......7303K} that as a graded manifold: (See also \eqref{eq:Map(R[1],M)} and Proposition~\ref{prop:PathTM}.)
\begin{eqnarray*}
\mapnm&=&\Map (TI[1] , \ \M)\\
&=&\Map (I\times \rr [1], \ \M)\\
&=& \Map \big(I, \Map ( \rr [1], \ \M) \big)\\
&\cong& \Map \big(I, T\M[-1]\big)\\
&=&PT\M[-1]\\
& \cong& T(P\M)[-1].
\end{eqnarray*}

Moreover, $\phi^L ( Q_\N )=\iota_\tau$, where $\tau \in \calx (P\M)$
is the tautological vector field on  $P\M$,
and $\phi^R (Q)=\widehat{PQ}[-1]$, the shifted complete lift  on $T(P\M)[-1]$ of
the homological vector field $PQ\in \calx (P\M)$.
As  a consequence, it follows that
 $\big(T(P\M)[-1], \ \iota_\tau+\widehat{PQ}[-1]\big)$
is an (infinite dimensional)  dg manifold. 

We summarize the
discussion above in the following

\begin{prop}
Let  $\tau \in \calx (P\M)$ be the tautological vector field on  $P\M$,
then $\big(T(P\M)[-1], \ \iota_\tau+\widehat{PQ}[-1]\big)$
is a  dg manifold.
\end{prop}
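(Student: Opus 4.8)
The statement is essentially a specialization of the AKSZ/internal-hom formalism recalled immediately above, applied to the source dg manifold $\N = TI[1]$ with its de Rham homological vector field $Q_\N = d_{DR}$ and the target derived manifold $\M = (M,L,\lambda)$ with homological vector field $Q = Q_\lambda$. The plan is to assemble the proof from three ingredients already developed in the excerpt: (i) the general fact that $\Map(\N,\M)$ is a graded manifold with homological vector field $Q_{\Map} = \phi^L(Q_\N) + \phi^R(Q_\M)$ whenever $\N,\M$ are dg manifolds; (ii) the chain of identifications $\Map(TI[1],\M) \cong \Map(I, T\M[-1]) = PT\M[-1] \cong T(P\M)[-1]$, which was already derived using \eqref{eq:Map(R[1],M)} and Proposition~\ref{prop:PathTM}; and (iii) the identification of the two summands of $Q_{\Map}$ under this chain of isomorphisms as $\iota_\tau$ and $\widehat{PQ}[-1]$ respectively.

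First I would recall/record the graded-manifold isomorphism $\Map(TI[1],\M) \cong T(P\M)[-1]$ in detail: writing $TI[1] = I \times \rr[1]$ as graded manifolds, cartesian closedness of the category of graded manifolds gives $\Map(I \times \rr[1], \M) \cong \Map(I, \Map(\rr[1],\M))$, and then \eqref{eq:Map(R[1],M)} (i.e. $\Map(\rr[1],\M) \cong T\M[-1]$) yields $\Map(I,T\M[-1]) = P(T\M[-1])$. Since $T\M[-1]\to\M$ is a vector bundle in dg manifolds, taking paths commutes with the shifted-tangent functor on the nose, so $P(T\M[-1]) \cong (P\M)^{\,\widetilde T}[-1] =: T(P\M)[-1]$; here one should note that $P\M$ itself may be only a diffeological (infinite-dimensional) space, so all of this is to be read in the sense of Appendix~\ref{sec:Diffeology}, and one invokes that the constructions $T[-1]$, $P(-)$, $\Map(-,-)$ all make sense and compose as expected in that setting.

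Next I would invoke ingredient (i): since $TI[1]$ and $\M$ are both dg manifolds, $\big(\Map(TI[1],\M), Q_{\Map}\big)$ is a dg manifold with $Q_{\Map} = \phi^L(d_{DR}) + \phi^R(Q_\lambda)$, where $\phi^L,\phi^R$ are the two canonical maps $\calx(TI[1]) \to \calx(\Map(TI[1],\M)) \leftarrow \calx(\M)$. It then remains to identify the two terms under the isomorphism $\Map(TI[1],\M)\cong T(P\M)[-1]$. For the right-hand term: $\phi^R$ is the functoriality map, so $\phi^R(Q_\lambda)$ is the vector field induced on $\Map(TI[1],\M)$ by post-composition with $Q_\lambda$; tracing through the identifications, post-composition with $Q_\lambda$ corresponds to $PQ_\lambda \in \calx(P\M)$ acting on $T(P\M)[-1]$ by its shifted complete lift, i.e. $\widehat{PQ_\lambda}[-1]$ — here one uses Mehta's result (cited in the remark after Proposition~\ref{pro:TM1}) that the complete lift of a homological vector field is linear, so the shifted lift makes sense. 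For the left-hand term: $\phi^L(d_{DR})$ is the vector field coming from the source; under $\Map(TI[1],\M) = P(T\M[-1])$, the de Rham differential on $TI[1]$ acts tautologically, and on $T(P\M)[-1]$ this is exactly contraction $\iota_\tau$ with the tautological vector field $\tau\in\calx(P\M)$ (this is the standard AKSZ identification of the de Rham piece with the contraction operator; cf.\ \cite{MR2819233,2003math......7303K}).

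Finally I would combine: $Q_{\Map} = \iota_\tau + \widehat{PQ_\lambda}[-1]$, and since $Q_{\Map}^2 = \tfrac12[Q_{\Map},Q_{\Map}] = 0$ by (i), $\big(T(P\M)[-1], \iota_\tau + \widehat{PQ}[-1]\big)$ is a dg manifold, as claimed. The main obstacle I expect is not any single computation but making the infinite-dimensional/diffeological bookkeeping rigorous — in particular, verifying that cartesian closedness, the $T[-1]$ functor, and the maps $\phi^L,\phi^R$ all behave as in the finite-dimensional case when $P\M$ is only diffeological, and that the Lie-bracket identity $[\iota_\tau,\widehat{PQ}[-1]] = \iota_{[\tau,Q]}$-type computation (analogous to the one in the proof of Proposition~\ref{pro:TM3}) goes through; since the proposition is stated as folklore and the paper explicitly declines to address "the subtle issues such as in which sense a path space is an infinite dimensional smooth manifold," it is legitimate here to carry out the identification formally and point to Appendix~\ref{sec:Diffeology} for the framework, with the genuinely new content (the explicit curved $L_\infty[1]$-formula) having already been proved by direct verification in Proposition~\ref{pro:paris}.
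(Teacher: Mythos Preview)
Your proposal is correct and follows essentially the same approach as the paper: the proposition is stated there as a summary of the preceding AKSZ discussion, which is exactly the argument you reconstruct (cartesian-closedness identification $\Map(TI[1],\M)\cong T(P\M)[-1]$, then identification of $\phi^L(d_{DR})$ with $\iota_\tau$ and $\phi^R(Q)$ with $\widehat{PQ}[-1]$). The paper also notes, immediately after the proposition, the alternative ``direct argument'' via $[\tau,PQ]=0$ and Proposition~\ref{pro:TM3} that you allude to at the end, and it likewise declines to make the diffeological bookkeeping fully rigorous.
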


\subsection{Path space of derived manifolds}

Indeed,  the fact above can be derived from a direct argument.  The flow
of $\tau$ is simply just a linear re-parameterization of the path space,
which preserves the  path  derived manifold $P\M=(PM, PL, P\lambda)$.
It thus follows that 
\begin{equation}
[\tau, \ PQ]=0,
\end{equation}
where $PQ$ denotes the corresponding homological vector field on
$P\M$. See Remark  \ref{rmk:tauQ}.

 From the proof of Proposition
\ref{pro:TM3}, it follows that $\iota_\tau+\widehat{PQ}[-1]$
is indeed a homological vector field on $T(P\M)[-1]$.

It is simple to see that $\tau \in \calx (P\M)$ is indeed a linear
vector field on the underlying vector bundle $\pi: PL\to PM$,  and
$$\pi_* \tau =D \in \calx (PM),$$
where $D$ denotes the  tautological vector field on  $PM$.
Choose a  connection $\nabla$ on $L\to M$, which induces a connection on
$PL\to PM$. Now we can apply Proposition \ref{pro:TM3} formally
to the path  derived manifold $P\M=(PM, PL, P\lambda)$ to obtain
the path space of shifted tangent bundle.

We need a lemma first.  

Given a path $l: I\to L$  in $L$, let $a(t)=\pi (l(t))$, $t\in I$,
be its corresponding path in $M$. Then $\dot{l}(t)\in T_{l(t)}L$
and 
$$\pi_* (\dot{l}(t))=\dot{a} (t)\in T_{a(t)}M$$

By $\widehat{\dot{a} (t)}|_{\dot{l}(t)}\in T_{l(t)}L$, we denote
the horizontal lift of $\dot{a} (t)\in T_{a(t)}M$ at 
$l(t)$. 

The following lemma is standard. See~\cite[Theorem~12.32]{MR2572292} or~\cite[page~114]{MR0152974}.

\begin{lem}
$$\dot{l}(t) =\widehat{\dot{a} (t)}|_{\dot{l}(t)}+\nabla_{\dot{a} (t)}l(t)$$
as tangent vectors at  $T_{l(t)}L$
\end{lem}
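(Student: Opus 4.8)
The assertion is local in $t$, so I would fix $t_0\in I$, choose a neighbourhood of $a(t_0)$ in $M$ over which $L$ admits a local frame $(e_\alpha)$, and use the induced fibre coordinates to get coordinates $(x^i,v^\alpha)$ on the total space of $L$. Writing $a(t)=(x^i(t))$ and $l(t)=(x^i(t),v^\alpha(t))$, the velocity of the curve is simply $\dot l(t)=\dot x^i(t)\,\partial_{x^i}+\dot v^\alpha(t)\,\partial_{v^\alpha}$. The goal is then to recognize the two summands on the right-hand side as the horizontal and vertical parts of this vector.

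The conceptual core of the argument, which I would present first, is this: the connection $\nabla$ determines a splitting $T_{l(t)}L=H_{l(t)}\oplus V_{l(t)}$, where $V_{l(t)}=\ker(\pi_*)_{l(t)}$ is canonically identified with the fibre $L|_{a(t)}$ (sending $\partial_{v^\alpha}$ to $e_\alpha$). Decompose $\dot l(t)=h+v$ accordingly. Since $\pi\circ l=a$ we have $\pi_*\dot l(t)=\dot a(t)$, and $\pi_*v=0$, so $\pi_*h=\dot a(t)$; as $h$ is horizontal and the horizontal lift is characterized as the unique horizontal vector over $\dot a(t)$ at $l(t)$, we get $h=\widehat{\dot a(t)}|_{l(t)}$. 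By the very definition of the covariant derivative of a section along a curve, the vertical part $v$, read inside $L|_{a(t)}$, is $\nabla_{\dot a(t)}l(t)$. To make this self-contained I would then back it up with the one-line coordinate check: with Christoffel symbols $\nabla_{\partial_{x^i}}e_\beta=\Gamma^\alpha_{i\beta}e_\alpha$, the horizontal lift of $\dot a(t)$ is $\dot x^i(t)\big(\partial_{x^i}-\Gamma^\alpha_{i\beta}v^\beta(t)\,\partial_{v^\alpha}\big)$ and $\nabla_{\dot a(t)}l(t)=\big(\dot v^\alpha(t)+\dot x^i(t)\Gamma^\alpha_{i\beta}v^\beta(t)\big)e_\alpha$; adding them recovers $\dot x^i(t)\,\partial_{x^i}+\dot v^\alpha(t)\,\partial_{v^\alpha}=\dot l(t)$.

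I do not expect a genuine obstacle here: the lemma is entirely a matter of unwinding the definitions of horizontal lift and covariant derivative, and the references \cite[Theorem~12.32]{MR2572292} and \cite[page~114]{MR0152974} cover exactly this. The only point requiring a little care is consistency of conventions — in particular the sign convention used for $\nabla_{a'}$ and the grading sign $(-1)^{|\alpha|}$ appearing in the definition of $\delta$ elsewhere in Section~\ref{sec:DerPathSp}. Since the present lemma concerns only the velocity of a curve in the ungraded bundle $L$, no grading sign enters, but I would state it explicitly so that the application of the lemma in the surrounding computations matches the sign in the definition of $\delta$.
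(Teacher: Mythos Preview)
Your proof is correct and complete. The paper itself does not supply a proof of this lemma; it simply declares the statement standard and points to \cite[Theorem~12.32]{MR2572292} and \cite[page~114]{MR0152974}, so your self-contained argument via the horizontal/vertical splitting (together with the coordinate verification) goes beyond what the paper does and matches exactly the content of those references.
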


Applying Proposition \ref{pro:TM3} formally to this situation,
 we obtain the following:

\begin{prop}
\label{pro:TM4}
Let $\M=(M,L,\lambda)$ be a derived manifold, and 
 $\nabla$  a connection on $L$. Then
 $(PM, PT_M \, dt\, \oplus PL\, dt\,\oplus PL, D+\delta+ P\mu)$ is a 
derived manifold,  where
$\mu=\lambda+\tilde\lambda+\nabla\lambda\,$ are as in
Proposition \ref{pro:TM1}, and $\delta: PL\to PL \, dt $ is the degree $1$ map
defined by  $\delta (l(t))=(-1)^{|l|} \nabla_{\dot{a} (t)}l(t) \, dt$.
\end{prop}

As a consequence,  for every path $a:I\to M$ in $M$, we get an induced
 curved $L_\infty [1]$-structure in the vector space
$\Gamma(I,a^\ast(T_M\oplus L)\,dt\oplus a^\ast L)$ by restricting
to the fiber $a\in PM$.

As $\delta^2=0$, it induces the structure of a complex on $\pathast$.
Also note  that $D|_a$ is the derivative $a'\, dt \in \Gamma(I,a^\ast T_M)\,dt$.
 It is simple to see that the resulting
 $L_\infty [1]$-operations on $\Gamma(I,a^\ast(T_M\oplus L)\,dt\oplus a^\ast L)$
  are exactly those in Proposition \ref{pro:paris}.

 We  summarize it in the following

\begin{cor}
\label{cor:paris}
Let $\delta$ be the covariant derivative of the pullback connection $a^\ast \nabla$ over $a \in PM$.
The sum $a'\, dt \, +a^\ast\mu$ is a curved $L_\infty[1]$-structure
on the complex $\big(\Gamma(I,a^\ast T\M[-1]), \delta \big) = \big( \pathast ,    \delta\big)$.
\end{cor}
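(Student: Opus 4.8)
The plan is to obtain Corollary~\ref{cor:paris} as the fiberwise incarnation of Proposition~\ref{pro:TM4}, which itself results from applying Proposition~\ref{pro:TM3} formally to the path derived manifold $P\M$. First I would recall that $P\M=(PM,PL,P\lambda)$ is itself a derived manifold (in the diffeological sense): the operations $P\lambda$ are smooth over $PM$ by Lemma~\ref{symult}, and the curved $L_\infty[1]$-axioms hold in each fiber $\Gamma(I,a^\ast L)$ because they hold for $\lambda$ over $M$. Next I would check that the tautological vector field $\tau$ on $PL$, whose value at a path $l\colon I\to L$ is the velocity $\dot l$, is a degree-$0$ linear vector field on $\pi\colon PL\to PM$ covering the derivative vector field $D\in\XX(PM)$; linearity is immediate since differentiation of sections is linear. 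The crucial relation $[\tau,PQ]=0$ holds because the flow of $\tau$ is infinitesimal reparametrisation of paths, which manifestly preserves all structure maps of $P\M$ (cf.\ Remark~\ref{rmk:tauQ}).

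With these ingredients, I would invoke Proposition~\ref{pro:TM3}: choosing on $PL\to PM$ the connection induced by $\nabla$, the data $(P\M,\tau)$ with $[\tau,PQ]=0$ produces a derived manifold structure on $PT_M\,dt\oplus PL\,dt\oplus PL$ over $PM$ with operations $D+\delta+P\mu$, where $\mu=\lambda+\tilde\lambda+\nabla\lambda$ is as in Proposition~\ref{pro:TM1} and $\delta$ is the fibrewise covariant derivative; this is exactly Proposition~\ref{pro:TM4}. Restricting to the fibre over a fixed path $a\in PM$, the base contributes $D|_a=a'\,dt$, the complex differential becomes the covariant derivative $\delta$ of the pullback connection $a^\ast\nabla$, and the remaining operations become $a^\ast\mu$. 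This exhibits $a'\,dt+a^\ast\mu$ as a curved $L_\infty[1]$-structure on $\big(\Gamma(I,a^\ast T\M[-1]),\delta\big)$, which is the assertion of the corollary. Note that only the connection $\nabla$ on $L$ (and the path $a$) enter, matching the hypotheses of Proposition~\ref{pro:paris}.

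The one point requiring care is legitimising the phrase ``apply Proposition~\ref{pro:TM3} formally'': the path space is infinite-dimensional, so one must either run the argument consistently in the diffeological category of Section~\ref{sec:Diffeology}, where the bundle, connection, parallel-transport and Lie-derivative manipulations of Section~\ref{sec:Ping} all transfer verbatim because they are local and fibrewise, or else simply observe that the fibrewise output of this formal procedure coincides term by term with the structure $a'\,dt+a^\ast\mu$ already established by direct verification in Proposition~\ref{pro:paris}. In fact the shortest rigorous route is the latter: after restriction to the fibre over $a$, the operations read off from Proposition~\ref{pro:TM4} are precisely those of Proposition~\ref{pro:paris}, so the corollary follows immediately from that proposition. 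I expect the diffeological bookkeeping (rather than any genuine mathematical content) to be the only real obstacle, and it can be sidestepped this way.
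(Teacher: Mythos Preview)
Your proposal is correct and follows essentially the same route as the paper: derive Proposition~\ref{pro:TM4} by applying Proposition~\ref{pro:TM3} formally to $P\M$ (using that $\tau$ is linear and $[\tau,PQ]=0$), then restrict to the fiber over $a\in PM$ and identify the resulting operations with those of Proposition~\ref{pro:paris}. Your remark that the ``formal'' application can be sidestepped by invoking the direct verification in Proposition~\ref{pro:paris} is exactly in the spirit of the paper, which states that the resulting operations ``are exactly those in Proposition~\ref{pro:paris}.''
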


\bibliographystyle{plain}

\bibliography{../../ref_CatFibObj}

\Addresses

\end{document}